\newtheorem{theorem}{Theorem}[section]
\newtheorem{lemma}[theorem]{Lemma}
\newtheorem{proposition}[theorem]{Proposition}
\newtheorem{rem}{Remark}
\numberwithin{rem}{section}
\numberwithin{equation}{section}
\newtheorem{definition}[theorem]{Definition}
\newtheorem{assumption}[theorem]{Assumption}
\definecolor{OliveGreen}{rgb}{0.33, 0.42, 0.18}
\definecolor{Plum}{rgb}{0.8, 0.6, 0.8}
\newcommandx{\unsure}[2][1=]{\todo[linecolor=red,backgroundcolor=red!25,bordercolor=red,#1]{#2}}
\newcommandx{\change}[2][1=]{\todo[linecolor=blue,backgroundcolor=blue!25,bordercolor=blue,#1]{#2}}
\newcommandx{\info}[2][1=]{\todo[linecolor=OliveGreen,backgroundcolor=OliveGreen!25,bordercolor=OliveGreen,#1]{#2}}
\newcommandx{\improvement}[2][1=]{\todo[linecolor=Plum,backgroundcolor=Plum!25,bordercolor=Plum,#1]{#2}}
\newcommandx{\thiswillnotshow}[2][1=]{\todo[disable,#1]{#2}}
\definecolor{mygreen}{rgb}{0,0.6,0}
\newcommand{\dpar}[2]{\dfrac{\partial #1}{\partial #2}}
 \newcommand{\R}{\mathbb R}
 \newcommand{\N}{\mathbb N}
\renewcommand{\P}{\mathbb P}
\newcommand{\Id}{\mathbb I}
\newcommand{\BB}{\mathcal{B}}
\newcommand{\CC}{\mathcal{C}}
\newcommand{\DD}{\mathcal{D}}
\newcommand{\EE}{\mathcal{E}}
\newcommand{\FF}{\mathcal{F}}
\newcommand{\II}{\mathcal{I}}
\newcommand{\KK}{\mathcal{K}}
\newcommand{\PP}{\mathcal{P}}
\newcommand{\QQ}{\mathcal{Q}}
\renewcommand{\SS}{\mathcal{S}}
\newcommand{\TT}{\mathcal{T}}
\newcommand{\VV}{\mathcal{V}}
\newcommand{\bba}{{\mathbf{{a}}}}
\newcommand{\bbf}{{\mathbf{{f}}}}
\newcommand{\bbg}{{\mathbf{{g}}}}
\newcommand{\bbn}{{\mathbf{{n}}}}
\newcommand{\bbs}{{\mathbf{{s}}}}
\newcommand{\bbu}{{\mathbf{{u}}}}
\newcommand{\bbv}{{\mathbf{{v}}}}
\newcommand{\bbw}{{\mathbf{{w}}}}
\newcommand{\bbx}{{\mathbf{{x}}}}
\newcommand{\bby}{{\mathbf{{y}}}}
\newcommand{\bbz}{{\mathbf{{z}}}}
\newcommand{\bbA}{{\mathbf{{A}}}}
\newcommand{\bbB}{{\mathbf{{B}}}}
\newcommand{\bbD}{{\mathbf{{D}}}}
\newcommand{\bbJ}{{\mathbf{{J}}}}
\newcommand{\bbL}{{\mathbf{{L}}}}
\newcommand{\bbM}{{\mathbf{{M}}}}
\newcommand{\bbN}{{\mathbf{{N}}}}
\newcommand{\bbP}{{\mathbf{{P}}}}
\newcommand{\bbR}{{\mathbf{{R}}}}
\newcommand{\bbT}{{\mathbf{{T}}}}
\newcommand{\bbU}{{\mathbf{{U}}}}
\newcommand{\bbW}{{\mathbf{{W}}}}
\newcommand{\bbX}{{\mathbf{{X}}}}
\newcommand{\btheta}{\bm{\theta}}
\newcommand{\btau}{\bm{\tau}}
\newcommand{\bbeta}{\bm{\beta}}
\newcommand{\bsigma}{\bm{\sigma}}
\newcommand{\hbbf}{\hat{\bbf}}
\newcommand{\bmw}{\bm{w}}
\renewcommand{\ker}{\text{Ker }}%_m3as}
\begin{document}
%\markboth{R. Abgrall and P.-H. Maire and M. Ricchiuto} { Embedding General Conservation Constraints in Discretizations of  Hyperbolic Systems}
\title{
 Embedding General Conservation Constraints in Discretizations of  Hyperbolic Systems on Arbitrary Meshes: A
Multidimensional Framework}

\author{R\'emi Abgrall$^{(1)}$,    Pierre-Henri Maire$^{(2)}$ and Mario Ricchiuto$^{(3)}$ \\
(1) Institute of Mathematics, University of Z\"urich, Switzerland\\
(2) CEA Cesta, 15 avenue des sabli\`{e}res, Le Barp, France\\
(3) Inria centre de l'Universit\'e de Bordeaux, Talence, France
}
%\author{R\'emi Abgrall}
%\address{Institute of Mathematics, \\University of Z\"urich, Switzerland\\
%remi.abgrall@uzh.ch}
%\author{Pierre-Henri Maire}
%\address{CEA Cesta, 15 avenue des sabli\`{e}res,\\ Le Barp, France\\
%pierre-henri.maire@cea.fr}
%\author{Mario Ricchiuto}
%\address{Inria centre de l'Universit\'e de Bordeaux,\\ Talence, France\\
%mario.ricchiuto@inria.fr}
%\date{}
%\maketitle
%\begin{history}
%    \received{30 September 2025}
%    \revised{20 October 2025}
%\end{history}
%-------------------------------------------------------
\maketitle
%-------------------------------------------------------
% ABSTRACT
%-------------------------------------------------------

%====================================================================
%==========   T A B L E    O F    C O N T E N T S
%%%%%\newpage

%====================================================================
\begin{abstract}
The purpose of this review is to discuss the notion of conservation in hyperbolic systems and how one can formulate it at the discrete level 
depending on the  representation  of the solution on the mesh.
 Since it is impossible to have a fully general theory, we 
 discuss several alternatives possibilities: cases where the solution is represented by average in volumes; cases where the mesh is staggerred (i.e. the components of the solution are not localised at the same places);  cases where the solution is solely represented by point values;  
 and an  example where all the previous options are mixed.
  We show how each configuration can provide, or not, enough flexibility. Though the discussion could be adapted to any hyperbolic system endowed with an entropy, we focus on compressible fluid mechanics, it its Eulerian and Lagrangian formulations.

On a given mesh, the unifying element is that we systematically express
 the update of conserved variables as  
 $u^{n+1}=u^n- \Delta t\; \delta u$, where the functional $u\mapsto \delta u$ depends on the value of $u$ at the current degree of freedom and its values from a set of  degrees of freedom. This set of define the stencil of the scheme. From the stencil, one can naturally define a graph connecting the states  that appears in $\delta u$. The notion of local conservation can be defined from this graph. We are aware of only two possible situations: either the graph is constructed from the faces of the mesh elements (or the dual mesh), or it  is defined from the mesh itself. Two notions of local conservation then emerge: either we define a numerical flux, or we define a "residual" attached to elements and the degrees of freedom within the element. We show that this two notions are in a way equivalent, but the one with residual allows much more flexibility, especially if additional algebraic constraints must be satisfied.
 Examples of specific additional conservation constraints are provided to illustrate this flexibility. We also show that this notion of conservation gives a very clear framework for the design of schemes in the Lagrangian setting.
 
 In the ending section we will provide a number of ongoing research avenues strongly related to the formulation discussed,
 and we highlight some   open questions which will be explored in the future.

\end{abstract}
%\keywords{non linear hyperbolic problems, Compressible fluids, Conservation, Numerical approximation.}
%\ccode{AMS Subject Classification: 65M06, 65M08, 65M60, 76-10. }
%\tableofcontents

%\newpage
\section{Introduction}
We are interested in the numerical approximation of hyperbolic problems.  %Indeed, 
The   number of problems in Physics and Mechanics that can be formulated by a set of hyperbolic (or perturbation of hyperbolic) Partial Differential Equations (PDE) is  enormous: 
%is  are more than numerous: 
compressible fluid dynamics, in plasma physics, meteorological and oceanic flows, astrophysics, materials, multiphase and multi-component flows, etc.  All these problems can be formulated by a partial differential equation that has the following structure:
$$ 
\dpar{\bbu}{t}+\text{ div }\bbf(\bbu)+\mathcal{B}(\bbu) \nabla \bbu=\mathbf{s}(\bbx, t, \bbu),
$$ where $\bbu$ is a vector of $\R^p$, $\bbf=(f_1, \ldots, f_d)$ is a flux with $f_i$ being $C^1$ functions of $\bbu$, $\mathcal{B}(\bbu)=(\bbB_1, \ldots, \bbB_d)$ is a tensor with $\bbB_j\in M_{p\times p}(\R)$. We have set for $\bbW=(\bbw_1, \ldots \bbw_d)$ with 
for $\bbw_j\in \R^p$ and $1\leq j\leq d$,
$$\mathcal{B} \bbW=\sum_{j=1}^d \bbB_j \bbw_j.$$ Last,  $\mathbf{s}$ is a source term.  

In this paper, we will focus our attention to the case where $\bbB=\mathbf{0}$ and $\bbs=0$, {\it i.e.}, a conservative system. The case where $\bbs$ is not identically $0$ has its special difficulties such as well balancing, {\it i.e.}, the reproduction of steady solutions. The case where $\bbB\neq \mathbf{0}$ is more challenging because the notion of solution is not completely clear despite the work of Dal Maso et al.  \cite{dalmaso} because it depends on 'paths' which are not obvious to define. Numerically speaking, it is also very challenging to define numerical scheme that converge, even empirically, to given solutions defined by path, see e.g. \cite{Karni,XU2023112297}.

The case of fully conservative problem is better understood, even if most of the problems are open for the system case, even the notion of solution where uniqueness may not be guaranteed, see  Section~\ref{intro} below for some references. We thus focus on the numerical approximation of
%
%In this paper we will focus on the numerical approximation of 
\begin{equation}
\label{1_1}
\dpar{\bbu}{t}+\text{ div }\bbf(\bbu)=0.
\end{equation}
with initial and boundary conditions. Considering fluid mechanics, and if one believes that the Euler equations mimic the relevant physics, one must admit solutions that are discontinuous. For example, in a tube closed on the left by a piston, if the piston starts moving to the right, a shock wave will appear. A shock wave is a moving surface across which all variables are discontinuous. On the same line, looking at a wing, it is clear that there is a surface, or something more complex, across which the tangential velocity is discontinuous.  In any case, this means that the formulation \eqref{1_1} is not sufficient because it contains derivatives. P.D. Lax, in the 50's, has introduced a notion of weak solution. This notion avoids the use of derivatives of $\bbu$ and mimics mathematically the concept of conservation: "what comes in,  comes out". Looking at \eqref{1_1}, we can also observe that the operator is a space time divergence, {\it i.e}, \eqref{1_1} writes
\begin{equation}
\label{1_1st}
\text{div}_{\bbx,t}\mathbf{F}=0,\end{equation}
where $\mathbf{F}=(\bbu, \bbf(\bbu))$.  The same philosophy of "what comes in,  comes out" thus also applies to time dependent problems. 

 The notion of conservation plays a crucial %central 
 role in the numerical approximation of \eqref{1_1}, and is the main focus of 
% hence in 
 this paper. This has been first identified by Lax and Wendroff in \cite{LxW}  where they propose %where 
 a generic form of finite volume schemes % that guaranty, 
guaranteeing that if the discrete solution is converging to some measurable function, then %that 
this function is a weak solution of the problem. Since then, this result %s 
has been extended to multiple dimensions, with all the kind of meshes one can imagine, see for example \cite{Kroner:96}. The discussion is still going on to define what is the "best" form, see \cite{shuConservation} for example.  The main purpose of this paper is to discuss an alternative definition of what is a locally conservative scheme, and use this notion to provide practical improvements to existing schemes. %,  that this notion can bring. 
Considering fluid mechanics, we discuss this in the Eulerian and Lagrangian setting. 

In the Eulerian context, the solution can be represented in several manners. Let us consider a tesselation of the computational domain by polygons or polyhedrons. We call them "elements" by abuse of language.   One can logically attach the degrees of freedom to these elements, as in the cell-centered finite volume methods, or the discontinuous Galerkin methods. If the elements are simplex, one can approximate the solution by polynomials in each simplex. If   discontinuities in the polynomials are admitted across the faces of the elements, then we are back to the discontinuous Galerkin method. If the solution is assumed to be globally continuous, we are in the finite element universe, where again several variants are possible. One intermediate solution,  in the piecewise linear case, is to construct for each vertex a control volume that allows to reformulate a finite volume method as a finite element one.  The variety of algorithms is very large.

The same variety also exist in the Lagrangian context, with additional particularities due to the fact that the mesh movement is part of the solution process. Indeed, 
the Lagrangian representation is less commonly used than the Eulerian one. It is particularly well suited to describe the time evolution of fluid flows within regions undergoing large deformations due to strong compressions and expansions. Since no mass flux crosses the boundary of a control volume moving with the material velocity, the Lagrangian formulation naturally tracks material interfaces in multimaterial compressible flows. These features motivate its use in the simulation of physical phenomena encountered in fields such as astrophysics and inertial confinement fusion. Unlike Eulerian numerical methods, Lagrangian approaches rely on moving computational grids. Consequently, in the Lagrangian framework one must discretize not only the governing conservation laws but also determine the nodal velocities that drive mesh motion. The volume of a zone, computed directly from its vertex coordinates, must exactly match the volume obtained from the discrete volume equation, also known as the Geometric Conservation Law (GCL). Ensuring this compatibility between grid motion and the GCL is a cornerstone of any robust multidimensional Lagrangian method.

The most straightforward way to satisfy this requirement is through a staggered discretization, in which  the velocity is defined at the nodes while thermodynamic variables (density, pressure, specific energy) are cell-centered. This approach was first introduced in the seminal work of von Neumann and Richtmyer \cite{Neumann1950} at Los Alamos during the Manhattan Project, and has since undergone continuous development (see, e.g., \cite{Morgan2021} for a historical overview).

An alternative is to adopt a cell-centered discretization, where all physical variables, including momentum, are stored at cell centers. In this case, interface fluxes are evaluated using approximate Riemann solvers defined along face normals. The nodal velocity required for mesh motion is then obtained by averaging the normal velocities of the faces adjacent to each node. This strategy, introduced in the mid-1980s by Dukowicz and collaborators \cite{Adessio1988}, suffers from incompatibility with the GCL. This issue was later resolved by Despr\'{e}s and Mazeran \cite{Despres2005}, who proposed a cell-centered, moving-mesh finite volume scheme for Lagrangian hydrodynamics that enforces GCL compatibility. Their method relies on a node-based approximation of the volume flux, defined as the dot product between the nodal velocity and the corner normal (i.e., the gradient of the cell volume with respect to nodal coordinates). Extending this idea, momentum and total energy are also discretized using nodal fluxes. These fluxes are consistently approximated with Riemann solvers defined in the directions of the corner normals and parametrized by projections of the nodal velocity onto these directions. Because the nodal fluxes are not uniquely defined, classical face-based conservation is lost; it is instead restored by imposing a node-based conservation condition, which yields a linear system for computing nodal velocities and fluxes. While elegant, this approach lacks robustness due to insufficient entropy production. To address this limitation, in \cite{Maire2009} was introduced a more robust node-based finite volume scheme that retains the same philosophy but employs Riemann solvers defined along the normals of the faces incident to each node, still parametrized by the projection of nodal velocities onto these directions.

These unconventional finite volume schemes differ from classical face-based methods in that conservation is not enforced face by face but instead recovered through node-based conservation conditions. In this work, we demonstrate that these node-based conservation conditions are rigorously equivalent to the consistency condition underlying Residual Distribution (RD) schemes: the sum of flux fluctuations around a node equals the approximation of the flux integral across the boundary of the dual cell surrounding that node. Furthermore, we show that these unconventional schemes are locally conservative. In two dimensions, by splitting each face into two subfaces defined by the vertices and the face midpoint, we derive an explicit and unique expression for the numerical flux associated with each subface. This half-face flux depends on all states surrounding the node, which motivates the designation multipoint finite volume method.

\bigskip

 The structure of this paper is as follows. We first recall  some elements on hyperbolic problems, introduce the notion of entropy and symmetrization. Then focusing on fluid mechanics, we recall several formalisms: Eulerian, Lagrangian and others, and discuss briefly 
 the Riemann problem. In a second part, we provide several examples of numerical schemes, all very classical. These examples are meant to represent the various cases one may encounter. We believe that all the generic situations are covered\footnote{with, possibly, one exception that will be briefly discussed in the conclusion}. We  show that they all follow the same structure: they are (or can be cast) in distribution form. In a third part, we formalise what we call the distribution form, and provide a local conservation principle. Using this we show a Lax Wendroff theorem, where the assumptions on the solution are very standard. We also show that any scheme that is in this distribution form can be written in flux form, and we provide a way to compute the flux.

 Using this, we show a systematic way of construction locally conservative schemes for a non conservative formulation of the Euler equations. We show how this notion of local conservation can be used for mesh refinement, for the control of entropy production, and some other features. We end by some remarks.

 In this paper, we do not provide any numerical illustrations of the methods and discussions. They are all contained in the references quoted in the text.

\paragraph{First notations.} In addition to $\BB \bbW$ defined above, we introduce additional operator notations that will be used throughout the paper.
If $\bbA\in M_{k\times d}(\R)$ is a matrix with columns 
$\bba_j$, $1\leq j\leq d$, and $\bbn\in \R^d$, 
the  product of $\bbA$ by $\bbn$ is $\bbA\bbn$.
The Euclidian scalar product between the vectors $\bbx$ and $\bby$ of $\R^d$ will be denoted by $\bbx\cdot\bby$ or $\bbx^\mathtt{T}\bby$.

\section{Mathematical and physical setting}
\subsection{Hyperbolic problems.}
\label{intro}

As already said above, we are interested in the approximation of the following hyperbolic problem,
\begin{equation}\label{hyper}
\dpar{\bbu}{t}+\text{ div }\bbf(\bbu)=0.
\end{equation} Here, $\bbu:(\bbx,t)\rightarrow \bbu(\bbx,t)\in \mathcal{R}\subset  \R^p$ for $\bbx\in \R^d$ and $t>0$. The set $\mathcal{R}$ will be called the invariant domain. The flux $\bbf=(f_1, \ldots , f_d)\in M_{p\times d}(\R)$ is supposed to be a $C^1$ mapping from  the invariant domain $\mathcal{R}$, which is supposed to be open and convex in $\R^p$, to $\R^{dp}$. The problem \eqref{hyper} is assumed hyperbolic: considering the tensor
$\bbA=\nabla_\bbu\bbf=(\nabla_\bbu f_1, \ldots , \nabla_\bbu f_d)$ where $\nabla_\bbu f_j\in M_{p\times p}(\R)$, for any $\bbn=(n_1, \ldots , n_d)\in \R^d$, the matrix
$$\bbA \bbn:=\sum_{j=1}^d \dpar{f_j}{\bbu} n_i$$ 
%$$\bbA\cdot \bbn:=\sum_{j=1}^d \dpar{f_j}{\bbu} %n_j$$ 
 is assumed to be diagonalisable in $\R$. 
 %In what follows, given a  the tensor $%\CC=(\bbC_1, \ldots , \bbC_d)$, i.e.  $\bbC_j\in %M_{k\times k}(\R)$, and another of one less %dimensions $\bbw\in M_{k\times d}(\R)=(\bbw_1, %\ldots , \bbw_d)$ (i.e. $\bbw_j\in \R^k$),  we %will  use the $$ notation
%% $\bbC_j\in M_{k\times k}(\R)$, 
%% if $\bbw\in M_{k\times d}(\R)=(\bbw_1, \ldots , \bbw_d)$ (i.e. $\bbw_j\in \R^k$), 
%to denote the product %write 
%%$$\CC \bbw=\sum_{j=1}^d \bbC_j \bbw_j \in M_{p\times p}(\R).$$
%$$\CC \bbw=\sum_{j=1}^d \bbC_j \bbw_j \in %\R^k.$$
%\todo[inline]{Je ne sais pas pouruqoi ici  le resultat \'etait dans   $M_{k\times k}(\R)$? (je me trpompe?) (M)}

\medskip
To make sense, the problem \eqref{hyper} is complemented with an initial condition,
$$\bbu(\bbx, 0)=\bbu_0(\bbx),$$ and boundary conditions on $\partial \Omega$. 
The boundary conditions we may consider in this paper are:
\begin{equation}
\label{hyper:bc}
\big ( \bbA\cdot \bbn\big )^-\big (\bbu(\bbx,t)-\bbu_b(\bbx, t)\big )=0, \quad t>0,
\end{equation}
where $\bbn$ is the local outward unit normal at $\bbx\in \partial \Omega$ and $\bbu_b\in \R^p$.
Doing this we assume that $\partial \Omega$ is smooth. We refer to \cite{GuermondErn} for more details.

\bigskip

We will leave the mathematical technicalities out of the scope of this paper, referring to\cite{Raviart1996,Kroner,Feireisl} for this, nor consider the problem of existence of  a solution, nor its uniqueness (see e.g. \cite{Szekelyhidi} among many others) We will also often assume that $\Omega=\R^d$ (i.e. we have no boundary conditions).  The only thing we will really need is the notion of weak solution: $\bbu\in L^1(\Omega\times[0,T[)\cap L^\infty(\Omega\times [0,T[)$, for some $T>0$ is said to be a weak solution of \eqref{hyper} if for all $\varphi\in C^1_0(\Omega\times [0,T[)$, the set of $C^1$ functions with compact support in $\Omega\times [0,T]$, we have
\begin{equation}
    \label{weak}
%    \int_{\Omega\times[0,T[} \bigg ( \dpar{\varphi}{t}(\bbx,t)\bbu(\bbx,t)+\nabla_\bbx\varphi\cdot \bbf(\bbu(\bbx,t))\bigg ) \; d\bbx \; dt+\int_{\Omega}\varphi(\bbx,0)\bbu_0(\bbx)\; d\bbx \; dt=0.
        \int_{\Omega\times[0,T[} \bigg ( \dpar{\varphi}{t}(\bbx,t)\bbu(\bbx,t)+\nabla_\bbx\varphi \bbf(\bbu(\bbx,t))\bigg ) \; d\bbx \; dt+\int_{\Omega}\varphi(\bbx,0)\bbu_0(\bbx)\; d\bbx \; dt=0.
\end{equation}
We will assume that $T$ can be chosen as large as wished, i.e. also  $T=+\infty$, though this assumption is debatable see \cite{merle1,merle2}.

It is very easy to find examples, for instance in the scalar case,  where, given an initial condition $\bbu_0$, there are infinitely  many weak solutions to the problem \eqref{hyper}.    In order to rule out some of them, and in the scalar case all but one, see \cite{GodlewskiRaviartTome1}, we  introduce the notion of entropy and entropy solutions. This is inspired by fluid mechanics.

\bigskip

Any "interesting" hyperbolic system is equipped with an entropy, denoted by $\eta$. This is a strictly concave\footnote{Usually, the mathematical entropy is strictly convex, here we have chosen to define the entropy with a more physical approach. This is a cosmetic choice.} real valued function defined on the invariant domain $\mathcal{R}$. It has to be combined with an "entropy flux" $\bbg=(g_1, g_2, \ldots, g_d)$ such that for all $i=1, \ldots , d$
\begin{equation}
    \label{entropy}
   \big ( \nabla_\bbu \eta\big )^{\mathtt{T}} \nabla_\bbu f_i=\nabla_\bbu g_i
\end{equation}
Using \eqref{entropy}, and assuming that the solution of \eqref{hyper} is smooth, we see that we have an additional conservation relation:
$$\dpar{\eta(\bbu)}{t}+\text{ div }\bbg(\bbu)=0.$$
It is well known that the solution of the Cauchy problem is not smooth in general. Hence,  this equality has to be understood in the sense of distribution and, imitating the Navier-Stokes equations, we should look for solution satisfying
$$\dpar{\eta(\bbu)}{t}+\text{ div }\bbg(\bbu)\geq 0,$$
i.e., for any positive $C^1_0(\R^d\times [0, +\infty[)$ solution, we look for solutions such that
%\begin{equation}\label{entropyinequality}
%\int_{\R^d\times[0,+\infty[)} \bigg ( \dpar{\varphi}{t}(\bbx,t)\eta(\bbx,t)+\nabla_\bbx\varphi\cdot \bbg(\bbu(\bbx,t))\bigg ) \; d\bbx \; dt+\int_{\Omega}\varphi(\bbx,0)\eta(\bbu_0(\bbx))\; d\bbx \; dt\geq 0.
%\end{equation}
\begin{equation}\label{entropyinequality}
\int_{\R^d\times[0,+\infty[)} \bigg ( \dpar{\varphi}{t}(\bbx,t)\eta(\bbx,t)+\nabla_\bbx\varphi \bbg(\bbu(\bbx,t))\bigg ) \; d\bbx \; dt+\int_{\Omega}\varphi(\bbx,0)\eta(\bbu_0(\bbx))\; d\bbx \; dt\geq 0.
\end{equation}

An important property of systems admitting an entropy is that of symmetrisation. If $\bbA_0$ represents the Hessian of the entropy $\eta$ with respect to the conserved variables, then one can show that for all $j=1, \ldots , d$, the matrices
$$\bbA_0\nabla_\bbu f_i$$ are  symmetric matrices. 
This is the essence of Godunov-Mock's theorem \cite{Mock,Godunov} (see also \cite{GodKsenia} for an English translation)
\begin{theorem}
Let $\eta(\bbu)$ be a strictly concave entropy function for \eqref{hyper}; then the entropy variable $\bbu\mapsto \bmw=\nabla_\bbu \eta(\bbu)$ defines a one-to-one mapping  and allows to symmetrize \eqref{hyper}.
\end{theorem}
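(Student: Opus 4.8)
The statement bundles two claims: that $\bbu\mapsto\bmw=\nabla_\bbu\eta(\bbu)$ is a bijection onto its image, and that rewriting \eqref{hyper} in the variable $\bmw$ produces a Friedrichs-symmetric system. The plan is to treat these in turn, the injectivity being an exercise in convex analysis and the symmetrization reducing to a single algebraic identity obtained by differentiating the entropy-compatibility relation \eqref{entropy}. Throughout I write $\bbA_0:=\nabla^2_\bbu\eta$ for the Hessian of the entropy and $\bbA_i:=\nabla_\bbu f_i$.

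For injectivity I would exploit strict concavity directly. Since $\bbA_0$ is the Hessian of a strictly concave function on the open convex set $\mathcal{R}$, it is negative definite, hence nonsingular, at every point. Monotonicity of the gradient then gives, for $\bbu_1\neq\bbu_2$ in $\mathcal{R}$,
\[
\big(\nabla_\bbu\eta(\bbu_1)-\nabla_\bbu\eta(\bbu_2)\big)\cdot(\bbu_1-\bbu_2)<0,
\]
so $\nabla_\bbu\eta(\bbu_1)\neq\nabla_\bbu\eta(\bbu_2)$ and the map is injective. Because its Jacobian $\bbA_0$ is everywhere invertible, the inverse function theorem makes it a local $C^1$ diffeomorphism, and a local diffeomorphism is an open map; combined with global injectivity this yields a $C^1$ diffeomorphism from $\mathcal{R}$ onto the open set $\bmw(\mathcal{R})$, with inverse Jacobian $\partial\bbu/\partial\bmw=\bbA_0^{-1}$.

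The heart of the proof is the symmetrization identity: each matrix $\bbA_0\bbA_i$ is symmetric. I would obtain it by differentiating \eqref{entropy}, written componentwise as $\sum_k w_k\,\partial(f_i)_k/\partial u_\ell=\partial g_i/\partial u_\ell$, once more with respect to $u_m$. The right-hand side becomes $\partial^2 g_i/\partial u_m\partial u_\ell$, symmetric in $(\ell,m)$; on the left, the term in which the derivative hits $f_i$ is $\sum_k w_k\,\partial^2(f_i)_k/\partial u_\ell\partial u_m$, again symmetric in $(\ell,m)$, while the term in which it hits $w_k$ is $\sum_k(\bbA_0)_{mk}(\bbA_i)_{k\ell}=(\bbA_0\bbA_i)_{m\ell}$, using $\partial w_k/\partial u_m=(\bbA_0)_{km}=(\bbA_0)_{mk}$. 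Subtracting the two symmetric quantities forces $(\bbA_0\bbA_i)_{m\ell}$ to equal $(\bbA_0\bbA_i)_{\ell m}$, which is the claim. This differentiation-and-index-bookkeeping step is where I expect the only real friction; everything else is formal.

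Finally I would assemble the symmetric form. Writing \eqref{hyper} in quasilinear form $\partial_t\bbu+\sum_i\bbA_i\,\partial_{x_i}\bbu=0$ and changing the unknown to $\bmw$ via $\partial_t\bbu=\bbA_0^{-1}\partial_t\bmw$ and $\partial_{x_i}\bbu=\bbA_0^{-1}\partial_{x_i}\bmw$, one gets
\[
\bbA_0^{-1}\,\partial_t\bmw+\sum_{i=1}^d \bbA_i\bbA_0^{-1}\,\partial_{x_i}\bmw=0.
\]
The time coefficient $\bbA_0^{-1}$ is symmetric and definite (being the inverse of the symmetric, negative-definite $\bbA_0$), while each $\bbA_i\bbA_0^{-1}$ is symmetric: from symmetry of $\bbA_0\bbA_i$ one has $\bbA_i^{\mathtt T}\bbA_0=\bbA_0\bbA_i$, and conjugating by $\bbA_0^{-1}$ gives $\bbA_0^{-1}\bbA_i^{\mathtt T}=\bbA_i\bbA_0^{-1}=(\bbA_i\bbA_0^{-1})^{\mathtt T}$. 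Thus the variable $\bmw$ puts the system in Friedrichs-symmetric form, with $\bbA_0$ (equivalently the positive-definite $-\bbA_0$) serving as the symmetrizer; this is precisely what is meant by symmetrizing \eqref{hyper}, which completes the argument.
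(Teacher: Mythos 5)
Your proof is correct, but it takes a genuinely different route from the paper's. For the key symmetry of $\bbA_0\nabla_\bbu f_i$ you differentiate the compatibility relation \eqref{entropy} once more with respect to $\bbu$ and let the two mixed-partial terms (the Hessians of $g_i$ and of the components of $f_i$) cancel, which is the classical direct computation. The paper instead introduces the Legendre-type potentials $q(\bmw)=\bmw^{\mathtt{T}}\bbu(\bmw)-\eta(\bbu(\bmw))$ and $\psi_j(\bmw)=\bmw^{\mathtt{T}}f_j(\bbu(\bmw))-g_j(\bbu(\bmw))$, uses \eqref{entropy} to show $\nabla_\bmw\psi_j=f_j(\bbu(\bmw))$, so that $\nabla_\bmw f_j=\partial^2\psi_j/\partial\bmw^2$ is symmetric as a Hessian, and concludes via the identity $\bbA_0\nabla_\bbu f_j=\bbA_0\,\big(\partial^2\psi_j/\partial\bmw^2\big)\,\bbA_0$. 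The two arguments are essentially dual and require the same regularity (twice-differentiable flux/entropy-flux pair and a nonsingular $\bbA_0$). What yours buys: it is more elementary and self-contained, and it actually proves the one-to-one claim, via strict monotonicity of the gradient of a strictly concave function on the open convex set $\mathcal{R}$, a point the paper asserts without proof; your final assembly in the $\bmw$ variables also makes the Friedrichs-symmetric structure fully explicit, where the paper stops at the symmetry of $\bbA_0\nabla_\bbu f_j$. What the paper's route buys: the flux potentials $\psi_j$ are precisely the objects reused later in the entropy-stability discussion (Tadmor's potential $\psi=\bmw\cdot\bbf-g$ in Definition~\ref{def:ES} and in the boundedness proof for $\alpha_p$), and exhibiting $f_j$ as a gradient in the entropy variables is the structural fact underlying entropy-conservative fluxes. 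One caveat on your write-up: strict concavity alone gives a negative \emph{semi}-definite Hessian, not a negative definite one; your injectivity argument is unaffected (it never uses the Hessian), but the invertibility of $\bbA_0$ needed for the inverse function theorem and for $\partial\bbu/\partial\bmw=\bbA_0^{-1}$ is an extra nondegeneracy hypothesis. Since the paper adopts the same convention (it later asserts that strict concavity makes $\bbA_0$ negative definite), this is a matter of stating the assumption explicitly rather than a gap.
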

This can be seen by considering the two potentials $q(\bmw)=\bmw^{\mathtt{T}}\bbu(\bmw)-\eta(\bbu(\bmw))$ and $\psi_j(\bmw)=\bmw^{\mathtt{T}}f_j(\bbu(\bmw))-g_j(\bbu(\bmw)),$ where we use the fact that the change of variable $\bbu\mapsto \bmw$ is one-to-one if the entropy is strictly concave. Then a simple calculation shows that $\dpar{\psi_j}{\bmw}=f_j(\bbu(\bmw))$, so that 
$$\nabla_{\bmw} f_j\big (\bbu(\bmw)\big )=\nabla^2_\bbu f_j(\bbu) \nabla_{\bmw}\bbu$$
is symmetric and finally using \eqref{entropy}, and some algebra, we get
$$\bbA_0\nabla_\bbu f_j (\bbu  )=\bbA_0\dfrac{\partial^2\psi_j}{\partial \bmw^2}\bbA_0.$$
This shows that $\bbA_0\nabla_\bbu f_j (\bbu  )$ is symmetric.

\subsection{The Euler equations of compressible fluid dynamics.}
%The canonical example is the system of the
The canonical example for hyperbolic systems is given by the
 Euler equations for compressible fluid dynamics. There are several variants of their formulation, depending if we are in an Eulerian setting of a Lagrangian one. The most natural formulation is the Lagrangian formulation because one clearly sees how are translated the principles of mass conservation, the Newton's law, the first and second principles of thermodynamics, refer for instance to \cite{Gurtin2009}.  In many practical applications, the Eulerian one is the simplest to use.

\subsubsection{The Lagrangian formalism}
Let $\omega(t)$ be a moving region of the $d$-dimensional space $\mathbb{R}^{\text{d}}$ filled by an inviscid, non heat conducting compressible fluid characterized by its mass density $\rho >0$ and its velocity $\mathbf{v}$. The Lagrangian control volume form of the gas dynamics equations is nothing but the integral form of the conservation laws written over the control volume $\omega(t)$, moving with the material velocity $\mathbf{v}$. In this framework the conservation laws of volume, momentum and total energy write \cite{Gurtin2009}
%\begin{equation}
%  \label{eq:Lcv}
%  \frac{\mathrm{d}}{\mathrm{d} t} \int_{\omega(t)} \rho \mathbf{u}\,\mathrm{d}\bbx+\int_{\partial \omega(t)} \mathbf{f}\mathbf(\mathbf{u})\cdot \mathbf{n}\,\mathrm{d}\gamma=\mathbf{0},
%\end{equation}
\begin{equation}
  \label{eq:Lcv}
  \frac{\mathrm{d}}{\mathrm{d} t} \int_{\omega(t)} \rho \mathbf{u}\,\mathrm{d}\bbx+\int_{\partial \omega(t)} \mathbf{f}\mathbf(\mathbf{u}) \mathbf{n}\,\mathrm{d}\gamma=\mathbf{0},
\end{equation}
where $\mathrm{d}\bbx$ (resp. $\mathrm{d}\gamma$) is the volume (resp. surface) element and $\mathbf{n}$ is the unit outward normal to the boundary surface $\partial \omega(t)$ of the control volume. The vector of conservative variables, $\mathbf{u}=\mathbf{u}(\mathbf{x},t)$ for $\mathbf{x} \in \mathbb{R}^{\text{d}}$, writes 
$$\mathbf{u}=\begin{pmatrix}
    \tau \\
    \mathbf{v}\\
    e
\end{pmatrix} \in \mathbb{R}^{\text{d}+2},$$
where %$\tau=\frac{1}{\rho}$
$\tau=1/\rho$ is the specific volume and $e$ the specific total energy. The physical flux tensor $\mathbf{f}$, and its normal projection, 
$\mathbf{f}_{\mathbf{n}}=\mathbf{f}\mathbf{n}$
%$\mathbf{f}_{\mathbf{n}}=\mathbf{f}\cdot\mathbf{n}$  
write respectively
$$
\mathbf{f}(\mathbf{u})=\begin{pmatrix}
-\mathbf{v}^{t} \\
p \Id_{\text{d}} \\
p \mathbf{v}^{\mathtt{T}}
\end{pmatrix}
\text{ and }\,
\mathbf{f}_{\mathbf{n}}(\mathbf{u})=\begin{pmatrix}
-\mathbf{v}\cdot \mathbf{n} \\
p \mathbf{n}\\
p \mathbf{v}\cdot \mathbf{n}
\end{pmatrix}
,
$$
where $p$ denotes the thermodynamic pressure and $\mathbb{I}_{\text{d}}$ the d-dimensional identity matrix. The specific internal energy reads 
$\varepsilon=e- \mathbf{v}^{2}/2$.
%$\varepsilon=e-\frac{1}{2} \mathbf{v}^{2}$.

We point out that the Lagrangian representation is characterized by the conservation of the material mass contained within the moving control volume. Namely, the mass conservation law writes
\begin{equation}
  \label{eq:mass}
  \frac{\mathrm{d}}{\mathrm{d} t} \int_{\omega(t)} \rho\,\mathrm{d}\bbx=0,
\end{equation}
which means that the mass flux at the boundary of the moving control volume is equal to zero.

For a point located on the boundary of the control volume $\omega(t)$, {\it i.e.}, $\mathbf{x} \in \partial \omega(t)$, its path is determined by the trajectory equation
\begin{equation}
  \label{eq:traj}
  \frac{\mathrm{d} \mathbf{x}}{\mathrm{d} t}=\mathbf{v}(\mathbf{x},t),\;\mathbf{x}(0)=\mathbf{X},
\end{equation}
where $\mathbf{X}$ denotes its initial position.

For this system of conservation laws, the specific volume and the internal energy are positive, namely $\mathbf{u}$ should stay in the region
$$\mathcal{R}=\left \{ \mathbf{u}=%\begin{pmatrix}
%\tau \\
%\mathbf{v} \\
%e
%\end{pmatrix}
\big ( \tau, \bbv, e\big )^{\mathtt{T}}
\in \mathbb{R}^{\text{d}+2} : \;\tau >0, \; \varepsilon=e-\frac{1}{2}\mathbf{v}^{2} >0 \right \},$$
which is a convex set.

It is well known that the gas dynamics equations admit discontinuous solutions, refer to \cite{Raviart1996}. To select the physically admissible solution and to define the thermodynamic closure of the foregoing system of conservation laws, we introduce the specific physical entropy $\eta$. We make the fundamental assumption  that  $(\tau,\eta) \mapsto \varepsilon(\tau,\eta)$ is strictly convex which is equivalent to assume that $(\tau,\varepsilon) \mapsto \eta(\tau,\varepsilon)$ is strictly concave, refer to \cite{Raviart1996}. We work with the physical entropy, entropy flux pair $(\eta,\mathbf{0})$ and the Lagrangian gas dynamics system of conservation laws \eqref{eq:Lcv} is equipped with the entropy inequality
\begin{equation}
\label{eq:entropineqgd}
 \frac{\mathrm{d}}{\mathrm{d} t} \int_{\omega(t)} \rho \eta\,\mathrm{d}\bbx \geq 0,
\end{equation}
which is the mathematical expression of the second law of thermodynamics. Namely, for smooth flows, the entropy is conserved, whereas it is increasing across discontinuities such as shock waves. In addition, the entropy inequality \eqref{eq:entropineqgd} ensures a selection of the physical solution. 
The thermodynamic closure is ensured by means of the complete equation of state (EOS) \cite{Menikoff1989}
\begin{equation}
  \label{eq:ceosmultiD}
  p(\tau,\eta)=-\frac{\partial \varepsilon}{\partial \tau},\quad \theta (\tau,\eta)=\frac{\partial \varepsilon}{\partial \eta}.
  \end{equation}
In addition, we make the classical assumption  that  the absolute temperature is strictly positive: $\theta >0$. The convexity of the specific internal energy with respect to the specific volume allows us to define the isentropic sound speed
\begin{equation}
  \label{eq:iss}
  \frac{a^{2}}{\tau^{2}}=-\frac{\partial p}{\partial \tau}=\frac{\partial^{2}\varepsilon}{\partial \tau^{2}}.
\end{equation}

\begin{rem}[Gamma gas law]
  \label{rem:gamma}
A relation widely used when testing numerical methods is
%  For the numerical applications, we shall utilize 
  the perfect gas EOS which expresses the pressure in terms of the specific volume and the specific internal energy as follows
  \begin{equation}
    \label{eq:ggl}
    p = (\gamma-1) \rho\varepsilon,
  \end{equation}
where $\gamma$ is the polytropic index. The corresponding isentropic sound speed writes
\begin{equation}
  \label{eq:aggl}
  a^{2}=\frac{\gamma p}{\rho}=\gamma p \tau.
\end{equation}
\end{rem}

\begin{rem}[Gibbs relation]
  \label{rem:Gibbs}
  By virtue of the complete EOS, the differential of specific internal energy writes $\mathrm{d}\varepsilon=-p\mathrm{d}\tau+\theta \mathrm{d}\eta$ which leads to the famous Gibbs thermodynamic relation
  \begin{equation}
    \label{eq:Gibbsrelation}
    \theta \mathrm{d}\eta=p \mathrm{d}\tau+ \mathrm{d}\varepsilon.
  \end{equation}
  Noting that $\mathrm{d}\varepsilon=\mathrm{d}e-\mathbf{v} \cdot \mathrm{d} \mathbf{v}$ we obtain
  $$\mathrm{d}\eta=\frac{1}{\theta} \left (p\mathrm{d}\tau-\mathbf{v} \cdot \mathrm{d} \mathbf{v}+\mathrm{d}e \right)=\bmw\cdot \mathrm{d} \mathbf{u}.$$
  This last relation allows us to write the vector of entropic variables
  \begin{equation}
    \label{eq:entropicvar}
    \bmw=\frac{\partial \eta (\mathbf{u})}{\partial \mathbf{u}}=\frac{1}{\theta} \begin{pmatrix}
      p \\
      -\mathbf{v} \\
      1
    \end{pmatrix}
  \end{equation}.

\end{rem}
\begin{rem}[Calorically perfect gas]
In a calorically perfect gas,  the specific internal energy is linear in term of the temperature:
$$\varepsilon=c_v \theta.$$
The specific heat at constant volume, $c_v$,  is a constant. Assuming that the gas is also a perfect gas, the Gibbs identity can be integrated to compute the specific entropy.
One can easily find:
$\eta=(\gamma-1)c_v\log \tau+c_v\log \theta+\eta_0$
i.e.
$$\eta=c_v\big ( \log p-\gamma \log \rho\big )+\eta'_0,$$
where $\eta_0$ and $\eta'_0$ are constants.
\end{rem}

\begin{rem}[Geometrical Conservation Law]
The Reynolds transport theorem \cite{Gurtin2009} that states, for any function $f$, that
  $$\dfrac{\mathrm{d}}{\mathrm{d}t}\int_{\omega(t)} f(\bbx,t)\; d\bbx=\int_{\omega(t)}\dpar{f}{t}\; d\bbx+\int_{\partial \omega(t)}f(\bbx,t) \bbn \; d\gamma.$$
  Applying this relation  to the characteristic function of the set $\omega(t)$, we get
  %The second equation of \eqref{eq:Lcv} expresses the time rate of change of the volume of $\omega(t)$ and reads
  \begin{equation}
    \label{eq:GCL}
    \frac{\mathrm{d} |\omega(t)|}{\mathrm{d} t}-\int_{\partial \omega(t)} \mathbf{v}\cdot \mathbf{n}\,\mathrm{d}\gamma=0.
  \end{equation}
  This is the Geometrical Conservation Law (GCL). It can be rewritten as 
  $$\frac{\mathrm{d} }{\mathrm{d} t} \int_{\omega(t)} \mathrm{d}v=\int_{\partial \omega(t)} \frac{\mathrm{d} \mathbf{x}}{\mathrm{d}t} \cdot \mathbf{n}\,\mathrm{d}\gamma.$$
The GCL is intrinsically connected to the trajectory equation \eqref{eq:traj}. When solving the Lagrangian gas dynamics equations, it is therefore necessary not only to satisfy the physical conservation laws of momentum and total energy corresponding to the second and third equations of \eqref{eq:Lcv} but also to enforce the GCL, ensuring that its geometrical property, namely its connection to the trajectory equation, is preserved. The interested reader might read the following references for more details about this very important topic \cite{Maire2009,MaireHDR2011,LMR2016,Despres2017}.
\end{rem}

\subsubsection{The Eulerian formalism}
To go from the Lagrangian formulation to the Eulerian one, i.e. in fixed coordinates, the idea is to make the change of variables induced by \eqref{eq:traj}: the position at time $t$ is $\bbx=\bbx(t; \bbX)$, and this transformation is one-to-one under very mild assumptions on the velocity field.  Using the Reynolds transport theorem,
and thanks to  mass conservation, we obtain the following result, valid for any function $f$:
$$\dfrac{\mathrm{d}}{\mathrm{d}t}\int_{\omega(t)} \rho (\bbx,t)f(\bbx,t)\; d\bbx=\int_{\omega(t)}\rho(\bbx,t)\dfrac{\mathrm{d}}{\mathrm{d}t}f(\bbx,t)\; d\bbx,$$
we obtain the Eulerian formulation.

 In the Eulerian setting, the vector of conserved variables is
$$\mathbf{u}=
\begin{pmatrix}
    \rho \\
    \rho \mathbf{v}\\
    E
\end{pmatrix} \in \mathbb{R}^{\text{d}+2},
$$
where $\rho>0$ is the mass density, $\bbv\in \R^d$ is the fluid velocity, and $E=\rho e=\rho\varepsilon+\tfrac{\rho}{2}\bbv^2$ is the total energy. The flux is defined by
$$\bbf=\begin{pmatrix}\rho\bbv \\ \rho\bbv\otimes \bbv+p\mathbb{Id}_d\\ (E+p)\bbv\end{pmatrix}.$$
The invariant domain is
$$\mathcal{R}=\left \{(\rho, \rho \mathbf{v}, E) \in \mathbb{R}^{\text{d}+2}:\rho>0,\; \frac{E}{\rho}-\frac{1}{2} \frac{(\rho \mathbf{v})^{2}}{\rho^{2}}>0 \right\}.$$

\subsubsection{Non conservative forms of the Euler equations}
As long as the solution is smooth, one may write the Euler equations in a non conservative form. Two are particularly interesting.

The first uses the internal energy and the velocity as main variables, and reads:
\begin{equation}
\label{nc:system}
\begin{split}
\dpar{\rho}{t}&+\text{ div }(\rho \bbv)=0,\\
\rho \dpar{\bbv}{t}&+ \rho\big ( \bbv  \cdot\nabla\big )\bbv+\nabla p=0,\\
\rho \dpar{\varepsilon}{t}&+ \rho \big ( \bbv  \cdot\nabla\big )\varepsilon+p\;\text{div }\bbv=0.
\end{split}
\end{equation}
The system \eqref{nc:system} can be rewritten
in a second interesting form involving directly the pressure as
\begin{equation*}
\begin{split}
\dpar{\rho}{t}&+\text{ div }(\rho \bbv)=0\\
\rho \dpar{\bbv}{t}&+ \rho \big ( \bbv  \cdot\nabla\big )\bbv+\nabla p=0\\
\dpar{\rho\varepsilon }{t}&+ \big ( \bbv  \cdot \nabla\big )(\rho\varepsilon)+\rho h\;\text{div }\bbv=0,
\end{split}
\end{equation*}
where the specific enthalpy $h$ is
$$h=\varepsilon+\frac{p}{\rho}.$$
The interest of this formulation is to separate the thermodynamical quantities, $\rho$, $p$ and $\rho\varepsilon$ from the kinematic ones. It is at the basis of the staggered mesh technique that we briefly discuss later % in the text, see 
in section \ref{sec:stagerred}.
\subsubsection{The Riemann problem.}
A very important case of a Cauchy problem is the Riemann problem, which in 1 dimension writes:
\begin{equation}
\label{eq:RP}
\frac{\partial \mathbf{u}}{\partial t}+\frac{\partial \bm{f}(\mathbf{u})}{\partial x}=0
\end{equation}
with the initial conditions
$$\mathbf{u}(x,0)=
\begin{dcases}
  \mathbf{u}_l \quad \text{if} \quad x <0, \\
  \mathbf{u}_r \quad \text{if} \quad x \geq 0.
\end{dcases}
$$

We denote by $\mathbf{r}(\mathbf{u}_l,\mathbf{u}_r,\tfrac{x}{t})$ the solution of the Riemann problem. For the Euler equations, an exact solution can be constructed when the equation of state is analytical and the left and right states are sufficiently close, see \cite{Raviart1996}. The Riemann problem plays a central role in the Godunov scheme \cite{Godunov1979}, where the numerical flux at each cell interface is obtained directly from its solution. In practice, however, obtaining an exact solution is often difficult, and even when available, its use is computationally expensive. Therefore, it is preferable to approximate the interface flux using an approximate Riemann solver, an approach that was formalized in the seminal works \cite{Harten1981,Harten1983}.

\section{Several motivating  examples}
\label{sec:motivating}
%In this section, we review and rewrite some classical and less classical numerical formulations for solving \eqref{hyper}.
In this section, we review  several numerical formulations for solving \eqref{hyper}, 
some very classical and others less well known, and reformulate them in our framework. 
\subsection{The one dimensional case}
\label{sec:motivating:1D}
We consider the Riemann problem \eqref{eq:RP} and its solution $\mathbf{r}(\mathbf{u}_l,\mathbf{u}_r,\tfrac{x}{t})$. The one-dimensional computational domain is paved with cells $[x_{i-\frac{1}{2}}, x_{i+\frac{1}{2}}]$, the cell volume is $\Delta x_i=x_{i+\frac{1}{2}}-x_{i-\frac{1}{2}}$ and we denote the cell midpoint by $x_i=\frac{1}{2}(x_{i+\frac{1}{2}}+x_{i-\frac{1}{2}})$. The cell averaged value at time $t^{n}$ is
$$\mathbf{u}_i^n=\frac{1}{\Delta x_i}\int_{x_{i-\frac{1}{2}}}^{x_{i+\frac{1}{2}}} \mathbf{u}(x,t^n)\,\mathrm{d}x.$$
We define
\begin{subequations}
  \label{eq:FVS}
    \begin{align}
      \mathbf{u}_{i-\frac{1}{2}}^{+}=&\dfrac{2}{\Delta x_i} \int_{x_{i-\frac{1}{2}}}^{x_{i}} \mathbf{r}(\mathbf{u}_{i-1}^{n}, \mathbf{u}_{i}^{n},\frac{x-x_{i-\frac{1}{2}}}{\Delta t})\,\mathrm{d}x,\label{eq:FVSa}\\
      \mathbf{u}_{i+\frac{1}{2}}^{-}=&\dfrac{2}{\Delta x_i} \int_{x_{i}}^{x_{i+\frac{1}{2}}} \mathbf{r}(\mathbf{u}_{i}^{n}, \mathbf{u}_{i+1}^{n},\frac{x-x_{i+\frac{1}{2}}}{\Delta t})\,\mathrm{d}x,\label{eq:FVSb}
    \end{align}
\end{subequations}
and the Godunov-type scheme \cite{Harten1981,Harten1983}
\begin{equation}
  \label{eq:God}
  \Delta x_i \mathbf{u}_i^{n+1}=\frac{\Delta x_i}{2}\mathbf{u}_{i-\frac{1}{2}}^++\frac{\Delta x_i}{2}\mathbf{u}_{i+\frac{1}{2}}^-,
\end{equation}
where $\Delta t>0$ is the time step and $t^{n+1}=t^n+\Delta t$. Introducing $\mathbf{u}_i^n$, \eqref{eq:God} might be rewritten as
\begin{equation}
\label{eq:Godm}
\Delta x_i\left (\mathbf{u}_i^{n+1}-\mathbf{u}_i^{n}\right ) +\frac{\Delta x_i}{2}\left [ \left ( \mathbf{u}_i^{n}-\mathbf{u}_{i-\frac{1}{2}}^{+}\right)+ \left ( \mathbf{u}_i^{n}-\mathbf{u}_{i+\frac{1}{2}}^{-}\right )\right ]=0.
\end{equation}
We identify the fluctuations in cell $i$ attached to the nodes $i-\frac{1}{2}$ and $i+\frac{1}{2}$:
\begin{equation}
\label{eq:fluc}
\mathbf{\Phi}_i^{i-1/2}=\frac{1}{2}\frac{\Delta x_i}{\Delta t} \left (\mathbf{u}_i^n-\mathbf{u}_{i-\frac{1}{2}}^{+}\right )\;\text{and}\; \mathbf{\Phi}_i^{i+1/2}=\frac{1}{2}\frac{\Delta x_i}{\Delta t} \left( \mathbf{u}_i^n-\mathbf{u}_{i+\frac{1}{2}}^{-}\right).
\end{equation}
It is also interesting to identify the flux form of the Godunov-type scheme \eqref{eq:Godm}. We get it simply introducing
\begin{equation}
\label{eq:fluxm}
\mathbf{f}_{i+\frac{1}{2}}^{-}-\mathbf{f}_i^{n}=\frac{1}{2}\dfrac{\Delta x_i}{\Delta t} \left ( \mathbf{u}_i^n-\mathbf{u}_{i+\frac{1}{2}}^{-}\right ),
\end{equation}
where $\mathbf{f}_i^n=\mathbf{f}(\mathbf{u}_i^n)$ and
\begin{equation}
\label{eq:fluxp}
\mathbf{f}_{i-\frac{1}{2}}^+-\mathbf{f}_i^n=-\frac{1}{2}\dfrac{\Delta x_i}{\Delta t} \left ( \mathbf{u}_i^n-\mathbf{u}_{i-\frac{1}{2}}^+\right ).
\end{equation}
Gathering the previous notations, we write the expression of the left-sided flux at $x_{i+\frac{1}{2}}$ from \eqref{eq:FVSb} and \eqref{eq:fluxm} 
$$\mathbf{f}_{i+\frac{1}{2}}^{-}=\mathbf{f}_i^n+\frac{1}{2}\frac{\Delta x_i}{\Delta t}\mathbf{u}_i^n-\frac{1}{\Delta t}\int_{x_{i}}^{x_{i+\frac{1}{2}}} \mathbf{r} \left (\mathbf{u}_i^n, \mathbf{u}_{i+1}^n, \frac{x-x_{i+\frac{1}{2}}}{\Delta t}\right )\,\mathrm{d}x,$$
and the expression of the right-sided flux at $x_{i+\frac{1}{2}}$ from \eqref{eq:FVSb} and \eqref{eq:fluxp}
$$\mathbf{f}_{i+\frac{1}{2}}^{+}=\mathbf{f}_{i+1}^n-\frac{1}{2}\frac{\Delta x_{i+1}}{\Delta t}\mathbf{u}_{i+1}^n+\frac{1}{\Delta t}\int_{x_{i+\frac{1}{2}}}^{x_{i+1}} \mathbf{r}\left (\mathbf{u}_i^n, \mathbf{u}_{i+1}^n, \frac{x-x_{i+\frac{1}{2}}}{\Delta t}\right)\, \mathrm{d}x.$$
We point out that these expressions of the left and right-sided flux at $x_{i+\frac{1}{2}}$ could also have been obtained invoking the integral form of the conservation law \eqref{eq:RP}.

With these definitions, \eqref{eq:Godm} rewrites under the flux form
\begin{equation}
  \label{eq:FVff}
\Delta x_i \left ( \mathbf{u}_i^{n+1}-\mathbf{u}_i^n\right ) +\Delta t \left [ \left ( \mathbf{f}_{i+\frac{1}{2}}^{-} - \mathbf{f}_i^n\right ) +\left ( \mathbf{f}_i^n-\mathbf{f}_{i-\frac{1}{2}}^{+}\right ) \right ]=0
\end{equation}

The left and right-sided fluxes at $x_{i+\frac{1}{2}}$ might be also expressed in terms of the fluctuations at node $i+\frac{1}{2}$ as follows
\begin{subequations}
  \label{eq:fluxfluc}
  \begin{align}
    \mathbf{f}_{i+\frac{1}{2}}^{-}=&\mathbf{f}_i^n+\frac{1}{2}\frac{\Delta x_i}{\Delta t} \left ( \mathbf{u}_i^n-\mathbf{u}_{i+\frac{1}{2}}^{-}\right ) =\mathbf{f}_i^n+\mathbf{\Phi}_i^{i+1/2},\label{eq:fluxflucm}\\
    \mathbf{f}_{i+\frac{1}{2}}^{+}=&\mathbf{f}_{i+1}^n-\frac{1}{2}\frac{\Delta x_{i+1}}{\Delta t} \left ( \mathbf{u}_{i+1}^n-\mathbf{u}_{i+\frac{1}{2}}^+\right ) =\mathbf{f}_{i+1}^n-\Phi_{i+1}^{i+1/2}.\label{eq:fluxflucp}
    \end{align}
\end{subequations}
By subtracting \eqref{eq:fluxflucm} and \eqref{eq:fluxflucp}, we get
\begin{align*}
\mathbf{f}_{i+\frac{1}{2}}^{+}-&\mathbf{f}_{i+\frac{1}{2}}^{-}=\mathbf{f}_{i+1}^n-\mathbf{f}_i^n-\left( \mathbf{\Phi}_i^{i+1/2}+\mathbf{\Phi}_{i+1}^{i+1/2}\right )\\
& =\mathbf{f}_{i+1}^n-\mathbf{f}_i^n-\frac{1}{2}\frac{\Delta x_{i+1}}{\Delta t} \mathbf{u}_{i+1}^n-\frac{1}{2}\frac{\Delta x_i}{ \Delta t}\mathbf{u}_i^n+\frac{1}{\Delta t}
\int_{-\frac{1}{2}\Delta x_i}^{\frac{1}{2}\Delta x_{i+1}} \mathbf{r}\left (\mathbf{u}_i^n, \mathbf{u}_{i+1}^n, \frac{x}{\Delta t}\right )\,\mathrm{d}x.
\end{align*}
From the above equation  we can deduce that  the Godunov-type scheme is conservative in the finite volume sense, {\it i.e.}, $\mathbf{f}_{i+\frac{1}{2}}^{+}-\mathbf{f}_{i+\frac{1}{2}}^{-}=\mathbf{0}$, if and only if the following condition holds
\begin{equation}
\label{eq:conscondrd}
\mathbf{\Phi}_i^{i+1/2}+\mathbf{\Phi}_{i+1}^{i+1/2}=\mathbf{f}_{i+1}^n-\mathbf{f}_i^n.
\end{equation}
The above condition can be interpreted as follows: \emph{the  sum of the fluctuations attached to the node $i+\frac{1}{2}$ is equal to the integral of $\frac{\partial \mathbf{f}}{\partial x}$ over the dual cell $[x_i, x_{i+1}]$}. This is the consistency condition employed in the residual distribution approach \cite{Abgrall2018,RD-ency}. We also note that \eqref{eq:conscondrd} is strictly equivalent to
\begin{equation}
\label{eq:conscondfv}
\frac{1}{\Delta t}\int_{-\frac{1}{2}\Delta x_i}^{\frac{1}{2}\Delta x_{i+1}} \mathbf{r}\left (\mathbf{u}_i^n, \mathbf{u}_{i+1}^n, \frac{x}{\Delta t} \right)\,\mathrm{d}x=\frac{\Delta x_i}{2\Delta t} \mathbf{u}_i^n+\frac{\Delta x_{i+1}}{2\Delta t}\mathbf{u}_{i+1}^n+\mathbf{f}_{i+1}^n-\mathbf{f}_i^n.
\end{equation}
This is nothing more than the classical consistency condition \cite{Harten1981,Harten1983} with the integral form of the conservation law \eqref{eq:RP}. 

This set of remarks can easily be generalised if one have a scheme of the form
\begin{equation}
\label{eq:fv-1d}
\mathbf{u}_i^{n+1}=\mathbf{u}_i^n-\frac{\Delta t}{\Delta x_i}\left( \hat{\mathbf{f}}_{i+\frac{1}{2}}-\hat{\mathbf{f}}_{i-\frac{1}{2}}\right ),
\end{equation}
where 
$$
\hat{\mathbf{f}}_{i+\frac{1}{2}}=\hat{\mathbf{f}}_{i+\frac{1}{2}}(\widetilde{\mathbf{u}}_{i},\widetilde{\mathbf{u}}_{i+1})
$$
is a numerical flux verifying the consistency property
\begin{equation}
\label{fv1d:cons}
\mathbf{f}_{i+\frac{1}{2}}(\widetilde{\mathbf{u}}, \widetilde{\mathbf{u}})= \mathbf{f}(\widetilde{\mathbf{u}}).
\end{equation}
A classical way to represent the numerical flux is the dissipative form
\begin{equation}
\label{eq:1dflux-dissipative}
\hat{\mathbf{f}}_{i+\frac{1}{2}}=\frac{1}{2}\left [ \mathbf{f}(\widetilde{\mathbf{u}}_{i+1})+\mathbf{f}(\widetilde{\mathbf{u}}_{i})- \mathbf{Q}_{i+\frac{1}{2}}\left (\widetilde{\mathbf{u}}_{i+1}-\widetilde{\mathbf{u}}_{i}\right )\right] ,
\end{equation}
where $\mathbf{Q}_{i+\frac{1}{2}}$ is some viscosity matrix and $\widetilde{\mathbf{u}}_{j}$ is some consistent approximation of $\mathbf{u}_j$. Here, we have in mind for example the result of 
the MUSCL extrapolation or of a  WENO type reconstruction. Following the same procedure as before, it is possible to introduce the fluctuations 
$$\mathbf{\Phi}_i^{i+1/2}=\frac{1}{2}\left [ \mathbf{f}(\widetilde{\mathbf{u}}_{i+1})-\mathbf{f}(\widetilde{\mathbf{u}}_{i})- \mathbf{Q}_{i+\frac{1}{2}}\left (\widetilde{\mathbf{u}}_{i+1}-\widetilde{\mathbf{u}}_{i}\right)\right ]$$
and
$$\mathbf{\Phi}_{i+1}^{i+1/2}=\frac{1}{2}\left [ \mathbf{f}(\widetilde{\mathbf{u}}_{i+1})-\mathbf{f}(\widetilde{\mathbf{u}}_{i})+ \mathbf{Q}_{i+\frac{1}{2}}\left (\widetilde{\mathbf{u}}_{i+1}-\widetilde{\mathbf{u}}_{i}\right)\right]$$
and we again get the relation \eqref{eq:conscondrd}, whatever the type of reconstruction that has been used.
\subsubsection{Simple solver}
\label{sssec:ss}
Let us consider the case for which $\mathbf{r}(\mathbf{u}_l, \mathbf{u}_r, \frac{x}{t})$ is the simple solver \cite{GalliceHDR2002,Gallice2003} characterized by the intermediate states $\mathbf{u}_k$, for $k=1, \cdots, m+1$ with $\mathbf{u}_1=\mathbf{u}_l$ and $\mathbf{u}_{m+1}=\mathbf{u}_r$. The wave speeds are $\lambda_k$ for $k=1,\ldots,m$ and they satisfy $\lambda_1\leq \ldots \leq \lambda_k\leq \ldots \leq \lambda_m$.

%For any $x\in \mathbb{R}$, we set $x^{-}=\frac{1}{2}(|x|-x)$ and $x^{+}=\frac{1}{2}(|x|+x)$. 
For any $x\in \mathbb{R}$, we set $x^{-}=\frac{1}{2}(x-|x|)$ and $x^{+}=\frac{1}{2}(x+|x|)$. 
We compute the left and right-sided fluxes
$$\mathbf{f}_{i+\frac{1}{2}}^-=\mathbf{f}_i^n+\sum_{k=1}^m \lambda^-_k \left (\mathbf{u}_{k+1}-\mathbf{u}_k\right ),$$
%$$\mathbf{f}_{i+\frac{1}{2}}^-=\mathbf{f}_i^n-\sum_{k=1}^m \lambda^-_k \left (\mathbf{u}_{k+1}-\mathbf{u}_k\right ),$$
$$\mathbf{f}_{i+\frac{1}{2}}^+=\mathbf{f}_{i+1}^n-\sum_{k=1}^m \lambda^+_k \left (\mathbf{u}_{k+1}-\mathbf{u}_k\right )$$
we have
$$\mathbf{\Phi}_i^{i+1/2}=\sum_{k=1}^m \lambda^{-}_{k} \left (\mathbf{u}_{k+1}-\mathbf{u}_k\right ), \quad 
\mathbf{\Phi}_{i+1}^{i+1/2}=\sum_{k=1}^m \lambda^{+}_{k} \left (\mathbf{u}_{k+1}-\mathbf{u}_k\right ).$$
%$$\mathbf{\Phi}_i^{i+1/2}=-\sum_{k=1}^m \lambda^{-}_{k} \left (\mathbf{u}_{k+1}-\mathbf{u}_k\right ), \quad 
%\mathbf{\Phi}_{i+1}^{i+1/2}=\sum_{k=1}^m \lambda^{+}_{k} \left (\mathbf{u}_{k+1}-\mathbf{u}_k\right ).$$
The conservative flux at node $i+1/2$ takes the classical form
$$\hat{\mathbf{f}}_{i+\frac{1}{2}}=\frac{1}{2}\left ( \mathbf{f}_i^n+\mathbf{f}_{i+1}^n\right ) -\frac{1}{2}\sum_{k=1}^m \vert \lambda_k\vert  \left (\mathbf{u}_{k+1}-\mathbf{u}_k\right )=
\frac{1}{2}\left ( \mathbf{f}_i^n+\mathbf{f}_{i+1}^n\right )-\frac{1}{2}\left (\mathbf{\Phi}_{i+1}^{i+1/2}-\mathbf{\Phi}_i^{i+1/2}\right).$$

\subsubsection{Example of the Roe scheme}
The Roe scheme \cite{Roe81} is often presented with the flux
\begin{equation}
  \label{eq:RoeFlux}
\hat{\mathbf{f}}(\mathbf{u}_{L},\mathbf{u}_{R})=\frac{1}{2}\left [\mathbf{f}(\mathbf{u}_{L})+\mathbf{f}(u_R)-\vert \bar A(\mathbf{u}_{L},\mathbf{u}_{R})\vert (\mathbf{u}_{L}-\mathbf{u}_{R})\right ].
\end{equation}
However, this formula does not explains the choice of the Roe average,
\begin{equation}
\label{eq:RoeLinearisation}
\bar A(\mathbf{u}_{L},\mathbf{u}_{R})(\mathbf{u}_{L}-\mathbf{u}_{R})=\mathbf{f}(\mathbf{u}_{L})-\mathbf{f}(\mathbf{u}_{R}).
\end{equation}
In the original paper, the solution of \eqref{eq:RP} is approximated by the solution of a problem of the type
$$\frac{\partial \mathbf{u}}{\partial t}+\bar{A}\frac{\partial \mathbf{u}}{\partial x}=0,$$
and the author is more particularly interested in upwind discretisation,
\begin{equation}
\label{eq:RoeFluct}
\mathbf{u}^{n+1}_j=\mathbf{u}_j^n-\frac{\Delta t}{\Delta x_i}\left [\bar{A}(\mathbf{u}_j^n,\mathbf{u}_{j+1}^n)^{-}(\mathbf{u}_{j+1}^n-\mathbf{u}_j^n)+\bar{A}(\mathbf{u}_{j-1},\mathbf{u}_j)^{+}(\mathbf{u}_{j}^n-\mathbf{u}_{j-1}^n)\right]. 
\end{equation}
%\todo[inline]{Ici $x^{-}=\frac{1}{2}(-|x|+x)$ alors que dans la section prA(C)cA(C)dente consacrA(C)e A  l'exemple du solveur simple c'est l'inverse $x^{-}=\frac{1}{2}(|x|-x)$. J'avais changA(C) les signes pour Aatre cohA(C)rent avec 3.1.1. et je vois que tu les as remis. Je peux les changer dans 3.1.1. si tu prA(C)fAres.\\
%Je suis personnellement plus habitu\'e \`a la deuxieme version $x^{-}=\frac{1}{2}(x-|x|)$, $|x|=x^+-x^-$. Je me suis permis de laisser ici et changer en haut 
%ce qui par ailleur rend cette partie consistente avec les (3.8) pour le sch\'ema de Godunov (j'ai laiss\'e la version alternative en commentaire)... (M). OK trAs bien. PH}
This is naturally put in fluctuation form with
$$\Phi_{j+1}^{j+1/2}=\bar{A}(\mathbf{u}_j^n,\mathbf{u}_{j+1}^n)^{+}\left (\mathbf{u}_{j+1}^n-\mathbf{u}_j^n\right ), \quad
\Phi_{j}^{j+1/2}=\bar{A}(\mathbf{u}_j^n,\mathbf{u}_{j+1}^n)^{-}\left (\mathbf{u}_{j+1}^n-\mathbf{u}_j^n\right ).$$
This scheme will satisfy the conservation relation 
$$\Phi_{j+1}^{j+1/2}+\Phi_{j}^{j+1/2}=\mathbf{f}(\mathbf{u}_{j+1}^n)-\mathbf{f}(\mathbf{u}_j^n)$$
if and only if
\begin{equation}
  \label{eq:linearisation}
  \bar{A}(\mathbf{u}_j^n, \mathbf{u}_{j+1}^n)\left ( \mathbf{u}_{j+1}^n-\mathbf{u}_j^n\right)= \mathbf{f}(\mathbf{u}_{j+1}^n)-\mathbf{f}(\mathbf{u}_j^n)
\end{equation}
and in this case we can write a flux which is precisely \eqref{eq:RoeFlux}.  Let us notice that \eqref{eq:RoeFlux}, as well as \eqref{eq:RoeFluct}, assumes that $\bar A(\mathbf{u}_{L},\mathbf{u}_{R})$ is diagonalisable in $\mathbb{R}$. This is not guaranteed {\it a priori} by the linearisation relation \eqref{eq:linearisation}.
%%%%%%%%%%%%%%%%%%%%%%%%%%
%\subsection{The multidimensional case.}
%%%%%%%%%%%%%%%%%%%%%%%%%%
%%%%%%%%%%%%%%%%%%%%%%%%%%
%\subsection{Finite volume schemes, I}\label{sec:FV:1}
\subsection{{Finite volume schemes I: formulation with two-states  Riemann fluxes}}\label{sec:FV:1}
%%%%%%%%%%%%%%%%%%%%%%%%%%
%%%%%%%%%%%%%%%%%%%%%%%%%%
We now consider the approximation of \eqref{hyper} in two space dimensions, on a mesh composed of triangular elements.
%We consider the case where the problem \eqref{hyper} is approximated on a triangular type mesh. 
%We cover the two dimensional case only in order to simplify the notations, 
Note that the  restriction to the  two dimensional case is only done in order  to work with simpler notations.
The discussion can be immediately generalized to more dimensions, and also to other types of element shapes.
%but the discussion is not restricted to this case only. It can easily adapted to more general element,
As we will see, the key point for our discussion is that for a given mesh  we can %have a way to 
construct a "dual" mesh. The latter is  the mesh of the control volumes, each of which can be  associated to %labeled
 by the vertices of the primal mesh.

\medskip

Let us denote by $K_l$, $l=1, \ldots, n_e$ the set of  triangles  composing the grid, and note
%The triangles are denoted by $K_l$, $l=1, \ldots, n_e$, 
their vertices by $\sigma_j$ with $j=1, \ldots, n_v$. The triangles cover $\R^2$, and the resulting mesh is conformal. To each vertex $\sigma$, one associates a control volume obtained  for example by joining the mid-points of the edges coming out of $\sigma$, and the centroids of the element containing $\sigma$, see figure \ref{fig:fv}. This type of control volumes have been considered by many authors, see e.g. \cite{dervieux1,dervieux2,dervieux3,stoufflet}, because it allows a natural connection between finite volume methods and finite element methods for linear convection diffusion problem. Denoting by $\vert C_\sigma\vert$ the area of the control volume, a natural discretisation is:
\begin{subequations}\label{fv:dervieux}
\begin{equation}\label{fv:dervieux:1}
\vert C_\sigma\vert\dfrac{d\bbu_\sigma}{dt}+\sum_{\sigma'\in \mathcal{V}_\sigma}
\hbbf_{\bbn_{\sigma,\sigma'}}(\widetilde{\bbu}_\sigma, \widetilde{\bbu}_{\sigma'})=0,
\end{equation}
{where $\hbbf_{\bbn_{\sigma,\sigma'}}(\widetilde{\bbu}_\sigma, \widetilde{\bbu}_{\sigma'})$  is a numerical flux
consistent in the sense of \eqref{fv1d:cons}, which in multiple space dimensions becomes
\begin{equation}\label{fvcons1d:2}
\hbbf_{\bbn_{\sigma,\sigma'}}(\widetilde{\bbu}, \widetilde{\bbu})
%=\bbf(\widetilde{\bbu})\cdot \bbn_{\sigma,\sigma'}.
=\bbf(\widetilde{\bbu}) \bbn_{\sigma,\sigma'}.
\end{equation}
 The states $\widetilde{\bbu}_\sigma$ and $\widetilde{\bbu}_{\sigma'}$ are,
 in a MUSCL-like manner, } consistent high order approximation of the true state at the mid-point of the edge $[\sigma, \sigma']$, while $\bbn_{\sigma,\sigma'}$ is the normal to $C_\sigma$. We refer to e.g. \cite{stoufflet,abgrall_eno,Oliver_Friedrich,ADERCWENO} and references therein for possible choices on general meshes. %In fact, this 
 This formulation is actually suited for steady simulation.  %and the one we consider in this paper is designed for unsteady simulations: 
We consider in this paper a form better suited for  unsteady simulations in which the term 
$$\hbbf_{\bbn_{\sigma,\sigma'}}(\widetilde{\bbu}_\sigma, \widetilde{\bbu}_{\sigma'})$$
is split as
\begin{equation}\label{fv:dervieux:2}\hbbf_{\bbn_{\sigma,\sigma'}}(\widetilde{\bbu}_\sigma, \widetilde{\bbu}_{\sigma'}):=
\hbbf_{\bbn_{\sigma,\sigma'}^{K}}(\widetilde{\bbu}_\sigma, \widetilde{\bbu}_{\sigma'})+\hbbf_{\bbn_{\sigma,\sigma'}^{K'}}(\widetilde{\bbu}_\sigma, \widetilde{\bbu}_{\sigma'}),
\end{equation}
\end{subequations}
where the edge $[\sigma,\sigma']=K\cap K'$.

\begin{figure}[ht]
\begin{subfigure}{0.45\textwidth}
\centering
\label{fig:fv:1}
\includegraphics[width=\textwidth]{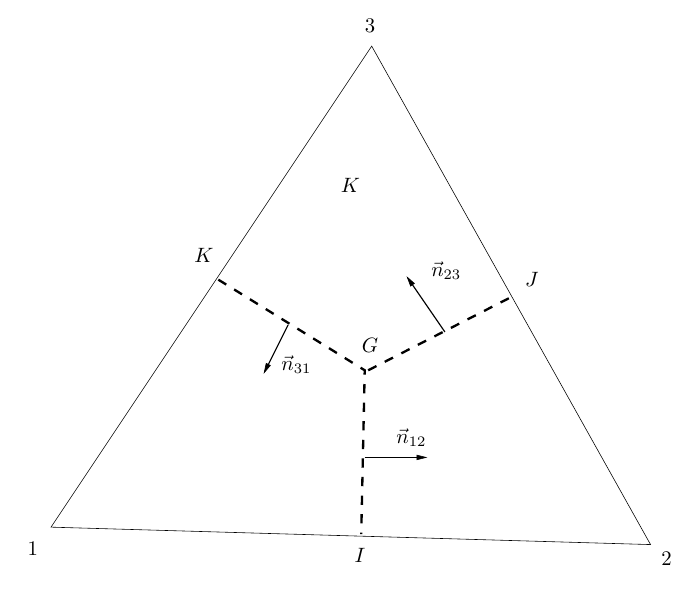}
\end{subfigure}
\begin{subfigure}{0.45\textwidth}
\centering
\label{fig:fv:2}
\includegraphics[width=\textwidth]{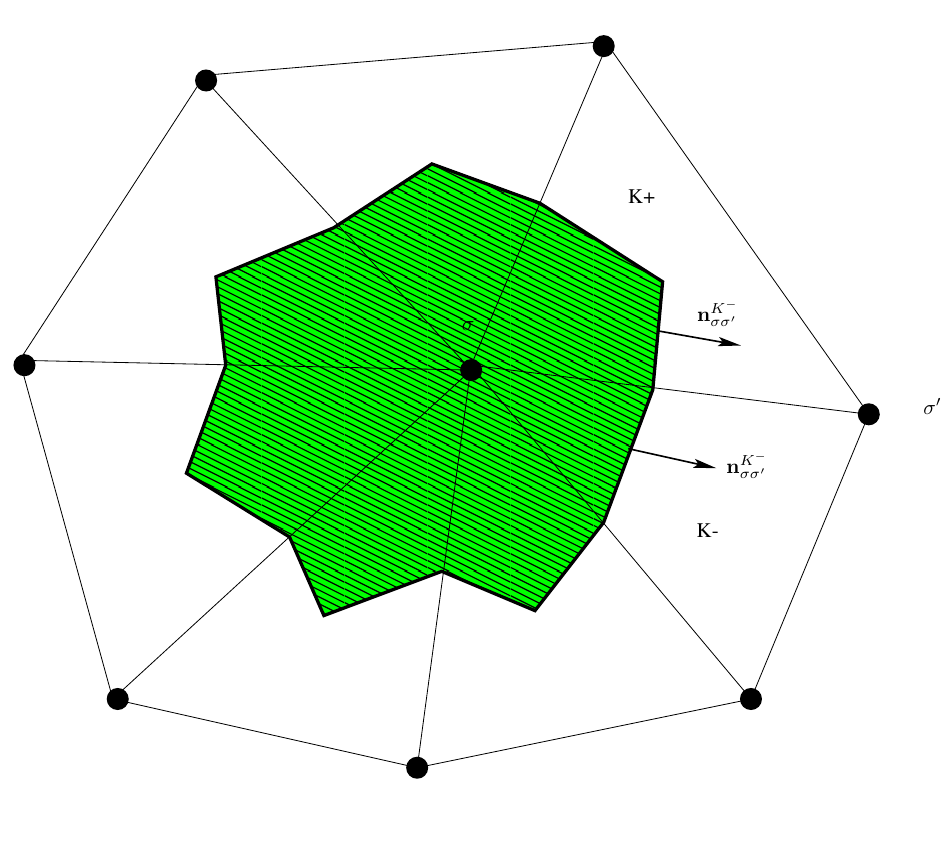}

\end{subfigure}
\caption{\label{fig:fv} Notations for the finite volume schemes. On the left: definition of the control volume for the degree of freedom $\sigma$.
 The vertex $\sigma$ plays the role of the vertex $1$ on the left picture, etc for the triangle K. The control volume $C_\sigma$ associated to $\sigma=1$ is green on the right and corresponds to $1IGK$ on the left. The vectors $\bbn_{ij}$ are normal to the internal edges scaled by the corresponding edge length}
\end{figure}

%If we  specialize ourselves 
We now specialize the discussion to the case of triangular elements. %,
Note however that  the  \emph{exactly the same arguments} can be given for more general elements,
 provided a conformal approximation space can be constructed. %, see 
 We refer to section \ref{sec:Lagrangian} for another very different example. In the following, we denote  with a superscript $+$ and $-$ the exterior (resp. the interior) value of the solution and to simplify the notations, we omit the $\widetilde{~}$. 
 
   Since the boundary of $C_\sigma$ is a closed polygon, we have
$$
\sum_{\gamma \subset \partial C_\sigma}\bbn_\gamma=0,$$
where $\gamma$ is any of the segment included in $\partial C_\sigma$, such as $IG$ on Figure \ref{fig:fv}.

Hence
\begin{equation*}
\begin{split}
\sum_{\gamma \subset \partial C_\sigma} \hbbf_{\bbn_\gamma }(\bbu^+_\sigma , \bbu^-& )= \sum_{\gamma \subset \partial C_\sigma} \hbbf_{\bbn_\gamma }(\bbu^+_\sigma, \bbu^- )-  \bbf (\bbu_\sigma) \bigg (\sum_{\gamma \subset \partial C_\sigma}\bbn_\gamma\bigg )\\
%\hbbf_{\bbn_\gamma }(\bbu^+_\sigma, \bbu^- )- \bigg (\sum_{\gamma \subset \partial C_\sigma}\bbn_\gamma\bigg )\cdot \bbf (\bbu_\sigma)\\
&=\sum\limits_{K, \sigma\in K} \sum\limits_{\gamma \subset \partial C_\sigma\cap K} \big ( \hbbf_{\bbn_\gamma }(\bbu^+_\sigma, \bbu^- )-\bbf (\bbu_\sigma) \bbn_\gamma \big )
%&=\sum\limits_{K, \sigma\in K} \sum\limits_{\gamma \subset \partial C_\sigma\cap K} \big ( \hbbf_{\bbn_\gamma }(\bbu^+_\sigma, \bbu^- )-\bbf (\bbu_\sigma)\cdot \bbn_\gamma \big )
\end{split}
\end{equation*}
To make things explicit, in $K$, the internal boundaries are $IG$, $JG$ and $KG$, and those around $\sigma$ (which is labeled by $1$) are $IG$ and $KG$.
We set
\begin{equation}
\begin{split}
%\Phi_\sigma^K(\bbu^h)&=\sum\limits_{\gamma\subset \partial C_\sigma\cap K} \big ( \hbbf_{\bbn_\gamma }(\bbu^+_\sigma, \bbu^- )-\bbf (\bbu_\sigma)\cdot \bbn_\gamma \big )\\
\Phi_\sigma^K(\bbu^h)&=\sum\limits_{\gamma\subset \partial C_\sigma\cap K} \big ( \hbbf_{\bbn_\gamma }(\bbu^+_\sigma, \bbu^- )-\bbf (\bbu_\sigma) \bbn_\gamma \big )\\
&=\sum\limits_{\gamma\subset \partial ( C_\sigma\cap K )}  \hbbf_{\bbn_\gamma }(\bbu^+_\sigma, \bbu^- ).
\end{split}
\label{fv:res:sigma}
\end{equation}
The last relation uses the consistency of the flux and the fact that $C_\sigma\cap K$ is a closed polygon. The quantity $\Phi_\sigma^K(\bbu^h)$ can also be interpreted as  the normal flux on $C_\sigma\cap K$.

We notice that if we sum up these residuals, we get
%\begin{equation*}
%\begin{split}
%\sum_{\sigma\in K} \Phi_\sigma^K(\bbu_h)&= \bigg ( \hbbf_{\bbn_{12}}(\bbu_1^+,\bbu_2^+)-\hbbf_{\bbn_{13}}(\bbu_1^+,\bbu_3^+)-\bbf(\bbu_1)\cdot\bbn_{12}+\bbf(\bbu_1)\cdot\bbn_{31}\bigg )\\
%&+\bigg ( \hbbf_{\bbn_{23}}(\bbu_2^+,\bbu_3^+)-\hbbf_{\bbn_{12}}(\bbu_2^+,\bbu_1^+)+\bbf(\bbu_2)\cdot\bbn_{12}-\bbf(\bbu_2)\cdot\bbn_{23}\bigg )\\
%&+\bigg ( -\hbbf_{\bbn_{23}}(\bbu_3^+,\bbu_2^+)+\hbbf_{\bbn_{31}}(\bbu_3^+,\bbu_1^+)-\bbf(\bbu_3)\cdot\bbn_{23}+\bbf(\bbu_3)\cdot\bbn_{31}\bigg )\\
%&= \bbf(\bbu_1)\cdot \big ( \bbn_{12}-\bbn_{31}\big ) +\bbf(\bbu_2)\cdot \big ( -\bbn_{23}+\bbn_{31}\big )
%+\bbf(\bbu_3)\cdot \big ( \bbn_{31}-\bbn_{23}\big )\\
%&=\bbf(\bbu_1)\cdot\frac{\bbn_1}{2}+\bbf(\bbu_2)\cdot\frac{\bbn_2}{2}+\bbf(\bbu_3)\cdot\frac{\bbn_3}{2}
%\end{split}
%\end{equation*}
\begin{equation*}
\begin{split}
\sum_{\sigma\in K} \Phi_\sigma^K(\bbu_h)&= \bigg ( \hbbf_{\bbn_{12}}(\bbu_1^+,\bbu_2^+)-\hbbf_{\bbn_{13}}(\bbu_1^+,\bbu_3^+)-\bbf(\bbu_1)\bbn_{12}+\bbf(\bbu_1)\bbn_{31}\bigg )\\
&+\bigg ( \hbbf_{\bbn_{23}}(\bbu_2^+,\bbu_3^+)-\hbbf_{\bbn_{12}}(\bbu_2^+,\bbu_1^+)+\bbf(\bbu_2)\bbn_{12}-\bbf(\bbu_2)\bbn_{23}\bigg )\\
&+\bigg ( -\hbbf_{\bbn_{23}}(\bbu_3^+,\bbu_2^+)+\hbbf_{\bbn_{31}}(\bbu_3^+,\bbu_1^+)-\bbf(\bbu_3)\bbn_{23}+\bbf(\bbu_3)\bbn_{31}\bigg )\\
&= \bbf(\bbu_1) \big ( \bbn_{12}-\bbn_{31}\big ) +\bbf(\bbu_2) \big ( -\bbn_{23}+\bbn_{31}\big )
+\bbf(\bbu_3) \big ( \bbn_{31}-\bbn_{23}\big )\\
&=\bbf(\bbu_1)\frac{\bbn_1}{2}+\bbf(\bbu_2)\frac{\bbn_2}{2}+\bbf(\bbu_3)\frac{\bbn_3}{2},
\end{split}
\end{equation*}
where $\bbn_j$ is the scaled inward normal of the edge opposite to vertex $\sigma_j$, i.e. twice the gradient of the $\P^1$ basis function
 $\varphi_{\sigma_j}$ associated to this degree of freedom.
Thus, we can reinterpret the sum as the boundary integral of the Lagrange interpolant of the flux.

In summary, the finite volume scheme \eqref{fv:dervieux} can be rewritten as 
\begin{equation}
\label{fv:dervieux:3}
\vert C_\sigma\vert \dfrac{d\bbu_\sigma}{dt}+\sum_{K, \sigma\in K}\Phi_\sigma^K(\bbu^h)=0.
\end{equation}
Denoting the characteristic function of a domain $\omega$ by $1_\omega$, the residuals $\Phi_\sigma^K$ satisfy the relation
\begin{equation}\label{fv:dervieux:4}
\Phi^K(\bbu^h):=\sum_{\sigma \in K}\Phi_\sigma^K(\bbu^h)=\int_{\partial K} \bbf^h \bbn ,  \text{ with }\bbf^h=\sum_{\sigma\in K} \bbf(\bbu_\sigma)1_{C_\sigma\cap K}.
\end{equation}
What we have done here is a simple generalisation of the one dimensional setting.  This writing is independant of the formal accuracy of the spatial discretisation.

%%%%%%%%%%%%%%%%%%%%%%%%%%%%%%%%%%

\subsection{{Finite volume methods II: multi-dimensional Riemann fluxes}}\label{sec:motivating:FV:II}
In the previous paragraph, we have shown that classical finite volume methods based on two-states numerical fluxes can be formulated in residual form as in relation \eqref{fv:dervieux:3}. The sum of the residuals $\Phi_\sigma^K(\bbu^h)$  gives  a cell residual coupling all the nodes of  cell $K$. Despite of this multi-dimensional aspect, the  finite volume 
fluxes involved only use  two states. To be more precise, these numerical 
fluxes are  obtained from some approximate  one-dimensional Riemann solver, connecting two states. We now consider a genuinely multidimensional formulation, which follows the works by~\cite{Gallice2022,PHRaph2}, and more recently \cite{grd25,bcrt25}.
These are  non-conventional   finite volume methods which do not involve  face-based fluxes, but corner multidimensional fluxes.
\begin{figure}
  \begin{center}
    \begin{subfigure}{0.4\textwidth}
      \centering
      \includegraphics[width=\textwidth]{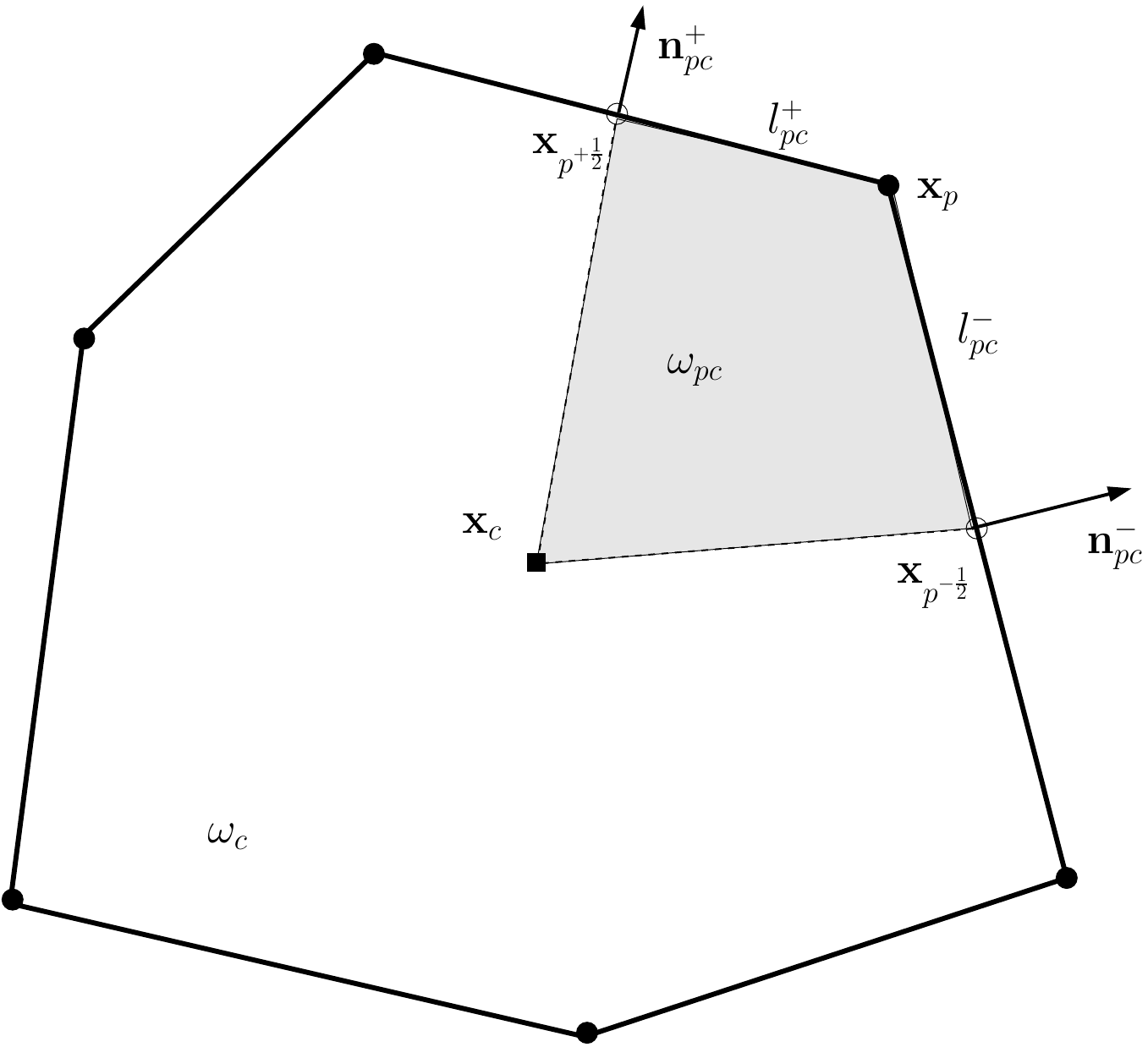}
      \caption{Geometry of polygonal cell $\omega_c$.}
      \label{fig:polygrida-mr}
    \end{subfigure}
    \begin{subfigure}{0.5\textwidth}
      \centering
      \includegraphics[width=\textwidth]{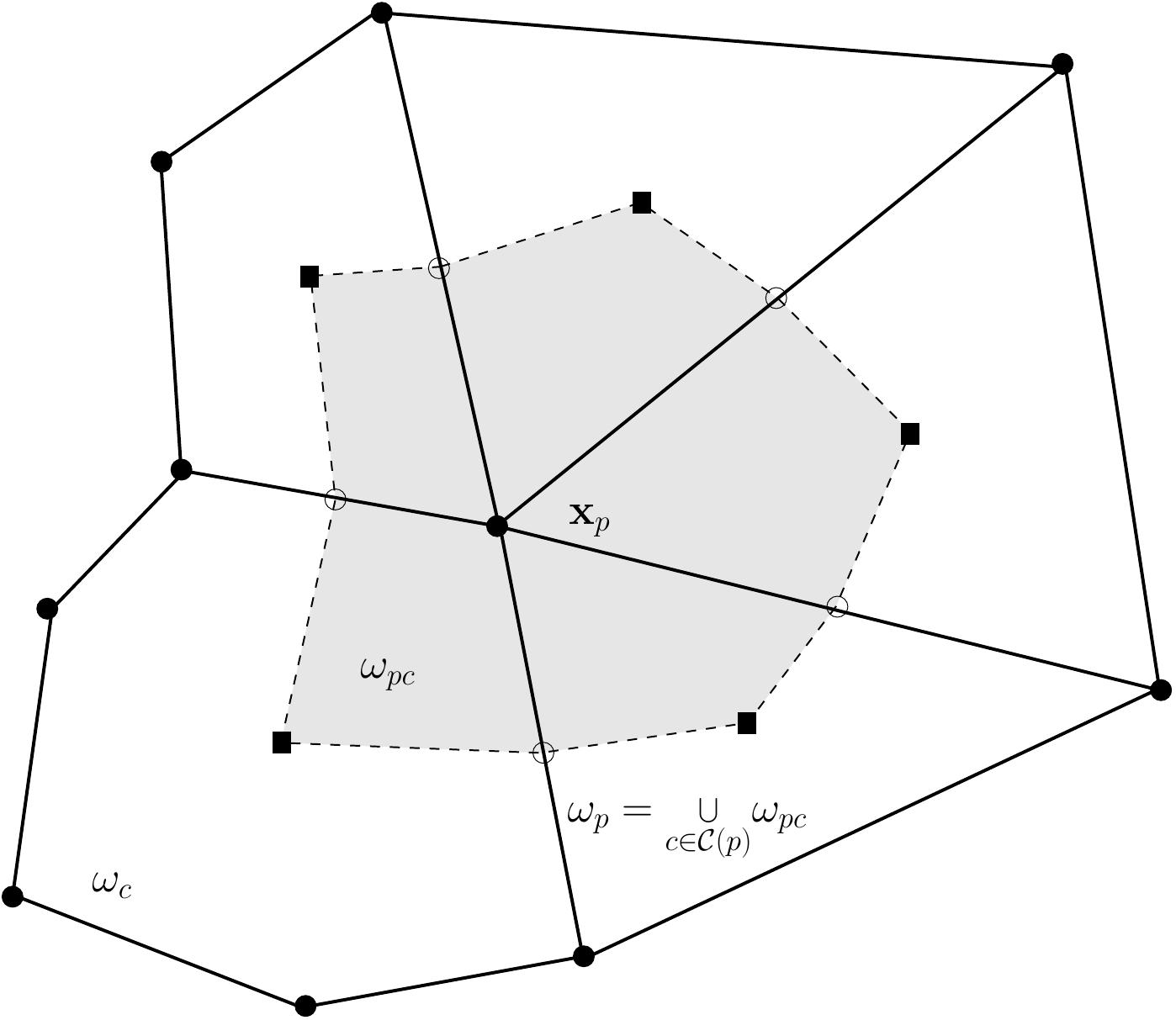}
      \caption{Dual cell $\omega_p$.}
      \label{fig:polygridb-mr}
       \end{subfigure}
    \caption{Geometry of a polygonal grid: primal and dual cells.}
    \label{fig:polygrid-mr}
  \end{center}
\end{figure}
We start with some geometrical definitions.
Let us   consider a tesselation of the computational given by non overlapping conformal polygonal cells denoted $\omega_c$: $\mathcal{D}=\cup_c \omega_c$. Each cell is  defined by the set of its vertices $\mathcal{P}(c)$.
The generic vertex also named point is denoted using the label $p$ and its vector position is $\mathbf{x}_p$. Vertices are denoted in a counterclockwise fashion, and $p^{+}$ and $p^{-}$  denote the next, and previous  points in this order. We also introduce the  midpoints of $[p^{-},p]$ and $[p,p^{+}]$: $p^{-\frac{1}{2}}$ and $p^{+\frac{1}{2}}$. The quadrangle obtained joining successively the cell centroid to $p^{-\frac{1}{2}}$, $p$, $p^{+\frac{1}{2}}$ and the centroid defines the sub-cell $\omega_{pc}$, refer to Fig.~\ref{fig:polygrida-mr}. The set of sub-cells of a given cell constitutes a partition of this cell, that is $\omega_c=\cup_{p \in \mathcal{P}(c)} \omega_{pc}$. Gathering the sub-cells surrounding the generic vertex $p$ we define the dual cell $\omega_p$. We denote by $\mathcal{C}(p)$ the set of all the cells containing $p$ as a node, and contributing to the assembly of the dual cell, {\it cf.} Figure~\ref{fig:polygridb-mr}). Let also $N_p$ denote the cardinality of  $\mathcal{C}(p)$:
$$
N_p := \sum_{c\in \mathcal{C}(p)} 1\,.
$$
The length of the half edges emanating from $p$ are denoted by $l_{pc}^+$ and $l_{pc}^-$, and the corresponding unit normals by $\bbn_{pc}^+$ and $\bbn_{pc}^-$, respectively. Finally,
we introduce the scaled corner normal
\begin{equation}\label{eq:corner_normal0}
l_{pc}\bbn_{pc} :=  l_{pc}^+\bbn_{pc}^+ + l_{pc}^-\bbn_{pc}^-.
\end{equation}
Finally, we denote by  $\mathcal{F}(c)$
 the set of faces $f$ of cell $c$. Each face $f \in \mathcal{F}(c)$ might be split in two sub-faces obtained  by connecting the midpoint with the vertices. For a point $p \in \mathcal{P}(c)$ we consider the set $\mathcal{SF}(pc)$ of sub-faces attached to $p$ and  $c$, that is the set of sub-faces of cell $c$ impinging at point $p$. In the present two-dimensional case, the sub-faces are nothing but the ``half-faces'' introduced previously and thus $\mathcal{SF}(pc)=[p^{-\frac{1}{2}},p] \cup [p,p^{+\frac{1}{2}}]$. 
For $f \in \mathcal{SF}(pc)$ we denote by $l_{pcf}$ the measure of the face and $\mathbf{n}_{pcf}$ its unit outward normal. 
Clearly we have
 \begin{equation}
  \label{eq:partition:mr}
  \mathcal{F}(c)=\bigcup_{p \in \mathcal{P}(c)} \mathcal{SF}(pc).
\end{equation}
and
\begin{equation}
  \label{eq:corner_normal}
  l_{pc}\bbn_{pc} = \sum\limits_{f\in \mathcal{SF}(pc)} l_{pcf}\mathbf{n}_{pcf},
\end{equation}
which is a more general form of  \eqref{eq:corner_normal0}.

We now integrate \eqref{hyper} on $\omega_c$, apply Gauss'  theorem, 
and evaluate the resulting face integrals with a second order quadrature formula 
involving the face nodes (trapezium rule in 2 dimensions). This leads to
\begin{equation}
\label{eq:point-fv1}
|\omega_c| \dfrac{d\overline{\bbu}_c}{dt}
+ \sum\limits_{p\in\mathcal{P}(c)}\sum\limits_{f\in\mathcal{SF}(pc)}
l_{pcf}\mathbf{f}_{pcf} =0,
\end{equation}
where $\mathbf{f}_{pcf}$ is a consistent numerical flux in the direction $\bbn_{pcf}$, in other words:
$$
l_{pcf}\mathbf{f}_{pcf}\approx\int_{f}\bbf \, \bbn \,\mathrm{d}s\,,\quad\forall f\in\mathcal{SF}(pc).
$$
Following \cite{grd25} we formally replace the  second sum in
\eqref{eq:point-fv1} by a single point flux
\begin{equation}
\label{eq:point-fv2}
l_{pc}\hbbf_{\bbn_{pc}} \equiv  \sum\limits_{f\in\mathcal{SF}(pc)}
l_{pcf}\mathbf{f}_{pcf}
\end{equation}
representing the numerical flux  along $\bbn_{pc}$. 
In the absence of any polynomial  reconstruction, the above flux  function depends  in general on  all the average states $\{\overline{u}_q\}_{q\in\mathcal{C}_p}$.  This requires the definition of a more general notion of consistency.

\begin{definition}
\label{sec:rd:MD:consistency-mr}
 A multidimensional flux 
$$\hbbf_\bbn:=\hbbf_\bbn(\bbu_1, \ldots , \bbu_N)$$
is consistent if, when $\bbu_1= \bbu_2= \ldots = \bbu_N=\bbu$ then
$$\hbbf_\bbn(\bbu, \ldots , \bbu)=\bbf(\bbu) \bbn.$$
\end{definition}

The finite volume prototype can now be cast as 
\begin{equation}\label{eq:point-fv3}
|\omega_c| \dfrac{d\overline{\bbu}_c}{dt}
+ \sum\limits_{p\in\mathcal{P}(c)}l_{pc} \hbbf_{\bbn_{pc}} =0,
\end{equation}
for which the conservation condition is now
\begin{equation}\label{eq:point-fv4}
\sum\limits_{c\in\mathcal{C}(p)} l_{pc}\hbbf_{\bbn_{pc}} =0.
\end{equation}
Note that the sum in \eqref{eq:point-fv3} is over $p\in \PP(c)$ while the sum in \eqref{eq:point-fv4} is over $c\in \CC(p)$.
%This relation is a particular case of the equilibrium relation \eqref{eq:equilibre flux} of section \eqref{sec:rd:flux}.

\medskip

We can now proceed in several ways. A possible approach is to follow \cite{Gallice2022,PHRaph2}  and define the sub-face fluxes $\mathbf{f}_{pcf}$, with a free parameter that allows imposing the nodal conservation constraint \eqref{eq:point-fv4}. This approach is related to numerical methods for Lagrange hydrodynamics and is discussed in more details in Section~\ref{sec:Lagrangian}.

A second approach is to construct some generalization of the dissipative form \eqref{eq:1dflux-dissipative}. To this end, following \cite{grd25,bcrt25} we set
\begin{equation}
\label{eq:mdflux-dissipative1}
\hbbf_{\bbn_{pc}} = \dfrac{1}{N_p}\sum\limits_{c\in\mathcal{C}(p)}\bbf(\overline{\bbu}_c ) \bbn_{pc} + \boldsymbol{\mathcal{D}}_{pc},
\end{equation}
where $\boldsymbol{\mathcal{D}}_{pc}$ denotes dissipation operators such that
\begin{equation}
\label{eq:mdflux-dissipative2}
\sum\limits_{c\in\mathcal{C}(p)}\boldsymbol{\mathcal{D}}_{pc} =0\,.
\end{equation}
Several strategies can be used to define these operators, using for example
the geometry of the dual cell $\omega_p$.  Examples can be found in  \cite{grd25,bcrt25}, 
and in  Section~\ref{sec:motivating:fem} below.

\begin{figure}
  \begin{center}
      \includegraphics[width=0.4\textwidth]{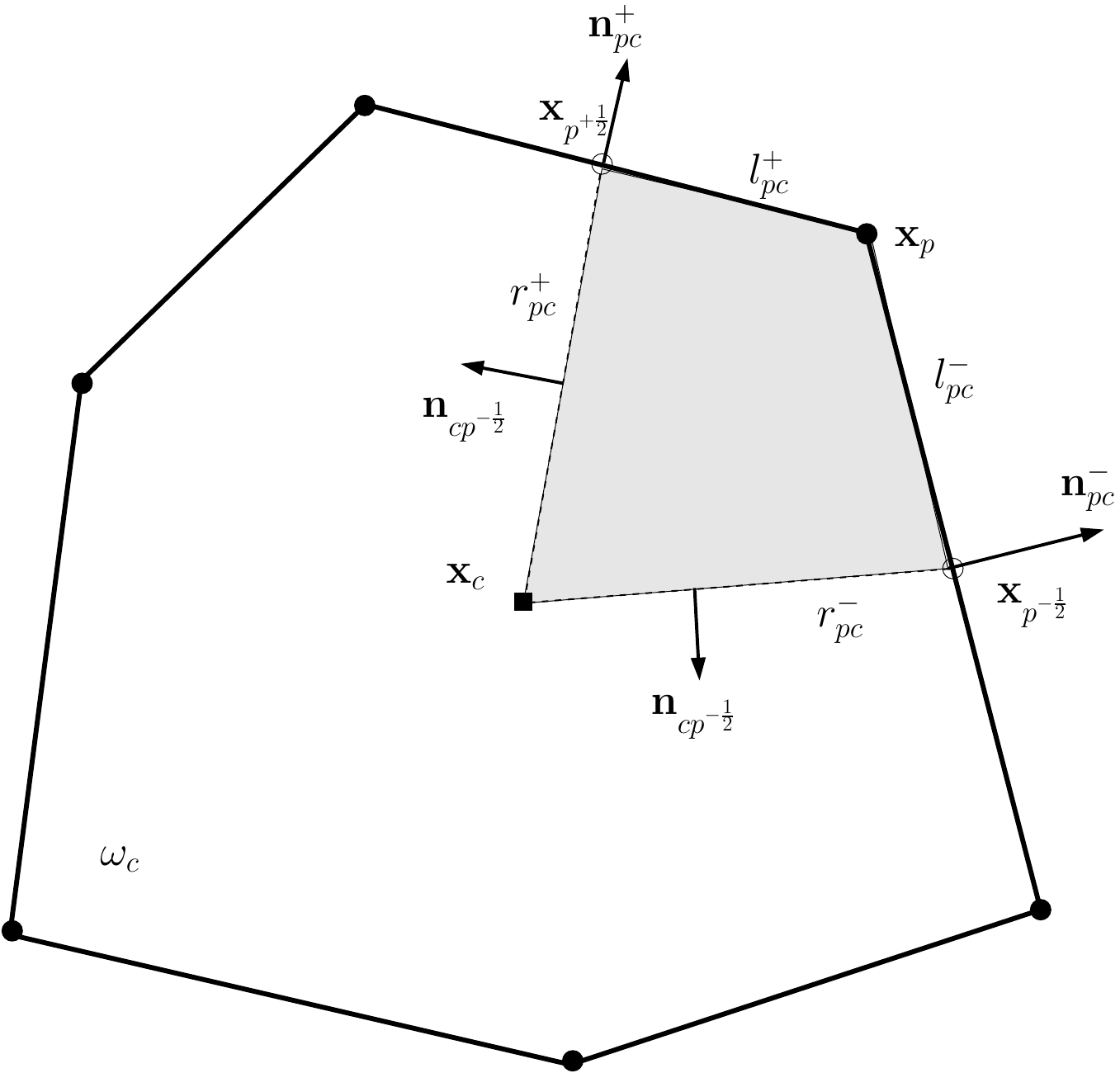}
    \caption{Sub-cell and reversed normals.}
    \label{fig:sub-cell-normals}
  \end{center}
\end{figure}

A third strategy to obtain multidimensional fluxes is the following. First one formally sets
\begin{equation}\label{eq:point-fv5}
l_{pc}\hbbf_{\bbn_{pc}} = l_{pc} \bbf(\overline{\bbu}_c) \bbn_{pc}
+ \Phi_c^{p} \;.
\end{equation}
The conservation condition \eqref{eq:point-fv4} is now equivalent to
\begin{equation}\label{eq:point-fv6}
\sum\limits_{c\in\mathcal{C}(p)}\Phi_c^{p}
=  \sum\limits_{c\in\mathcal{C}(p)} \bbf(\overline{\bbu}_c) ( - l_{pc} \bbn_{pc} ) := \Phi^{\omega_p}\;.
\end{equation}
We note that the point normal can be also decomposed as (cf. also equation \eqref{eq:corner_normal})
$$
l_{pc}\bbn_{pc} = 
l_{pc}^+\bbn_{pc}^++
l_{pc}^-\bbn_{pc}^-
=  - r_{pc}^+\bbn_{cp^{+\frac{1}{2}}}
- r_{pc}^-\bbn_{cp^{-\frac{1}{2}}},
$$
where $r_{pc}^\pm$ are the lengths of the cell radii  joining $\bbx_c$
with the mid-points  $\bbx_{p^{\pm\frac{1}{2}}}$, and $\bbn_{cp^{\pm\frac{1}{2}}}$ the normals relative to these segments (see Fig.~\ref{fig:sub-cell-normals}).
Using this notation we can now see that, if the $\overline{\bbu}_c$ values are interpreted as point values, we have the estimate (within first order quadrature)
$$
\begin{aligned}
\Phi^{p} 
= & \sum_{c\in\mathcal{C}(p)}  \left( 
r_{pc}^+ \bbf(\overline{\bbu}_c)  \bbn_{cp^{+\frac{1}{2}}}
+ r_{pc}^- \bbf(\overline{\bbu}_c) \bbn_{cp^{-\frac{1}{2}}}
\right)=\int_{\partial \omega_p}\bbf^h  \bbn \;d\gamma,
%\\= &\sum_{c\in\mathcal{C}(p)}
%\left( \int_{\bbx_c}^{\bbx_{p^{+\frac{1}{2}}}} \bbf\bullet\bbn\,%\mathrm{d}s
%+ \int_{\bbx_c}^{\bbx_{p^{-\frac{1}{2}}}} \bbf\bullet\bbn\,\mathrm{d}s 
%+\mathcal{O}(h^2)
%\right)
%\approx  \int\limits_{\partial\omega_p} \bbf(\bbu)\bullet \bbn\,\mathrm{d}s
\end{aligned}
$$
where 
$\bbf^h$ is constant in subdomain $[\bbx_c,\bbx_{p^{-1/2}}, \bbx_p,\bbx_{p^{+1/2}},\bbx_c]$ with value $\bbf(\bbu_c)$.

\bigskip

We can now use  techniques issued from upwind residual distribution methods
to define the cell residuals $\Phi_c^{\omega_p}$ verifying the conservation condition
\eqref{eq:point-fv6}. Multidimensional upwind fluxes have been   proposed  in this way in \cite{grd25}.  Examples of multidimensional upwind residual distribution are recalled later.

Note that if the multidimensional flux is defined using the subface Riemann fluxes
as in \cite{Gallice2022,PHRaph2}, or the dissipative form \eqref{eq:mdflux-dissipative1},
the residual formulation still holds.  In particular, starting from \eqref{eq:point-fv3}, and using the ansatz \eqref{eq:point-fv5},
we can immediately write
\begin{equation}\label{eq:point-fvrd}
|\omega_c| \dfrac{d\overline{\bbu}_c}{dt} =
- \sum\limits_{p\in\mathcal{P}(c)}l_{pc} \hbbf_{\bbn_{pc}} 
= - \bbf(\overline{\bbu}_c) \sum\limits_{p\in\mathcal{P}(c)}l_{pc} \bbn_{pc} -  \sum\limits_{p\in\mathcal{P}(c)} \Phi_c^{p}
= -  \sum\limits_{p\in\mathcal{P}(c)}   \Phi_c^{p}
\end{equation}
since $\sum_p l_{pc} \bbn_{pc}=0$. This  shows the full equivalence between finite volumes with multidimensional corner fluxes and residual distribution.  
High order extensions can be obtained by replacing the arguments of the of the multidimensional  numerical flux  with consistent  point-wise approximations
at $\bbx_p$, exactly as discussed in the previous section. We refer to \cite{grd25} for some examples and more details.

%%%%%%%%%%%%%%%%%%%%%%%%%%%%%%%%%%

%%%%%%%%%%%%%%%%%%%%%%%%%%%%%%%%%%%%%
\subsection{Finite element-like examples}\label{sec:motivating:fem}

\subsubsection{Notations}

 \label{notations_fem}In this section we  fix additional notations  needed to describe the 
 functional spaces used.

From now on, we assume that $\Omega$ has a polyhedric boundary\footnote{i.e. we assume that $\Omega$ may be different of $\R^d$ here.}.  This simplification is by no mean essential. We denote by $\mathcal{E}_h$ the set of internal edges/faces of $\mathcal{T}_h$, and by $\mathcal{F}_h$ those contained in $\partial \Omega$.  $\KK$ stands either for an element $K$ or a face/edge $e\in \mathcal{E}_h\cup \mathcal{F}_h$. The boundary faces/edges are denoted by $\Gamma$.  The mesh is assumed to be shape regular, $h_K$ represents the diameter of the element $K$. Similarly, if $e\in \mathcal{E}_h\cup \mathcal{F}_h$, $h_e$ represents its diameter.

% \todo[inline]{Remi: il est bien possible qu'on introduise des notations qui servent pas par la suite, vu qu'on shunte les conditions aux limites....}
 
 Throughout this paper, we follow Ciarlet's definition \cite{ciarlet,ErnGuermond} of a finite element approximation: we have a set of degrees of freedom $\Sigma_K$ of linear forms acting on the set $\P^k$ of polynomials of degree $k$ such that the linear mapping
 $$q\in \P^k\mapsto \big (\sigma_1(q), \ldots, \sigma_{|\Sigma_K|}(q)\big )$$
 is one-to-one. The space $\P^k$ is spanned by the basis function $\{\varphi_{\sigma}\}_{\sigma\in \Sigma_K}$  defined by
 $$\forall \sigma,\, \sigma',  \sigma(\varphi_{\sigma'})=\delta_\sigma^{\sigma'}.$$
  We have in mind either Lagrange interpolations where the degrees of freedom are associated to points in $K$, or other type of polynomials approximation such as B\'ezier polynomials where we will also do the same geometrical identification.
 Considering all the elements covering $\Omega$, the set of degrees of freedom is denoted by $\mathcal{S}$ and a generic degree of freedom  by $\sigma$. We note that for any $K$, 
 $$\forall \bbx\in K, \quad \sum\limits_{\sigma\in K}\varphi_\sigma(\bbx)=1.$$
 For any element $K$, $\#K$ is the number of degrees of freedom in $K$. If $\Gamma$ is a face or a boundary element, $\#\Gamma$ is also the number of degrees of freedom in $\Gamma$.

 The integer $k$ is assumed to be the same for any element.  We define 
$$\mathcal{V}^h=\bigoplus_K\{ \bbv\in L^2(K), \bbv_{|K}\in \P^k\}.$$
The solution will be sought for in a  space $V^h$ that is:
\begin{itemize}
\item Either $V^h=\mathcal{V}^h$. In that case, the elements of $V^h$ can be discontinuous across internal faces/edges of $\mathcal{T}_h$. There is no conformity requirement on the mesh.
\item Or  $V^h=\mathcal{V}_h\cap C^0(\Omega)$ in which case the mesh needs to be conformal.
\end{itemize}

Throughout the text, we need to integrate functions. This is done via quadrature formulas, and the symbol $\oint$ used in volume integrals
%Volume integrals will be noted  by   %via some quadrature formula, and the symbol $\oint$ used
and for volume integrals
%$$\oint_K v(\bbx)\; d\bbx$$
indicates 
%while for  boundary integrals  we have %or boundary integrals
%$$
%\oint_{\partial K} v(\bbx)\; d\gamma.$$
%These particular notations indicate 
that these integrals are not evaluated in a  classical sense, but discretely  via some
%means that these integrals are done via
user defined numerical quadratures.

 If $e\in \mathcal{E}_h$, represents any  \emph{internal} edge, i.e. $e\subset K\cap K^+$ for two elements $K$ and $K^+$,  we define for any function $\psi$ the jump  $[\nabla \psi ]=\nabla \psi_{|K}-\nabla \psi_{| K^+}$. Similarly, $\{\bbv\}=\tfrac{1}{2}\big (\bbv_{|K}+\bbv_{|K^+}\big )$ denotes the average.
 
 %If $\mathbf{x}$ and $\mathbf{y}$ are two vectors of $\R^q$, for $q$ integer, $\langle \bbx,\bby\rangle$ is their scalar product. In some occasions, it can also be denoted as $\bbx\cdot\bby$ or $\bbx^{\mathtt{T}}\bby$.  We also use  $\bbx\cdot\bby$ when $\bbx$ is a matrix and $\bby$ a vector: it is simply the matrix-vector multiplication.

%In sections \ref{flux} and \ref{sec:entropy}, we have to deal with oriented graph. Given two vertices of this graph $\sigma$ and $\sigma'$, we write $\sigma>\sigma'$ to say that $[\sigma,\sigma']$ is a direct edge.
%

%%%%%%%%%%%%%%%% 
\subsubsection{Examples}\label{sec:fem:examples}
%%%%%%%%%%%%%%%% 
 Consider a variational formulation of the steady version of \eqref{hyper}:
$$\text{ find } \bbu^h\in \big ( V^h \big )^p \text{ such that for any } {v}^h\in V^h,  a(\bbu^h,\bbv^h)=0.$$
It is obtained by considering the weak form of \eqref{hyper} where we have specialised the test functions to  $V^h$ and the solution to  $\big (V^h\big )^p$, and where we have added stabilisation terms. 
 Examples are
\begin{itemize}
\item The SUPG \cite{Hughes1} variational formulation,  with $\bbu^h, \bbv^h\in V^h=\mathcal{V}^h\cap C^0(\Omega)$:
\begin{equation}
\label{SUPG:var}
\begin{split}
a(\bbu^h,\bbv^h)&:=-\int_\Omega  \bbf(\bbu^h) \nabla\bbv^h \; d\bbx\\&+\sum\limits_{K\subset \Omega}h_K\int_K\big [ \nabla \bbf(\bbu^h) \nabla \bbv^h\big ] \; \tau_K\; \big [\nabla\bbf(\bbu^h) \nabla \bbu^h\big ] d\bbx\\
&\qquad +\int_{\partial \Omega} \bbv^h \big (\bbf_\bbn(\bbu^h,\bbu_b)-\bbf(\bbu^h)\bbn\big ) \;d\gamma .
\end{split}
\end{equation}
Here $\bbf_\bbn(\bbu^h,\bbu_b)$ is a numerical flux that is consistent with the 'inflow' boundary condition of \eqref{hyper:bc}, and 
 $\tau_K$ is a positive parameter (or of the form $\bbA_0(\bbu^h)\btheta_K \bbA_0(\bbu^h)$ with $\btheta_K$ symmetric to get a dissipative term in the case of  the system case, assuming it is symetrisable).
\item The Galerkin scheme with jump stabilization,  see \cite{burman} for details.  We have
\begin{equation}
\label{burman:var}
\begin{split}
a(\bbu^h,\bbv^h)&:=-\int_\Omega  \bbf(\bbu^h) \nabla \bbv^h\; d\bbx+\sum\limits_{e \subset \Omega}\theta_e h_e^2\int_e  \big [ \nabla \bbu^h\big ]  \big [ \nabla \bbv^h \big ]\; d\gamma \\
&\qquad +\int_{\partial \Omega} \bbv^h \big (\bbf_\bbn(\bbu^h,\bbu_b)-\bbf(\bbu^h) \bbn\big ) \; d\gamma .
\end{split}
\end{equation}
 Here,  $\bbu^h, \bbv^h\in V^h=\mathcal{V}^h\cap C^0(\Omega)$, and $\theta_e$ is a positive parameter.
\item The discontinuous Galerkin formulation: we look for $\bbu^h, \bbv^h\in V^h=\mathcal{V}^h$ such that
\begin{equation}\label{DG:var}
a(\bbu^h,\bbv^h):=\sum\limits_{K\subset \Omega}\bigg ( -\int_K\bbf(\bbu^h) \nabla\bbv^h\; d\bbx+\int_{\partial K}\bbv^h\hat{\bbf}_\bbn(\bbu^{h},\bbu^{h,-}) \;d\gamma \bigg ).
\end{equation}
In \eqref{DG:var}, the boundary integral is a sum of integrals on the faces of $K$, and here for any face of $K$
 $\bbu^{h,-}$ represents the approximation of $\bbu$ on the other side of that face in the case of internal elements, and $\bbu_b$ when that face is on $\partial \Omega$. % In \eqref{DG:var}, $\hbbf$ is a consistent numerical flux. 
 Note that  we should have defined for boundary faces $\bbu^{h,-}=\bbu^h$, and then \eqref{DG:var} is rewritten as 
 \begin{equation}\label{DG:var:2}
 \begin{split}
a(\bbu^h,\bbv^h)&:=\sum\limits_{K\subset \Omega}\bigg ( -\int_K\bbf(\bbu^h)  \nabla\bbv^hd\bbx+\int_{\partial K}\bbv^h\, \hat{\bbf}_\bbn(\bbu^{h},\bbu^{h,-}) \; d\gamma \bigg )\\
&\quad+\sum\limits_{\Gamma\subset\partial\Omega}\int_{\Gamma}\bbv^h\bigg ( \bbf_\bbn(\bbu^h,\bbu_b)-\bbf(\bbu^h) \bbn \bigg )\; d\gamma.
\end{split}
\end{equation}
In \eqref{DG:var}, we have implicitly assumed $\hbbf_\bbn=\bbf_\bbn$ on the boundary edges.
\end{itemize}
%%%%%%%%%%%%%%%% 
%\subsubsection{Examples}
%%%%%%%%%%%%%%%% 
Using the fact that the basis functions that span $V_h$ have a \emph{compact} support, then each scheme can be rewritten in the form: for all degrees of freedom $\sigma$, 
\begin{equation}
\label{eq:residual}
\sum_{K, \sigma\in K}\phi_\sigma^K(\bbu^h)+\sum_{\Gamma, \sigma\in \Gamma, \Gamma\subset \partial \Omega} \Phi_\sigma^\Gamma (\bbu^h)=0
\end{equation} with the following expression for the residuals:
    \begin{itemize}
    \item For the  SUPG scheme \eqref{SUPG:var}, the  residual are defined by
    \begin{equation}\label{SUPG}
    \begin{split}\Phi_\sigma^K(\bbu^h)&=\int_{\partial K}\varphi_\sigma \bbf(\bbu^h) \bbn \; d\gamma -\int_K  \bbf(\bbu^h) \nabla \varphi_\sigma \; d\bbx\\
   & \qquad+h_K
    \int_K \bigg (\nabla_\bbu\bbf(\bbu^h) \nabla \varphi_\sigma \bigg )\btau_K \bigg (\nabla_\bbu\bbf(\bbu^h) \nabla \bbu^h \bigg )\;d\bbx .
    \end{split}\end{equation}
  %  with $\tau_K>0$.
    \item For the Galerkin scheme with jump stabilization \eqref{burman:var}, the residuals are defined  by:  
        \begin{equation}\label{burman}\Phi_\sigma^K(\bbu^h)=\int_{\partial K}\varphi_\sigma \bbf(\bbu^h) \bbn\; d\gamma -\int_K  \bbf(\bbu^h) \nabla \varphi_\sigma \; d\bbx +
    \sum\limits_{e \text{ faces of }K} \frac{\theta_e}{2} h_e^2 \int_{\partial K} [\nabla \bbu^h] [\nabla \varphi_\sigma]\; d\gamma\end{equation}
    with $\theta_e>0$.
    Here, since the mesh is conformal, any internal edge $e$ (or face in 3D) is the intersection of the element $K$ and another element denoted by $K^+$.
    \item For the discontinuous Galerkin scheme,
    \begin{equation}\label{DG}
    \Phi_\sigma^K(\bbu^h)=-\int_K\bbf(\bbu^h) \nabla\varphi_\sigma\;  d\bbx+\int_{\partial K}\varphi_\sigma \hat{\bbf}_\bbn(\bbu^{h},\bbu^{h,-}) \; d\gamma
    \end{equation}
    using the second definition of $\bbu^{h,-}$.
    \item The boundary residuals are 
  \begin{equation}
  \label{boundary}
  \Phi_\sigma^\Gamma(\bbu^h)=  \int_{\Gamma}\varphi_\sigma\big ( \bbf_\bbn(\bbu^h, \bbu_b)-\bbf(\bbu^h) \bbn\big )\; d\gamma
  \end{equation}
  \end{itemize}
All these residuals satisfy the relevant conservation relations, namely 
\begin{equation}
\sum_{\sigma\in K}\Phi_\sigma^K(\bbu^h)=\oint_{\partial K}\hbbf_\bbn(\bbu^h,\bbu^{h,-})\; d\gamma
\label{conservation:K} \end{equation}
and 
\begin{equation}
\label{conservation:Gamma}
\sum_{\sigma\in \Gamma}\Phi_\sigma^\Gamma(\bbu^h)=\oint_{\partial K}\big( \hbbf_\bbn(\bbu^h,\bbu^b)-\bbf(\bbu^h)\bbn\big )\; d\gamma, 
%\sum_{\sigma\in \Gamma}\Phi_\sigma^\Gamma(\bbu^h)=\oint_{\partial K}\big( \hbbf_\bbn(\bbu^h,\bbu^b)-\bbf(\bbu^h)\cdot\bbn\big )\; d\gamma, 
\end{equation}
depending if we are dealing with element residuals or boundary residuals.

\bigskip

For now, we are just rephrasing classical finite element schemes into a purely numerical framework. However, considering  the pure numerical point of view and forgetting the variational framework, we can go further and define schemes that have no clear variational formulation. 
%These are  the limited  Residual Distribution Schemes, 
These are Residual Distribution schemes with bounded splitting coefficients 
see \cite{abgrallLarat,abgralldeSantisSISC,RD-ency2,RD-ency}. The solution is sought in $V^h=\mathcal{V}^h\cap C^0(\Omega)$, i.e. globally continuous, and the residuals have the form
    \begin{equation}
    \label{schema RDS}\Phi_\sigma^K(\bbu^h)=\bbeta_\sigma \int_{\partial K}\bbf(\bbu^h) \bbn\; d\gamma
    \end{equation}
    or,  with $\theta_K\geq 0$
    \begin{equation}
    \label{schema RDS SUPG}\Phi_\sigma^K(\bbu^h)=\bbeta_\sigma \int_{\partial K}\bbf(\bbu^h) \bbn\; d\gamma+\theta_Kh_K
    \int_K \bigg (\nabla_\bbu\bbf(\bbu^h) \nabla \varphi_\sigma \bigg )\btau_K \bigg (\nabla_\bbu\bbf(\bbu^h) \nabla \bbu^h \bigg )\;d\bbx, 
    \end{equation}
or
\begin{equation}
\label{schema RDS jump}\Phi_\sigma^K(\bbu^h)=\bbeta_\sigma \int_{\partial K}\bbf(\bbu^h) \bbn\; d\gamma+
    \theta_e \;h_e^2 \int_{\partial K} [\nabla \varphi_\sigma] [\nabla \bbu^h]\; d\gamma \qquad \theta_e\geq 0,\end{equation}
   where the parameters $\bbeta_\sigma$ are defined to guarantee conservation,
   $$\sum\limits_{\sigma\in K} \bbeta_\sigma=\Id_{p\times p}$$
   and such that \eqref{schema RDS SUPG},  without the streamline term,  and \eqref{schema RDS jump},  without the jump term,  satisfy a discrete maximum principle in the scalar case. The system case is done formally. 
   
   The streamline term and jump term are introduced because one can easily see that spurious modes may exist, but their role is very different compared to \eqref{SUPG} and \eqref{burman} where they are introduced to stabilize the Galerkin scheme: if formally the maximum principle is violated, experimentally the violation is extremely small if existent at all. See \cite{energie,abgrallLarat,ABGRALL20091314} for more details. 
   A similar construction can be done starting from a discontinuous Galerkin scheme, see \cite{abgrall:shu,abgrall:dgrds}.

   The non-linear stability is provided by the coefficient $\bbeta_\sigma$ which is a non-linear function of $\bbu^h$.  Possible values of $\bbeta_\sigma$ are described in remark \ref{beta} below.
   \begin{rem}
   \label{beta} In the scalar case, 
   the coefficients $\bbeta_\sigma$ introduced in the relations \eqref{schema RDS SUPG} and \eqref{schema RDS jump} are defined by:
\begin{equation}\label{eqbeta}
\bbeta_\sigma=\dfrac{\max(0,\frac{\Phi^L_\sigma}{\Phi})}
{
\sum\limits_{\sigma'\in K} \max(0,\frac{\Phi^L_{\sigma'}}{\Phi})},
\end{equation}
where the residuals $\{\Phi_\sigma^L\}$ define a scheme that are invariant domain preserving, ans such that
$$\sum_{\sigma\in K}\Phi_\sigma^L=\Phi.$$
A typical example of such a scheme is the scalar N scheme defined in section \eqref{sec:fem:examples:upwind} below. Another one is defined as follow:
$$\Phi_\sigma^L=\frac{\Phi}{N_\sigma}+\alpha^K\big ( \bbu_\sigma-\bar \bbu^K),$$ where $\bar \bbu^K$ is the arithmetic average of the $N_K$ degrees of freedom in $K$ and $\alpha>0$ larger than some evaluation of the maximum spectral radius of the Jacobian of the flux in the element $K$, see \cite{energie,abgrallLarat}. The term
$$\alpha^K\big ( \bbu_\sigma-\bar \bbu^K)$$ is sometimes called "graph viscosity".

 These coefficients are always defined and   guarantee a local maximum principle for \eqref{schema RDS SUPG} and \eqref{schema RDS jump}: this is again a consequence of the conservation properties,  see e.g. \cite{abgrallLarat}. Note this is true for any order of interpolation.
 Extensions to the system case can be found in \cite{abgrallLarat}
 \end{rem}
\begin{rem}[Petrov Galerkin formulation of the limited RD scheme]
\label{remark:petrovGalerkin}
Again we forget the boundary conditions to simplify. If one multiply
\eqref{eq:residual} by $v(\sigma)$ with $v\in \VV^h$, and sum, we get
$$\sum_K\sum_{\sigma\in K} v(\sigma)\Phi_\sigma^K=0$$ that, using 
\eqref{schema RDS}, we can rewrite as
$$\sum_K\int_K \II(v)\text{ div }\bbf(\bbu^h)\; d\bbx=0$$
with $\II(v)$ defined as:
$$\II_{\vert K}(v)=\sum_{\sigma\in K}\beta_\sigma v(\sigma).$$
This function is piecewise constant in each $K$, so that one can interpret \eqref{schema RDS} as a Petrov Galerkin formulation of the scheme. The main difference with a classical setting is that the test function will depend on the solution via the coefficients $\beta$. This interpretation plays a role for developing unsteady versions of these schemes, see section \ref{sec:unsteady}
\end{rem}

For the sake of completeness, one should also mention the  finite element FCT schemes developed by Kuzmin and his collaborators, see \cite{KuzminHennes} or \cite{KuzminFCT0,KuzminFCT1} for initial contributions. We also  mention the work by Guermond and co-workers, see \cite{IDP0,guermond2018second,guermond2019invariant} for example. Each of these schemes can also  be put in the form \eqref{eq:residual}.
   \subsubsection{Multidimensional upwind examples}\label{sec:fem:examples:upwind}

Multidimensional upwind schemes  in the residual setting follow ideas put forward in the 80s and 90s by P.L. Roe, and the work done at that time mostly by groups   of Roe at U. Michigan, and of H. Deconinck  at the von Karman Institute \cite{Roe:87,Roe:90,Roe92,Deconinck1993} (see also \cite{RD-ency} for a review).

For scalar advection 
$$
\dfrac{\partial u}{\partial t} + \bba \cdot \nabla u=0,
$$
multidimensional upwinding in a cell $K$ boils down to the simple idea that
$$
k_\sigma^K:=\bba\cdot \bbn_{\sigma}^K \le 0 \Rightarrow \Phi_{\sigma}^K = 0, 
$$
where $\bbn_{\sigma}^K$ denote  the inward pointing vector 
normal to the edge facing node $\sigma$, scaled by the length of the edge, see Fig.~\ref{fig:inflo-outflow}. The quantities $k_\sigma^K$ are called "inflow parameters".  The fact that $K$ is a triangle (or a tetrahedron in 3D) is essential here because the velocity can be written as a linear combination of two of the vectors $\bbn_\sigma^K$ and they sum up to zero.
Using this, as shown in figure \ref{fig:inflo-outflow}, there are two types of triangles: the one for which only one of the $\bba\cdot\bbn_\sigma^K$ is negative and the other  inflow parameters are positive, and those for which only one inflow parameter is positive and the two others are negative. This does not occur for quadrangles nor hexahedrons, while it is true also for tetrahedrons.
\begin{figure}
\centering
\includegraphics[width=0.6\textwidth]{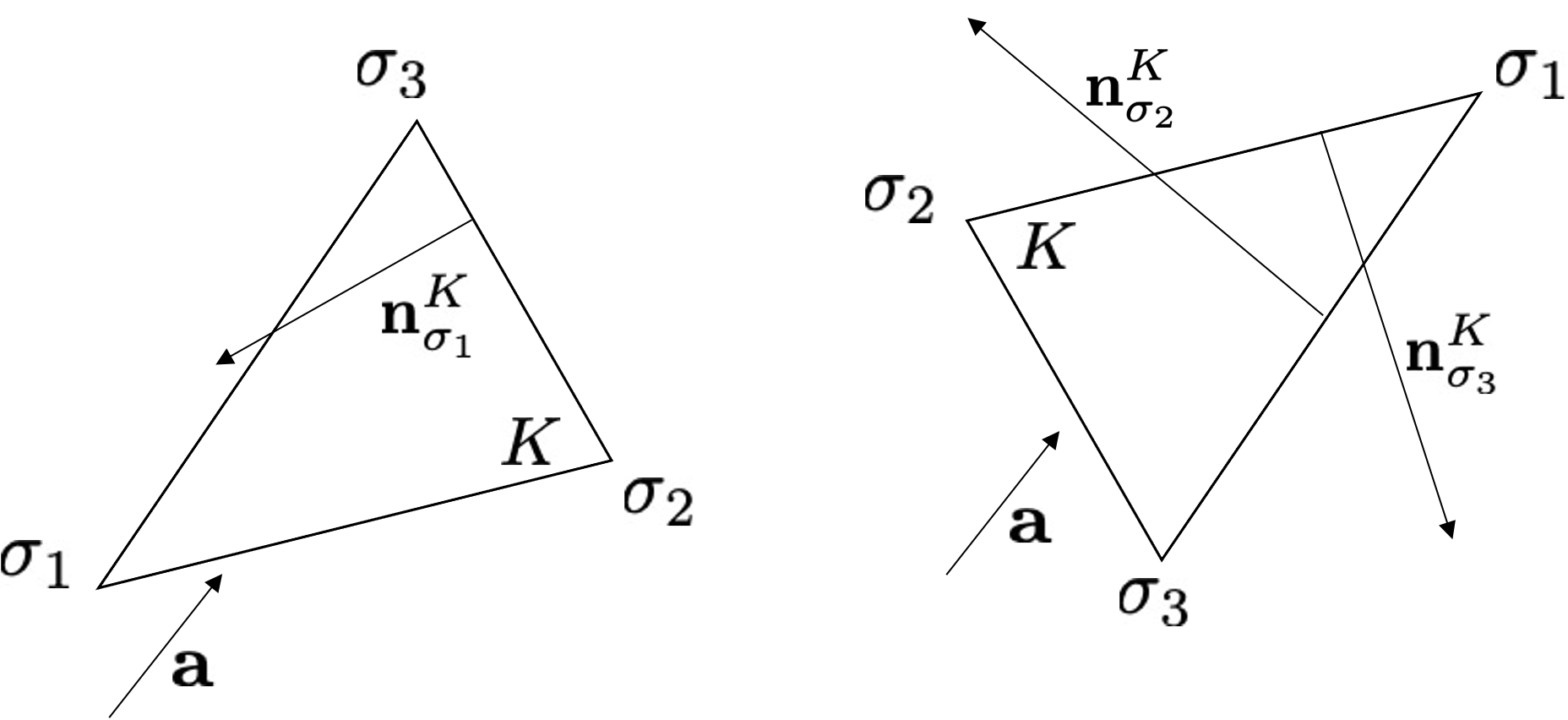}
\caption{Multidimensional upwinding.}
\label{fig:inflo-outflow}
\end{figure}

  Several examples exist of schemes following this philosophy. An example is 
$$
\Phi_{\sigma}^K = \dfrac{  \max(0, \bba\cdot\bbn_{\sigma}^K) )}{\sum\limits_{\mu\in T}    \max(0, \bba\cdot \bbn_{\mu}^K) )}\Phi\text{ with }\Phi=\int_{\partial K}\bba\cdot\bbn u\; d\gamma=\frac{1}{d}\sum_{\sigma\in K}k_\sigma^K u_\sigma.  
$$
Another example is the well-known N scheme which is defined by \cite{Roe:87,Roe:90}
\begin{subequations}\label{eq:scalar:N}
\begin{equation}\label{eq:scalar:N1}
\Phi_{\sigma}^K =  \max(0, \bba\cdot \bbn_{\sigma}^K) ) ( u_{\sigma} - u_{\text{in}} ),
\end{equation}
where, for linear advection, $u_{\text{in}}$ is the inflow state over cell $K$ defined as
\begin{equation}\label{eq:scala:N:2}
u_{\text{in}}=N\sum_{\sigma\in K} \min(0, \bba\cdot \bbn_{\sigma}^K)u_\sigma
\quad \text{ with }\quad N^{-1}=\sum_{\sigma\in K} \min(0, \bba\cdot \bbn_{\sigma}^K).
\end{equation}
\end{subequations}
The success of the above scheme is related to many important structural properties: monotonicity preservation; energy stability; very low dissipation.
These properties rely on the use of the upwinding, which is strongly based on the non-conservative form of the hyperbolic equation.
The conservation condition for the N scheme is guaranteed by the fact that 
in $d$ space dimensions
$$
\int_K\bba\cdot \nabla u \,\mathrm{d}\bbx  =  
\int_{\partial K}(\bba \, u) \cdot \bbn  \,\mathrm{d}s =
\sum_{\sigma\in K} \dfrac{1}{d} \bba\cdot \bbn_{\sigma}^K u_{\sigma}.
$$
% Extensions to non-linear problems have required the use of complex multidimensional generalizations of the linearization \eqref{eq:RoeLinearisation}, see e.g.    \cite{Deconinck1993,Abgrall99}.
% In the multidimensional case, these linearizations are not always available,
% and may introduce spurious numerical artifact \cite{crd02}.
% Other more general variants which allow the validity of 
%  \eqref{conservation:K} 
% have been proposed e.g. in \cite{crd02,rcd05}.
%  The interested reader may refer to \cite{RD-ency,HDR-MR} and references therein for more details on this aspect, and for generalizations to systems. Multidimensional upwind finite volume fluxes based on this method are discussed in \cite{grd25}.
 
 A version of the N scheme for systems has been proposed in \cite{vanderWeideDeconinck}.
 For a linear system
 $$\dpar{\bbu}{t}+\bbA \nabla \bbu=0,$$
 it writes
 \begin{subequations}\label{eq:system:N}
 \begin{equation}
 \bm\Phi_\sigma^K= \bbA_\sigma^+(\bbu_\sigma-\bbu_{in}\big )
 \text{ with }\bbA_\sigma=\bbA\bbn_\sigma.
 \end{equation}
 The "inflow" state is defined by conservation, assuming a linear approximation of $\bbu$ in $K$: from 
 \begin{equation}\label{eq:system:N:2}\sum_{\bsigma\in K}{\bm \Phi}_{\bsigma}=\int_K\bbA\nabla \bbu\; f\; d\bbx=\frac{1}{d}\sum_{\bsigma\in K}\bbA_\sigma\bbu_{\bsigma},
 \end{equation}
we get
\begin{equation}\label{eq:system:3}\bbN^{-1} \bbu_{in}=\sum_{\bsigma\in K} \bbA_{\bsigma}^-\bbu_{\bsigma} \text{ with }
\bbN^{-1}=\sum_{\bsigma\in K} \bbA_{\bsigma}^+.
\end{equation}
\end{subequations}
In some cases, for  non-linear systems,  $\bbN^{-1}$ may be locally non-invertible. However, for   symmetrizable systems  
$\bbu_{in}$ is always well defined even in these cases,  see \cite{Abgrall99}.

 Extensions to non-linear problems have required the use of complex multidimensional generalizations of the linearization \eqref{eq:RoeLinearisation}, see e.g.    \cite{Deconinck1993,Abgrall99}.
 In the multidimensional case, these linearizations are not always available,
 and may introduce spurious numerical artifact \cite{crd02}.
To cope with  this   general variants which allow the validity of 
  \eqref{conservation:K}  independently of the linearization of the flux Jacobians 
 have been proposed  in \cite{crd02,rcd05}.
 In this case the scheme reads
 \begin{equation}\label{eq:system:CRDN}
 \bm\Phi_\sigma^K= \bbA_\sigma^+(\bbu_\sigma-\bbu_{c}\big )
 \end{equation}
 and the conservative state $\bbu_{c}$ is computed by imposing conservation:

 \begin{equation}\label{eq:system:CRDN1}
 \sum_{\sigma\in K}\bbA_\sigma^+(\bbu_\sigma-\bbu_{c}\big ) = \bm\Phi
 \Rightarrow  \bbN^{-1}\bbu_{c} = \sum_{\sigma\in K} \bbA_\sigma^+\bbu_\sigma - \bm\Phi
 \end{equation}
For non-linear systems, this formulation gives freedom in the  evaluation of   $\bm\Phi$ (type of quadrature formula, variables being interpolated, etc) while allowing the use of arbitrary averages to linearize the flux Jacobians. 
  The interested reader may refer to last references and \cite{RD-ency,HDR-MR}   for more details. 
  Multidimensional upwind finite volume fluxes based on this method are discussed in \cite{grd25}.
 
%It turns out that for a symmetrisable system, $\bbu_{in}$ can always be defined, even if $\bbN^{-1}$ may not be invertible for , see \cite{Abgrall99} for a proof.

\begin{rem}[About the invariance domain property of the N scheme]
\label{re:nscheme}
It is very easy to show that the scalar N scheme is monotone under a CFL condition if a forward Euler time stepping is used. This comes from the fact that the residual write as:
$$\sum_{\sigma'\in K} c_{\sigma\sigma'}(\bbu_\sigma-\bbu_{\sigma'})$$ with $$c_{\sigma\sigma'}=\max(0,\bba\cdot \bbn_\sigma)N\min(0,\bba\cdot\bbn_{\sigma'})>0.$$

For the system $N$ scheme, any formal result is not known. The only known result is an energy inequality.
\end{rem}
%%%%%%%%%%%%%%%%%%
%\input{mario-unsteady}
\subsection{Time dependent problems: space-time conservation}
\label{sec:unsteady}
The time dependant problems need special care especially if one wants to extend in a simple manner the methods of section \ref{sec:fem:examples} and \ref{sec:fem:examples:upwind}. The examples of \ref{sec:fem:examples} are or can be rewritten in with a Galerkin or Petrov-Galerkin formulation.  To get consistency in time and space, the most natural way to proceed is to see the problem \eqref{eq:hyper} as a steady problem in space and time. Of course we need to enforce a causality principle: the future depends only on the past. This means that we will consider time slabs, $[t_n, t_{n+1}$ and the space time domain $\Omega \times [t_n, t_{n+1}]$ where $\Omega\subset \R^d$ open in $\R^d$ (and possibly equal to $\R^d$). The domain $\Omega$ is covered by polygons or simplex. Then the idea is to test in $\Omega\times [t_n, t_{n+1}]$ using test functions that are tensor product of test functions in time and test functions in space.  By letting $t_{n+1}-t_n\rightarrow 0$ this leads to the methods of lines that, to be easily implemented, need an easy to inverse mass matrix.  This is clearly the case for the discontinuous Galerkin methods because the mass matrix is block diagonal. This is less easy, or at least more demanding, when the mass matrix is only sparse. This is the case for the stabilized finite element methods, for example when the  jump stabilisation \`a la Hansbo-Burman is used. The SUPG method is more complex, because the mass matrix, via the stabilisation parameter $\tau$, may need to be recomputed at each time step, and then inverted again. Concerning the non linear RD methods briefly described at the end of section \ref{sec:fem:examples} this may even look impossible because it is unclear that the "mass" matrix is invertible.

For second order in time and space approximation with triangle element, one can find a solution to this apparent paradox in \cite{Mario2015}: how to invert a non invertible mass matrix. The idea is to start from the Petrov-Galerkin formulation that is sketched in remark \ref{remark:petrovGalerkin}, then to integrate consistently in time and space, and then to modify  the mass matrix via some form of mass lumping.  This amounts to develop an iterative method with only two steps. Note that if time-space consistency is lost, the scheme is first order only, at most.

On the problem $$\dpar{\bbu}{t}+\bba\cdot \nabla \bbu=0,$$ with $\bba$ constant, we do the following:
\begin{itemize}
    \item Petrov-Galerkin formulation:
    \begin{equation*}
    \begin{split}\Phi_\sigma^K(\bbu^n,\bbu^{n+1})=\beta_\sigma(\bbu^n,\bbu^{n+1})&\bigg ( \int_K\dfrac{\bbu^{n+1}-\bbu^n}{\Delta t}\; d\bbx\\&+\int_{\partial K} \frac{ \bba\cdot\bbn\; \bbu^{n+1}+\bba\cdot\bbn\; \bbu^{n}}{2}\; d\bbx\bigg ).\end{split}\end{equation*}
    Here $\bbu$ is approximated by a $\P^1$ polynomial, and the time discretisation is done by Crank-Nicholson. Last $\beta_\sigma$ is evaluated at time $t_{n+1/2}$. A jump stabilisation {\it \`a la } Burman-Hansbo can be added or not. Clearly at this level the method is fully implicit, and requires inverting the matrix whose entries are the $\beta_\sigma$. Note that this is the case, 
    even if the time integral is done with some backward, fully explicit approach.  
    \item For the scheme above we  introduce the functionals %We introduce the functionals
    $$L^2_{\sigma,K}\bbv\mapsto \Phi_\sigma^K(\bbu^n,\bbv)$$
    and
    $$L^1_{\sigma,K}(\bbu^n,\bbv)\mapsto \frac{\vert K\vert}{3}\frac{\bbv_\sigma-\bbu_\sigma^n}{\Delta t}+\beta_\sigma^K(\bbu^n, \bbu^n)\int_{\partial K}\bba\cdot\bbn\; \bbu^n\; d\gamma$$
    These functionals satisfy the conservation relations 
    $$\sum_{\sigma\in K}L^2_{\sigma,K}(\bbv)=\int_K\frac{\bbv-\bbu^n}{\Delta t}+\int_{\partial K} \frac{\bba\cdot \bbn\; \bbu^n+\bba\cdot\bbn\; \bbv}{2}\; d\gamma.$$ 
 %   and
 We now define    the order 1 functionals
    $$\sum_{\sigma\in K}L^1_{\sigma,K}(\bbv)=\int_K\frac{\bbv-\bbu^n}{\Delta t}+\int_{\partial K} \bba\cdot \bbn\; \bbu^n\; d\gamma$$
    in $K\times [t_n,t_{n+1}]$.
    We also introduce  the operators
    $$L_\sigma^2(\bbu,\bbv)=\sum_{K, \sigma\in K}L^2_{\sigma,K}(\bbu^n,\bbv)$$
    and 
    $$L_\sigma^1(\bbu,\bbv)=\sum_{K, \sigma\in K}L^2_{\sigma,K}(\bbu^n,\bbv)$$
    \item Then we set $\bbv^{(0)}=\bbu^n$, and construct $\bbv^{(p)}$ as
    $$L^1(\bbu^n,\bbv^{(p+1)})=L^1(\bbu^{n}, \bbv^{(p)})-L_2(\bbu^n,\bbv^{(p)}).$$
    Written explicitly, this amounts to write:
    \begin{equation*}
    \begin{split}\vert C_\sigma\vert \big ( \bbv_\sigma^{(p+1)}-\bbv_\sigma^{(p)}\big )=\sum_{K, \sigma\in K}&
    \beta_\sigma^K(\bbu^n,\bbv^{(p)})\bigg (\int_K(\bbv^{(p)}-\bbu^n )\; d\bbx\\
    &\qquad +\Delta t\int_{\partial K}\frac{\bba\cdot \bbn\; \bbv^{(p)}+\bba\cdot \bbn\; \bbu^n}{2}\; d\gamma\bigg )
    \end{split}
    \end{equation*}
    which is solvable explicitly.
    \item In \cite{Mario} is shown that only 2 iterations of this algorithm to obtain a second approximation of the solution. This is easy to extend to the non linear and scalar case, see the above mentionned reference
\end{itemize}
The generalization of this method  and its   analysis are discussed in \cite{AbgrallDec,Sixtine1,Sixtine2}.
 The resulting scheme   has always    the form of a series of explicit Euler  steps, similarly to e.g. classical explicit Runge-Kutta discretizations.
 Other extensions and applications to time dependent problems   can be found in \cite{ABGRALL2019274,abgrall2021relaxation,ArR:14,rf14,micalizzi2022new,Mario2015,ArR:17,arpaia2020well}. %Other  variants  up to fourth order of accuracy can be found in 
%\cite{rf14}.
%%%%%%%%%%%%%%%%%%

%===================
\section{General theory}
\label{sec:gt}

All the previous examples have the following structure: starting from a mesh made of a collection of subdomains $\{K_l\}_{l=1, \ldots, n_e^{primal}}$ on which the solution of \eqref{hyper} is represented with degrees of freedom $\sigma$, a dual mesh is constructed. Let us denote the elements of this dual mesh by $\{C_k\}_{k=1, \ldots, n_e^{dual}}$.% Any of these element contains a family of degrees of freedom.

The schemes we consider are defined via residuals, they can be understood as differences of flux values. Here, the notion of primal and dual mesh is not as it is usually done, it is mostly used as a terminology. The primal mesh is the mesh on which the residuals are naturally defined, and the dual mesh is the mesh on which we express the evolution  in time of  each  unknown as a sum of  fluctuations from surrounding elements, see below.
%the scheme is defined using the summation \eqref{forme schemas}.
%\todo[inline]{Je ne suis pas super fan de citer des equations qui n'on pas encore paru. J'ai propos\'e une phrase sans citer l'equations (42) qui arrive plus bas, mais je ne serai pas vex\'e si on remet comme avant (M)}

In the example of section \ref{sec:motivating:1D}, the primal mesh has elements $K_j=[x_j,x_{j+1}]$, and the dual mesh is with elements $[x_{j-1/2},x_{j+1/2}]$. In the example of section \ref{sec:FV:1}, the dual mesh is made of simplex, while the primal mesh is made of elements as depicted on figure  \ref{fig:fv}.  In the example of section \ref{sec:motivating:fem}, the primal and dual mesh are the same, made of simplex. In the example of section \ref{sec:motivating:FV:II}, the configuration is very similar to the one of section \ref{sec:FV:1}.%we proceed  as in section \ref{sec:FV:1} %and the dual cell can be a simplex.. 
%\todo[inline]{Pour le flux multiD  avec des triangles, tout d\'epend de la definitions de la maille ``duale''  on ou fait la mise \`a jour, mais  dans le cas plus simple on est essentiellement dans la m\^eme configuration de la 3.2 :  le r\'esidu est   rattach\'e un un truangle (pas specialement celui du maillage, mais bon), la maille dual c'est un polygone.  Par contre  pour FEM, hormis DG,  je suis pas 100\% que que dual et primal coincident ?}

%The semi-discrete evolution of any degree of freedom for
A general prototype can be written for all these cases. In particular, when solving
\begin{subequations}\label{eq:hyper}
\begin{equation}
\label{eq:hyper:1}
\dpar{\bbu}{t}+\text{ div }\bbf(\bbu)=0, \quad \bbx\in \R^d
\end{equation}
with
\begin{equation}
\label{eq:hyper:2}
\bbu(\bbx,0)=\bbu_0(\bbx).
\end{equation}
\end{subequations}
the scheme is described by a relation like:
\begin{subequations}\label{forme schemas}
\begin{equation}\label{rds_re}
\vert C_\sigma\vert \dfrac{\mathrm{d}\bbu_\sigma}{\mathrm{d}t}+\sum_{K, \sigma\in K}\Phi_\sigma^{K}(\bbu^h)
=0\end{equation}
and the residuals satisfy  conservation relations
\begin{equation}
\label{rds_re:conservation}
\Phi^K:=\sum_{\sigma\in K}\Phi_\sigma^K=\oint_{\partial K}\hbbf_\bbn(\bbu^h_{\vert K},\bbu^h_{\vert K'})\; d\gamma
\end{equation}
\end{subequations}
In \eqref{rds_re:conservation} $K'$ represent the elements of the primal mesh that  share a face of the dual mesh, see figure \ref{fig:general}. Last, $\vert C_\sigma\vert$ is an area. 
The precise definition of all these entities (dual mesh, residual, area)  depends on
%All these terms (dual mesh, residual, area) are defined from 
the scheme itself. %; we are interested in the general structure. 
Here,  we only need to use the general structure of the discretization which is  given by  \eqref{rds_re:conservation}-\eqref{forme schemas} for all methods.
We have not taken into account boundary term, assuming that $\Omega=\R^d$, but they could be taken into account in a similar way. The quantity $\Phi^K$ is the total residual.
\begin{figure}[h]
\begin{center}
\begin{subfigure}{0.5\textwidth}
{\pgfkeys{/pgf/fpu/.try=false}%
 \ifdim\dimen1=0pt\ifdim\dimen3=0pt\dimen1=1000sp\dimen3\dimen1
  \else\dimen1\dimen3\fi\else\ifdim\dimen3=0pt\dimen3\dimen1\fi\fi
\begin{tikzpicture}[x=+\dimen1, y=+\dimen3]
{\ifx\XFigu\undefined\catcode`\@11
\def\temp{\alloc@1\dimen\dimendef\insc@unt}\temp\XFigu\catcode`\@12\fi}
\XFigu3946sp
% Uncomment to scale line thicknesses with the same
% factor as width of the drawing.
%\pgfextractx\XFigu{\pgfqpointxy{1}{1}}
\ifdim\XFigu<0pt\XFigu-\XFigu\fi
\catcode`\@11
\pgfutil@ifundefined{pgf@pattern@name@xfigp0}{
% lines 30 degree left
\pgfdeclarepatternformonly{xfigp0}
{\pgfqpoint{-1bp}{-1bp}}{\pgfqpoint{9bp}{5bp}}{\pgfqpoint{8bp}{4bp}}
{	\pgfsetdash{}{0pt}\pgfsetlinewidth{0.45bp}
	\pgfpathqmoveto{-1bp}{4.5bp}\pgfpathqlineto{9bp}{-0.5bp}
	\pgfusepathqstroke
}
}{}
\catcode`\@12
\definecolor{cyan2}{rgb}{0,0.69,0.69}
\clip(428,-11572) rectangle (12322,-813);
\tikzset{inner sep=+0pt, outer sep=+0pt}
%  0 
\pgfsetlinewidth{+7.5\XFigu}
\pgfsetstrokecolor{black}
\pgfsetfillpattern{xfigp0}{black}
\draw[pattern,preaction={fill=cyan2}] (5925,-6000)--(5175,-2175)--(9075,-825)--(12000,-3225)--(11100,-6525)--(5925,-6000);
\pgfsetfillcolor{red}
\filldraw  (5925,-6000) circle [radius=+106];
\filldraw  (5625,-4575) circle [radius=+106];
\filldraw  (4950,-7050) circle [radius=+106];
\draw (11100,-6525)--(11700,-9750)--(5850,-10125)--(3525,-8550)--(5925,-6000);
\draw (5925,-6000)--(1200,-4275)--(3525,-8550);
\draw (1200,-4275)--(1425,-2175)--(5175,-2175);
\pgfsetlinewidth{+15\XFigu}
\pgfsetdash{{+90\XFigu}{+90\XFigu}}{++0pt}
\draw (4350,-3975)--(675,-2325);
\draw (3375,-6675)--(450,-8100);
\draw (5775,-9150)--(5175,-11550);
\draw (9900,-7350)--(12300,-7950);
\draw (8250,-2850)--(10650,-1350);
\draw (4350,-3975)--(4575,-1500);
\draw (8250,-2850)--(5925,-1425);
\draw (4350,-3975)--(8250,-2850)--(9900,-7350)--(5775,-9075)--(3375,-6675)--(4350,-3975);
\pgfsetfillcolor{black}
\pgftext[base,left,at=\pgfqpointxy{10200}{-3675}] {\fontsize{12}{14.4}\usefont{T1}{ptm}{m}{n}$\beta$}
\pgftext[base,left,at=\pgfqpointxy{5550}{-6600}] {\fontsize{12}{14.4}\usefont{T1}{ptm}{m}{n}$\sigma_1$}
\pgftext[base,left,at=\pgfqpointxy{8175}{-6750}] {\fontsize{12}{14.4}\usefont{T1}{ptm}{m}{n}$\sigma_2$}
\pgftext[base,left,at=\pgfqpointxy{3225}{-4500}] {\fontsize{12}{14.4}\usefont{T1}{ptm}{m}{n}$\alpha$}
\pgfsetlinewidth{+7.5\XFigu}
\pgfsetdash{}{+0pt}
\pgfsetfillcolor{red}
\filldraw  (8100,-6225) circle [radius=+106];
\pgfsetfillcolor{black}
\pgftext[base,left,at=\pgfqpointxy{5850}{-7425}] {\fontsize{12}{14.4}\usefont{T1}{ptm}{m}{n}$\beta'$}
\end{tikzpicture}}%
\end{subfigure}
\end{center}
\caption{\label{fig:general} A refaire Dotted line: primal mesh with element $\alpha$, Plain lines: dual mesh with elements $\beta$. The element $\beta'$ shares with $\beta$ the face containing $\sigma_1$ and $\sigma_2$.}
\end{figure}
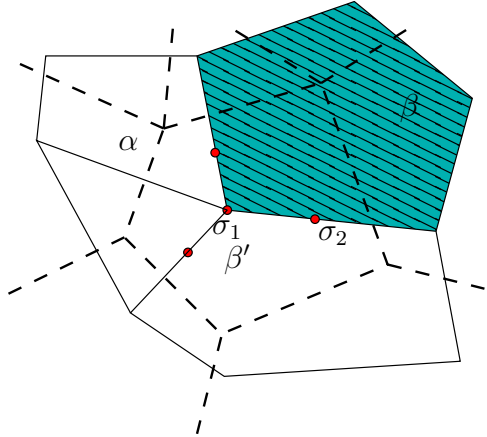

The purpose of this section is to provide a Lax-Wendroff like theorem, and to show that any scheme of the form \eqref{rds_re}-\eqref{rds_re:conservation} can be rewritten as a finite volume scheme.
\subsection{Lax Wendroff like theorem}
Here we state a result that explains why the conservation relations \eqref{rds_re:conservation} will guaranty that in the limit of mesh refinement, and provided the scheme is stable, that the limit solution is a weak one. This will be revisited in section \ref{sec:rd:flux}  by using different arguments showing that RD scheme can indeed be seen as finite volume schemes.

The notations are those defined in section \ref{notations_fem}.
Using the conservation relation \eqref{rds_re:conservation}, we obtain:
\begin{lemma} for any $\bbv^h\in V^h$,
$$\bbv_h=\sum_{\sigma} \bbv_\sigma \varphi_\sigma,$$ the following relation:
\begin{equation}\label{rds_re:algebre2}
\begin{split}
0&=  -\oint_\Omega \nabla \bbv_h  \bbf(\bbu^h) \; d\bbx  +\sum\limits_{e\in \mathcal{E}_h} \int_e[\bbv^h]\hbbf_\bbn(\bbu^h,\bbu^{h,-})\; d\gamma\\
&\qquad +\sum\limits_{K\subset \Omega}\frac{1}{\#K}\bigg ( \sum\limits_{\sigma,\sigma'\in K} (\bbv_\sigma-\bbv_{\sigma'})\bigg ( \Phi_\sigma^K(\bbu^h)-\Phi_\sigma^{K, Gal}(\bbu^h) \bigg ) 
\bigg ),
\end{split}
\end{equation}
where
$$\Phi_\sigma^{K, Gal}(\bbu^h)=-\oint_K\nabla\varphi_\sigma \bbf(\bbu^h) \; d\bbx +\oint_{\partial K} \varphi_\sigma \hbbf_\bbn(\bbu^h,\bbu^{h,-}) \; d\gamma.$$
\end{lemma}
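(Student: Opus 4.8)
The plan is to establish \eqref{rds_re:algebre2} as a purely algebraic consequence of the conservation identity \eqref{rds_re:conservation}, organised around the auxiliary Galerkin residual $\Phi_\sigma^{K,Gal}$. The only analytic input is \eqref{rds_re:conservation} together with the partition-of-unity property of the basis; everything else is bookkeeping.

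\medskip

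First I would record that $\Phi_\sigma^{K,Gal}$ itself obeys the same conservation relation. Summing its definition over $\sigma\in K$ and using $\sum_{\sigma\in K}\varphi_\sigma\equiv 1$ on $K$ — hence $\sum_{\sigma\in K}\nabla\varphi_\sigma\equiv 0$ — kills the volume term and leaves $\sum_{\sigma\in K}\Phi_\sigma^{K,Gal}=\oint_{\partial K}\hbbf_\bbn\,d\gamma=\Phi^K$. Comparing with \eqref{rds_re:conservation}, the elementwise difference $\Psi_\sigma^K:=\Phi_\sigma^K-\Phi_\sigma^{K,Gal}$ has zero sum on each element, $\sum_{\sigma\in K}\Psi_\sigma^K=0$. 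This is the single place where the hypothesis \eqref{rds_re:conservation} is actually used.

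\medskip

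Second, I would assemble the weighted Galerkin residual globally. Multiplying the definition of $\Phi_\sigma^{K,Gal}$ by $\bbv_\sigma$, summing over $\sigma\in K$, and collapsing the coefficients through $\nabla\bbv_h|_K=\sum_{\sigma\in K}\bbv_\sigma\nabla\varphi_\sigma$ and $\bbv_h|_K=\sum_{\sigma\in K}\bbv_\sigma\varphi_\sigma$ gives $\sum_{\sigma\in K}\bbv_\sigma\Phi_\sigma^{K,Gal}=-\oint_K\nabla\bbv_h\,\bbf(\bbu^h)\,d\bbx+\oint_{\partial K}\bbv_h\,\hbbf_\bbn\,d\gamma$. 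Summing over all $K$ turns the first term into $-\oint_\Omega\nabla\bbv_h\,\bbf(\bbu^h)\,d\bbx$, while each internal face $e\subset K\cap K^+$ is visited twice with opposite outward normals. Invoking the conservativity (single-valuedness) of the numerical flux, $\hbbf_{\bbn_K}(\bbu^h|_K,\bbu^h|_{K^+})=-\hbbf_{\bbn_{K^+}}(\bbu^h|_{K^+},\bbu^h|_K)$, the two adjacent contributions telescope into $\int_e[\bbv^h]\hbbf_\bbn\,d\gamma$, which vanishes automatically when $V^h\subset C^0(\Omega)$ and is genuinely present in the discontinuous case. This reproduces the first two terms of the claimed right-hand side. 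Then I rewrite $\sum_K\sum_{\sigma\in K}\bbv_\sigma\Psi_\sigma^K$ with the elementary zero-sum identity: whenever $\sum_{\sigma\in K}a_\sigma=0$ one has $\sum_{\sigma\in K}\bbv_\sigma a_\sigma=\tfrac{1}{\#K}\sum_{\sigma,\sigma'\in K}(\bbv_\sigma-\bbv_{\sigma'})a_\sigma$, obtained by expanding the double sum and discarding the part proportional to $\sum_\sigma a_\sigma$. Applied to $a_\sigma=\Psi_\sigma^K$ this yields exactly the symmetric difference term of \eqref{rds_re:algebre2}.

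\medskip

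Finally I assemble $\sum_K\sum_{\sigma\in K}\bbv_\sigma\Phi_\sigma^K=\sum_K\sum_{\sigma\in K}\bbv_\sigma\Phi_\sigma^{K,Gal}+\sum_K\sum_{\sigma\in K}\bbv_\sigma\Psi_\sigma^K$ and substitute the two previous steps. The left member is precisely the weighted nodal residual $\sum_\sigma\bbv_\sigma\sum_{K,\sigma\in K}\Phi_\sigma^K$; once the discrete equations \eqref{rds_re} are imposed this reduces to the mass contribution $-\sum_\sigma|C_\sigma|\bbv_\sigma\,\mathrm{d}\bbu_\sigma/\mathrm{d}t$, which is absent in the steady (or space–time tested) balance and thus produces the displayed ``$0$''. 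The main obstacle I anticipate is the face bookkeeping of the second step: one must track orientations and the precise meaning of $\bbu^{h,-}$ so that the opposite-normal terms really combine into the jump $[\bbv^h]$, and handle the continuous and discontinuous choices of $V^h$ uniformly. The rest is linear algebra and the partition-of-unity identity.
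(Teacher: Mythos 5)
Your proof is correct and takes essentially the same route as the paper's: multiply the scheme by the nodal values $\bbv_\sigma$, swap the sums, split each residual into $\Phi_\sigma^{K,Gal}$ plus a deviation, use the zero elementwise sum of that deviation (a consequence of \eqref{rds_re:conservation} together with the partition of unity) to produce the symmetric difference term, and telescope the element boundary flux integrals into jumps over internal edges. If anything, you are more explicit than the paper on two points it leaves tacit: the verification that $\Phi_\sigma^{K,Gal}$ itself satisfies the conservation relation, and the handling of the time-derivative term that yields the ``$0$'' on the left-hand side.
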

%The quadrature formula are those of \eqref{rds_re:rds:2} and \eqref{rds_re:rds:3}.
\begin{proof}
We start from \eqref{rds_re} which is multiplied by $\bbv_\sigma$, and these relations are added for each $\sigma$. We get ($\Omega=\R^d$)
$$
0=\sum\limits_{\sigma\in \mathcal{S}}\bbv_\sigma\bigg ( \sum\limits_{K\subset \Omega, \sigma\in K} \Phi_\sigma^K(\bbu^h)%+\sum\limits_{\Gamma\subset\partial \Omega, \sigma\in \Gamma}\Phi_\sigma^\Gamma(\bbu^h,\bbu_b) 
\bigg ).
$$
Permuting the sums on $\sigma$ and $K$, then on $\sigma$ and $\Gamma$, we get:
$$
0=\sum\limits_{K\subset \Omega} \bigg ( \sum\limits_{\sigma\in K} \bbv_\sigma\Phi_\sigma^K(\bbu^h)\bigg ) %+ \sum\limits_{\Gamma\subset\partial \Omega}\bigg ( \sum\limits_{\sigma\in \Gamma} \bbv_\sigma\Phi_\sigma^\Gamma(\bbu^h,\bbu_b)\bigg )
 .
$$
%We look at the first term, the second is done similarly.
We have, introducing $\Phi_\sigma^{K, Gal}$  as above
and $\#K$ the number of degrees of freedom in $K$,
\begin{equation*}
\begin{split}
\sum\limits_{\sigma\in K} \bbv_\sigma\Phi_\sigma^K(\bbu^h)&=\sum\limits_{\sigma\in K} \bbv_\sigma \Phi_\sigma^{K, Gal}(\bbu^h) +\sum\limits_{\sigma\in K} \bbv_\sigma\bigg ( \Phi_\sigma^K(\bbu^h)-\Phi_\sigma^{K, Gal}(\bbu^h) \bigg )\\
&= -\oint_K \nabla \bbv_h  \bbf(\bbu^h) \; d\bbx +\oint_{\partial K} \bbv^h \hbbf_\bbn(\bbu^h, \bbu^{h,-})\; d\gamma\\&\qquad\qquad+ \sum\limits_{\sigma\in K} \bbv_\sigma\bigg ( \Phi_\sigma^K(\bbu^h)-\Phi_\sigma^{K, Gal}(\bbu^h) \bigg )\\
&=-\oint_K \nabla \bbv_h  \bbf(\bbu^h) \; d\bbx +\oint_{\partial K} \bbv^h \hbbf_\bbn(\bbu^h, \bbu^{h,-})\; d\gamma\\&\qquad \qquad+ \frac{1}{\#K}\sum\limits_{\sigma,\sigma'\in K} (\bbv_\sigma-\bbv_{\sigma'})\bigg ( \Phi_\sigma^K(\bbu^h)-\Phi_\sigma^{K, Gal}(\bbu^h) \bigg )
\end{split}
\end{equation*}
because $$\sum\limits_{\sigma\in K} \big ( \Phi_\sigma^K(\bbu^h)-\Phi_\sigma^{K, Gal}(\bbu^h) \big )=0.$$

We finally get:
\begin{equation*}
\begin{split}
0&= \sum\limits_{K\subset \Omega} \bigg ( -\oint_K \nabla \bbv_h  \bbf(\bbu^h) \; d\bbx +\oint_{\partial K} \bbv^h \hbbf_\bbn(\bbu^h, \bbu^{h,-})\; d\gamma\bigg ) \\%+\sum\limits_{\Gamma\subset \partial \Omega}\oint_{\Gamma}\bbv^h \big (\hat{\bbf}_\bbn(\bbu^h,\bbu_b)-\bbf(\bbu^h) \bbn\big ) \; d\gamma\\
& \qquad  +\sum\limits_{K\subset \Omega}\frac{1}{\#K}\bigg ( \sum\limits_{\sigma,\sigma'\in K} (\bbv_\sigma-\bbv_{\sigma'})\bigg ( \Phi_\sigma^K(\bbu^h)-\Phi_\sigma^{K, Gal}(\bbu^h) \bigg ) \bigg )
%&\qquad \qquad
%+\sum\limits_{\Gamma\subset \partial \Omega} \frac{1}{\#\Gamma}\bigg ( \sum\limits_{\sigma,\sigma'\in \Gamma} (\bbv_\sigma-\bbv_{\sigma'})(\Phi_\sigma^\Gamma \big (\bbu^h,\bbu_b)-\Phi_\sigma^{Gal,\Gamma}(\bbu^h,\bbu_b)\big )\bigg )
\end{split}
\end{equation*}
i.e. after having defined $[\bbv^h]= \bbv^h-\bbv^{h,-}$ and chosen one orientation of the internal edges $e\in \mathcal{E}_h$,  we get \eqref{rds_re:algebre2}.
\end{proof}

The relation \eqref{rds_re:algebre2} is instrumental in proving the following results.
The first one is proved in  \cite{AbgrallRoe}, and is a generalisation of the classical Lax-Wendroff theorem.
\begin{theorem}\label{rds_re:th:LW}
Assume the family of meshes $\mathcal{T}=(\mathcal{T}_h)$
%_{h\in \HH}$ 
is shape regular. We assume that the residuals $\{\Phi_\sigma^{\mathcal{K}}\}_{\sigma\in \KK}$,
 for $\mathcal{K}$ an element or a boundary element of $\mathcal{T}_h$, satisfy: \begin{itemize}
\item There exists a constant $C$ which depends only on the family of meshes $\mathcal{T}_h$ and $M\in \R^+$ such that
 for any $\bbu^h\in V^h$ with $||\bbu^h||_{\infty}\leq M$, then
\begin{equation}
\label{rds_re:H1}\big|\Phi^\KK_\sigma({\bbu^h}_{|\KK})\big |\leq C h^{d-1}\;\sum_{\sigma, \sigma'\in \SS_K}|\bbu_\sigma^h-\bbu_{\sigma'}^h|,
\end{equation}
where $\SS_K$ is the list of degrees of freedom that appears in the expression of $\Phi^\KK_\sigma({\bbu^h}_{|\KK})$. 
\item There exists $C\in \N$, independent of $\mathcal{T}_h$, so that $\#\SS_K$ the cardinal of $\SS_K$, is bounded by $C$,
\begin{equation}
\label{rds_re:H2}\#\SS_K\leq C,
\end{equation}
\item The conservation relations \eqref{rds_re:conservation} where the quadrature formula satisfy: for any $w\in L^\infty(\Omega \times [0,T]$ and any $K$ regular enough,
$$\bigg \vert\int_K w(\bbx)\; d\bbx-\oint_K w(\bbx)\; d\bbx\bigg \vert =\Vert w\vert_\infty O(h^{d+1})$$
where $C$ depends only on $w$ via its $L^\infty$, norm
\end{itemize}
Then if there exists a constant $C_{max}$ such that the solutions of the scheme 
\begin{equation}\label{rds_re:sec:rd:leschema:formule}
\bbu^{n+1}_\sigma= \bbu^{n}_\sigma - \dfrac{\Delta t}{{|C_\sigma|}} \sum_{K, \sigma \in K} \Phi_\sigma^K(\bbu^{n}),
\end{equation}
where $C_\sigma$ is the dual control volume associated { with the dof $\sigma$},
 satisfy $||\bbu^h||_{\infty}\leq C_{max}$ and a function $\bbv\in L^2(\Omega)$ such that $(\bbu^h)_{h}$ or at least a sub-sequence converges to $\bbv$ in $L^2(\Omega)$, then $\bbv$ is a weak solution of \eqref{eq:hyper}.
\end{theorem}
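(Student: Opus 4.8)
The plan is to run the classical Lax--Wendroff argument, but to organise the algebra around the identity \eqref{rds_re:algebre2}, which already separates the ``Galerkin skeleton'' of the scheme from the genuinely scheme-dependent fluctuation. Fix $\varphi\in C^1_0(\Omega\times[0,T[)$, set $\varphi_\sigma^n:=\varphi(\bbx_\sigma,t_n)$, and let $\varphi^h(\cdot,t_n):=\sum_\sigma \varphi_\sigma^n\varphi_\sigma\in V^h$ be its nodal interpolant. First I would multiply the update \eqref{rds_re:sec:rd:leschema:formule} by $\varphi_\sigma^n$, sum over all degrees of freedom $\sigma$ and all time levels $n$, and multiply by $\Delta t$. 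This yields $T_1+T_2=0$, with a discrete time term $T_1=\sum_n\sum_\sigma |C_\sigma|(\bbu_\sigma^{n+1}-\bbu_\sigma^n)\varphi_\sigma^n$ and a discrete space term $T_2=\Delta t\sum_n\sum_\sigma\varphi_\sigma^n\sum_{K,\sigma\in K}\Phi_\sigma^K(\bbu^n)$.

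For $T_1$ I would use Abel (discrete-in-time) summation by parts; since $\varphi$ has compact support in time the upper boundary term drops, leaving $-\sum_n\Delta t\sum_\sigma |C_\sigma|\bbu_\sigma^{n+1}\tfrac{\varphi_\sigma^{n+1}-\varphi_\sigma^n}{\Delta t}-\sum_\sigma|C_\sigma|\bbu_\sigma^0\varphi_\sigma^0$. Because the control volumes $\{C_\sigma\}$ tile $\Omega$ and $\varphi$ is $C^1$, the first sum is a Riemann sum converging to $-\int_0^T\!\!\int_\Omega \bbv\,\partial_t\varphi\,d\bbx\,dt$ and the second to $-\int_\Omega \bbu_0\,\varphi(\cdot,0)\,d\bbx$; here I would invoke the strong $L^2$ convergence $\bbu^h\to\bbv$ together with the uniform bound $\|\bbu^h\|_\infty\le C_{max}$ to pass to the limit. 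For $T_2$ I would apply, at each time slice, the algebraic rearrangement established in the proof of the Lemma (with $\bbv_h=\varphi^h(\cdot,t_n)$), which expresses $\sum_K\sum_{\sigma\in K}\varphi_\sigma^n\Phi_\sigma^K$ as $-\oint_\Omega\nabla\varphi^h\,\bbf(\bbu^h)\,d\bbx$, plus the interface-jump terms $\sum_e\int_e[\varphi^h]\hbbf_\bbn$, plus the fluctuation term $E_n:=\sum_K\tfrac{1}{\#K}\sum_{\sigma,\sigma'\in K}(\varphi_\sigma^n-\varphi_{\sigma'}^n)(\Phi_\sigma^K-\Phi_\sigma^{K,Gal})$. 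The jump terms vanish because the interpolant of a continuous $\varphi$ is single-valued at shared face nodes, so $[\varphi^h]=0$; the Galerkin term converges, using the quadrature-consistency hypothesis together with the continuity of $\bbf$ on the invariant domain and $\bbf(\bbu^h)\to\bbf(\bbv)$ (dominated convergence along an a.e.-convergent subsequence), to $-\int_0^T\!\!\int_\Omega\nabla\varphi\,\bbf(\bbv)\,d\bbx\,dt$. Collecting the surviving limits in $T_1+T_2=0$ and changing the global sign reproduces exactly the weak formulation \eqref{weak}.

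The crux, and the step I expect to be the real obstacle, is showing $\Delta t\sum_n E_n\to 0$. Using the Lipschitz regularity of $\varphi$ gives $|\varphi_\sigma^n-\varphi_{\sigma'}^n|\le C\|\nabla\varphi\|_\infty\,h$, and hypothesis \eqref{rds_re:H1} applied to $\Phi_\sigma^K$, together with the analogous bound for $\Phi_\sigma^{K,Gal}$ (a consistent residual that vanishes on constant data and hence obeys the same estimate), gives $|\Phi_\sigma^K-\Phi_\sigma^{K,Gal}|\le C h^{d-1}\sum_{\mu,\nu\in K}|\bbu_\mu^n-\bbu_\nu^n|$. With the uniform stencil bound \eqref{rds_re:H2} this yields $|\Delta t\sum_n E_n|\le C\,\Delta t\sum_n h^d\sum_K\sum_{\mu,\nu\in K}|\bbu_\mu^n-\bbu_\nu^n|$. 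A naive $L^\infty$ estimate of the right-hand side is only $O(1)$, so strong convergence must be exploited: applying Cauchy--Schwarz with respect to the discrete space-time measure $\Delta t\,h^d$ reduces matters to proving $\Delta t\sum_n h^d\sum_K\sum_{\mu,\nu\in K}|\bbu_\mu^n-\bbu_\nu^n|^2\to 0$. I would establish this by norm equivalence on shape-regular elements, identifying this quantity (up to constants depending on $k$ and the mesh regularity) with $\|\bbu^h-P_0^h\bbu^h\|_{L^2(\Omega\times[0,T])}^2$, where $P_0^h$ is the cell-wise $L^2$ projection onto constants; since $\|\bbu^h-P_0^h\bbu^h\|\le\|\bbu^h-\bbv\|+\|\bbv-P_0^h\bbv\|$ and both terms tend to zero (the first by hypothesis, the second because cell-wise constant projections of an $L^2$ function converge to it as $h\to0$), the fluctuation term indeed vanishes. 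This is the only place where strong rather than merely weak convergence is needed, and it is precisely what renders the scheme-dependent part asymptotically inert, leaving the conservative Galerkin skeleton that produces the weak solution.
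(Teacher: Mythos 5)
Your proposal is correct and follows essentially the same route as the paper: the same decomposition of $\sum_\sigma \varphi^n_\sigma\Phi_\sigma^K$ into a Galerkin part plus a fluctuation part via \eqref{rds_re:algebre2}, the same treatment of the time term (discrete summation by parts) and of the Galerkin term (quadrature hypothesis plus $L^2$ convergence), and control of the scheme-dependent fluctuation by \eqref{rds_re:H1}--\eqref{rds_re:H2} together with strong convergence --- your inline Cauchy--Schwarz/cellwise-projection argument is precisely the content of the paper's Lemma~\ref{sec:rd:lemme1}, which the paper invokes without proof. The only omission is that you treat only globally continuous $V^h$ (where $[\varphi^h]=0$ exactly), whereas the paper's proof also covers discontinuous, possibly nonconformal, approximations by estimating $[\pi_h^k\varphi]=O(h^{k+1})$ so that the interface sum scales as $O(h^k)$, cf. Remark~\ref{rds_re:remark}.
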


The proof will be given in the next section \ref{rds_re:sec:th:LW}.
Another consequence of \eqref{rds_re:algebre2} is the following result on entropy inequalities:
\begin{proposition}\label{rds_re:th:entropy}
Let $(\eta,\mathbf{g})$ be a  entropy-flux couple for \eqref{eq:hyper} and $\hat{\mathbf{g}}_\bbn$ be a numerical entropy flux consistent with $\mathbf{g} \bbn$. Assume that the residuals satisfy:
for any element $K$,
\begin{subequations}\label{rds_re:entropy}
\begin{equation}\label{rds_re:entropy:1}
\sum_{\sigma \in K}\langle\nabla_\bbu \eta(\bbu_\sigma), \Phi_\sigma^K\rangle \geq \oint_{\partial K} \hat{\mathbf{g}}_\bbn(\bbu^h,\bbu^{h,-}) \; d\gamma
\end{equation}
%and for any boundary edge $e$,
%\begin{equation}\label{rds_re:entropy:2}
%\sum_{\sigma \in e}\langle\nabla_\bbu E(\bbu_\sigma) ,  \Phi_\sigma^e\rangle  \geq \int_{e} \big (\hat{\mathbf{g}}_\bbn(\bbu^h,\bbu_b)- \mathbf{g}(\bbu^h) \bbn \big )\; d\gamma.
%\end{equation}
\end{subequations}
Then, under the assumptions of theorem \ref{rds_re:th:LW}, the limit weak solution also satisfies the following entropy inequality: for any $\varphi\in C^1(\overline{\Omega})$, $\varphi\geq 0$, 
$$\int_0^T\int_\Omega\dpar{\varphi}{t}\eta(\bbu)\; d\bbx-\int_0^T\int_\Omega \nabla \varphi \mathbf{g}(\bbu) \; d\bbx
%\int_0^T\int_{\partial\Omega^-}\varphi\;\mathbf{g}(u_b) \bbn  \; d\gamma
 -\int_\Omega\varphi(\bbx,0)\eta(\bbu_0(\bbx))\; d\bbx\leq 0.$$
\end{proposition}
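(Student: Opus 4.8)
The plan is to mimic the proof of the Lax--Wendroff Theorem~\ref{rds_re:th:LW}, but with the physical entropy $\eta$ playing the role of the conserved variable and with the exact conservation relation \eqref{rds_re:conservation} replaced by the cell entropy inequality \eqref{rds_re:entropy:1}; the inequality (rather than equality) survives all the manipulations precisely because the test function is required to be nonnegative. First I would start from the fully discrete forward Euler scheme \eqref{rds_re:sec:rd:leschema:formule} and exploit the concavity of $\eta$: setting $\bmw_\sigma^n=\nabla_\bbu\eta(\bbu_\sigma^n)$, concavity gives the tangent estimate
$$\eta(\bbu_\sigma^{n+1})-\eta(\bbu_\sigma^n)\le \bmw_\sigma^n\cdot(\bbu_\sigma^{n+1}-\bbu_\sigma^n)=-\frac{\Delta t}{|C_\sigma|}\sum_{K,\sigma\in K}\bmw_\sigma^n\cdot\Phi_\sigma^K(\bbu^n).$$
Multiplying by $|C_\sigma|\,\varphi_\sigma^n$ with $\varphi_\sigma^n:=\varphi(\bbx_\sigma,t_n)\ge 0$ and summing over $\sigma$ and $n$ keeps the inequality oriented, and the second-order-in-time defect is discarded with the favorable sign provided by concavity.

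Next I would reorganise the right-hand side cell by cell. Writing $\sum_\sigma\varphi_\sigma^n\sum_{K\ni\sigma}(\cdot)=\sum_K\sum_{\sigma\in K}(\cdot)$ and splitting $\varphi_\sigma^n=\bar\varphi_K^n+(\varphi_\sigma^n-\bar\varphi_K^n)$ for a reference value $\bar\varphi_K^n\ge 0$ (for instance the value at the centroid), the uniformly weighted part $\bar\varphi_K^n\sum_{\sigma\in K}\bmw_\sigma\cdot\Phi_\sigma^K$ is exactly the left-hand side of \eqref{rds_re:entropy:1}; since $\bar\varphi_K^n\ge 0$ this term is bounded below by $\bar\varphi_K^n\oint_{\partial K}\hat{\mathbf{g}}_\bbn\,d\gamma$, so the global inequality becomes an upper bound in terms of the numerical entropy flux. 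The remaining fluctuation term $\sum_K\sum_{\sigma\in K}(\varphi_\sigma^n-\bar\varphi_K^n)\,\bmw_\sigma\cdot\Phi_\sigma^K$ is a commutator of exactly the type appearing in \eqref{rds_re:algebre2}; using $|\varphi_\sigma^n-\bar\varphi_K^n|=O(h)$, the boundedness of $\bmw_\sigma$ (guaranteed by $\|\bbu^h\|_\infty\le C_{max}$ and the continuity of $\nabla_\bbu\eta$ on $\mathcal{R}$), and the structural bounds \eqref{rds_re:H1}--\eqref{rds_re:H2}, it is controlled and shown to vanish as $h\to0$ by the same estimate already used in the proof of Theorem~\ref{rds_re:th:LW}.

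It then remains to pass to the limit in the two surviving families of terms. On the left I would perform an Abel summation in time, turning $\sum_n\varphi_\sigma^n(\eta_\sigma^{n+1}-\eta_\sigma^n)$ into a discrete analogue of $-\int_0^T\!\!\int_\Omega\partial_t\varphi\,\eta-\int_\Omega\varphi(\cdot,0)\eta(\bbu_0)$; the initial slice produces exactly the $\eta(\bbu_0)$ contribution and compact support in time (or $\varphi(\cdot,T)=0$) kills the final slice. On the right, the weighted sum $\Delta t\sum_n\sum_K\bar\varphi_K^n\oint_{\partial K}\hat{\mathbf{g}}_\bbn$ is rearranged by the discrete integration-by-parts / telescoping across faces underlying identity \eqref{rds_re:algebre2}, and converges to the continuous entropy-flux term by the consistency of $\hat{\mathbf{g}}_\bbn$ with $\mathbf{g}\bbn$, the quadrature-consistency hypothesis of Theorem~\ref{rds_re:th:LW}, and the strong $L^2$ convergence $\bbu^h\to\bbv$ (with continuity of $\mathbf{g}$ and dominated convergence via the $L^\infty$ bound). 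Collecting the limits yields the claimed inequality with the orientation dictated by $\varphi\ge 0$.

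The main obstacle I expect is the fluctuation/commutator estimate of the second paragraph: showing that $\sum_K\sum_{\sigma\in K}(\varphi_\sigma-\bar\varphi_K)\,\bmw_\sigma\cdot\Phi_\sigma^K$ vanishes in the limit is the only genuinely delicate point, since $L^2$ convergence does not by itself bound the accumulated solution variations $\sum_{\sigma,\sigma'}|\bbu_\sigma^h-\bbu_{\sigma'}^h|$. One must reuse the weak-BV / summation estimate already developed for Theorem~\ref{rds_re:th:LW}, and verify that inserting the extra bounded factor $\bmw_\sigma$ and the $O(h)$ factor $\varphi_\sigma-\bar\varphi_K$ does not spoil it. Everything else is a transcription of the Lax--Wendroff argument, the only conceptually new inputs being the concavity inequality and the nonnegativity of $\varphi$, which together guarantee that the discrete entropy production is transferred to the limit with the correct sign.
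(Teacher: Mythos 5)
Your proposal is correct and is essentially the paper's own argument: the paper proves this proposition only by the remark that ``the proof is similar to that of theorem \ref{rds_re:th:LW}'', and your adaptation --- the concavity tangent inequality $\eta(\bbu^{n+1}_\sigma)-\eta(\bbu^n_\sigma)\le \nabla_\bbu\eta(\bbu^n_\sigma)\cdot(\bbu^{n+1}_\sigma-\bbu^n_\sigma)$, the nonnegative cell-mean weights $\bar\varphi_K^n$ so that the one-sided hypothesis \eqref{rds_re:entropy:1} survives the summation, the weak-BV control of the commutator term via \eqref{rds_re:H1}--\eqref{rds_re:H2} and Lemma \ref{sec:rd:lemme1}, and the same limit passage as in Lemmas \ref{th:LW} and \ref{sec:rd:lemme3} --- is precisely the natural instantiation of that remark. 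One caveat, which is the paper's issue rather than yours: carried out carefully, your argument produces in the limit the weak entropy inequality in the form \eqref{entropyinequality}, whereas the signs displayed in the proposition's conclusion are not equivalent to that form (an apparent typo in the paper), so you should not expect to reproduce them literally.
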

%\begin{proof}
The proof is similar to that of theorem \ref{rds_re:th:LW}. 
%%%%%%%%%%%%%%%%%%%%%%%%%%%%%%%%%%
%%%%%%%%%%%%%%%%%%%%%%%%%%%%%%%%%%
%%%%%%%%%%%%%%%%%%%%%%%%%%%%%%%%%%
\subsection{Sketch of the proof of theorem \ref{rds_re:th:LW}.}\label{rds_re:sec:th:LW}
%%%%%%%%%%%%%%%%%%%%%%%%%%%%%%%%%%
%%%%%%%%%%%%%%%%%%%%%%%%%%%%%%%%%%
%%%%%%%%%%%%%%%%%%%%%%%%%%%%%%%%%%
%%%%%%%%%%%%%%%%%%%%%%%%%%%%%%%%%%

We only give the results and refer to \cite{AbgrallRoe} for the proof itself.
%For simplicity, we avoid the boundary conditions, for example by considering initial conditions
 %that have a compact support.

\subsubsection{Technical assumptions}\label{sec:rd:sec:lax}
%%%%%%%%%%%%%%%
Let us first give some conditions on the mesh,  the residuals
$\Phi_\sigma^{K}$, and define some notations and
functional spaces. {We first do the proof for globally continuous elements, and we will explain how to (slightly) modify it for discontinuous elements at the end of this section, se remark \ref{rds_re:remark}.}

\begin{assumption}
\label{sec:rd:H0}
The mesh $\TT_h$ is  regular. By regular we mean that all elements
 are roughly the same size, more 
precisely that there exist  constants 
 $C_1$ and  $C_2$ such that for any element
$$ K, \; \; C_1 \leq \sup_{K \in \mathcal{K}_h} \dfrac{h^d}{|K|} \leq C_2.$$
\end{assumption}
We say that two elements are neighbors if they have a common edge.
Let $\TT_h$ be a triangulation satisfying assumption \ref{sec:rd:H0},
 and ${\cal C}_h$ be 
a set of dual volumes
associated with the degrees of freedom $\sigma$. Then we consider the 
following subspaces,
\begin{equation}\label{LxW:spaces}
\begin{array}{l}
V_h^k=\{ v_h \in \big (C^0(\R^2)\big )^p;  \forall ~K \in
\TT_h{v_{h}}_{|K} \text{ polynomial of degree k } \}\\
~~\\
X^h=\{v_h; {v_h}_{|C} \text{ constant}\in \R^p, \forall ~C \in \CC_h\}.
\end{array}
\end{equation}
%Here, $\bbf_{\vert K}$ denotes the restriction of $\bbf$ to $K$. 
The basis functions of $V_h^k$ are $\{\varphi_\sigma\}_{\sigma}$. Since $\P^k$ is finite dimensional, there exist two constants, $C_1>0$ and $C_2>$ depending only on $k$ such that for any $v\in \P^k$, 
\begin{equation}
\label{sec:rd:assumption basis functions}
C_{1}|K|\sum_{\sigma\in K}|v_\sigma| \leq \int_K\big |v\big |\; d\bbx\leq C_{2}|K|\sum_{\sigma\in K} |v_\sigma|.
\end{equation}

We denote  by $\pi_h^1v$ the piecewise linear interpolation of a continuous
 function.
Let  $L_h:  V_h^k \rightarrow X^h$ be the mass lumping operator,
$L_h(v)=\sum_\sigma v(x_\sigma) \chi_{\sigma}$ where $\chi_{\sigma}$ is the characteristic 
function of the cell $C_\sigma$.
Last, $\pi_h^k$ is the $\P^k$ interpolation defined on $V_h^k$. 
\bigskip

The proof is strongly inspired by \cite{Kroner:96}.
 For the
sake of simplicity, we assume $d=2$ and $p=1$, the generalisation is immediate.

As in \cite{Kroner:96}, we start by 
\begin{lemma}
\label{sec:rd:lemme1}
Let  $T>0$ and  $N$ the integer part of  $\tfrac{T}{\Delta t}$.
We consider $\QQ \subset \R^2$, a bounded  domain.
Let 
$(\bbu^h)_{h}$ be a sequence such that  $\bbu^h(\; . \; ,t_n) \in X_h$ for any 
$n\leq N$. We assume 
there exists a constant  $C$ independent of  $h$ and  $\bbu\in  
L^{2}_{loc}(\QQ\times [0,T])$ such that 
$$
\sup_h \sup_{\bbx,t}\Vert \bbu^h(\bbx,t)\Vert\leq  C ,\; \; \; \lim_h
\Vert{\bbu^h-\bbu}\Vert_{L^2(\QQ\times [0,T])}=0.
$$
We define $$\overline{\bbu^h}_K=\dfrac{1}{|K|}\int_K \bbu^h\; d\bbx$$
the average value of $\bbu^h$ in $K$.
Then 
\begin{enumerate}
\item \begin{equation}\label{sec:rd:BV:1}\lim\limits_{h\rightarrow 0} \bigg (\sum\limits_{n=0}^N \Delta t \sum\limits_{K\subset \QQ} 
|K|\sum\limits_{\sigma
\in K} \Vert\bbu^h_\sigma-\overline{\bbu^h}_K)\Vert\bigg )=0.\end{equation}
\item and 
\begin{equation}\label{sec:rd:BV:2}\lim\limits_h \bigg [h \; \Vert{\nabla
{\bbu}^h}\Vert_{L^2(\QQ\times [0,T])}\bigg ]=0.\end{equation}
\end{enumerate}
\end{lemma}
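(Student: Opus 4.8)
The plan is to reduce both assertions to the single ``local oscillation'' estimate
\[
E_h := \sum_{n=0}^N \Delta t \sum_{K\subset \QQ} |K| \sum_{\sigma,\sigma'\in K} \Vert \bbu^h_\sigma - \bbu^h_{\sigma'}\Vert^2 \xrightarrow[h\to 0]{} 0 ,
\]
and then to prove this estimate by comparing the piecewise-constant iterate with its spatial translates. For the first assertion, since $\bbu^h(\cdot,t_n)$ is constant on each sub-cell $C_\sigma\cap K$, the element average $\overline{\bbu^h}_K=|K|^{-1}\sum_{\sigma\in K}|C_\sigma\cap K|\,\bbu^h_\sigma$ is a convex combination of the nodal values, so $\Vert \bbu^h_\sigma-\overline{\bbu^h}_K\Vert \le \sum_{\sigma'\in K}\Vert \bbu^h_\sigma-\bbu^h_{\sigma'}\Vert$; the quantity in \eqref{sec:rd:BV:1} is therefore bounded by $\sum_n\Delta t\sum_K|K|\sum_{\sigma,\sigma'}\Vert\bbu^h_\sigma-\bbu^h_{\sigma'}\Vert$, which by Cauchy--Schwarz (the number of node pairs per element being bounded by \eqref{rds_re:H2} and $\sum_K|K|\le|\QQ|$) is at most $C\,E_h^{1/2}$. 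For the second assertion I read $\nabla\bbu^h$ as the gradient of the continuous $\P^k$ representative carried by the nodal values $\{\bbu^h_\sigma\}$; a standard inverse/scaling inequality on the shape-regular element $K$ gives $\Vert\nabla\bbu^h\Vert_{L^\infty(K)}\le C h^{-1}\max_{\sigma,\sigma'\in K}\Vert\bbu^h_\sigma-\bbu^h_{\sigma'}\Vert$, whence $h^2\int_K\Vert\nabla\bbu^h\Vert^2\le C|K|\sum_{\sigma,\sigma'}\Vert\bbu^h_\sigma-\bbu^h_{\sigma'}\Vert^2$ and, after summation, $h^2\Vert\nabla\bbu^h\Vert^2_{L^2(\QQ\times[0,T])}\le C\,E_h$. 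Thus \eqref{sec:rd:BV:1} and \eqref{sec:rd:BV:2} both follow once $E_h\to 0$.

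To control $E_h$ I introduce the $L^2$ modulus of continuity under spatial translations,
\[
\Omega_h(\bby):=\int_0^T\!\!\int_{\QQ}\Vert \bbu^h(\bbx+\bby,t)-\bbu^h(\bbx,t)\Vert^2\,d\bbx\,dt ,
\]
and claim that $\sup_{|\bby|\le C h}\Omega_h(\bby)\to 0$. Indeed, by the triangle inequality $\Omega_h(\bby)^{1/2}\le 2\Vert \bbu^h-\bbu\Vert_{L^2(\QQ\times[0,T])}+\Vert \bbu(\cdot+\bby)-\bbu\Vert_{L^2(\QQ\times[0,T])}$; the first term tends to $0$ by the assumed $L^2$ convergence, while the second is bounded by the modulus of continuity of the fixed limit $\bbu\in L^2$, which tends to $0$ as $|\bby|\le Ch\to 0$ by continuity of translation in $L^2$ (a harmless fattening of $\QQ$, together with the uniform bound $\sup\Vert\bbu^h\Vert\le C$, takes care of points translated across $\partial\QQ$).

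It remains to bound $E_h$ by $\Omega_h$. Fix $n$, $K$ and two nodes $\sigma,\sigma'\in K$, and write $D_\sigma:=C_\sigma\cap K$, $D_{\sigma'}:=C_{\sigma'}\cap K$, on which $\bbu^h(\cdot,t_n)$ equals $\bbu^h_\sigma$ and $\bbu^h_{\sigma'}$ respectively. Shape-regularity gives $|D_\sigma|,|D_{\sigma'}|\ge c\,h^2$ and $|K|\le C h^2$, so
\[
|K|\,\Vert\bbu^h_\sigma-\bbu^h_{\sigma'}\Vert^2
\le \frac{C}{h^2}\int_{D_\sigma}\!\!\int_{D_{\sigma'}}\!\!\Vert\bbu^h(\bbx,t_n)-\bbu^h(\bbz,t_n)\Vert^2\,d\bbz\,d\bbx .
\]
Substituting $\bby=\bbz-\bbx$ (so that $|\bby|\le\operatorname{diam}K\le C h$) and summing over the bounded number of node pairs in $K$ and over $K$, the essentially disjoint sub-cell supports (bounded overlap, again by shape-regularity) yield, for each fixed $\bby$, a sum over $\bbx$ controlled by $\int_\QQ\Vert\bbu^h(\bbx,t_n)-\bbu^h(\bbx+\bby,t_n)\Vert^2\,d\bbx$. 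Multiplying by $\Delta t$, summing in $n$, and recognising the piecewise-constant-in-time integral, I obtain
\[
E_h\le \frac{C}{h^2}\int_{|\bby|\le C h}\Omega_h(\bby)\,d\bby
\le \frac{C}{h^2}\,\bigl|\{|\bby|\le C h\}\bigr|\,\sup_{|\bby|\le C h}\Omega_h(\bby)
= C'\sup_{|\bby|\le C h}\Omega_h(\bby)\xrightarrow[h\to 0]{}0 .
\]

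The main obstacle is precisely this last passage: turning the discrete nodal differences into the continuous translation modulus while keeping the measure bookkeeping honest. The decisive point is that the factor $h^{-2}$ produced by recovering a nodal value from an integral over a sub-cell of area $\simeq h^2$ is exactly compensated by the area $\simeq h^2$ of the translation ball $\{|\bby|\le C h\}$; this balance, and the bounded-overlap counting that lets one assemble the sub-cell contributions into a single integral over $\QQ$, both rely essentially on the shape-regularity Assumption~\ref{sec:rd:H0} and on the uniform stencil bound \eqref{rds_re:H2}. The remaining points (the behaviour near $\partial\QQ$ under translation, and the piecewise-constant-in-time extension of $\bbu^h$) are routine.
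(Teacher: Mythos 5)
Your proof is correct: the paper itself does not spell out a proof of this lemma (it only states the result and defers to \cite{AbgrallRoe}, noting the argument is inspired by \cite{Kroner:96}), and your argument --- reducing \eqref{sec:rd:BV:1}--\eqref{sec:rd:BV:2} to the vanishing of the local $L^2$ oscillation $E_h$ (via convexity of the element average plus Cauchy--Schwarz for the first claim, and an inverse inequality for the $\P^k$ representative for the second), then controlling $E_h$ by the spatial translation modulus, which vanishes by the assumed strong $L^2$ convergence and continuity of translations, with the $h^{-2}$ from recovering nodal values on sub-cells exactly balanced by the $O(h^2)$ measure of the translation ball --- is precisely this classical Kr\"oner-type compactness route. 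The one piece of wording to fix is near $\partial\QQ$: since no convergence information is available outside $\QQ$ (and $\bbu$ need not even be defined there), one should shrink rather than ``fatten'', i.e.\ treat the elements within distance $Ch$ of $\partial\QQ$ separately using the uniform $L^\infty$ bound (their total contribution to $E_h$ is $O(h)$) and run the translation argument only where $\bbx$ and $\bbx+\bby$ both lie in $\QQ$; this is the routine point you already flagged.
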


Then we have the following very classical lemma, see e.g. \cite{GodlewskiRaviartTome1}
\begin{lemma}
\label{th:LW}
Let $\varphi \in C_0^{k+1}(\R^2 \times \R^+)$. With the assumptions of 
Theorem
\ref{rds_re:th:LW}, one has
$$ \sum\limits_n\Delta t \sum\limits_\sigma |C_\sigma| \big (u^{n+1}_{\sigma}-u^n_{\sigma} \big ) \varphi(\sigma,t_n)+
\int_{\R^2\times \R^+} u \dpar{\varphi}{t} \; d\bbx dt +\int_{\R^2} u_0(x)
\varphi(x,0)\rightarrow 0$$
when $h \rightarrow 0$.
\end{lemma}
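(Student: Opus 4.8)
The plan is to view this identity as the temporal half of the weak formulation and to prove it by discrete summation by parts (Abel summation) in the time index, following the classical Lax--Wendroff argument \cite{GodlewskiRaviartTome1,Kroner:96}. Writing $\varphi_\sigma^n:=\varphi(x_\sigma,t_n)$, I would first isolate the discrete time-derivative contribution
$$
S_h:=\sum_n\sum_\sigma |C_\sigma|\,\big(u^{n+1}_\sigma-u^n_\sigma\big)\,\varphi_\sigma^n .
$$
Because $\varphi\in C_0^{k+1}(\R^2\times\R^+)$ has compact support, only finitely many time levels contribute and $\varphi_\sigma^n$ vanishes once $t_n$ exceeds the support. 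Shifting the index in the $u^{n+1}_\sigma$ part and regrouping, Abel summation gives
$$
S_h=-\sum_\sigma |C_\sigma|\,u^0_\sigma\,\varphi_\sigma^0-\sum_{n\ge 1}\sum_\sigma |C_\sigma|\,u^n_\sigma\,\big(\varphi_\sigma^n-\varphi_\sigma^{n-1}\big),
$$
the boundary term at the last level being killed by the compact support. This already exhibits the two expected pieces: an initial-data term and a term that should converge to $-\int\!\!\int u\,\partial_t\varphi$.

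Second, I would recognise the double sum as a Riemann sum of a space--time integral. Using the exact increment $\varphi_\sigma^n-\varphi_\sigma^{n-1}=\int_{t_{n-1}}^{t_n}\partial_t\varphi(x_\sigma,s)\,ds$ and the fact that $u^h(\cdot,t_n)\in X^h$ is piecewise constant on the dual cells, equal to $u^n_\sigma$ on $C_\sigma$, each spatial slice satisfies
$$
\sum_\sigma |C_\sigma|\,u^n_\sigma\,\partial_t\varphi(x_\sigma,s)=\int_{\R^2} u^h(x,t_n)\,\partial_t\varphi(x,s)\,dx+R_n(s),
$$
where $R_n(s)$ collects, cell by cell, the error between $\partial_t\varphi(x_\sigma,s)$ and its average over $C_\sigma$. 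By shape regularity (Assumption~\ref{sec:rd:H0}) one has $|C_\sigma|=O(h^2)$ and $|R_n(s)|\lesssim h\,\|\nabla_x\partial_t\varphi\|_\infty\,|\mathrm{supp}\,\varphi|$, so that after integrating in $s$ and summing over $n$ the accumulated quadrature error is $O(h)$ and disappears in the limit. The same quadrature estimate applied to the initial term gives $\sum_\sigma |C_\sigma|\,u^0_\sigma\,\varphi_\sigma^0=\int_{\R^2}u^h(x,0)\,\varphi(x,0)\,dx+O(h)$.

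Third, I would pass to the limit. The uniform bound $\|u^h\|_\infty\le C_{max}$ together with the compact support of $\partial_t\varphi$ confines everything to a fixed bounded space--time box, on which $u^h\to u$ in $L^2$ by hypothesis; hence the space--time integral of $u^h\,\partial_t\varphi$ converges to $\int\!\!\int u\,\partial_t\varphi$, and, using $u^h(\cdot,0)\to u_0$, the initial term converges to $\int_{\R^2}u_0\,\varphi(\cdot,0)$. Collecting these limits yields $S_h\to -\int\!\!\int u\,\partial_t\varphi-\int_{\R^2}u_0\,\varphi(\cdot,0)$, which is exactly the claimed statement.

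The step I expect to be the real obstacle is the control of the accumulated error across the $O(1/\Delta t)$ time levels: each local replacement (both the exact-increment step and the nodal quadrature) costs a factor that must be summable to $o(1)$ rather than merely small per step. This is where the regularity of the dual mesh and of $\varphi$, the uniform $L^\infty$ bound, and the finite measure of the support must be combined carefully; once the per-step errors are shown to be $O(h)$ uniformly in $n$ with a uniformly bounded number of contributing levels, the conclusion follows from the $L^2$ convergence furnished by Lemma~\ref{sec:rd:lemme1} and the hypotheses of Theorem~\ref{rds_re:th:LW}.
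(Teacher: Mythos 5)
Your proof is correct and is essentially the argument the paper itself relies on: the paper states Lemma \ref{th:LW} without proof, deferring to the classical literature (\cite{GodlewskiRaviartTome1}, and \cite{AbgrallRoe} for the full theorem), and that classical proof is exactly your combination of Abel summation in the time index, a Riemann-sum/quadrature estimate on the dual cells using shape regularity, and passage to the limit via the uniform $L^\infty$ bound, the compact support of $\varphi$, and the $L^2$ convergence. Two details are worth recording. First, you have (correctly) read the statement without the factor $\Delta t$ in front of the discrete sum: as printed, with $\Delta t$ multiplying $|C_\sigma|(u^{n+1}_\sigma-u^n_\sigma)$ and no division by $\Delta t$, the discrete term would vanish in the limit and the claimed identity would be false; the consistent reading, the one that matches the scheme \eqref{rds_re:sec:rd:leschema:formule} and plugs into Lemma \ref{sec:rd:lemme3}, is the one you adopt in your definition of $S_h$. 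Second, your Riemann sum pairs $u^h(\cdot,t_n)$ with the slab $[t_{n-1},t_n]$, whereas the convergence hypothesis of Theorem \ref{rds_re:th:LW} concerns the forward-in-time interpolant $u^h(\cdot,s)=u^h(\cdot,t_n)$ for $s\in[t_n,t_{n+1})$; the two differ by a time shift of $\Delta t$, which is harmless here because $\partial_t\varphi$ is uniformly continuous with compact support (so the shifted test function converges uniformly and the $L^2$ convergence of $u^h$ applies to either pairing), but one sentence saying so would fully close the argument.
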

%The proof is classical: it is the same as in the proof of the classical
% Lax Wendroff theorem.

\medskip
Again we introduce $\phi_\sigma$ the basis function at $\sigma$ and $\Phi_\sigma^{K,Gal}$ the Galerkin residuals
$$
\Phi_\sigma^{K,Gal}=-\oint_{K} \nabla \varphi_\sigma \bbf^h(\bbu^h)+\oint_{\partial K} \varphi_\sigma \hbbf_\bbn^h(\bbu^h,\bbu^{h,-}) \; d\gamma.
$$
For ease of notations, we have dropped the superscript $n$ that indicates the time step.
We remind that
$$\sum_{\sigma \in K} \Phi_\sigma^{K,Gal}=\oint_{\partial K}\hbbf_\bbn^h(\bbu^h, \bbu^{h,-}) \; d\gamma=\sum_{\sigma \in K} \Phi_\sigma^K.$$
We have
\begin{lemma}
\label{sec:rd:lemme3}
Let $\bbv \in C^{k+1}_0(\R^2\times \R^+)$, $\bbf\in
(C^{1}(\R^m))^{d}$, $d=2$. Assume that
 $\hbbf_\bbn$ is a consistant Lipschitz continuous numerical flux,  and that  $\bbu_h$ satisfies those of Theorem \ref{th:LW}.
 Then
$$
\Delta t   \sum\limits_{n,K}\sum\limits_{\sigma \in K'_K \subset K} \bbv_\sigma^n
\Phi_{\sigma}^{K,Gal} (\bbu_h^n)
+
\int_{\R^2\times \R^+} \nabla \bbv(\bbx,t) \bbf(\bbu(\bbx,t))  \; d\bbx dt \rightarrow 0$$
when $h \rightarrow 0$.
\end{lemma}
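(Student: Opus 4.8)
The plan is to exploit the explicit Galerkin form of the residual together with the interpolation identity $\sum_{\sigma\in K}\bbv_\sigma\varphi_\sigma=\bbv^h$, where $\bbv^h=\pi_h^k\bbv(\cdot,t_n)$ is the $\P^k$ interpolant of the smooth test function at time $t_n$. First I would substitute
$$
\Phi_\sigma^{K,Gal}(\bbu^h)=-\oint_K\nabla\varphi_\sigma\,\bbf(\bbu^h)\,d\bbx+\oint_{\partial K}\varphi_\sigma\,\hbbf_\bbn(\bbu^h,\bbu^{h,-})\,d\gamma
$$
and sum against $\bbv_\sigma^n$ over $\sigma\in K$. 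Since $\sum_{\sigma\in K}\bbv_\sigma\nabla\varphi_\sigma=\nabla\bbv^h$ on $K$, the nodal sum collapses to
$$
\sum_{\sigma\in K}\bbv_\sigma^n\Phi_\sigma^{K,Gal}(\bbu^h)=-\oint_K\nabla\bbv^h\,\bbf(\bbu^h)\,d\bbx+\oint_{\partial K}\bbv^h\,\hbbf_\bbn(\bbu^h,\bbu^{h,-})\,d\gamma.
$$

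Second, I would sum over all $K$ and dispose of the edge terms. In the continuous case treated here, $\bbu^h$ and $\bbv^h$ are single-valued across each interior edge $e=K\cap K^+$, so $\bbu^{h,-}=\bbu^h$ on $e$ and, by consistency of the numerical flux, $\hbbf_{\bbn_K}(\bbu^h,\bbu^h)=\bbf(\bbu^h)\bbn_K$. Because $\bbn_{K^+}=-\bbn_K$ and $\bbv^h$ is continuous, the two contributions of $e$ cancel; as $\bbv$ has compact support and $\Omega=\R^2$ there is no far-field boundary term. Hence
$$
\sum_{K}\sum_{\sigma\in K}\bbv_\sigma^n\Phi_\sigma^{K,Gal}(\bbu^h)=-\oint_{\R^2}\nabla\bbv^h\,\bbf(\bbu^h)\,d\bbx,
$$
and the claim reduces to proving $\Delta t\sum_n\oint_{\R^2}\nabla\bbv^h\,\bbf(\bbu^h)\,d\bbx\to\int_{\R^2\times\R^+}\nabla\bbv\,\bbf(\bbu)$.

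Third, I would establish this convergence through a short chain of replacements, each controlled on the compact support of $\bbv$, where only $O(h^{-d})$ elements and $O(1/\Delta t)$ time levels intervene. (a) Replace $\oint$ by the exact integral using the quadrature hypothesis of Theorem \ref{rds_re:th:LW}: with $w=\nabla\bbv^h\,\bbf(\bbu^h)$ uniformly bounded (from $\|\bbu^h\|_\infty\le C_{max}$ and $\bbv\in C^{k+1}$), the accumulated error is $O(h^{-d})\cdot O(h^{d+1})\cdot\Delta t\sum_n 1=O(h)$. (b) Replace $\nabla\bbv^h$ by $\nabla\bbv$ via the standard interpolation estimate $\|\nabla\bbv-\nabla\pi_h^k\bbv\|_{L^\infty}=O(h^{k})\|\bbv\|_{C^{k+1}}$, paired with the bound on $\bbf(\bbu^h)$. (c) Replace $\bbf(\bbu^h)$ by $\bbf(\bbu)$: since $\bbf\in C^1$ is Lipschitz on the bounded range of the states, $|\bbf(\bbu^h)-\bbf(\bbu)|\le L|\bbu^h-\bbu|$, so the error is bounded by $\|\nabla\bbv\|_\infty\,L\,\|\bbu^h-\bbu\|_{L^1(\mathrm{supp}\,\bbv)}\to0$ using the $L^2$-convergence hypothesis and compact support. (d) Recognize the remaining $\Delta t\sum_n\int_{\R^2}\nabla\bbv(\cdot,t_n)\,\bbf(\bbu(\cdot,t_n))$ as a Riemann sum in $t$ converging to the full space--time integral, the time regularity of $\bbv$ absorbing the $O(\Delta t)$ error.

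Finally, for the discontinuous case the edge terms no longer cancel exactly: the assembly produces jump contributions $\oint_e[\bbv^h]\,\hbbf_\bbn(\bbu^h,\bbu^{h,-})$. But $\bbv$ is smooth, so the interpolant jump obeys $\|[\bbv^h]\|_{L^\infty(e)}=O(h^{k+1})$; summed over edges and time levels this is $O(h^k)\to0$ for $k\ge1$, recovering the same limit (the modification announced in Remark \ref{rds_re:remark}). The main obstacle is not any single estimate but the \emph{bookkeeping that ties the three discretization parameters together}: one must check that the quadrature, interpolation and time-stepping errors all accumulate to quantities that still vanish after the joint summation $\Delta t\sum_n\sum_K$, which is where shape regularity (Assumption \ref{sec:rd:H0}), the uniform $L^\infty$ bound, and the compact support of $\bbv$ enter in concert; moreover the $L^2$-only convergence of $\bbu^h$ forces every flux manipulation through Lipschitz and dominated-convergence arguments rather than pointwise ones.
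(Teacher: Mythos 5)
Your proposal is correct and follows essentially the same route as the paper's proof: collapsing the nodal sum into the Galerkin weak form, splitting off the quadrature error and the edge terms, handling the edge contributions by consistency and continuity of the interpolant (with the $O(h^{k+1})$ jump estimate in the discontinuous case, exactly as in Remark~\ref{rds_re:remark}), and then passing to the limit in the volume term via the interpolation estimate, the Lipschitz continuity of $\bbf$, and the $L^2$ convergence of $\bbu^h$ on the compact support of $\bbv$. The only cosmetic difference is that you make the time Riemann-sum step explicit where the paper leaves it implicit; the substance is identical.
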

This time we give the proof because it explains the role of the quadrature formula.

\begin{proof}
Let $\bbv\in C_0^1(\R^2 \times [0,+\infty[)$. Let $\Omega$ and $T$
such that 
$\text{supp}(\bbv) \subset \Omega \times [0,T]$. Consider $K \in
\TT_h$.
Now $\pi_h^k \bbv$ is the interpolation of $\bbv$ of degree $k$. We have:
%\todo[inline]{Il y avait 2 oint dans (I) un devrait etre un int, j'ai chang\'e. Remi check (M)}
\begin{equation*}
\begin{split}
\Delta t \sum_{n,K}  \sum_{\sigma \in K} &\bbv_\sigma^n
\Phi_{\sigma}^{K, Gal} (\bbu_h^n) =-\sum_{n,K} \int_{t^n}^{t^{n+1}} \int_K\nabla\pi_h^k \bbv(\bbx,t_n)  \bbf^h (\bbu_h^n) \; d\bbx dt\\
&-\underbrace{\sum_{n,K} \int_{t^n}^{t^{n+1}} \bigg (\oint_K\nabla\pi_h^k \bbv(\bbx,t_n)  \bbf^h (\bbu_h^n) \; d\bbx dt-\int_K\nabla\pi_h^k \bbv(\bbx,t_n)  \bbf^h (\bbu_h^n) \; d\bbx dt\bigg )}_{(I)}\\
&
\qquad +
\underbrace{\sum_{n,K} \int_{t^n}^{t^{n+1}}\oint_{\partial K}\pi_h^k \bbv(\bbx,t_n) \hbbf_\bbn(\bbu^h,\bbu^{h,-})\; d\gamma}_{(II)}
\end{split}
\end{equation*}
since  
\begin{equation*}
\begin{split}
 \sum_{\sigma\in K}  \bbv_\sigma^n \Phi_{\sigma}^{K,Gal} (\bbu_h^n)&=
-\sum\limits_K \oint_K \nabla(\pi_h^k \bbv)(x,t_n) \; . \; \bbf^h(\bbu_h^n) \; d\bbx +  \oint_{\partial K}
\pi_h^k \bbv(x,t^n) \hbbf_\bbn(\bbu_h^n,\bbu^{h,-}) d\gamma.
\end{split}\end{equation*}

Moreover,  $\bbv$ is $C^{k+1}$ and the triangulation is regular:
 $\nabla (\pi_h^k \bbv)$ is uniformly bounded by a constant $C$ 
independent of $h$.
Thus we have
\begin{equation*}
\begin{split}
\bigg |\sum\limits_{n,K} \int_{K\times [t_n,t_{n+1}]} \nabla(\pi_h^k&
\bbv)(\bbx,t_n) \;
. \; \bbf^h(\bbu_h^n) \; d\bbx dt- \sum\limits_{n,K} \oint_{K\times [t_n,t_{n+1}]} \nabla \bbv \; . \; \bbf(\bbu) \; d\bbx dt \bigg |\\
&\leq  C  \sum \limits_{n,K}\oint_{K\times[t^n,t^{n+1}]} \Vert{\bbf^h(\bbu_h)-\bbf(\bbu)}\Vert \; d\bbx  dt \\
&\quad + \sum\limits_{n,K}\oint_{K\times[t^n,t^{n+1}]} \Vert{\nabla \pi_h^k\bbv-\nabla \bbv} \Vert\; \Vert{\bbf(\bbu_h)}\Vert\;\; d\bbx dt
 \end{split}\end{equation*}
The first sum is less than
$\Vert{\bbf^h(\bbu_h)-\bbf(\bbu)}\Vert_{L^1(\text{supp }\bbv \times [0,T])}\leq C_\bbf ||\bbu_h-\bbu||_{L^1(\text{supp }\bbv \times [0,T])}$
and tends to  $0$ because  $\Vert{u_h}\Vert_{\infty}$ is bounded independently of
 $h$, 
 $\bbf$ is  $C^1$ and $\bbu_h \rightarrow \bbu$ in  $L^2_{loc}$.

Similarly, since  $\bbu_h$ is bounded and  $\bbf$ is continuous, $\bbf(\bbu_h)$ is bounded
uniformly in $h$ by a constant $C$.
The second term of the right hand side sum is bounded by the $L^1$ norm of 
 $\nabla \pi_h^k\bbv-\nabla \bbv$ that
tends to $0$ since the triangulation is uniform.

Since the quadrature formula sastifies for any $w\in L^\infty(\Omega\times [0,T])$
$$\int_K w\, d\bbx-\oint_K w\; d\bbx=\vert K\vert \times o(1),$$
the term $(I)$ tends to $0$.

We discuss $(II)$. Since
\begin{equation}\label{rds_re:jump} \begin{split}\sum_{n,K} \int_{t^n}^{t^{n+1}}&\oint_{\partial K} \pi_h^k \bbv(\bbx,t_n)  \hbbf_\bbn(\bbu^h,\bbu^{h,-})\; d\gamma\\&
\qquad =
\sum_{n,e\in \EE} \int_{t^n}^{t^{n+1}}\oint_e\bigg [\pi_h^k \bbv(\bbx,t_n)\bigg]  \hbbf_\bbn(\bbu^h,\bbu^{h,-})\; d\gamma, \end{split}\end{equation}
where $[w]$ is the jump of $w$ across $e$, 
we get
\begin{equation*}\begin{split}\bigg \vert \sum_{n,K} \int_{t^n}^{t^{n+1}}\oint_{\partial K} \pi_h^k \bbv(\bbx,t_n) \hbbf_\bbn(\bbu^h,\bbu^{h,-})\; d\gamma\bigg \vert &\leq 
C \sum_{n,e\in \EE} \int_{t^n}^{t^{n+1}} \oint_{\partial K} \bigg \vert \bigg [  \pi_h^k \bbv(\bbx,t_n)\bigg ]\bigg\vert  \; d\gamma\\&\rightarrow 0,\end{split}\end{equation*}
see the remark \ref{rds_re:remark} below.
This ends the proof.
\end{proof}
\begin{rem}
\label{rds_re:remark}
In the case of a continuous representation, $\hbbf_\bbn(\bbu^h,\bbu^{h,-})=\bbf(u^h) \bbn$ by consistency and $\big [  \pi_h^k \bbv(\bbx,t_n)\big ]=0$ because $\pi_h^k\bbv$ is continuous. If one has a discontinuous approximation as for DG, then $\big [  \pi_h^k \bbv(\bbx,t_n)\big ]=O(h^{k+1})$ so that the sum of contributions on the edges scales like $O(h^{d-1}h^{k+1}h^{-d})=O(h^k)$ because there is $O(h^{-d})$ edges in a regular mesh.
This is the only difference between the continuous and the discontinuous case.
\end{rem}
%%%%%%%%%%%%%%%%%%%%%%

The theorem \ref{rds_re:th:LW} is obtained by a simple combination of each of the above lemmas.
%\end{proof}

\subsection{Flux formulation}
\label{sec:rd:flux}
%%%%%%%
In this section we show that the scheme \eqref{eq:hyper} also admits a flux formulation, with an explicit form of the flux: the method is also locally conservative. Local conservation is of course  well known for the Finite Volume and discontinuous Galerkin  approximations. It is much less understood for the  continuous finite elements methods, despite the papers \cite{Hughes1,BurQS:10}.
%\todo[inline]{une question qui me vient \`a l'A(C)sprit est : pour les sch\'ema FV de corner, on trouve le flux num d'origine ou pas ? j'ai pas l'impression car la (4.14b) est vraie que pour un flux d'arrete mais pas de coin ...}

Inspired by the RD formulation of finite volume schemes given in section \ref{sec:FV:1}, we show that any scheme of the form \eqref{forme schemas} can be equivalently rephrased as a finite volume scheme, we explicitly provide the flux formula as well as the control  volumes. In order to illustrate this result, we give several examples: the general RD scheme with $\P^1$ and $\P^2$ approximation on simplex, the case of a $\P^1$ RD scheme using a particular form of the residuals so that one can better see the connection with more standard formulations, and finally  an example with a discontinuous Galerkin formulation using $\P^1$  approximation.

%%%%%%%%%%%%%%%%%%%%%%%%%%%%%%%%%%%%%%%%

Interpreting a scheme  as  a finite volume schemes amounts to defining control volumes and flux functions. 
We first have to adapt the notion of consistency.  The property that stands for the consistency is that if all the states  are identical in an element, then each of the residuals vanishes. Hence, we recall the definition of a multidimensional flux, see definition \ref{sec:rd:MD:consistency-mr}:
 A multidimensional flux 
$$\hbbf_\bbn:=\hbbf_\bbn(\bbu_1, \ldots , \bbu_N)$$
is consistent if, when $\bbu_1= \bbu_2= \ldots = \bbu_N=\bbu$ then
$$\hbbf_\bbn(\bbu, \ldots , \bbu)=\bbf(\bbu) \bbn.
%\hbbf_\bbn(\bbu, \ldots , \bbu)=\bbf(\bbu)\cdot \bbn.
$$

We proceed first with the general case and show the connection with elementary fact about graphs, and then provide several examples. The results of this section apply to any  finite element of finite volume 
method but also to discontinuous Galerkin methods. {An application in the Lagrangian formalism described in section \ref{sec:Lagrangian} %, %and in particular with the equilibrium condition 
will also be discussed.} There is no need for exact evaluation of integral formula (surface or boundary), so that these results apply to schemes as they are implemented.

\subsubsection{General case}
%\todo[inline]{On change de notation ici. Normalement $K$ est pour les elements du maillage et $C$ pour les dual mesh... Remi check (M)}
Let $K$ be an element of the primal mesh, we denote by $\mathcal{S}$ the set of degrees of freedom which are contained in $K$. By construction, some are on the boundary of $K$. We assume that we have constructed on $K$ a graph, denoted by $\mathcal{T}_K$, 
whose nodes are the elements of $\mathcal{S}$. This graph,
%
%which vertices are the elements of $\mathcal{S}$, this graph, $\mathcal{T}_K$ 
is assumed to be simply connected and oriented. We also assume that the vertices of the graph are $\mathcal{S}$.%\todo[inline]{On l'a dit deja non ? (M)}

The problem is to find quantities $\hbbf_{\sigma,\sigma'}$ for any edge $[\sigma,\sigma']$ of   $\mathcal{T}_K$ such that:
\begin{subequations}\label{sec:rd:GC:1}
\begin{equation}
\label{sec:rd:GC:1.1}
\Phi_\sigma=\sum_{\text{ edges }[\sigma,\sigma']} \hbbf_{\sigma,\sigma'}+\hbbf_\sigma^{b}
\end{equation}
and we impose
\begin{equation}
\hbbf_{\sigma,\sigma'}=-\hbbf_{\sigma',\sigma}.
\label{sec:rd:GC:1.2}
\end{equation}
The boundary flux  $\hbbf_\sigma^{b}$ is the 'part' of $\oint_{\partial K} \hbbf_\bbn(\bbu^h,\bbu^{h,-}) \; d\gamma$ associated to $\sigma$. {The control volumes will be defined by their normals so that we get consistency.} Concrete examples will be given later, for each of the case we have considered.

Note that \eqref{sec:rd:GC:1.2} implies that the boundary flux must satisfy the following relation
\begin{equation}
\label{sec:rd:GC:conservation}
\sum\limits_{\sigma\in K}\Phi_\sigma=\sum\limits_{\sigma\in K}\hbbf_\sigma^b.
\end{equation}
%In short, we will consider 
%\begin{equation}
%\label{sec:rd:BC:1.3}
%\hbbf_\sigma^b=\oint_{\partial K} \varphi_\sigma\; \hbbf_\bbn (\bbu^h,\bbu^{h,-}) \; d\gamma,
%\end{equation}
\end{subequations}
%but other  examples can be considered provided the consistency \eqref{sec:rd:GC:conservation} relation holds true, see for example section \ref{sec:rd:particular}.

We say that an edge $[\sigma,\sigma']$ is direct if it is postively oriented.
Any edge $[\sigma,\sigma']$ is either direct or, if not, $[\sigma',\sigma]$ is direct. 
%\todo[inline]{direct== positively oriented in $\mathcal{T}_K$ ? on met ca ? Remi check (M)}
Because of \eqref{sec:rd:GC:1.2}, we only need to know $\hbbf_{\sigma,\sigma'}$ for direct edges. Thus we introduce the notation $\hbbf_{\{\sigma,\sigma'\}}$ for  the flux  assigned to  the direct edge whose extremities are $\sigma$ and $\sigma'$. %We can rewrite \eqref{sec:rd:GC:1.1} as, for any $\sigma\in \mathcal{S}$,
For any $\sigma\in \mathcal{S}$,
we can rewrite \eqref{sec:rd:GC:1.1} as
\begin{equation}
\label{sec:rd:GC:1.1bis}
\sum_{\sigma'\in \mathcal{S}} \varepsilon_{\sigma,\sigma'} \hbbf_{\{\sigma,\sigma'\}}=\Psi_\sigma:=\Phi_\sigma-\hbbf_\sigma^b,
\end{equation}
with
%\todo[inline]{Replaced $\mathcal{T}$ with $\mathcal{T}_K$, Remi check (M)} 
$$
\varepsilon_{\sigma,\sigma'}=\left \{
\begin{array}{ll}
0& \text{ if }\sigma \text{ and }\sigma' \text{ are not on the same edge of }\mathcal{T}_K\\%\mathcal{T},\\
1& \text{ if } [\sigma,\sigma']\text{ is an edge and } \sigma \rightarrow \sigma' \text{ is direct,}\\
-1&  \text{ if } [\sigma,\sigma']\text{ is an edge and } \sigma' \rightarrow \sigma \text{ is direct.}
\end{array}
\right .
$$
$\mathcal{E}^+$ represents the set of direct edges.
Of course we have
\begin{equation}\label{eq:equilibre flux}
    \sum_{\sigma\in K} \Psi_\sigma=\sum_{\sigma\in K,\sigma'\in K}\hbbf_{\sigma,\sigma'}=0
\end{equation}
which is the generalisation of \eqref{eq:point-fv4} of section \ref{sec:motivating:FV:II}.

Hence the problem is to find  a vector $\hbbf=(\hbbf_{\{\sigma,\sigma'\}})_{\{\sigma,\sigma'\} \text{ direct edges}}$ such that
$$\bbA\hbbf=\Psi,$$
where $\Psi=(\Psi_\sigma)_{\sigma\in \mathcal{S}}$ and $A_{\sigma \sigma'}=\varepsilon_{\sigma,\sigma'}$.

We have  the following lemma which shows the existence of a solution.
\begin{lemma}\label{sec:rd:lemma:flux}
For any couple $\{\Phi_\sigma\}_{\sigma\in \mathcal{S}}$ and $\{\hbbf_\sigma^{b}\}_{\sigma\in \mathcal{S}}$ satisfying the condition  \eqref{sec:rd:GC:conservation}, there exists numerical flux functions $\hbbf_{\sigma,\sigma'}$ that satisfy \eqref{sec:rd:GC:1}. Recalling that the  matrix of the Laplacian of the graph is $\bbL=\bbA\bbA^{\mathtt{T}}$, we have
\begin{enumerate}
\item The rank of $\bbL$ is $|\mathcal{S}|-1$ and its image is $\big (\text{span}\{\mathbf{1}\})^\bot$. We still denote the inverse of $\bbL$ on $\big (\text{span}\{\mathbf{1}\} )^\bot$ by $\bbL^{-1}$,
\item 
With the previous notations, a solution is 
\begin{equation}
\label{sec:rd:eq:lemma}\big (\hbbf_{\{\sigma,\sigma'\}}\big )_{\{\sigma,\sigma'\} \text{ direct edges}}=\bbA^{\mathtt{T}}\bbL^{-1} \big (\Psi_\sigma\big )_{\sigma\in \mathcal{S}}.\end{equation}
\end{enumerate}
\end{lemma}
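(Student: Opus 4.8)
The plan is to observe that the matrix $\bbA$, whose generic entry records the signed incidence $\varepsilon_{\sigma,\sigma'}$ of a vertex with a direct edge of $\mathcal{T}_K$, is precisely the oriented incidence matrix of the graph, and then to read off everything from the standard spectral structure of the graph Laplacian $\bbL=\bbA\bbA^{\mathtt{T}}$. Since $\bbA$ has scalar entries in $\{0,\pm 1\}$, the linear algebra below is carried out component by component, so the vector-valued nature of the unknowns $\hbbf_{\{\sigma,\sigma'\}}\in\R^p$ causes no difficulty. First I would identify $\ker\bbL$. As $\bbL$ is symmetric positive semidefinite, $\bbL x=0$ is equivalent to $\bbA^{\mathtt{T}}x=0$, because $x^{\mathtt{T}}\bbL x=\Vert \bbA^{\mathtt{T}}x\Vert^2$. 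Reading off the definition of $\varepsilon_{\sigma,\sigma'}$, the entry of $\bbA^{\mathtt{T}}x$ associated with the direct edge $\sigma\to\sigma'$ equals $x_\sigma-x_{\sigma'}$; hence $\bbA^{\mathtt{T}}x=0$ forces $x$ to be constant along every edge, and since $\mathcal{T}_K$ is connected this means $x\in\text{span}\{\mathbf{1}\}$. Conversely $\mathbf{1}\in\ker\bbL$ since each column of $\bbA^{\mathtt{T}}$ has exactly one $+1$ and one $-1$. Thus $\ker\bbL=\text{span}\{\mathbf{1}\}$, giving $\text{rank}\,\bbL=|\mathcal{S}|-1$, and by symmetry of $\bbL$ we get $\text{Im}\,\bbL=(\ker\bbL)^\perp=(\text{span}\{\mathbf{1}\})^\perp$. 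The same identity $\ker\bbA^{\mathtt{T}}=\text{span}\{\mathbf{1}\}$ yields $\text{Im}\,\bbA=(\ker\bbA^{\mathtt{T}})^\perp=(\text{span}\{\mathbf{1}\})^\perp$, which proves item~(1).

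For item~(2) I would first check solvability. Writing $\Psi_\sigma=\Phi_\sigma-\hbbf_\sigma^b$, the compatibility condition \eqref{sec:rd:GC:conservation} reads exactly $\mathbf{1}^{\mathtt{T}}\Psi=\sum_{\sigma\in K}(\Phi_\sigma-\hbbf_\sigma^b)=0$, i.e. $\Psi\in(\text{span}\{\mathbf{1}\})^\perp=\text{Im}\,\bbA$, so $\bbA\hbbf=\Psi$ is solvable. Moreover $\Psi\in(\text{span}\{\mathbf{1}\})^\perp=\text{Im}\,\bbL$, so $\bbL^{-1}\Psi$ is well defined as the unique element of $(\text{span}\{\mathbf{1}\})^\perp$ mapped to $\Psi$ by $\bbL$, in agreement with the paper's convention for $\bbL^{-1}$. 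Setting $y=\bbL^{-1}\Psi$ and $\hbbf=\bbA^{\mathtt{T}}y$ as in \eqref{sec:rd:eq:lemma}, I would then simply compute $\bbA\hbbf=\bbA\bbA^{\mathtt{T}}y=\bbL y=\Psi$, which is the relation \eqref{sec:rd:GC:1.1bis}; the antisymmetry \eqref{sec:rd:GC:1.2} is automatic because only direct edges carry an independent component of $\hbbf$.

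I expect the only genuinely substantive point to be the identification $\ker\bbA^{\mathtt{T}}=\text{span}\{\mathbf{1}\}$, where connectivity of $\mathcal{T}_K$ is essential; the rest follows from the symmetry of $\bbL$ and the orthogonal splitting $\R^{|\mathcal{S}|}=\text{span}\{\mathbf{1}\}\oplus\text{Im}\,\bbL$. If $\mathcal{T}_K$ is a spanning tree, as the adjective ``simply connected'' suggests, then $\bbA$ has full column rank and the solution is unique; for a merely connected graph the formula \eqref{sec:rd:eq:lemma} selects one particular representative, the remaining freedom being exactly the cycle space of $\mathcal{T}_K$, which lies in $\ker\bbA$ and does not affect \eqref{sec:rd:GC:1.1bis}.
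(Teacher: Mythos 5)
Your proposal is correct and follows essentially the same route as the paper's proof: both factor the problem through the graph Laplacian $\bbL=\bbA\bbA^{\mathtt{T}}$, identify $\ker\bbL=\text{span}\{\mathbf{1}\}$ from connectivity of $\mathcal{T}_K$, deduce $\text{Im}\,\bbL=(\text{span}\{\mathbf{1}\})^\bot$ by symmetry, and verify the formula via $\bbA\bbA^{\mathtt{T}}\bbL^{-1}\Psi=\bbL\bbL^{-1}\Psi=\Psi$, with the compatibility condition \eqref{sec:rd:GC:conservation} guaranteeing $\mathbf{1}^{\mathtt{T}}\Psi=0$. The only differences are cosmetic: you prove $\ker\bbL=\ker\bbA^{\mathtt{T}}=\text{span}\{\mathbf{1}\}$ inline via $x^{\mathtt{T}}\bbL x=\Vert\bbA^{\mathtt{T}}x\Vert^2$ where the paper cites the standard graph-theory result on connected components, and note one trivial slip in your converse step: it is each \emph{row} of $\bbA^{\mathtt{T}}$ (i.e.\ each column of $\bbA$, one per edge) that carries exactly one $+1$ and one $-1$, not each column of $\bbA^{\mathtt{T}}$.
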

\begin{proof}
We first have $\mathbf{1}^{\mathtt{T}}\,\bbA=0$: $\text{ Im } \bbA\subset \big (\text{span }\{1\}\big )^\bot (\subset \R^{|\mathcal{S}|})$. Let us show that we have equality. 
In order to show this, we notice that  the matrix $\bbA$ is nothing more that the incidence matrix of the oriented graph $\mathcal{G}$ defined by the triangulation $\mathcal{T}_K$. %\todo[inline]{triangulation   $\mathcal{T}$ sort une peut de nullepart. il manque une definition quelque part. Remi check (M)}. 
It is known \cite{graph} that its null space of $\bbL$  is equal to the number of connected  components of the graph, i.e. here $\dim \ker \bbL=1$. Since 
$$ \bbL\,\mathbf{1}=0,$$ we see that $\ker \bbL=\text{span }\{\mathbf{1}\}$, so that $\text{ Im }\bbL= \big (\text{span }\{\mathbf{1}\}\big )^{\mathtt{T}}$ because $\bbL$ is symmetric. We can define the inverse of $\bbL$ on $\text{Im }\bbL$,  denoted by $\bbL^{-1}$. 

Let $x\in \big (\text{span }\{\mathbf{1}\}\big )^\bot=\text{ Im }\bbL$. There exists $y\in \R^{|S|}$ such that $x=\bbL y=\bbA(\bbA^{\mathtt{T}}y)$: this shows that $x\in \text{ Im }\bbA$ and thus
$\text{ Im }\bbA=\big (\text{span }\{\mathbf{1}\}\big )^\bot=\big (\text{Im }\bbL\big )^\bot$.  From this we deduce that $\text{rank }\bbA=|\mathcal{S}|-1$ because $\text{ Im } \bbA\subset \R^{|\mathcal{S}|}$.

Let $\Psi\in \R^{|\mathcal{S}|}$ be such that $\mathbf{1}^{\mathtt{T}}\Psi =0$. 
%$\mathbf{1}^{\mathtt{T}}\Psi\rangle=0$. Removed \rangle
We know there exists a unique $z\in \big (\text{span }\{\mathbf{1}\}\big )^\bot$ such that $\bbL z=\Psi$, i.e.
$$\bbA(\bbA^{\mathtt{T}}z)=\Psi.$$
This shows that a solution is given by \eqref{sec:rd:eq:lemma}.
\end{proof}

This set of flux are consistent and we can estimate the normals $\bbn_{\sigma,\sigma'}$.
In the case of a constant state, we have $\Phi_\sigma=0$ for all $\sigma\in K$. Let us assume that
\begin{equation}
\label{sec:rd:GC:consistency}
%\hbbf_\sigma^b=\bbf(\bbu^h)\cdot \mathbf{N}_\sigma
\hbbf_\sigma^b=\bbf(\bbu^h) \mathbf{N}_\sigma
\end{equation}
with $\sum\limits_{\sigma\in K} \mathbf{N}_\sigma=0$: this is the case for all the examples we consider. The flux $\bbf(\bbu^h)$ has components on the canonical basis of $\R^d$:
$\bbf(\bbu^h)=\big (f_1(\bbu^h), \ldots , f_d(\bbu^h)\big )$, so that
$$\hbbf_\sigma^b=\sum\limits_{i=1}^d f_i(\bbu^h)\mathbf{N}^i_\sigma.$$
%If $\lambda_2, \ldots , \lambda_{\#K}$ are the non zero eigenvalues of $L$ and $(\mathbf{e}_2, \ldots , \mathbf{e}_{\#K})$ the corresponding eigenvectors, we have for any $\bbx$,
%$$L(\bbx)=\sum_{l=2}^{\#K} \lambda_l ( \bbx\cdot \mathbf{e}_j) \mathbf{e}_j$$ so that
%$$L^{-1}(\bbx)=\sum_{l=2}^{\#K}\frac{1}{\lambda_i}( \bbx\cdot e_j) e_j.$$
Applying this to $\big (\hbbf_{\sigma_1}^b, \ldots, \hbbf_{\sigma_{\#K}}^b\big )$, we see that the $j$-th component of $\bbn_{\sigma,\sigma'}$ for $[\sigma,\sigma']$ direct, must satisfy:
$$\text{ for any }\sigma\in K, \; \mathbf{N}^j_\sigma=\sum\limits_{[\sigma,\sigma']\text{ edge }}\varepsilon_{\sigma,\sigma'}\bbn_{\sigma,\sigma'}^j$$
i.e.
$$\big ( \mathbf{N}^j_{\sigma_1}, \ldots , \mathbf{N}^j_{\sigma_{\#K}}\big )^{\mathtt{T}}= \bbA \; \big ( \bbn_{\sigma,\sigma'}^j\big )_{[\sigma,\sigma']\in \mathcal{E}^+}.$$
We can solve the system and the solution, with some abuse of language, is
\begin{equation}
\label{sec:rd:GC:normals}
\big ( \bbn_{\sigma,\sigma'}\big )_{[\sigma,\sigma']\in \mathcal{E}^+}=\bbA^{\mathtt{T}}\bbL^{-1} \big ( \mathbf{N}_{\sigma_1}, \ldots , \mathbf{N}_{\sigma_{\#K}}\big )^{\mathtt{T}}
\end{equation}
This also defines the control volumes since we know their normals. We can state:
\begin{proposition}
If the residuals $(\Phi_\sigma)_{\sigma\in K}$ and the boundary fluxes $(\hbbf_\sigma^b)_{\sigma\in K}$ satisfy \eqref{sec:rd:GC:conservation}, and if the boundary fluxes satisfy the consistency relations \eqref{sec:rd:GC:consistency}, then we can find  a set of consistent flux $(\hbbf_{\sigma,\sigma'})_{[\sigma,\sigma']} $ satisfying \eqref{sec:rd:GC:1}. They are given by \eqref{sec:rd:eq:lemma}. In addition, for a constant state,
%$$\hbbf_{\sigma,\sigma'}(\bbu^h)=\bbf(\bbu^h)\cdot\bbn_{\sigma,\sigma'}$$
$$\hbbf_{\sigma,\sigma'}(\bbu^h)=\bbf(\bbu^h)\bbn_{\sigma,\sigma'}$$ for the normals defined by \eqref{sec:rd:GC:normals}.
\end{proposition}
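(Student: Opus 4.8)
The plan is to treat the two assertions separately: the existence of antisymmetric edge fluxes obeying \eqref{sec:rd:GC:1} is essentially a repackaging of Lemma \ref{sec:rd:lemma:flux}, while the constant-state identity is obtained by exploiting the linearity of the reconstruction map $\bbA^{\mathtt{T}}\bbL^{-1}$ in the components of the flux.

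First I would verify the single hypothesis needed to invoke Lemma \ref{sec:rd:lemma:flux}, namely that $\Psi=(\Psi_\sigma)_{\sigma\in\mathcal{S}}$ with $\Psi_\sigma=\Phi_\sigma-\hbbf_\sigma^b$ lies in $\text{Im}\,\bbL=\big(\text{span}\{\mathbf{1}\}\big)^\bot$, equivalently $\mathbf{1}^{\mathtt{T}}\Psi=0$. Summing over $\sigma\in K$ and using the conservation hypothesis \eqref{sec:rd:GC:conservation} gives $\sum_\sigma\Psi_\sigma=\sum_\sigma\Phi_\sigma-\sum_\sigma\hbbf_\sigma^b=0$, which is exactly this condition. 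Lemma \ref{sec:rd:lemma:flux} then furnishes the solution \eqref{sec:rd:eq:lemma} of $\bbA\hbbf=\Psi$, i.e. of \eqref{sec:rd:GC:1.1bis} and hence of \eqref{sec:rd:GC:1.1}; the antisymmetry \eqref{sec:rd:GC:1.2} is automatic from the incidence signs $\varepsilon_{\sigma,\sigma'}$ of the oriented graph.

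For the consistency I would specialise to a constant state $\bbu^h\equiv\bbu$, where the residual consistency gives $\Phi_\sigma=0$ and hence, by \eqref{sec:rd:GC:consistency}, $\Psi_\sigma=-\bbf(\bbu)\mathbf{N}_\sigma$. The hypothesis $\sum_\sigma\mathbf{N}_\sigma=0$ places each scalar field $(\mathbf{N}_\sigma^j)_\sigma$ in $\text{Im}\,\bbL$, so the corner normals \eqref{sec:rd:GC:normals} are well defined and $\bbA\bbn=\mathbf{N}$ holds componentwise via $\bbA\bbA^{\mathtt{T}}\bbL^{-1}=\mathrm{Id}$ on $\text{Im}\,\bbL$. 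Writing $\bbf(\bbu)=(f_1(\bbu),\ldots,f_d(\bbu))$ with columns $f_j(\bbu)\in\R^p$ constant over $K$ and factoring them through the node-indexed operator $\bbA^{\mathtt{T}}\bbL^{-1}$, the formula \eqref{sec:rd:eq:lemma} gives
$$\hbbf_{\{\sigma,\sigma'\}}=\bbA^{\mathtt{T}}\bbL^{-1}\Psi=-\sum_{j=1}^d f_j(\bbu)\,\big(\bbA^{\mathtt{T}}\bbL^{-1}\mathbf{N}^j\big)_{\{\sigma,\sigma'\}},$$
and, recognising $\big(\bbA^{\mathtt{T}}\bbL^{-1}\mathbf{N}^j\big)_{\{\sigma,\sigma'\}}$ as the $j$-th component of the corner normal $\bbn_{\sigma,\sigma'}$ produced by \eqref{sec:rd:GC:normals} (with the orientation fixed along the direct edge $\sigma\to\sigma'$), this is exactly $\bbf(\bbu)\,\bbn_{\sigma,\sigma'}$, which is the claimed consistency in the sense of Definition \ref{sec:rd:MD:consistency-mr}.

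I expect the main obstacle to be precisely this last bookkeeping of signs and orientations: one must ensure that the same incidence convention $\varepsilon_{\sigma,\sigma'}$ and the same pseudo-inverse $\bbA^{\mathtt{T}}\bbL^{-1}$ govern both the flux reconstruction \eqref{sec:rd:eq:lemma} and the normals \eqref{sec:rd:GC:normals}, so that the minus sign carried by $\Psi_\sigma=-\bbf(\bbu)\mathbf{N}_\sigma$ is absorbed consistently into the orientation of the reconstructed normals. Everything else is linear algebra on the graph Laplacian, already packaged in Lemma \ref{sec:rd:lemma:flux}; once the orientation conventions are aligned, the identity $\hbbf_{\sigma,\sigma'}=\bbf(\bbu)\bbn_{\sigma,\sigma'}$ drops out purely from the linearity of $\bbA^{\mathtt{T}}\bbL^{-1}$ in the flux components together with $\bbA\bbA^{\mathtt{T}}\bbL^{-1}=\mathrm{Id}$ on $\text{Im}\,\bbL$.
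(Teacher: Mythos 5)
Your proof is correct and is essentially the paper's own (implicit) argument: existence follows from Lemma \ref{sec:rd:lemma:flux} once \eqref{sec:rd:GC:conservation} gives $\mathbf{1}^{\mathtt{T}}\Psi=0$, and the constant-state identity follows by factoring $\Psi_\sigma=-\bbf(\bbu)\mathbf{N}_\sigma$ through the linear map $\bbA^{\mathtt{T}}\bbL^{-1}$ componentwise. The orientation/sign bookkeeping you flag as the main obstacle is genuine but is inherited from the paper itself: reading \eqref{sec:rd:GC:normals} literally, your computation gives $\hbbf_{\sigma,\sigma'}=-\bbf(\bbu)\bbn_{\sigma,\sigma'}$, and the stated $+$ sign corresponds to defining the normals by $\bbA\bbn=-\mathbf{N}$ rather than $\bbA\bbn=\mathbf{N}$ --- which is in fact the convention realized in the paper's $\P^1$ example (there $\bbn_{12}=\tfrac{1}{6}\big(\bbn_1-\bbn_2\big)$ solves $\bbA\bbn=-\mathbf{N}$ when $\mathbf{N}_\sigma=-\bbn_\sigma/2$), and is what the paper's admitted ``abuse of language'' glosses over.
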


\bigskip

We can state a couple of general remarks:
\begin{rem}
\begin{enumerate} 
\item The flux $\hbbf_{\sigma,\sigma'}$ depend on the $\Psi_\sigma$ and not directly on the $\hbbf_\sigma^b$. We can design the fluxes independently of the boundary flux, and their consistency directly comes from the consistency of the boundary fluxes.
\item 
The residuals depends on more than 2 arguments. For stabilized finite element methods, or the non linear stable residual distribution
 schemes, see e.g.  \cite{Hughes1,struijs,abgrallLarat}, the residuals depend on all the states on  $K$. Thus
the formula \eqref{sec:rd:eq:lemma} shows that the flux depends on more than two states in contrast to the  1D case. In the finite volume case however, the support of the flux function is generally larger than the three states of $K$, think for example of an ENO/WENO method, or a simpler MUSCL one.
\item The numerical fluxes from formula \eqref{sec:rd:eq:lemma} are influenced by the form of the total residual \eqref{rds_re:conservation}. In particular, if the residuals are defined from "standard" flux (like HLL, or Roe's), there is no reason why the flux computed here will coincide. This will be the case if the interior triangulation $\mathcal{T}_K$ is the same as that has been used to define the residual, and if the boundary flux are chosen accordingly.
%\item We have set at the beginning that $\hbbf_{{\sigma,\sigma'}}=-\hbbf_{{\sigma',\sigma}}$. The formula \eqref{sec:rd:eq:lemma} are antisymmetric with respect to the indices, and then do respect the assumed equality.
\item The formula \eqref{sec:rd:eq:lemma} makes no assumption on the approximation space $V^h$: 
the flux formulation is %they are 
valid for continuous and discontinuous approximations. The structure of the approximation space appears only in the total residual.
\end{enumerate}

\end{rem}

\subsubsection{Some particular cases: fully explicit formulas}\label{sec:rd:particular}
Let $K$ be a fixed triangle. We are given a set of residues $\{\Phi_\sigma^K\}_{\sigma\in K}$, our aim here is to define a
 flux function. 
 
 We start by the continuous finite element case 
for $\P^1$ and $\P^2$ interpolant. The the element $K$ is any simplex of the mesh. We consider the Lagrange degrees of freedom, and the graph that connect them, see figures \ref{sec:rd:fig:P2} for the $\P^2$ case, the graph for the $\P^1$ case being the 3 vertices of $K$.

\paragraph{The general  $\P^1$ case.}

The boundary flux are defined as
$$\hbbf_\sigma^b=-\bbf(\bbu_\sigma) \frac{\bbn_\sigma}{2}, $$
where $\bbn_{\bsigma}$ is the inward scaled normal opposite to the vertex $\bsigma$. This comes from left figure of figure \ref{fig:fv}, where, by analogy with the finite volume case, we want that the residual $\Phi_{\bsigma}^K$ be  the sum of the normal flux to the sub-cell $\bm{c}$ equal to $1IGK$, $2JGI$ or $3KGJ$. Hence, the boundary flux for $\bsigma$ will be the sum of $\bbf(\bbu_{\bsigma})$ against the outward normal to $\bm{c}$ contained in the boundary of $K$.

The adjacent matrix is 
$$\bbA=\left (\begin{array}{rrr} 1&0&-1\\
-1&1&0\\
0&-1&1
\end{array}\right ).
$$
A straightforward calculation shows that the matrix $L=A^{\mathtt{T}}A$ has eigenvalues $0$ and $3$ with multiplicity 2 with eigenvectors
$$\bbR=\begin{pmatrix}
\frac{1}{\sqrt{3}} & \frac{1}{\sqrt{2}} & \frac{1}{\sqrt{6}}\\
\frac{1}{\sqrt{3}} & \frac{-1}{\sqrt{2}} & \frac{1}{\sqrt{6}}\\
\frac{1}{\sqrt{3}} & 0                         & \frac{-2}{\sqrt{6}}
\end{pmatrix}
$$
To solve $A\hbbf=\Psi$, we decompose $\Psi$ on the eigen-basis:
$$\Psi=\alpha_2 R_2+\alpha_3R_3,$$ where explicitly
$$\begin{array}{l}
\alpha_2=\frac{1}{\sqrt{2}} \big (\Psi_1-2\Psi_2+\Psi_3\big )\\
\\
\alpha_3=\sqrt{\frac{3}{2}}\big (\Psi_1-\Psi_3\big )
\end{array}
$$
so that 
$$\hbbf=\frac{1}{3}\begin{pmatrix}\Psi_1-\Psi_3 \\ \Psi_2-\Psi_3 \\ \Psi_3-\Psi_2 \end{pmatrix}.$$

In order to describe the control volumes, we first have to make precise the normals $\bbn_\sigma$ in that case. It is easy to see that in all the cases described above, we have 
$$\mathbf{N}_\sigma=-\frac{\bbn_\sigma}{2}.$$ Then a short calculation shows that
$$\begin{pmatrix}
\bbn_{12} \\ \bbn_{23} \\ \bbn_{31} \end{pmatrix}=
\frac{1}{6}\begin{pmatrix} \bbn_1-\bbn_2 \\ \bbn_2 -\bbn_3 \\ \bbn_3 -\bbn_1 \end{pmatrix}.
$$
Using elementary geometry of the triangle, we see that these  are the normals of the elements of the dual mesh. For example, the normal $\bbn_{12}$ is the normal of 
 $PG$, see figure \ref{fig:fv}.
 
 Relying more on the geometrical interpretation (once we know the control volumes), we also can recover the same formula by elementary calculations.
%%%%%%%%%%%%%

\paragraph{The general example of the $\P^2$ approximation. }
Using a similar method, we get (see figure \ref{sec:rd:fig:P2} for some notations):
$$
\begin{array}{lcl}
\hbbf_{14}&=&\dfrac{1}{12}\big (\Psi_1-\Psi_4\big )+\dfrac{1}{36}\big ( \Psi_6-\Psi_5\big )+\dfrac {7}{36}\big (\Psi_1- \Psi_2\big )+\dfrac {5 }{36}\big (\Psi_3-\Psi_1\big )\\
&\\
\hbbf_{16}&=&\dfrac{1}{12}\big ( \Psi_4-\Psi_1\big )+\dfrac {5}{36}\big ( \Psi_5-\Psi_1)
+\dfrac {7}{36}\big ( \Psi_6-\Psi_1\big ) +\dfrac{1}{36}\big ( \Psi_3- \Psi_2\big )\\
&\\
\hbbf_{46}&=&\dfrac{2}{9}\big (\Psi_2-\Psi_6\big )+\dfrac{1}{9}\big (  \Psi_3- \Psi_5\big )\\
&\\
\hbbf_{54}&=&\dfrac{2}{9}\big (  \Psi_5-\Psi_2\big )+\dfrac{1}{9}\big ( \Psi_5-\Psi_1\big )\\
\end{array}
$$
$$
\begin{array}{lcl}
%&\\
\hbbf_{42}&=&\dfrac {7}{36}\big (\Psi_2-\Psi_3\big ) +\dfrac{5}{36}\big (\Psi_1-\Psi_3\big )+\dfrac{1}{12}\big(\Psi_6-\Psi_3\big )+\dfrac{1}{36}\big (\Psi_5-\Psi_4\big ) \\

&\\
\hbbf_{25}&=&\dfrac{1}{36}\big (\Psi_2-\Psi_1\big )+\dfrac{5}{36}\big (\Psi_3-\Psi_5\big ) +\dfrac{7}{36}\big ( \Psi_3-\Psi_5\big )+\dfrac{1}{12}\big (\Psi_3-\Psi_6\big )  \\

&\\
\hbbf_{53}&=&\dfrac{1}{36}\big (\Psi_1-\Psi_6\big )+\dfrac{5}{36}\big (\Psi_3-\Psi_5\big )+\dfrac{7}{36}\big (\Psi_4-\Psi_5\big )+\dfrac{1}{12}\big (\Psi_2-\Psi_5\big )
\\
&\\
\hbbf_{63}&=& \dfrac{1}{36}\big (\Psi_4-\Psi_3\big )+\dfrac{5}{36}\big (\Psi_5-\Psi_1\big )+\dfrac{7}{36}\big (\Psi_5-\Psi_6\big )+\dfrac{1}{12}\big (\Psi_5-\Psi_2\big )\\
&\\

\hbbf_{65}&=&\dfrac{1}{9}\big (\Psi_1- \Psi_3\big )+\dfrac{2}{9}\big ( \Psi_6- \Psi_4\big )\end {array}
$$
Then we choose the boundary flux:
$$\hbbf_\sigma^b=\int_{\partial K}\varphi_\sigma\bbn\; d\gamma$$ and get:
$$
\begin{array}{lll}
\mathbf{N}_l=-\dfrac{\bbn_l}{6} & \text{if }l=1,2,3\\ &&\\
\mathbf{N}_4=\dfrac{\bbn_3}{3}& \mathbf{N}_5=\dfrac{\bbn_1}{3}& \mathbf{N}_6=\dfrac{\bbn_2}{3}
\end{array}
$$
The normals are given by:
$$
\begin{array}{lcl}
\bbn_{14}&=&\dfrac{1}{12}\big (\bbn_1-\bbn_4\big )+\dfrac{1}{36}\big ( \bbn_6-\bbn_5\big )+\dfrac {7}{36}\big (\bbn_1- \bbn_2\big )+\dfrac {5 }{36}\big (\bbn_3-\bbn_1\big )\\
&\\
\bbn_{16}&=&\dfrac{1}{12}\big ( \bbn_4-\bbn_1\big )+\dfrac {5}{36}\big ( \bbn_5-\bbn_1)
+\dfrac {7}{36}\big ( \bbn_6-\bbn_1\big ) +\dfrac{1}{36}\big ( \bbn_3- \bbn_2\big )\\
&\\

\bbn_{46}&=&\dfrac{2}{9}\big (\bbn_2-\bbn_6\big )+\dfrac{1}{9}\big (  \bbn_3- \bbn_5\big )\\
&\\
\bbn_{54}&=&\dfrac{2}{9}\big (  \bbn_5-\bbn_2\big )+\dfrac{1}{9}\big ( \bbn_5-\bbn_1\big )
\end{array}
$$
$$
\begin{array}{lcl}
%&\\
\bbn_{42}&=&\dfrac {7}{36}\big (\bbn_2-\bbn_3\big ) +\dfrac{5}{36}\big (\bbn_1-\bbn_3\big )+\dfrac{1}{12}\big(\bbn_6-\bbn_3\big )+\dfrac{1}{36}\big (\bbn_5-\bbn_4\big ) \\

&\\
\bbn_{25}&=&\dfrac{1}{36}\big (\bbn_2-\bbn_1\big )+\dfrac{5}{36}\big (\bbn_3-\bbn_5\big ) +\dfrac{7}{36}\big ( \bbn_3-\bbn_5\big )+\dfrac{1}{12}\big (\bbn_3-\bbn_6\big )  \\

&\\
\bbn_{53}&=&\dfrac{1}{36}\big (\bbn_1-\bbn_6\big )+\dfrac{5}{36}\big (\bbn_3-\bbn_5\big )+\dfrac{7}{36}\big (\bbn_4-\bbn_5\big )+\dfrac{1}{12}\big (\bbn_2-\bbn_5\big )
\\
&\\
\bbn_{63}&=& \dfrac{1}{36}\big (\bbn_4-\bbn_3\big )+\dfrac{5}{36}\big (\bbn_5-\bbn_1\big )+\dfrac{7}{36}\big (\bbn_5-\bbn_6\big )+\dfrac{1}{12}\big (\bbn_5-\bbn_2\big )\\
&\\

\bbn_{65}&=&\dfrac{1}{9}\big (\bbn_1- \bbn_3\big )+\dfrac{2}{9}\big ( \bbn_6- \bbn_4\big )\end {array}
$$

\bigskip

There is not uniqueness, and it is possible to construct different solutions to the problem, simply because it is possible to construct different graphs. In what follows, we show another possible construction. 
We consider the set-up defined by Figure \ref{sec:rd:fig:P2}.
\begin{figure}[h!]
\begin{center}
\includegraphics[width=0.45\textwidth]{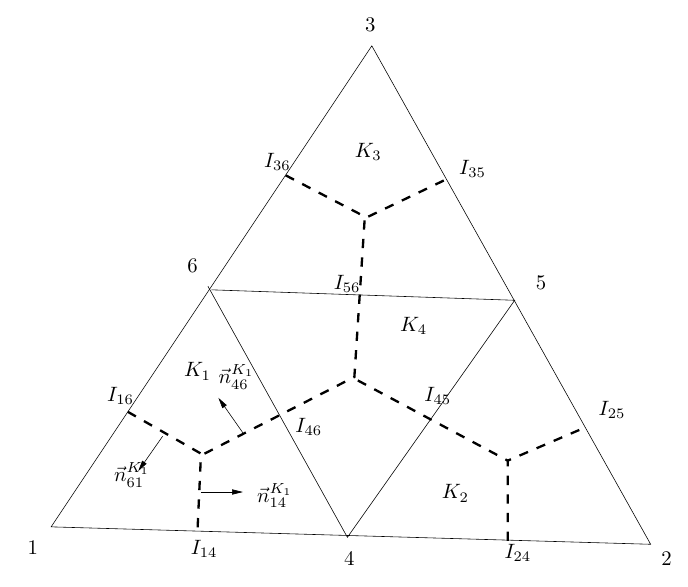}
\end{center}
\caption{\label{sec:rd:fig:P2} Geometrical elements for the $\P^2$ case. $I_{ij}$ is the mid-point between the vertices $i$ and $j$. The intersections of the dotted lines are the centroids of the sub-elements.}
\end{figure}
The triangle is split first into 4 sub-triangles $K_1$, $K_2$, $K_3$ and $K_4$. From this sub-triangulation,
 we can construct a dual mesh as in the $\P^1$ case and we have represented the  6 sub-zones
 that are the intersection of the dual control volumes and the triangle $K$.  Our notations are as follow: given any sub-triangle $K_\xi$,
 if $\gamma_{ij}$ is intersection between two adjacent control volumes (associated to $\sigma_i$ and $\sigma_j$ vertices of $K_\xi$), the normal
 to $\gamma_{ij}$ in the direction $\sigma_i$ to $\sigma_j$ is denoted by $\bbn_{ij}^{\xi}$. Similarly the flux across $\gamma_{ij}$ is denoted 
$\hbbf_{ij}^\xi$.

Then we need to define boundary fluxes.  If $\sigma$ belongs to $K_l$, we denote the boundary flux as $\hbbf_\sigma^{K_l}$. A rather  natural condition is that
$$\begin{array}{ll}
\hbbf_l^{K_l}=\hbbf_l^b & l=1,2,3\\
\hbbf_4^{K_l}=\frac{1}{3}\hbbf_4^b & l=1,2,4\\
\hbbf_5^{K_l}=\frac{1}{3}\hbbf_5^b & l=2,3,4\\
\hbbf_6^{K_l}=\frac{1}{3}\hbbf_6^b & l=1,3,4.\\
\end{array}
$$
We recover the conservation relation. Other choices are possible since this one is arbitrary: the only true condition is that the sum of the boundary flux is equal to the sum  of the $\hbbf_j^b$ for $j=1, \ldots, 6$: this is the conservation relation.

Then we set:
\begin{equation}
\label{sec:rd:rd:fv:P2}
\begin{array}{lll} 
\Phi_1&= -\hbbf_{\bbn_{61}}^1+\hbbf_{\bbn_{14}}^1&+\hbbf_1^b\\
\Phi_2&=-\hbbf_{\bbn_{42}}^2+\hbbf_{\bbn_{25}}^2&+\hbbf_2^b\\
\Phi_3&=-\hbbf_{\bbn_{53}}^3+\hbbf_{\bbn_{36}}^3&+\hbbf_3^b\\
\Phi_4&=-\hbbf_{\bbn_{14}}^1+\big (\hbbf_{\bbn_{46}}^1-\hbbf_{\bbn_{64}}^4\big )+\big (\hbbf_{\bbn_{45}}^4-\hbbf_{\bbn_{54}}^2\big )+\hbbf_{\bbn_{42}}^2&+\hbbf_4^b\\
\Phi_5&= -\hbbf_{\bbn_{25}}^2+\big ( \hbbf_{\bbn_{54}}^2-\hbbf_{\bbn_{45}}^4\big ) + \big ( \hbbf_{\bbn_{56}}^4-\hbbf_{\bbn_{65}}^3\big ) + \hbbf_{\bbn_{53}}^3&+\hbbf_5^b\\
\Phi_6&=-\hbbf_{\bbn_{36}}^3+\big (\hbbf_{\bbn_{65}}^3-\hbbf_{\bbn_{56}}^4\big )+\big (\hbbf_{\bbn_{64}}^4-\hbbf_{\bbn_{46}}^1\big )+\hbbf_{\bbn_{61}}^1&+\hbbf_6^b
\end{array}
\end{equation}
We can group the terms in \eqref{sec:rd:rd:fv:P2}  by sub-triangles, namely:
\begin{equation}
\label{sec:rd:rd:fv:P2:2}
\begin{array}{lclclcl}
\Phi_1&=& \big (-\hbbf_{\bbn_{61}}^1+\hbbf_{\bbn_{14}}^1+\hbbf_1^b \big )& \\
\Phi_2&=&\big (-\hbbf_{\bbn_{42}}^2+\hbbf_{\bbn_{25}}^2+\hbbf_2^b \big )&\\
\Phi_3&=&\big (-\hbbf_{\bbn_{53}}^3+\hbbf_{\bbn_{36}}^3+\hbbf_3^b\big )&\\
\Phi_4&=&\big (-\hbbf_{\bbn_{14}}^1+\hbbf_{\bbn_{46}}^1+\hbbf_4^{K_1}\big ) &+&\big ( -\hbbf_{\bbn_{64}}^4+\hbbf_{\bbn_{45}}^4+\hbbf_1^{K_4} \big )\\
&&&+&\big (-\hbbf_{\bbn_{54}}^2+\hbbf_{\bbn_{42}}^2+\hbbf_4^{K_2} \big )\\
\Phi_5&=&\big (-\hbbf_{\bbn_{25}}^2+\hbbf_{\bbn_{54}}^2+\hbbf_5^{K_2} \big)&
+&\big(-\hbbf_{\bbn_{45}}^4+\hbbf_{\bbn_{56}}^4+\hbbf_5^{K_4}\big )\\
&&& +&\big (-\hbbf_{\bbn_{65}}^3+\hbbf_{\bbn_{53}}^3+\hbbf_5^{K_3}\big )\\
\Phi_6&=&\big (-\hbbf_{\bbn_{36}}^3+\hbbf_{\bbn_{65}}^3+\hbbf_6^{K_3} \big )& 
+&\big (-\hbbf_{\bbn_{56}}^4+\hbbf_{\bbn_{64}}^4+\hbbf_6^{K_4} \big )\\
&&&+&\big ( -\hbbf_{\bbn_{46}}^1+\hbbf_{\bbn_{61}}^1+\hbbf_6^{K_1}\big ).
\end{array}
\end{equation}

Then we define the sub-residuals per sub elements:
\begin{equation}
\label{sec:rd:rd:fv:P2:3}
\begin{split}
\Phi_1^1=-\hbbf_{\bbn_{61}}^1+\hbbf_{\bbn_{14}}^1+\hbbf_1^{b_{\phantom{1}}}  &,\qquad\Phi_4^2=-\hbbf_{\bbn_{54}}^2+\hbbf_{\bbn_{42}}^2+\hbbf_4^{K_2}\\
\Phi_4^1=-\hbbf_{\bbn_{14}}^1+\hbbf_{\bbn_{46}}^1+\hbbf_4^{K_1} &,\qquad\Phi_2^2=-\hbbf_{\bbn_{42}}^2+\hbbf_{\bbn_{25}}^2+\hbbf_2^{K_2}\\
\Phi_6^1=-\hbbf_{\bbn_{46}}^1+\hbbf_{\bbn_{61}}^1+\hbbf_6^{K_1}  &,\qquad\Phi_5^2=-\hbbf_{\bbn_{25}}^2+\hbbf_{\bbn_{54}}^2+\hbbf_5^{K_2}\\
&\\
\Phi_5^3=-\hbbf_{\bbn_{65}}^3+\hbbf_{\bbn_{53}}^3+\hbbf_5^{K_3}  &,\qquad\Phi_4^4=-\hbbf_{\bbn_{64}}^4+\hbbf_{\bbn_{45}}^4+\hbbf_4^{K_4}\\
\Phi_3^3=-\hbbf_{\bbn_{36}}^3+\hbbf_{\bbn_{65}}^3+\hbbf_3^{K_3}  &,\qquad\Phi_5^4=-\hbbf_{\bbn_{45}}^4+\hbbf_{\bbn_{56}}^4+\hbbf_5^{K_4}\\
\Phi_6^3=-\hbbf_{\bbn_{36}}^3+\hbbf_{\bbn_{65}}^3+\hbbf_6^{K_3}  &,\qquad\Phi_6^4=-\hbbf_{\bbn_{56}}^4+\hbbf_{\bbn_{64}}^4+\hbbf_6^{K_4},
\end{split}
\end{equation}
so we are back to the $\P^1$ case: in each sub-triangle, we can define flux that will depend on the 6 states of the element via the boundary flux.
This is legitimate because in the $\P^1$ case,  we have not used the fact that the interpolation is linear, we have only used the fact that we have 3 vertices. Clearly the fluxes are consistent in the sense of definition \ref{sec:rd:MD:consistency-mr}.

The same argument can be clearly extended to higher degree element, as well as to non triangular element: what is needed is to subdivide the element into 
sub-triangles. %Similarly, this can be extended to discontinuous elements in the spirit of remark \ref{sec:rd:remark:general}.

The two solutions we have presented for the $\P^2$ case are different: the control volumes are different, since they have more sides in the second case than in the first one.

\paragraph{Discontinuous Galerkin schemes.} $ $\\
We show two examples, that of a triangular element and $\P^1$ approximation, and that of  a quadrangle and $\P^2$ approximation.

In the $\P^1$ case, the flux between two DOFs $\sigma$ and $\sigma'$ is given by
%$$\hbbf_{\sigma,\sigma'}(\bbu^h,\bbu^{h,-})=\oint_{\partial K}(\varphi_\sigma-\varphi_{\sigma'})\hbbf_\bbn(\bbu^h,\bbu^{h,-})\; d\gamma-\oint_K \nabla\big (\varphi_\sigma-\varphi_{\sigma'}\big )\cdot \bbf(\bbu^h)\; d\bbx.$$
$$\hbbf_{\sigma,\sigma'}(\bbu^h,\bbu^{h,-})=\oint_{\partial K}(\varphi_\sigma-\varphi_{\sigma'})\hbbf_\bbn(\bbu^h,\bbu^{h,-})\; d\gamma-\oint_K \nabla\big (\varphi_\sigma-\varphi_{\sigma'}\big ) \bbf(\bbu^h)\; d\bbx.$$
Again, from simple geometry, $$\nabla\big (\varphi_\sigma-\varphi_{\sigma'}\big )=-\frac{\bbn_{\sigma\sigma'}}{|K|},$$ so that
\begin{equation*}
%\label{sec:rd:flux:DG}
\hbbf_{\sigma,\sigma'}(\bbu^h,\bbu^{h,-})=\oint_{\partial K}(\varphi_\sigma-\varphi_{\sigma'})\hbbf_\bbn(\bbu^h,\bbu^{h,-})\; d\gamma+\frac{\oint_K  \bbf(\bbu^h)\; d\bbx}{|K|}\bbn_{\sigma\sigma'}.
\end{equation*}
%\begin{equation*}
%\label{sec:rd:flux:DG}
%\hbbf_{\sigma,\sigma'}(\bbu^h,\bbu^{h,-})=\oint_{\partial K}(\varphi_\sigma-\varphi_{\sigma'})\hbbf_\bbn(\bbu^h,\bbu^{h,-})\; d\gamma+\frac{\oint_K  \bbf(\bbu^h)\; d\bbx}{|K|}\cdot\bbn_{\sigma\sigma'}.
%\end{equation*}
Note that $\oint_{\partial K}(\varphi_\sigma-\varphi_{\sigma'})\; d\gamma=0$ if we take the same quadrature formula on each edge, as it is usually done. Hence, denoting by $\bar \bbu$ the cell average of $\bbu^h$ on $K$, we can rewrite the flux as 
\begin{equation}
\label{sec:rd:flux:DG}
\hbbf_{\sigma,\sigma'}(\bbu^h,\bbu^{h,-})=\frac{\oint_K  \bbf(\bbu^h)\; d\bbx}{|K|}\bbn_{\sigma\sigma'}+ \oint_{\partial K}(\varphi_\sigma-\varphi_{\sigma'})\big ( \hbbf_\bbn(\bbu^h,\bbu^{h,-})-\bbf(\bar\bbu)\bbn\big )\; d\gamma
\end{equation}
%\begin{equation}
%\label{sec:rd:flux:DG}
%\hbbf_{\sigma,\sigma'}(\bbu^h,\bbu^{h,-})=\frac{\oint_K  \bbf(\bbu^h)\; d\bbx}{|K|}\cdot\bbn_{\sigma\sigma'}+ \oint_{\partial K}(\varphi_\sigma-\varphi_{\sigma'})\big ( \hbbf_\bbn(\bbu^h,\bbu^{h,-})-\bbf(\bar\bbu)\cdot\bbn\big )\; d\gamma
%\end{equation}
so that the second term can be interpreted as a dissipation. The control volume is depicted in figure  \ref{sec:rd:control:DG}.
\begin{figure}[ht]
\begin{center}
\includegraphics[width=0.5\textwidth]{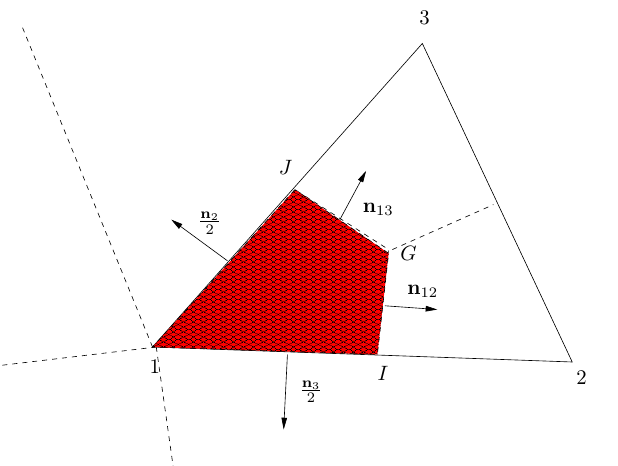}
\end{center}
\caption{\label{sec:rd:control:DG} Representation of the control volume associated to DOF $1$.}
\end{figure}
Referring to figure \ref{sec:rd:control:DG} for the DOF $\# 1$, the flux on the faces $I1J$ is $\oint_{\partial K}\varphi_1 \hbbf_\bbn(\bbu^h,\bbu^{h,-})\; d\gamma$. In order to respect some geometrical assignment, the flux on $1I$ is set to
$$\oint_{1I}\varphi_1 \hbbf_\bbn(\bbu^h,\bbu^{h,-})\; d\gamma$$
and on $1J$,
$$\oint_{1J}\varphi_1 \hbbf_\bbn(\bbu^h,\bbu^{h,-})\; d\gamma.$$

 \bigskip
 Let us give the example of a third order dG scheme on a quad, see figure \ref{dG3}

\begin{figure}[ht]
\begin{center}
\includegraphics[width=0.45\textwidth]{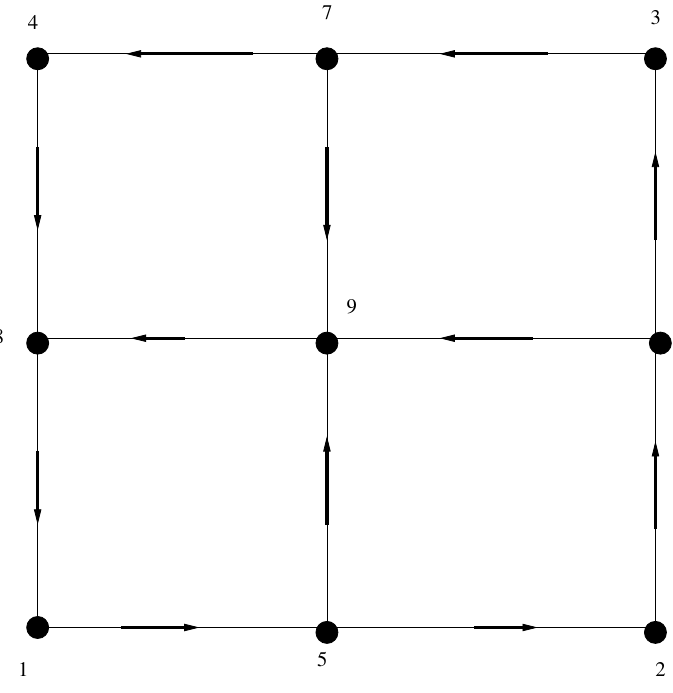}
\end{center}
\caption{\label{dG3} Incidence graph and numbering of the degrees of freedom.}
\end{figure}
Again we set 
$\hbbf_i^{b}=\int_{\partial K} \varphi_i\hat{f}_{\mathbf{n}} d\gamma$.
and 
set $\Psi_\sigma=\Phi_\sigma-f_\sigma^{B}$, so that
\begin{equation*}
\begin{split}
\Psi_1=\hbbf_{15}-\hbbf_{81}, & \quad \Psi_2=-\hbbf_{52}-\hbbf_{26}, \quad \Psi_3=-\hbbf_{63}+\hbbf_{37}\\
\Psi_4=-\hbbf_{74}+\hbbf_{48}&\quad
\Psi_5=-\hbbf_{15}+\hbbf_{52}+\hbbf_{59}, \quad
\Psi_6=-\hbbf_{26}+\hbbf_{63}+\hbbf_{69}\\
\Psi_7=-\hbbf_{37}+\hbbf_{74}+\hbbf_{79}&\quad
\Psi_8=-\hbbf_{48}-\hbbf_{98}+\hbbf_{81}, \quad
\Psi_9=-\hbbf_{59}-\hbbf_{69}+\hbbf_{79}+\hbbf_{98}
\end{split}
\end{equation*}
The incidence matrice is 
 $$
\bbA
=
\left ( \begin{array}{rrrrrrrrrrrr}
1&0 &0&0&0&0&0&-1&0&0&0&0\\
0&-1&1&0&0&0&0&0&0 &0&0&0\\
0& 0&0&-1&1&0&0&0&0 &0&0&0\\
0&0&0&0&0&-1&1&0&0 &0&0&0\\
-1&1&0&0&0&0&0&0&0 &0&0&1\\
0&0&-1&1&0&0&0&0&0 &0&1&0\\
0&0&0&0&-1&1&0&0&1 &0&0&0\\
0&0&0&0&0&0&-1&1&0 &-1&0&0\\
0&0&0&0&0&0&0&0&-1&1&-1&-1\\
\end{array}\right )
$$
The unknowns flux and  known $\Psi$ are
$$f=\big ( \hbbf_{15}, \hbbf_{52}, \hbbf_{26} , \hbbf_{63}, \hbbf_{37}, \hbbf_{74} , \hbbf_{48} ,\hbbf_{81} ,\hbbf_{79} , \hbbf_{98} , \hbbf_{69} ,\hbbf_{59}\big )^{\mathtt{T}}, \qquad 
\Psi=\big (\Psi_{1},\Psi_2, \ldots ,\Psi_9 \big )^{\mathtt{T}}, \Psi_\sigma=\Phi_\sigma-f_\sigma^{B}$$
so that $$\Psi=\bbA\;f$$
The Graph Laplacian is
$$\bbL=\bbA \bbA^{\mathtt{T}}
=\left (\begin{array}{ccccccccc} 
2&0&0&0&-1&0&0&-1&0\\
0&2&0&0&-1&-1&0&0&0\\
0&0&2&0&0&-1&-1&0&0\\
0&0&0&2&0&0&-1&-1&0\\
-1&-1&0&0&3&0&0&0&-1\\
0&-1&-1&0&0&3&0&0&-1\\
-1&0&0&-1&0&0&0&3&-1\\
0&0&0&0&-1&-1&-1&-1&4
\end{array}
\right )
$$
We know that 
$$\ker \bbL=\text{ span }\{\mathbf{x}_0=(1,1,1,1,1,\ldots , 1)^{\mathtt{T}}\}\text{ so that }\R^n=\ker \bbL\oplus \big (\ker \bbL\big )^\bot.$$
For any $\alpha>0$ we observe that
$$\bbL= \big ( \bbL+(\alpha-1)\dfrac{\mathbf{x}_0\otimes\mathbf{x}_0}{n}\big ) +\alpha \dfrac{\mathbf{x}_0\otimes\mathbf{x}_0}{n}$$
with $$\bbL+(\alpha-1)\dfrac{\mathbf{x}_0\otimes\mathbf{x}_0}{n}$$ invertible.
From this we get {$$\bbL^{-1}= \big ( \bbL+(\alpha-1)\dfrac{\mathbf{x}_0\otimes\mathbf{x}_0}{n}\big )^{-1} +\dfrac{1}{\alpha }\dfrac{\mathbf{x}_0\otimes\mathbf{x}_0}{n}$$}

We get 
$$\bbL^{-1}=\left( \begin {array}{ccccccccc} {\frac {13}{24}}&\frac{-1}{12}&-{\frac {5}{
24}}&\frac{-1}{12}&{\frac {7}{72}}&-{\frac {11}{72}}&-{\frac {11}{72}}&{\frac 
{7}{72}}&\frac{-1}{18}\\ \noalign{\medskip}\frac{-1}{12}&{\frac {13}{24}}&\frac{-1}{12}&-{
\frac {5}{24}}&{\frac {7}{72}}&{\frac {7}{72}}&-{\frac {11}{72}}&-{
\frac {11}{72}}&\frac{-1}{18}\\ \noalign{\medskip}-{\frac {5}{24}}&\frac{-1}{12}&{
\frac {13}{24}}&\frac{-1}{12}&-{\frac {11}{72}}&{\frac {7}{72}}&{\frac {7}{72}
}&-{\frac {11}{72}}&\frac{-1}{18}\\ \noalign{\medskip}\frac{-1}{12}&-{\frac {5}{24}}&\frac{-1}{12}&{\frac {13}{24}}&-{\frac {11}{72}}&-{\frac {11}{72}}&{\frac {7}{
72}}&{\frac {7}{72}}&\frac{-1}{18}\\ \noalign{\medskip}{\frac {7}{72}}&{\frac 
{7}{72}}&-{\frac {11}{72}}&-{\frac {11}{72}}&{\frac {13}{36}}&\frac{-1}{18}&-{
\frac {5}{36}}&\frac{-1}{18}&0\\ \noalign{\medskip}-{\frac {11}{72}}&{\frac {7
}{72}}&{\frac {7}{72}}&-{\frac {11}{72}}&\frac{-1}{18}&{\frac {13}{36}}&\frac{-1}{18}&
-{\frac {5}{36}}&0\\ \noalign{\medskip}-{\frac {11}{72}}&-{\frac {11}{
72}}&{\frac {7}{72}}&{\frac {7}{72}}&-{\frac {5}{36}}&\frac{-1}{18}&{\frac {13
}{36}}&\frac{-1}{18}&0\\ \noalign{\medskip}{\frac {7}{72}}&-{\frac {11}{72}}&-
{\frac {11}{72}}&{\frac {7}{72}}&\frac{-1}{18}&-{\frac {5}{36}}&\frac{-1}{18}&{\frac {
13}{36}}&0\\ \noalign{\medskip}\frac{-1}{18}&\frac{-1}{18}&\frac{-1}{18}&\frac{-1}{18}&0&0&0&0&\frac{2}{9}
\end {array} \right) 
$$
and 
$$f=\bbA\bbL^{-1}\Psi$$
The normals are again obtained by consistency.

%%%%%%%%%%%%%%%%%%%%%
\section{A cell-centered subface-based unconventional Finite Volume discretization of Lagrangian hydrodynamics}
\label{sec:Lagrangian}
%%%%%%%%%%%%%%%% 
%\todo[inline]{modifi\'e pour lisser la transition, \`a relire (M)}
We consider now the discretization of the Lagrangian gas dynamics equations \eqref{eq:Lcv}. These equations are  characterized by their moving-grid framework. onsequently, the evolving geometry must be treated with particular care.
We present a subface-based, cell-centered finite volume method. To ensure consistency with the grid motion, the volume equation also known as the Geometrical Conservation Law (GCL)-is discretized in a compatible manner. This approach naturally leads to a node-based approximation of the volume flux, 
then also extended to construct the approximations of the momentum and total energy fluxes. The 
resulting method is   a
generalization to moving meshes  the multidimensional finite volume approach of section \ref{sec:motivating:FV:II}, thus naturally fitting the general framework of this paper. 
Such a node-based treatment of the physical flux is less common than the classical face-based flux formulations, which are conservative by construction. As already said, the proposed scheme  provides a nontrivial example of a finite volume method that is not inherently expressed in terms of face-based fluxes, but corner multidimensional fluxes.
The flux formulation of Section~\ref{sec:gt} allows
to recast the scheme in terms of subface-based fluxes.

For clarity of exposition, we describe the discretization in two dimensions, noting that its extension to three dimensions is straightforward and introduces no additional conceptual difficulties.

\subsection{Moving grid Finite Volume discretization}
\label{ssec:FVdiscretization}
%%%%%%%%%%%%%%%% 
\subsubsection{Moving grid notation}
%%%%%%%%%%%%%%%% 
\label{sssec:Moving}
The two-dimensional Euclidean space is equipped with the direct orthonormal basis $(\mathbf{e}_x,\mathbf{e}_y)$ which is supplemented by $\mathbf{e}_z=\mathbf{e}_x \times \mathbf{e}_y$. The computational domain at time $t$, $\mathcal{D}(t)$, is paved by means of a collection of nonoverlapping conformal polygonal cells denoted $\omega_c(t)$ and thus $\mathcal{D}(t)=\cup_c \omega_c(t)$. Here, we shall utilize the notations introduced in Section~\ref{sec:motivating:FV:II}. For the sake of completeness, let us recall that the polygonal cell $\omega_c(t)$ is characterized by the set of its vertices $\mathcal{P}(c)$. The generic vertex also named point is denoted using the label $p$ and its vector position is $\mathbf{x}_p(t)$. In the counterclockwise ordered list of points of cell $\omega_c(t)$, the vertex $p^{+}$ is the next vertex with respect to $p$ and $p^{-}$ is the previous one, refer to Fig.~\ref{fig:polygrida-mr}.  Gathering the subcells surrounding the generic vertex $p$ we define the dual cell, refer to Fig.~\ref{fig:polygridb-mr}
\begin{equation}
  \label{eq:dualcell}
  \omega_p=\bigcup_{c \in \mathcal{C}(p)} \omega_{pc},
\end{equation}
where $\mathcal{C}(p)$ is the set of cells sharing vertex $p$. The volume of the polygonal cell might be expressed in terms of the position vectors of its vertices
\begin{equation}
  \label{eq:volume}
  |\omega_c(t)|=\sum_{p \in \mathcal{P}(c)} \frac{1}{2} (\mathbf{x}_{p} \times \mathbf{x}_{p^{+}})\cdot \mathbf{e}_z.
\end{equation}
The above formula stems from the decomposition of the polygonal cell into triangular subcells, refer for instance to \cite{Whalen1996}. 

Finally, let us recall that each face $f \in \mathcal{F}(c)$ of the polygonal cell might be split in two subfaces which are the line segments connecting the midpoint and the vertices, refer to Fig.~\ref{fig:polygrida-mr}. For a point $p \in \mathcal{P}(c)$ we consider the set of subfaces attached to $p$ and to $c$, $\mathcal{SF}(pc)$, that is the set of subfaces of cell $c$ impinging at point $p$. In the present two-dimensional case, the subfaces are nothing but the ``half-faces'' introduced previously and thus $\mathcal{SF}(pc)=[p^{-\frac{1}{2}},p] \cup [p,p^{+\frac{1}{2}}]$. For $f \in \mathcal{SF}(pc)$ we denote by $l_{pcf}$ the measure of the face and $\mathbf{n}_{pcf}$ its unit outward normal. We point out that the set of subfaces of a given cell is a partition of the set of its faces $\mathcal{F}(c)=\bigcup_{p \in \mathcal{P}(c)} \mathcal{SF}(pc)$. In two dimensions, the notion of a subface may appear unnecessary. Nevertheless, it proves useful in what follows, as it allows us to formulate the Finite Volume discretization in a general framework that naturally extends to three dimensions. The interested reader might refer to \cite{Delmas2025} for a detailed description of the construction of subfaces on general polyhedral grids.
%%%%%%%%%%%%%%%% 
\subsubsection{The corner normal vector and the Geometrical Conservation Law}
%%%%%%%%%%%%%%%% 
\label{sssec:corner}
By virtue of \eqref{eq:volume}, it is clear that the cell volume is a function of its vertices coordinate, {\it i.e.}, $|\omega_c(t)|:=|\omega_c(t)(\mathbf{x}_1,\dots,\mathbf{x}_p,\dots,\mathbf{x}_{|\mathcal{P}(c)|})|$. Then, we compute the time rate of change of the polygonal cell volume simply applying chain rule of composed derivatives
%\todo[inline]{C'est quoi $XS\mathcal{P}c$?}
%\todo[inline]{RA(C)ponse PH : la notation est peut Aatre mal choisie pour moi $|\mathcal{P}(c)|$ dA(C)signe le cardinal de $\mathcal{P}(c)$}
$$\frac{\mathrm{d}}{\mathrm{d}t} |\omega_c(t)|=\sum_{p \in \mathcal{P}(c)} \frac{\partial |\omega_c|}{\partial \mathbf{x}_p} \cdot \frac{\mathrm{d} \mathbf{x}_{p}}{\mathrm{d} t}.$$
This formula puts forward the crucial role plays by the partial derivative of the volume with respect to the vertex position vector which is known as the corner normal vector, cf. \cite{Caramana1998} and the references therein
\begin{equation}
  \label{eq:defcornervec}
  l_{pc}\mathbf{n}_{pc}=\frac{\partial |\omega_c|}{\partial \mathbf{x}_p},
\end{equation}
where $\mathbf{n}_{pc}^{2}=1$. Substituting \eqref{eq:volume} into the foregoing definition leads to the expression of the corner normal of a generic polygonal cell
\begin{equation}
  \label{eq:fromcornervec}
  l_{pc}\mathbf{n}_{pc}=\frac{1}{2} [\mathbf{x}_{p^{+}}(t)-\mathbf{x}_{p^{-}}(t) ] \times \mathbf{e}_z=l_{pc}^{-}\mathbf{n}_{pc}^{-}+l_{pc}^{+}\mathbf{n}_{pc}^{+}.
\end{equation}
This shows that the corner normal is the summation of the outward normals to the ``half-faces'' impinging at point $p$, refer to Fig.~\ref{fig:polygrida-mr}. Finally, introducing the corner normal allows us to rewrite the time rate of change of the polygonal cell volume under the form
\begin{equation}
  \label{eq:GCLsemidiscrete}
  \frac{\mathrm{d}}{\mathrm{d}t} |\omega_c(t)|-\sum_{p \in \mathcal{P}(c)} l_{pc}\mathbf{n}_{pc} \mathbf{v}_p=0.
\end{equation}
Here, we have also made use of the point $p$ semi-discrete trajectory equation
\begin{equation}
  \label{eq:trajeqsd}
  \frac{\mathrm{d}\mathbf{x}_p}{\mathrm{d}t}=\mathbf{v}_{p},\;\mathbf{x}_p(0)=\mathbf{X}_p,
\end{equation}
where $\mathbf{v}_{p}$ is the point $p$ velocity needed to move the mesh. By proceeding like this we have obtained a semi-discrete version of the volume equation which is fully compatible with the semi-discrete trajectory equation of the grid nodes.

Noticing that the corner normal vector $l_{pc}\mathbf{n}_{pc}$ is the sum of the normal vectors to the subfaces belonging to $\mathcal{SF}(pc)$,
equation \eqref{eq:GCLsemidiscrete}
 is rewritten under the equivalent form
  \begin{equation}
  \label{eq:GCLsemidiscretebis}
  \frac{\mathrm{d}}{\mathrm{d}t} |\omega_c(t)|-\sum_{p \in \mathcal{P}(c)}\sum_{f \in \mathcal{SF}(pc)} l_{pcf}\mathbf{n}_{pcf} \mathbf{v}_p=0.
\end{equation}
This makes appear quite naturally a node-centered volume flux expressed in terms of the node velocity and the subfaces normals.
  
Finally, we observe that the corner normal fulfills the fundamental geometrical identity
\begin{equation}
  \label{eq:geomiden}
  \sum_{p \in \mathcal{P}(c)} l_{pc}\mathbf{n}_{pc}=\mathbf{0}.
\end{equation}
This result, by virtue of \eqref{eq:fromcornervec}, is a direct consequence of the fact that the sum of the outward normals to a closed contour is equal to zero.

The corner vector is also the cornerstone to define discrete mathematical operators such as the discrete divergence operator following the approach introduced in \cite{Shashkov1996} in the more general framework of mimetic finite difference method. For any vector field $\mathbf{q}:=\mathbf{q}(\mathbf{x}),\;\text{for}\; \mathbf{x} \in \omega_c$ we define the cell-centered discrete divergence 
\begin{equation}
  \label{eq:discretediv}
  \text{DIV}_c(\mathbf{q})=\frac{1}{|\omega_c|}\sum_{p \in \mathcal{P}(c)} l_{pc}\mathbf{n}_{pc} \cdot \mathbf{q}_{p},
\end{equation}
where $\mathbf{q}_p=\mathbf{q}(\mathbf{x}_{p})$. Identity \eqref{eq:geomiden} implies that the divergence of constant vectors is equal to zero. The mathematical properties of the discrete operators and their application to construct numerical methods for solving different classes of partial differential equations are described in the review paper \cite{Lipnikov2014}. Finally, utilizing the discrete gradient operator the semi-discrete GCL rewrites
\begin{equation}
  \label{eq:GCLdiv}
  \frac{1}{|\omega_c|}\frac{\mathrm{d}}{\mathrm{d}t} |\omega_c(t)|=\text{DIV}_c(\mathbf{v}).
\end{equation}
This highlights the mechanical interpretation of the discrete divergence operator as the normalized discrete time rate of change of the cell volume. 
%%%%%%%%%%%%%%%% 
\subsubsection{Formal FV discretization of the physical conservation laws}
%%%%%%%%%%%%%%%% 
\label{sssec:ffvdisc}
The discrete form of the mass conservation equation \eqref{eq:mass} for the cell $\omega_c$ takes the trivial form
\begin{equation}
  \label{eq:mmesd}
  \int_{\omega_c(t)} \rho(\mathbf{x},t)\,\mathrm{d}v=\int_{\omega_c(0)} \rho^{0}(\mathbf{X})\,\mathrm{d}V=m_c,
\end{equation}
where $\rho^{0}$ is the initial mass density and $m_c$ the constant cell mass, namely $\frac{\mathrm{d} m_{c}}{\mathrm{d} t}=0$. Bearing this in mind, we define the cell-centered mass-averaged quantities
\begin{equation}
  \label{eq:massaver}
  \mathbf{u}_c=\frac{1}{m_c} \int_{\omega_c} \rho \mathbf{u}\,\mathrm{d}v,
\end{equation}
and thus $\tau_c$ satisfies $m_c \tau_c=|\omega_c|$.

Employing the previous notation, the Lagrangian system of conservation laws \eqref{eq:Lcv} written over $\omega_c(t)$ takes the following form
$$m_c \frac{\mathrm{d} \mathbf{u}_c}{\mathrm{d} t}+\int_{\partial \omega_c(t)} \mathbf{f}\mathbf{n}\,\mathrm{d}s=\mathbf{0}.$$
Since $m_c \tau_c=|\omega_c(t)|$, it is worth noticing that the first equation of the previous system is nothing but the Geometrical Conservation Law that we have already discretized, see \eqref{eq:GCLsemidiscretebis}. Taking inspiration from the GCL discretization we adopt a node-based discretization of the integral flux in the previous equation 
$$\int_{\partial \omega_c(t)} \mathbf{f}(\mathbf{u})\mathbf{n}\,\mathrm{d}s=\sum_{p \in \mathcal{P}(c)} \sum_{f \in \mathcal{SF}(pc)} l_{pcf} \mathbf{f}_{pcf},$$
where $\mathbf{f}_{pcf}$ is a consistent numerical subface flux in the direction $\mathbf{n}_{pcf}$, refer to Fig.~\ref{fig:polygonal-cell}, that is
$$l_{pcf} \mathbf{f}_{pcf} \approx \int_{f\in \mathcal{SF}(pc)} \mathbf{f}  \mathbf{n}\,\mathrm{d}s.$$
This provides us the subface-based Finite Volume discretization of the Lagrangian system of conservation laws \eqref{eq:Lcv}
\begin{equation}
  \label{eq:Lcvsemidisc}
  m_c \frac{\mathrm{d} \mathbf{u}_c}{\mathrm{d} t}+\sum_{p \in \mathcal{P}(c)} \sum_{f \in \mathcal{SF}(pc)} l_{pcf} \mathbf{f}_{pcf}=\mathbf{0}.
\end{equation}
\begin{figure}
\centering
\includegraphics[width=0.5\textwidth]{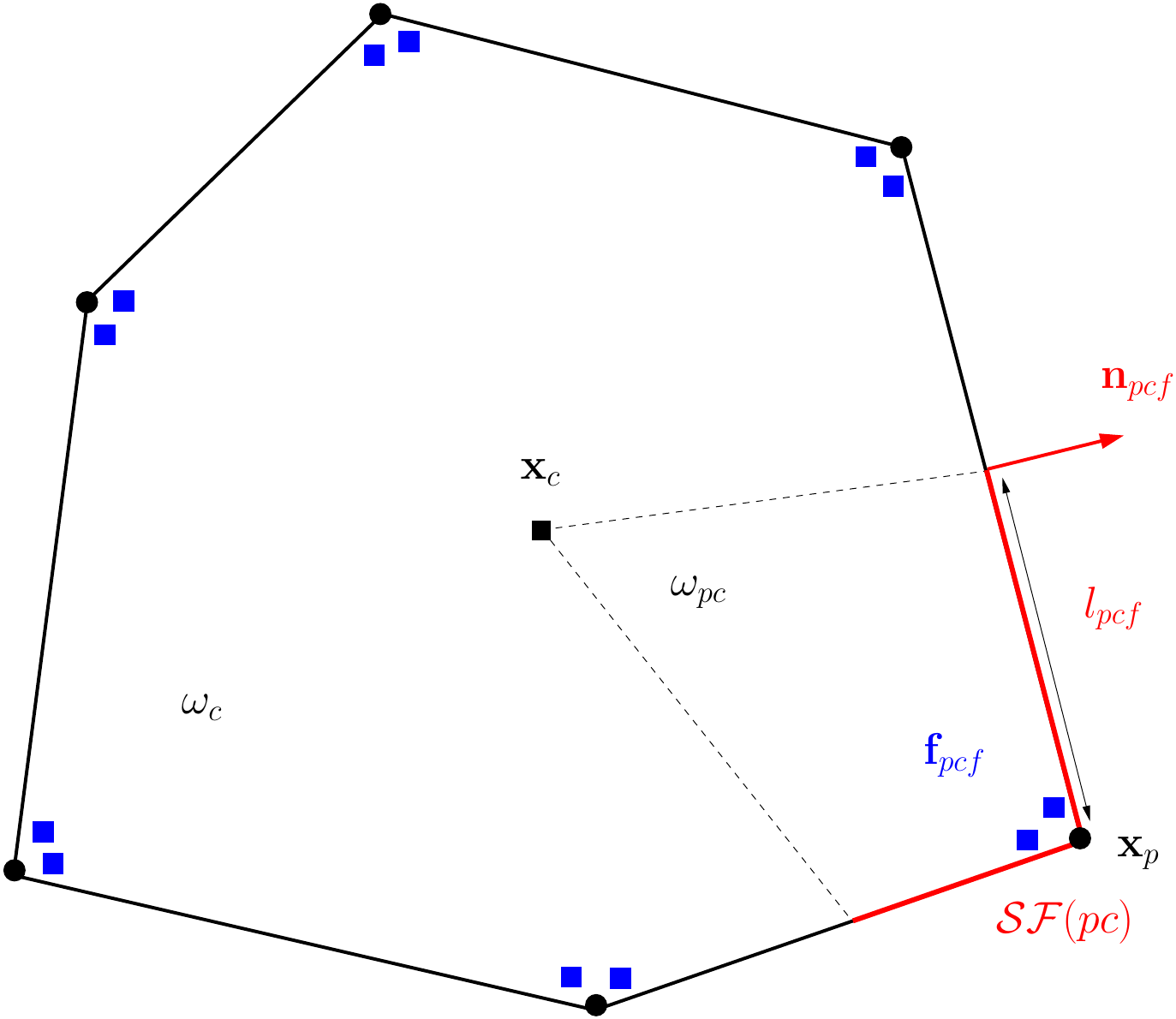}
\caption{Polygonal cell: representation of the subface-based discretization; The two subfaces related to the corner $pc$ are colored in red; The subface fluxes are displayed by means of the blue squares located at the cell corners.}
\label{fig:polygonal-cell}
\end{figure}
The subface flux in the direction $\mathbf{n}_{pcf}$ reads
$$
\mathbf{f}_{pcf}=
\begin{pmatrix}
  -\mathbf{v}_{p}\cdot \mathbf{n}_{pcf} \\
  p_{pcf} \mathbf{n}_{pcf} \\
  (p\mathbf{v}\cdot \mathbf{n})_{pcf}
\end{pmatrix}
.
$$
We note that the first component arises directly from the GCL equation. In contrast, $p_{pcf}\mathbf{n}_{pcf}$ and $(p\mathbf{v}\cdot \mathbf{n})_{pcf}$ are formal notations representing, respectively, the momentum flux and the total energy flux at the subface $f \in \mathcal{SF}(pc)$. It is reasonable to assume that $(p\mathbf{v}\cdot \mathbf{n})_{pcf}=p_{pcf}\mathbf{n}_{pcf} \cdot \mathbf{v}_{p}$ and under this assumption the subface flux becomes
\begin{equation}
  \label{eq:compft}
  \mathbf{f}_{pcf}=
  \begin{pmatrix}
  -\mathbf{v}_{p}\cdot \mathbf{n}_{pcf} \\
  p_{pcf} \mathbf{n}_{pcf} \\
  p_{pcf} \mathbf{n}_{pcf} \cdot \mathbf{v}_{p}
  \end{pmatrix}
 .
\end{equation}
Gathering the foregoing results, we arrive at the semi-discrete subface-based Finite Volume discretization of the Lagrangian gas dynamics
\begin{equation}
  \label{eq:semidiscsyst}
  m_c \frac{\mathrm{d} }{\mathrm{d} t} \begin{pmatrix}
    \tau_c \\
    \mathbf{v}_c \\
    e_c
  \end{pmatrix}
  +\sum_{p \in \mathcal{P}(c)} \sum_{f \in \mathcal{SF}(c)} l_{pcf} \begin{pmatrix}
    -\mathbf{v}_{p} \cdot \mathbf{n}_{pcf} \\
    p_{pcf} \mathbf{n}_{pcf} \\
    p_{pcf} \mathbf{n}_{pcf} \cdot \mathbf{v}_{p}
  \end{pmatrix}
  =\mathbf{0}.
\end{equation}
It remains to determine the nodal velocity, $\mathbf{v}_p$, and the subface momentum flux, $p_{pcf}$. This will be achieved through the introduction of a specific Riemann solver which shall allow us to compute a consistent numerical approximation of the subface flux, $\mathbf{f}_{pcf}$, in the direction $\mathbf{n}_{pcf}$. 

\subsection{Approximate Riemann solver for the numerical flux approximation}
%%%%%%%%%%%%%%%% 
\label{ssec:ars}

Let $\mathbf{n}$ be a unit normal vector, we are interested in studying the approximate solution to the Riemann problem defined in the $\mathbf{n}$ direction
$$
(\mathcal{RP})
\begin{dcases}
\frac{\partial \mathbf{u}}{\partial t}+\frac{\partial \mathbf{f}_{\mathbf{n}}(\mathbf{u})}{\partial m}=\mathbf{0},\\
\mathbf{u}(m,0)=
    \begin{dcases}
      \mathbf{u}_{l} \quad \text{if} \quad m <0, \\
      \mathbf{u}_{r} \quad \text{if} \quad m \geq 0.
    \end{dcases}
\end{dcases}
$$
%\todo[inline]{C'est quoi "$m_{pcf}$"?? (Remi->PH}
The Lagrangian mass coordinate, $m$, is defined by by $\mathrm{d}m=\rho\,\mathrm{d}x_{\mathbf{n}}$ where $x_{\mathbf{n}}=\mathbf{x}\cdot \mathbf{n}$. The vector of conservative variables, $\mathbf{u}=\mathbf{u}(m,t)$, and the flux vector, $\mathbf{f}_{\mathbf{n}}$, written under Lagrangian representation are
$$\mathbf{u}=\begin{pmatrix}
\tau \\
v_{\mathbf{n}} \\
\mathbf{v}_{t} \\
e
\end{pmatrix}
,\;\text{and}\;
\mathbf{f}_{\mathbf{n}}=
\begin{pmatrix}
  -v_{\mathbf{n}}\\
  p \\
  \mathbf{0} \\
  pv_{\mathbf{n}}
\end{pmatrix}
.
$$
Here, $v_{\mathbf{n}}=\mathbf{v} \cdot \mathbf{n}$ is the normal component of the velocity, whereas $\mathbf{v}_{t}=\mathbf{v}-v_{\mathbf{n}} \mathbf{n}$ is its tangentiel component. Following \cite{GalliceHDR2002,Gallice2022}, we introduce the ``simple'' approximate Riemann solver
$$
\mathbf{r}(\mathbf{u}_l,\mathbf{u}_r,\xi)=
\begin{cases}
  \mathbf{u}_l\; &\text{if} \;\xi < -\lambda_l, \\
  \mathbf{u}_l^{\star} \; & \text{if} \;-\lambda_l \leq \xi < 0, \\
  \mathbf{u}_r^{\star} \; & \text{if} \; 0 \leq \xi < \lambda_r, \\
  \mathbf{u}_r \; & \text{if} \; \lambda_r \leq \xi,
\end{cases}
$$
where $\xi=\frac{m}{t}$. This Riemann solver consists of the 4 constant states $\mathbf{u}_l,\mathbf{u}_l^{\star},\mathbf{u}_r^{\star},\mathbf{u}_r$ separated by the 3 waves characterized by the 3 wave speeds $-\lambda_l,0,\lambda_r$ and we assume that $\lambda_l$ and $\lambda_r$ are positive, refer to Fig.~\ref{fig:diagmt}. We shall determine the intermediate states $\mathbf{u}_l^{\star}$, $\mathbf{u}_r^{\star}$ and the wave speeds $\lambda_l$, $\lambda_r$ in terms of the left and right states to achieve the characterization of this Riemann solver.
\begin{figure}
\centering
\includegraphics[width=0.5\textwidth]{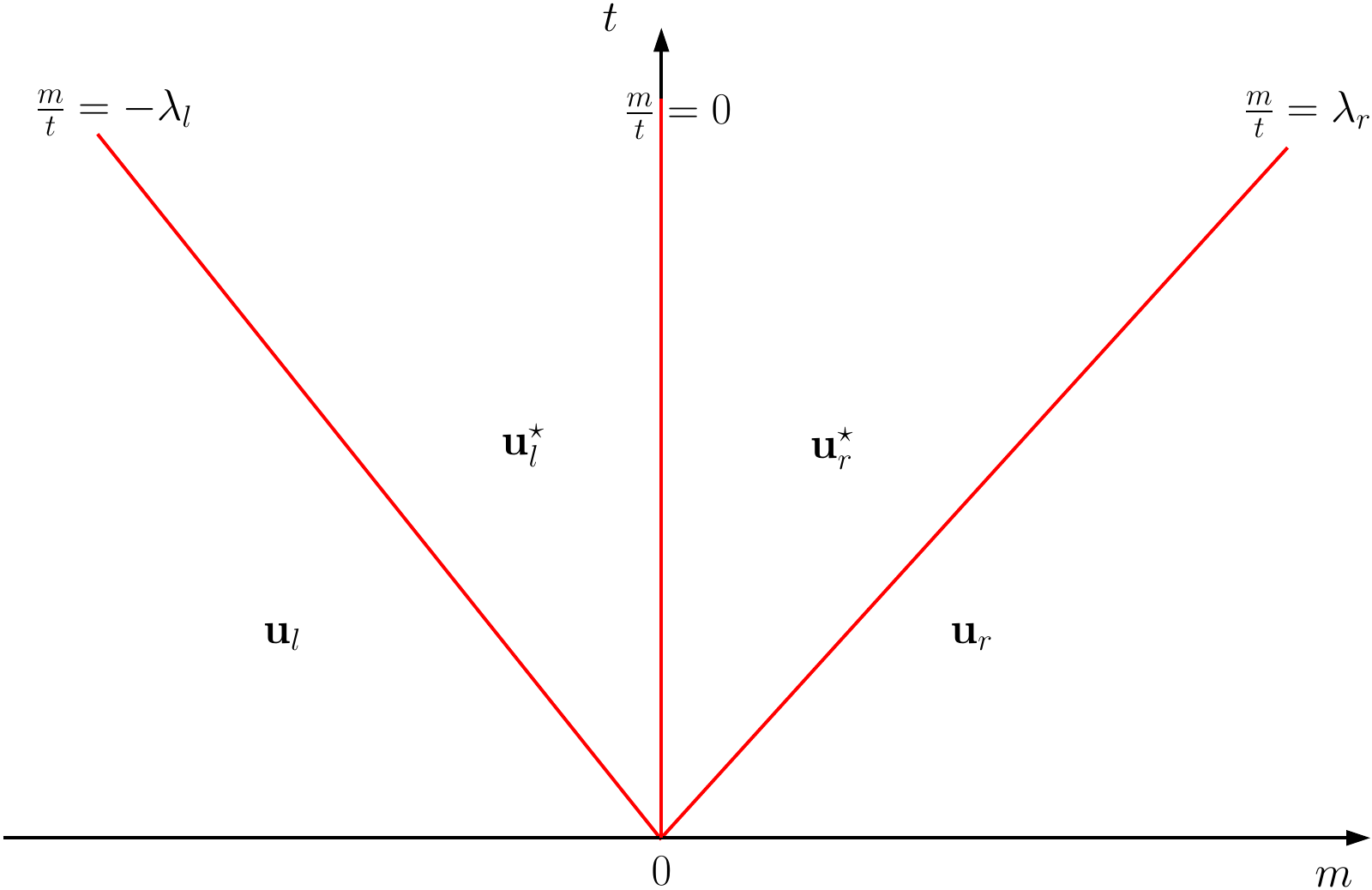}
\caption{Diagram $m-t$ representation of the simple approximate Riemann solver.}
\label{fig:diagmt}
\end{figure}

The intermediate state vector of conservative variables and the intermediate flux vector write
$$\mathbf{u}_{s}^{\star}=
\begin{pmatrix}
  \tau_s^{\star} \\
  v_{\mathbf{n},s}^{\star} \\
  \mathbf{v}_{t,s}^{\star} \\
  e_{s}^{\star}
\end{pmatrix}
,\;\text{and}\;
\mathbf{f}_{\mathbf{n},s}^{\star}=
\begin{pmatrix}
  -v_{\mathbf{n},s}^{\star} \\
  p_{s}^{\star} \\
  \mathbf{0} \\
  (pv_{\mathbf{n}})_{s}^{\star}
\end{pmatrix}
,\;\text{where}\;s=l,r.
$$
We note that $\mathbf{f}_{\mathbf{n},s}^{\star} \neq \mathbf{f}_{\mathbf{n}}(\mathbf{u}_{s}^{\star})$. The unknowns shall be determined writing the conservation relations across the waves of the approximate Riemann solver. Before going any further, we reduce the number of unknowns assuming that $(pv_{\mathbf{n}})_{s}^{\star}=p_{s}^{\star}v_{\mathbf{n},s}^{\star}$ and $v_{\mathbf{n},l}^{\star} = v_{\mathbf{n},r}^{\star} = v_{\mathbf{n}}^{\star}$ which finally leads to $2\text{d}+5$ scalar unknowns. Writing down the conservation relations respectively across the left- and the right-sided wave yields
\begin{subequations}
\label{eq:consrela}
\begin{align}
  &\lambda_{l} (\mathbf{u}_{l}^{\star}-\mathbf{u}_{l})+\mathbf{f}_{\mathbf{n},l}^{\star}-\mathbf{f}_{\mathbf{n},l}=\mathbf{0},\label{eq:consrelaa}\\
  -&\lambda_{r} (\mathbf{u}_{r}-\mathbf{u}_{r}^{\star})+\mathbf{f}_{\mathbf{n},r}-\mathbf{f}_{\mathbf{n},r}^{\star}=\mathbf{0},\label{eq:consrelab}
\end{align}
\end{subequations}
where $\mathbf{f}_{\mathbf{n},s}=\mathbf{f}_{\mathbf{n}}(\mathbf{u}_{s})$ for $s=l,r$. These two vectorial equations represent $2\text{d}+4$ scalar equations which write
\begin{equation*}
  (\mathcal{S}_l)
  \begin{cases}
    \lambda_l(\tau_l^{\star}-\tau_l)-(v_{\mathbf{n}}^{\star}-v_{\mathbf{n},l})=0,\\
    \lambda_l(v_{\mathbf{n}}^{\star}-v_{\mathbf{n},l})+ p_l^{\star}- p_l =0,\\
    \lambda_l(\mathbf{v}_{t,l}^{\star}-\mathbf{v}_{t,l})=\mathbf{0},\\
    \lambda_l(e_l^{\star}-e_l)+ p^{\star}_lv_{\mathbf{n}}^{\star} - p_lv_{\mathbf{n},l} =0,
  \end{cases}
  \;
  (\mathcal{S}_r)
  \begin{cases}
    \lambda_r(\tau_r^{\star}-\tau_r)+v_{\mathbf{n}}^{\star}-v_{\mathbf{n},r}=0,\\
    \lambda_r(v_{\mathbf{n}}^{\star}-v_{\mathbf{n},r})-(p^{\star}_r- p_r )=0,\\
    \lambda_r(\mathbf{v}_{t,r}^{\star}-\mathbf{v}_{t,r})=\mathbf{0},\\
    \lambda_r(e_r^{\star}-e_r)-(p^{\star}_rv_{\mathbf{n}}^{\star} - p_rv_{\mathbf{n},r}) =0.
\end{cases}
\end{equation*}
Here, $e_s^{\star}=\varepsilon_s^{\star}+\frac{1}{2} (v_{\mathbf{n}}^{\star})^{2}+\frac{1}{2}(\mathbf{v}_{t,s}^{\star})^{2}$ for $s=l,r$. The third equation of $(\mathcal{S}_l)$ and $(\mathcal{S}_r)$ shows that the tangential component of the velocity is conserved across the left- and the right-sided wave. The above systems involve $2\text{d}+5$ unknowns and $2\text{d}+4$ equations. Consequently, we may select $v_{\mathbf{n}}^{\star}$ as a free parameter, in terms of which the solutions can be expressed. This extra degree of freedom shall be useful to determine the nodal velocity required in the Finite Volume discretization to displace the computational grid, refer to Section~\ref{sssec:corner}.

The positivity of the intermediate states, {\it i.e.}, $\tau_s^{\star}>0$ and $\varepsilon_s^{\star} >0$ for $s=l,r$ is ensured by adapting the wave speeds $\lambda_l$ and $\lambda_r$ as follows
\begin{equation}
  \label{eq:pcev1}
\lambda_l \geq \max \left(\frac{p_l}{\sqrt{2\varepsilon_l}}, -\dfrac{v_{\mathbf{n}}^{\star}-v_{\mathbf{n},l}}{\tau_l}\right),\;\; \text{and}\;\;\lambda_r \geq \max \left(\frac{p_r}{\sqrt{2\varepsilon_r}}, \dfrac{v_{\mathbf{n}}^{\star}-v_{\mathbf{n},r}}{\tau_r}\right).
\end{equation}
The proof of this result might be found in \cite{Gallice2022}. Moreover, noticing that for a convex equation of state, {\it i.e.}, $\tau \mapsto p(\tau,\eta)$ strictly convex, there holds $\frac{a^{2}}{\tau^{2}} \geq \frac{p^{2}}{2 \varepsilon}$ \cite{Menikoff1989}, the %foregoing
positivity condition turns into
\begin{equation}
  \label{eq:pcev}
\lambda_l \geq \max \left(\frac{a_l}{\tau_l}, -\dfrac{v_{\mathbf{n}}^{\star}-v_{\mathbf{n},l}}{\tau_l}\right),\;\; \text{and}\;\;\lambda_r \geq \max \left(\frac{a_r}{\tau_r}, \dfrac{v_{\mathbf{n}}^{\star}-v_{\mathbf{n},r}}{\tau_r}\right).
\end{equation}
We note that these positivity conditions depend weakly on the equation of state via the isentropic sound speed.

By summing the left-sided conservation relation \eqref{eq:consrelaa} with the right-sided one \eqref{eq:consrelab}, we obtain
\begin{equation}
  \label{eq:fluxdiff}
  \mathbf{f}_{\mathbf{n},r}^{\star}-\mathbf{f}_{\mathbf{n},l}^{\star}= \lambda_{l} (\mathbf{u}_{l}^{\star}-\mathbf{u}_{l})-\lambda_{r} (\mathbf{u}_{r}-\mathbf{u}_{r}^{\star})+\mathbf{f}_{\mathbf{n},r}-\mathbf{f}_{\mathbf{n},l}.
  \end{equation}
On the other hand, the difference between $\mathbf{f}_{\mathbf{n},r}^{\star}$ and $\mathbf{f}_{\mathbf{n},l}^{\star}$ computed from their components, is given by
\begin{equation}
  \label{eq:fluxdiff2}
  \mathbf{f}_{\mathbf{n},r}^{\star}-\mathbf{f}_{\mathbf{n},l}^{\star}=(p_{r}^{\star}-p_{l}^{\star})
  \begin{pmatrix}
    0 \\
    1 \\
    \mathbf{0} \\
    v_{\mathbf{n}}^{\star}
  \end{pmatrix}
.
\end{equation}
Since, {\it a priori} $p_{r}^{\star}-p_{l}^{\star}\neq 0$ the left and right intermediate fluxes are distinct which means that the left-hand side of \eqref{eq:fluxdiff} does not vanish and thus in general the HLL consistency condition \cite{Harten1981,Harten1981} with the integral form of the conservation law is not fulfilled. Now, summing the second equations of $(\mathcal{S}_l)$ and $(\mathcal{S}_r)$ we obtain the expression of $p_{r}^{\star}-p_{l}^{\star}$ in terms of the parameter $v_{\mathbf{n}}^{\star}$
\begin{equation}
  \label{eq:difpressure}
  p_{r}^{\star}-p_{l}^{\star}=(\lambda_l+\lambda_r) \left (v_{\mathbf{n}}^{\star}-\overline{v}_{\mathbf{n},lr}\right),
\end{equation}
where  $\overline{v}_{\mathbf{n},lr}$ is the face-based normal velocity defined by
\begin{equation}
  \label{eq:facebasedvel}
  \overline{v}_{\mathbf{n},lr}=\frac{\lambda_l v_{\mathbf{n},l}+\lambda_r v_{\mathbf{n},r}}{\lambda_l+\lambda_r}-\frac{p_r-p_l}{\lambda_l+\lambda_r},
\end{equation}
which is nothing but the one-dimensional Godunov-like interface normal velocity, refer to \cite{Chan2021}. Gathering the foregoing results we express the difference between the left and the right intermediate fluxes as follows
\begin{equation}
  \label{eq:fluxdiffultim}
  \mathbf{f}_{\mathbf{n},r}^{\star}-\mathbf{f}_{\mathbf{n},l}^{\star}=(\lambda_l+\lambda_r) \left (v_{\mathbf{n}}^{\star}-\overline{v}_{\mathbf{n},lr}\right)
  \begin{pmatrix}
    0 \\
    1 \\
    \mathbf{0} \\
    v_{\mathbf{n}}^{\star}
  \end{pmatrix}
.
\end{equation}
At this stage, we have to cope with the following alternative regarding the parameter $v_{\mathbf{n}}^{\star}$
\begin{itemize}
\item Either $v_{\mathbf{n}}^{\star}=\overline{v}_{\mathbf{n},lr}$, then $p_l^{\star}= p_r^{\star}$ and the approximate Riemann solver is consistent with the integral form of the underlying conservation law since the HLL consistency condition is satisfied. This Riemann solver induces a unique interface flux, {\it i.e.}, $\mathbf{f}_{\mathbf{n},l}^{\star}=\mathbf{f}_{\mathbf{n},r}^{\star}$ and a Godunov-type face-based Finite Volume scheme. 
\item Or $v_{\mathbf{n}}^{\star}\neq \overline{v}_{\mathbf{n},lr}$, then $p_l^{\star} \neq p_r^{\star}$ and the HLL consistency condition is not fulfilled. This Riemann solver does not induce a conservative Godunov-type Finite Volume in the classical sense since the interface flux is not uniquely defined, {\it i.e.}, $\mathbf{f}_{\mathbf{n},l}^{\star}\neq \mathbf{f}_{\mathbf{n},r}^{\star}$.
\end{itemize}
In what follows, we shall investigate how to determine the parameter $v_{\mathbf{n}}^{\star}$ and to restore the conservation property when the cell interface numerical flux is not uniquely defined anymore. 
\begin{rem}[Left and right intermediate fluxes in terms of their arithmetic average]
  \label{rem:lrsfaa}
Halving the summation of \eqref{eq:consrelaa} and \eqref{eq:consrelab} provides the arithmetic average of the left and right intermediate fluxes
  \begin{align*}
    \langle \mathbf{f}_{\mathbf{n}} \rangle_{lr}^{\star}=&\frac{1}{2} \left (\mathbf{f}_{\mathbf{n},l}^{\star}+\mathbf{f}_{\mathbf{n},r}^{\star}\right)\\
    =&\frac{1}{2} \left (\mathbf{f}_{\mathbf{n},l}+\mathbf{f}_{\mathbf{n},r} \right)-\frac{\lambda_l}{2}(\mathbf{u}_{l}^{\star}-\mathbf{u}_{l})-\frac{\lambda_r}{2}(\mathbf{u}_{r}-\mathbf{u}_{r}^{\star}),
  \end{align*}
  which formally coincides with the classical one-dimensional interface flux, but differs in that the left and right intermediate fluxes are not equal. On the other hand, the difference between the left and right intermediate fluxes being given by \eqref{eq:fluxdiffultim} we arrive at
\begin{subequations}
\label{eq:exprflux}
\begin{align}
  &\mathbf{f}_{\mathbf{n},l}^{\star}=\langle \mathbf{f}_{\mathbf{n}} \rangle_{lr}^{\star}-\frac{1}{2}(\lambda_l+\lambda_r) (v_{\mathbf{n}}^{\star}-\overline{v}_{\mathbf{n},lr}) \begin{pmatrix}
    0 \\
    1 \\
    \mathbf{0}\\
    v_{\mathbf{n}}^{\star}
  \end{pmatrix}
  \label{eq:exprfluxl}\\
  &\mathbf{f}_{\mathbf{n},r}^{\star}=\langle \mathbf{f}_{\mathbf{n}} \rangle_{lr}^{\star}+\frac{1}{2}(\lambda_l+\lambda_r) (v_{\mathbf{n}}^{\star}-\overline{v}_{\mathbf{n},lr}) \begin{pmatrix}
    0 \\
    1 \\
    \mathbf{0}\\
    v_{\mathbf{n}}^{\star}
  \end{pmatrix}
  \label{eq:exprfluxr}
\end{align}
\end{subequations}
\end{rem}
\begin{rem}[Expression of the left- and right-sided fluxes in terms of the Riemann solver]
  \label{rem:fluxrs}
  Integrating the conservation law of $(\mathcal{RP})$ respectively over $[0,\Delta t] \times [-\Delta m_l,0]$ and $[0,\Delta t] \times [0,\Delta m_r]$, see Fig.~\ref{fig:diagmt} and replacing $\mathbf{u}(m,t)$ by the approximate Riemann solver, $\mathbf{r}(\mathbf{u}_l,\mathbf{u}_r,\xi)$, we express the left- and the right-sided fluxes in terms of the approximate Riemann solver
\begin{subequations}
\label{eq:exprfluxrs}
\begin{align}
  \mathbf{f}_{\mathbf{n}}^{-}=&\mathbf{f}_{\mathbf{n},l}-\int_{-\infty}^{0} \left [\mathbf{r}(\mathbf{u}_l,\mathbf{u}_r,\xi)-\mathbf{u}_l\right] \,\mathrm{d}\xi, \label{eq:exprfluxrsl}\\
  \mathbf{f}_{\mathbf{n}}^{+}=&\mathbf{f}_{\mathbf{n},r}+\int_{0}^{+\infty} \left [\mathbf{r}(\mathbf{u}_l,\mathbf{u}_r,\xi)-\mathbf{u}_r\right]\,\mathrm{d}\xi, \label{eq:exprfluxrsr}
\end{align}
\end{subequations}
where $\xi=\frac{m}{t}$ is the self-similar variable. We note that these definitions of the left- and righ-sided fluxes might be found in \cite{Harten1983,Bouchut2004}. Substituting the expression of the approximate Riemann solver in the above formulas yields
\begin{align*}
 \mathbf{f}_{\mathbf{n}}^{-}=&\mathbf{f}_{\mathbf{n},l}-\lambda_l(\mathbf{u}_{l}^{\star}-\mathbf{u}_{l}),\\
 \mathbf{f}_{\mathbf{n}}^{+}=&\mathbf{f}_{\mathbf{n},r}-\lambda_r(\mathbf{u}_{r}-\mathbf{u}_{r}^{\star}).
\end{align*}
Then, by virtue of the conservation conditions \eqref{eq:consrelaa} and \eqref{eq:consrelab} written across the left- and the right-sided wave we arrive at
\begin{equation}
  \label{eq:iflff}
  \mathbf{f}_{\mathbf{n}}^{-}=\mathbf{f}_{\mathbf{n},l}^{\star},\;\text{and}\; \mathbf{f}_{\mathbf{n}}^{+}=\mathbf{f}_{\mathbf{n},r}^{\star},
\end{equation}
which shows that the left- and right-sided fluxes are identical to the corresponding left and right intermediate fluxes. This result stems from the fact that, in the Lagrangian representation, the contact wave coincides with the interface between the left and right states. 
\end{rem}
\subsection{An unconventional multidimensional Godunov-type FV discretization}
\label{ssec:unconv}
\subsubsection{Subface flux approximation and domain preserving property}
\label{sssec:sfa}
Let us write the explicit time discretization of the subface-based Finite Volume formulation introduced in Section~\ref{sssec:ffvdisc}
\begin{equation}
  \label{eq:Lcvdisc}
  m_c (\mathbf{u}_c^{n+1}-\mathbf{u}_c^{n})+\Delta t\sum_{p \in \mathcal{P}(c)} \sum_{f \in \mathcal{SF}(pc)} l_{pcf} \mathbf{f}_{pcf}=\mathbf{0}.
\end{equation}
Here, $\mathbf{u}_c^{n}$ denote the value of the mass averaged variable $\mathbf{u}$ at time $t^{n}$ and $\Delta t =t^{n+1}-t^{n}$ is the time step. The subface flux, $\mathbf{f}_{pcf}$, is defined by means of the approximate Riemann studied in the previous section. To this end, we introduce the Riemann problem constructed in the direction of the unit outward normal $\mathbf{n}_{pcf}$ to the subface $f$ located at the interface between cells $\omega_c$ and $\omega_d$, refer to Fig.~\ref{fig:interface-cd}
$$
(\mathcal{RP}_{pcf})
\begin{dcases}
\frac{\partial \mathbf{u}}{\partial t}+\frac{\partial \mathbf{f}_{\mathbf{n}_{pcf}}(\mathbf{u})}{\partial m_{pcf}}=\mathbf{0},\\
\mathbf{u}(m_{pcf},0)=
    \begin{dcases}
      \mathbf{u}_{l} \quad \text{if} \quad m_{pcf} <0, \\
      \mathbf{u}_{r} \quad \text{if} \quad m_{pcf} \geq 0.
    \end{dcases}
\end{dcases}
$$
The Lagrangian mass coordinate $m_{pcf}$ is defined by $\mathrm{d}m_{pcf}=\rho\,\mathrm{d}x_{\mathbf{n}_{pcf}}$, where $x_{\mathbf{n}_{pcf}}=\mathbf{x}\cdot \mathbf{n}_{pcf}$. 
\begin{figure}
\centering
\includegraphics[width=0.5\textwidth]{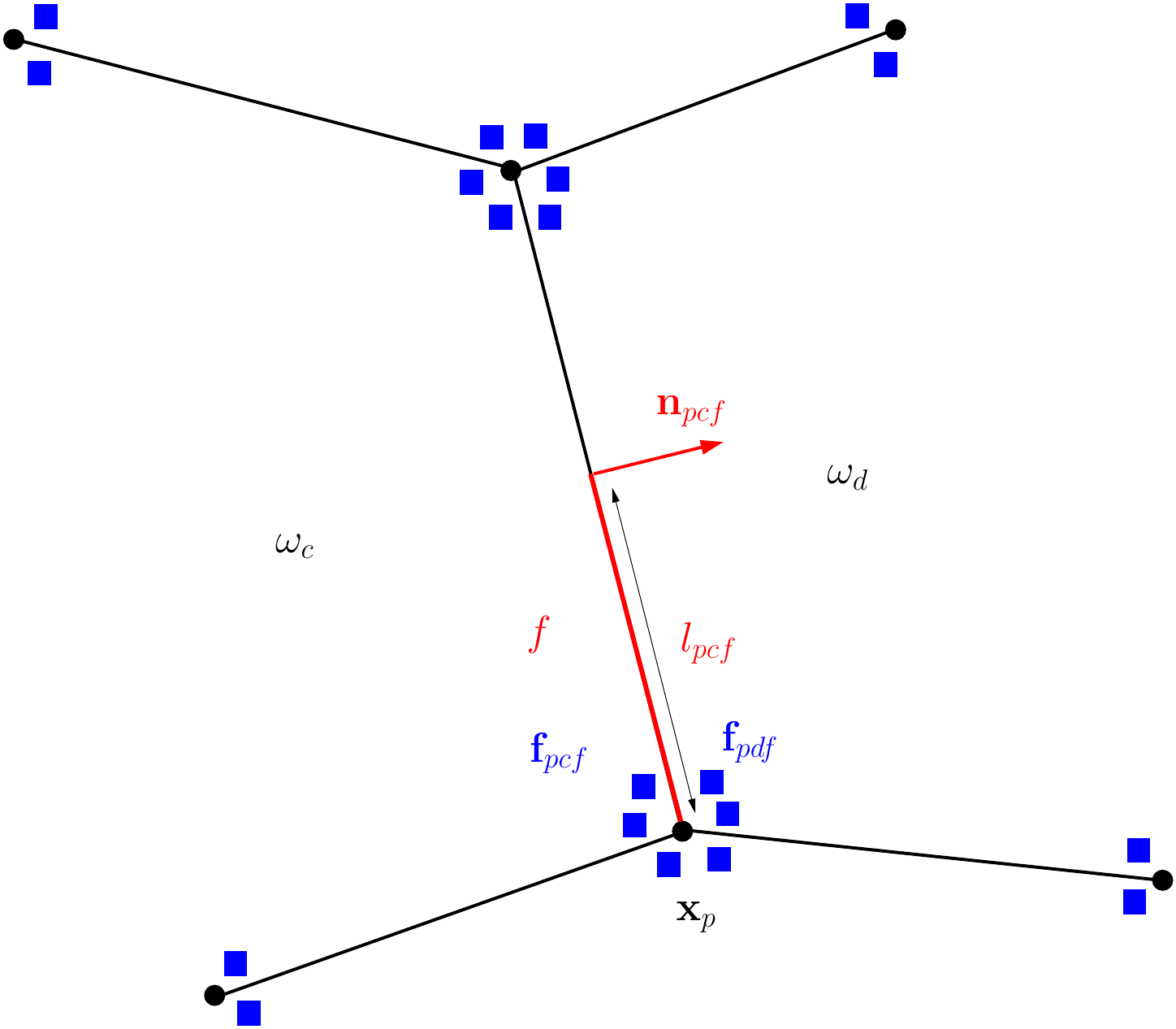}
\caption{Interface between cells $\omega_c$ and $\omega_d$: the subface fluxes are displayed by means of blue squares.}
\label{fig:interface-cd}
\end{figure}
Next, inspired by Section~\ref{ssec:ars} we consider the approximate Riemann solver, see Fig.~\ref{fig:diagmt-cellc}
$$\mathbf{r}(\mathbf{u}_c^{n},\mathbf{u}_d^{n},v_{\mathbf{n}_{pcf}}^{\star},\xi_{pcf}).$$ 
where $\xi_{pcf}=\frac{m_{pcf}}{t}$ is the self-similar variable and $\mathbf{u}_d^{n}$ denotes the mass-averaged state of $u$ within the right-hand neighbor of cell $\omega_c$, see Fig.~\ref{fig:interface-cd}. We explicitly include $v_{\mathbf{n}_{pcf}}^{\star}$ as an argument to emphasize the parametrization of the Riemann solver with respect to this quantity. With this in mind, and after a straightforward adaptation of the notation, we define the subface flux in terms of the left-sided flux, see Remark~\ref{rem:fluxrs}, as follows
\begin{equation}
  \label{eq:subffdef}
  \mathbf{f}_{pcf}=\mathbf{f}_{\mathbf{n}_{pcf}}^{-}=\mathbf{f}_{\mathbf{n}_{pcf}}(\mathbf{u}_{c}^{n})-\int_{-\infty}^{0} \left [\mathbf{r}(\mathbf{u}_c^{n},\mathbf{u}_d^{n},v_{\mathbf{n}_{pcf}}^{\star},\xi_{pcf})-\mathbf{u}_c^{n}\right] \,\mathrm{d}\xi,
\end{equation}
where $\mathbf{f}_{\mathbf{n}_{pcf}}(\mathbf{u}_{c}^{n})=\mathbf{f}(\mathbf{u}_{c}^{n})  \mathbf{n}_{pcf}$.
\begin{figure}
\centering
\includegraphics[width=0.5\textwidth]{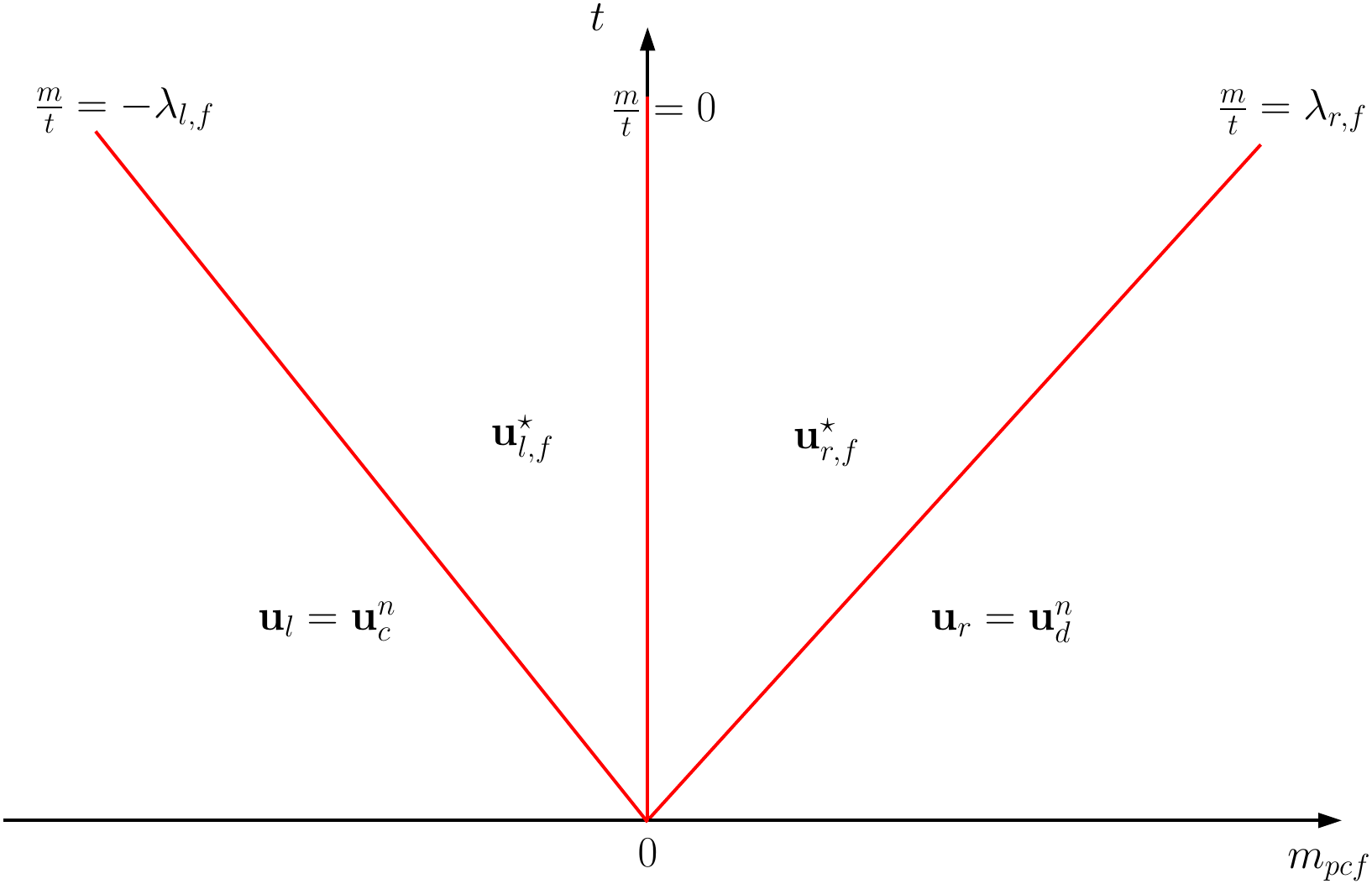}
\caption{Diagram $m-t$ representation of the simple approximate Riemann solver $\mathbf{r}(\mathbf{u}_c^{n},\mathbf{u}_d^{n},v_{\mathbf{n}_{pcf}}^{\star},\xi_{pcf})$ at the subface $f$ in the direction of the unit normal $\mathbf{n}_{pcf}$.}
\label{fig:diagmt-cellc}
\end{figure}
Considering the subface $f$ shared by corners $pc$ and $pd$, {\it i.e.}, $f \in \mathcal{SF}(pc) \cap \mathcal{SF}(pd)$, we observe that, in general, the corresponding subface fluxes are distinct, $\mathbf{f}_{pcf} \neq \mathbf{f}_{pdf}$, and therefore the resulting subface based Finite Volume cannot be conservative in the classical sense.
By virtue of Remark~\ref{rem:fluxrs} and with obvious notation adaptation the subface flux writes
\begin{equation}
  \label{eq:subffdef1}
  \mathbf{f}_{pcf}=\mathbf{f}_{\mathbf{n}_{pcf},l}-\lambda_{l,f}(\mathbf{u}_{l,f}^{\star}-\mathbf{u}_{c}^{n}),
  \end{equation}
  where $\mathbf{f}_{\mathbf{n}_{pcf},l}=\mathbf{f}(\mathbf{u}_{c}^{n})  \mathbf{n}_{pcf}$. Substituting the above subface flux expression into the Finite Volume discretization \eqref {eq:Lcvdisc} we arrive at
$$
m_c (\mathbf{u}_c^{n+1}-\mathbf{u}_c^{n})-\Delta t\sum_{p \in \mathcal{P}(c)} \sum_{f \in \mathcal{SF}(pc)} l_{pcf} \lambda_{l,f} (\mathbf{u}_{l,f}^{\star}-\mathbf{u}_{c}^{n})=\mathbf{0},
$$
thanks to the geometrical identity \eqref{eq:geomiden}. Collecting the terms in factor of the intermediate states $\mathbf{u}_{l,f}^{\star}$ and $\mathbf{u}_{c}^{n}$ allows us to express $\mathbf{u}_{c}^{n+1}$ as a linear combination of the intermediate states 
\begin{equation}
  \label{eq:linearcomb}
  \mathbf{u}_c^{n+1}=\left (1-\frac{\Delta t}{m_c}\sum_{p \in \mathcal{P}(c)} \sum_{f \in \mathcal{SF}(pc)} l_{pcf} \lambda_{l,f} \right) \mathbf{u}_c^{n}+\frac{\Delta t}{m_c} \sum_{p \in \mathcal{P}(c)} \sum_{f \in \mathcal{SF}(pc)} l_{pcf} \lambda_{l,f} \mathbf{u}_{l,f}^{\star}.
\end{equation}
Since the mass flux paramater $\lambda_{l,f}$ of the approximate Riemann solver is non-negative, it follows directly that $\mathbf{u}_c^{n+1}$ is a convex combination of $\mathbf{u}_c^{n}$ and the intermediate states $\mathbf{u}_{l,f}^{\star}$ for $f \in \mathcal{SF}(pc),\; \forall p \in \mathcal{P}(c)$, provided that the time step satisfies the constraint
\begin{equation}
  \label{eq:CFLlike1}
  \Delta t \leq \frac{m_c}{\sum_{p \in \mathcal{P}(c)} \sum_{f \in \mathcal{SF}(pc)} l_{pcf} \lambda_{l,f}}.
\end{equation}
Introducing
$$\Delta t_c=\frac{m_c}{\sum_{p \in \mathcal{P}(c)} \sum_{f \in \mathcal{SF}(pc)} l_{pcf} \lambda_{l,f}},$$
allows us to write the global CFL-like time step condition
\begin{equation}
  \label{eq:CFLlike2}
  \Delta t \leq \min_{c} \Delta t_c.
\end{equation}
The convex combination property, under the given time step condition, is very important as it ensures that our Finite Volume scheme is domain preserving. More precisely, if $\mathbf{u}_{c}^{n} \in \mathcal{R}$, then both
$$\mathbf{r}(\mathbf{u}_c^{n},\mathbf{u}_d^{n},v_{\mathbf{n}_{pcf}}^{\star},\xi_{pcf}) \in \mathcal{R}\;\text{and}\;\mathbf{u}_{c}^{n+1},$$
provided that the positivity condition \eqref{eq:pcev} on the mass flux parameters $\lambda_{l,f}$, $\lambda_{r,f}$, together with the time step condition \eqref{eq:CFLlike2}, are satisfied. In other words, this guarantees that our Finite Volume scheme preserves positivity for both the specific volume and the specific internal energy provided that conditions \eqref{eq:pcev} and \eqref{eq:CFLlike2} are fulfilled. 
\subsubsection{From the fluctuations consistency condition to the node-based conservation condition}
\label{sssec:nbc}
By virtue of the geometrical identity \eqref{eq:geomiden} we are able to rewrite the subface-based Finite Volume under the form
\begin{equation}
  \label{eq:flucform}
  m_c (\mathbf{u}_c^{n+1}-\mathbf{u}_c^{n})+\Delta t\sum_{p \in \mathcal{P}(c)} \sum_{f \in \mathcal{SF}(pc)} l_{pcf} \left [\mathbf{f}_{pcf}-\mathbf{f}(\mathbf{u}_c^{n})  \mathbf{n}_{pcf} \right]=\mathbf{0},
\end{equation}
which makes appear the fluctuation within cell $\omega_c$ attached to node $p$
\begin{equation}
  \label{eq:fluctuationpc}
  \mathbf{\Phi}_{c}^{p}=\sum_{f \in \mathcal{SF}(pc)} l_{pcf} \left [\mathbf{f}_{pcf}-\mathbf{f}(\mathbf{u}_c^{n}) \mathbf{n}_{pcf} \right].
\end{equation}
Inspired by the theory of the Residual Distribution schemes \cite{Abgrall2018}, refer %for instance to \cite{Abgrall2018} or 
to section \ref{sec:gt} and relations \eqref{forme schemas} and in particular \eqref{rds_re:conservation}, we impose the consistency condition, which requires that the sum of the fluctuations around node $p$ equals the integral of the flux across the boundary of the dual cell $\omega_p$
\begin{equation}
  \label{eq:concon}
  \sum_{c\in \mathcal{C}(p)} \mathbf{\Phi}_{c}^{p} =\int_{\partial \omega_p} \mathbf{f}(\mathbf{u}) \mathbf{n}\,\mathrm{d}s.
\end{equation}
The approximation of the integral at the right-hand side is computed decomposing it over the subcells surrounding node $p$, see Fig.~\ref{fig:polygridb-mr}
$$
\int_{\partial \omega_p} \mathbf{f}(\mathbf{u}) \mathbf{n}\,\mathrm{d}s=\sum_{c \in \mathcal{C}(p)} \int_{\partial \omega_{pc} \cap (\omega_c \setminus \partial \omega_c)}\mathbf{f}(\mathbf{u}) \mathbf{n}\,\mathrm{d}s.$$
Since the contour of the subcell $\omega_{pc}$ is closed we have $\int_{\partial \omega_{pc}} \mathbf{n}\,\mathrm{d}s=\mathbf{0}$ and then
$$\int_{\partial \omega_{pc} \cap (\omega_c \setminus \partial \omega_c)}\mathbf{n}\,\mathrm{d}s=-\sum_{f \in \mathcal{SF}(pc)} l_{pcf} \mathbf{n}_{pcf}.$$
Finally, the right-hand side of \eqref{eq:concon} might be rewritten
\begin{equation}
  \label{eq:rhsconcon}
  \int_{\partial \omega_p} \mathbf{f}(\mathbf{u}) \mathbf{n}\,\mathrm{d}s=-\sum_{f \in \mathcal{SF}(pc)} l_{pcf} \mathbf{f}(\mathbf{u}_c^{n}) \mathbf{n}_{pcf}.
\end{equation}
Substituting \eqref{eq:rhsconcon} and the fluctuation expression \eqref{eq:fluctuationpc} in the consistency condition \eqref{eq:concon} turn into
\begin{equation}
  \label{eq:nbc}
  \sum_{c\in \mathcal{C}(p)} \sum_{f \in \mathcal{SF}(pc)} l_{pcf} \mathbf{f}_{pcf}=\mathbf{0}.
\end{equation}
This is nothing but the node-based conservation condition introduced in the design of cell-centered finite volume for Lagrangian hydrodynamics \cite{LMR2016} and which provides the so called nodal solver to compute the node velocity. This condition, originated from the Lagrangian setting,  underpins also
  the construction of multidimensional finite volume schemes recalled in Section~\ref{sec:motivating:FV:II}. 
It is remarkable to observe that this node-based conservation condition, which might be obtained as a sufficient global conservation condition \cite{LMR2016} is strictly equivalent to the consistency condition \eqref{rds_re:conservation} introduced classically in the Residual Distribution framework. This is a nice example of the duality between conservation in terms  of flux,
and conservation in terms of residuals, which 
is the  core theme of this paper.
%\todo[inline]{Aout\'e du blabla pour mettre expliciter encore plus de lien. Peut etre pas n\'ecessaire \`a relire (M)}
\subsubsection{Nodal solver}
\label{sssec:nodalsolver}
The node-based conservation condition \eqref{eq:nbc} states that the sum, over all cells $c$ sharing the point $p$, of the fluxes associated with the subfaces incident to $p$ is equal to zero. The subface fluxes are illustrated as blue patches on both sides of each subface emanating from $p$; see Figure~\ref{fig:geom2Dpointa} for a two-dimensional representation of the subface fluxes associated with a given node of a generic grid.
\begin{figure}  
\begin{center}          
\begin{subfigure}{0.45\textwidth}
\centering
\includegraphics[width=\textwidth]{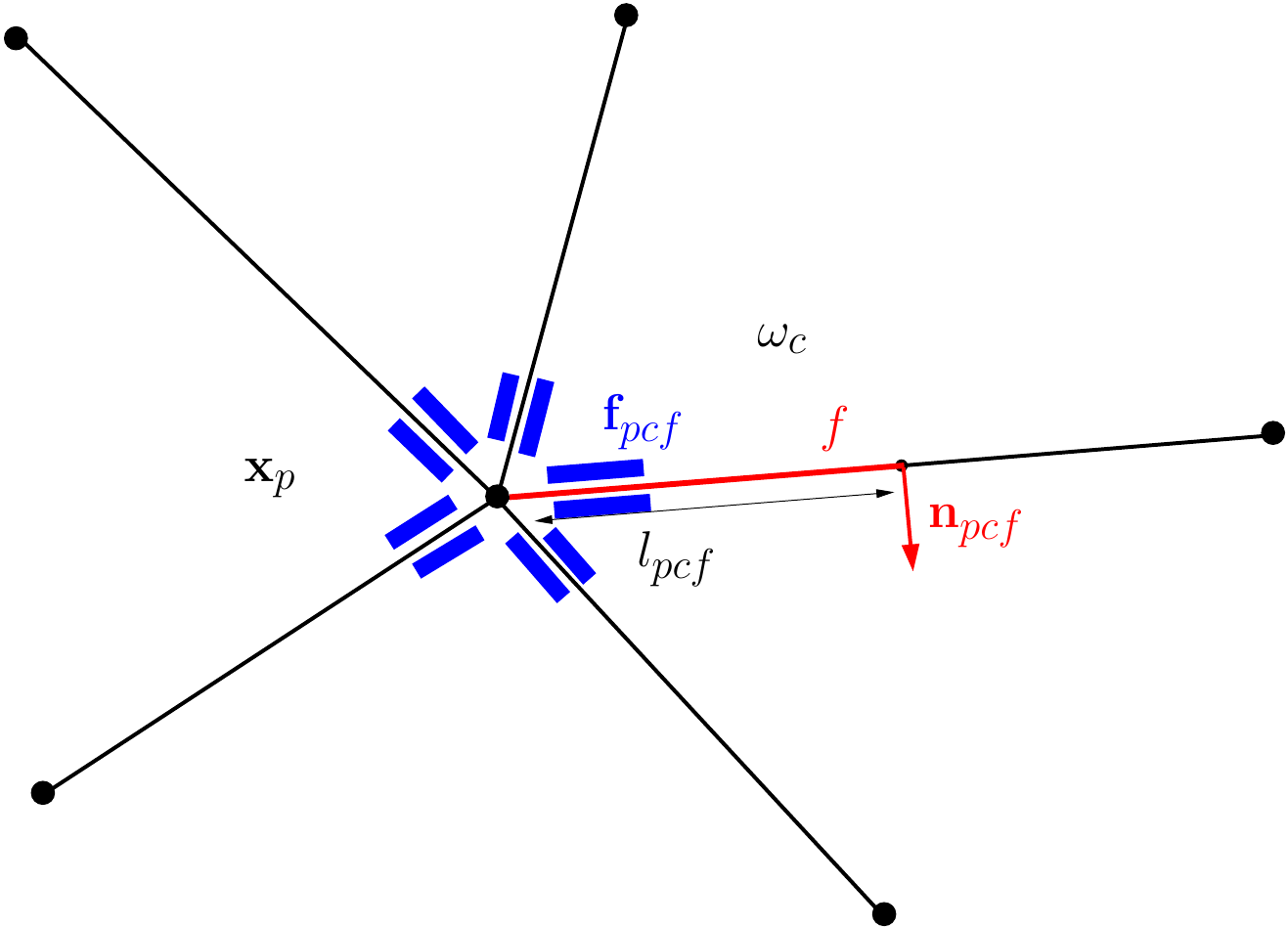}     
\caption{Representation of the subface flux.}
\label{fig:geom2Dpointa}
\end{subfigure}
\begin{subfigure}{0.45\textwidth}
\centering
\includegraphics[width=\textwidth]{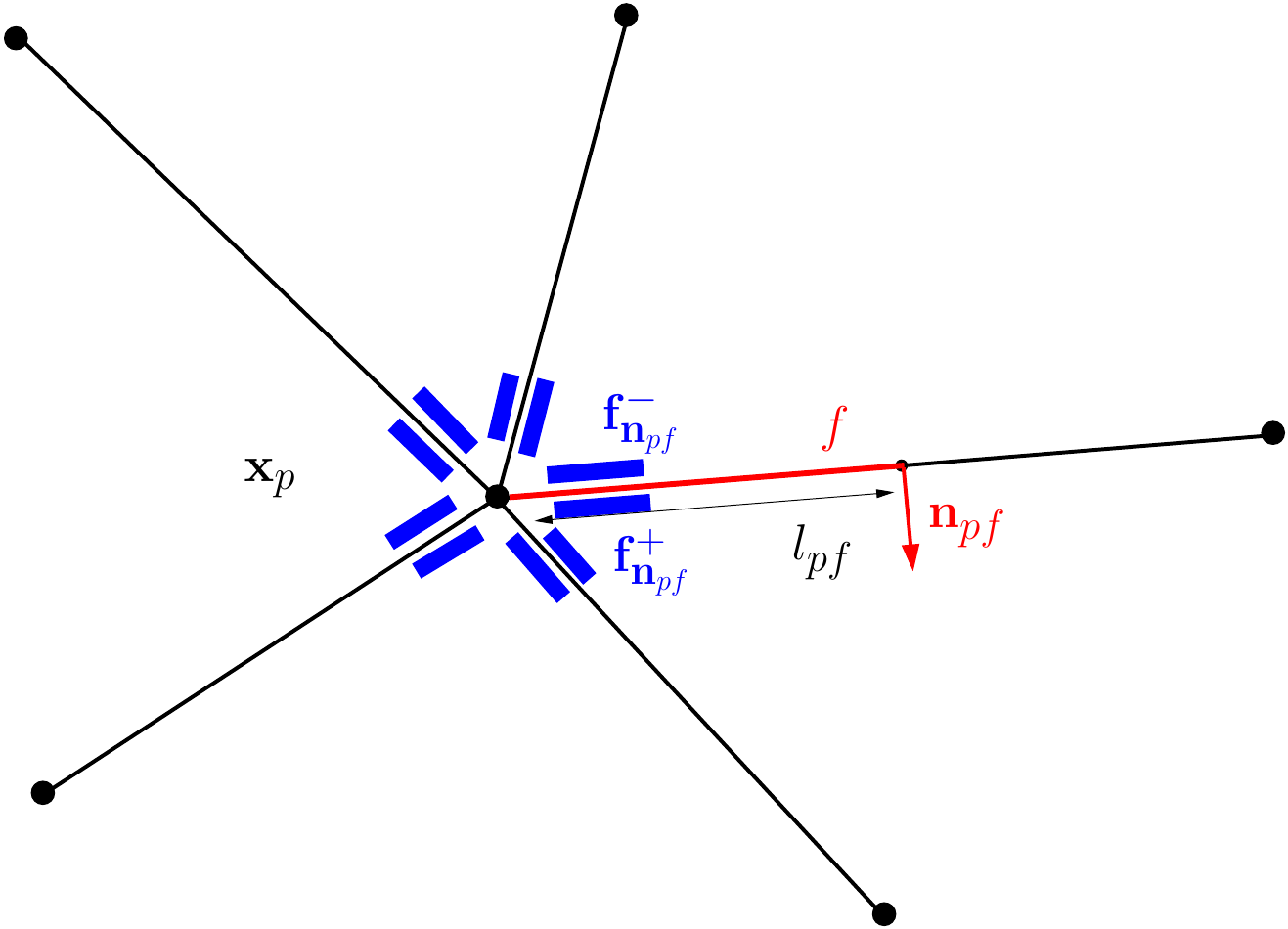}
\caption{Representation of the left- and right-sided fluxes.}
\label{fig:geom2Dpointb}
\end{subfigure}
\caption{Grid fragments in the vicinity of point $p$.}
\label{fig:geom2Dpoint} 
\end{center}
\end{figure}
Note that the sum, over all cells $c$ sharing the point $p$, of the fluxes associated with the subfaces incident to $p$ is exactly equal to the sum of the left- and right-sided fluxes associated with those same subfaces, see Figure~\ref{fig:geom2Dpointb}. This amounts to rewrite condition \eqref{eq:nbc} into
\begin{equation}
  \label{eq:condcons2}
  \sum_{f \in \mathcal{SF}(p)} l_{pf} (\mathbf{f}_{\mathbf{n}_{pf}}^{+}-\mathbf{f}_{\mathbf{n}_{pf}}^{-})=\mathbf{0}.
\end{equation}
Here, $\mathcal{SF}(p)$ denotes the set of subfaces impinging at point $p$. For $f \in \mathcal{SF}(p)$, $l_{pf}$ is the length of subface $f$ and $\mathbf{n}_{pf}$ is its unit normal. In the foregoing equation, $\mathbf{f}_{\mathbf{n}_{pf}}^{-}$ (resp. $\mathbf{f}_{\mathbf{n}_{pf}}^{+}$) denotes the left-sided (resp. right-sided) flux attached to the subface $f$, refer to Figure~\ref{fig:geom2Dpointb}. Thanks to \eqref{eq:iflff} and \eqref{eq:fluxdiffultim} the difference between the left and the right-sided fluxes is expressed in terms of the approximate Riemann solver as
$$
\mathbf{f}_{\mathbf{n}_{pf}}^{+}-\mathbf{f}_{\mathbf{n}_{pf}}^{-}=(\lambda_{l,pf}+\lambda_{r,pf}) \left (v_{\mathbf{n}_{pf}}^{\star}-\overline{v}_{\mathbf{n}_{pf},lr} \right)\begin{pmatrix}
0\\
1\\
\mathbf{0}\\
v_{\mathbf{n}_{pf}}^{\star}
\end{pmatrix}
,
$$
where $\overline{v}_{\mathbf{n}_{pf},lr}$ is normal velocity of face $f$ determined by \eqref{eq:facebasedvel} and $v_{\mathbf{n}_{pf}}^{\star}$ is the unknown normal velocity parameter attached to our approximate Riemann solver. This allows to write the node-based conservation condition \eqref{eq:nbc} component-wise as follows
\begin{subequations}
\label{eq:condconsun}
\begin{align}
&\sum_{f \in \mathcal{SF}(p)} l_{pf} (\lambda_{l,pf}+\lambda_{r,pf}) \left (v_{\mathbf{n}_{pf}}^{\star}-\overline{v}_{\mathbf{n}_{pf},lr} \right)\mathbf{n}_{pf}=\mathbf{0},\label{eq:condconsuna}\\
&\sum_{f \in \mathcal{SF}(p)} l_{pf} (\lambda_{l,pf}+\lambda_{r,pf}) \left (v_{\mathbf{n}_{pf}}^{\star}-\overline{v}_{\mathbf{n}_{pf},lr} \right)v_{\mathbf{n}_{pf}}^{\star}=0.\label{eq:condconsunb}
\end{align}
\end{subequations}
We have $|\mathcal{SF}(p)|$ scalar unknowns, the normal velocity $\overline{v}_{\mathbf{n}_{pf},lr}$ of each subface impinging at $p$, for only $\text{d+1}$ scalar equations. For instance, in the case of a Cartesian grid, we have $|\mathcal{SF}(p)|=4$ and $d+1=3$. Therefore, to close this system of equations, we make the assumption that the parameter $v_{\mathbf{n}_{pf}}^{\star}$ is the projection of the unknown nodal vector $\mathbf{v}_p$ onto the unit normal $\mathbf{n}_{pf}$, that is
\begin{equation}
\label{eq:nodalvelocity}
v_{\mathbf{n}_{pf}}^{\star}=\mathbf{v}_p \cdot \mathbf{n}_{pf},\;\forall\,f\in \mathcal{SF}(p).
\end{equation}
This fundamental assumption drastically reduces the number of unknowns to the vectorial unknown $\mathbf{v}_{p}$, which can be interpreted as an approximation of the nodal velocity. With this assumption the conservation condition \eqref{eq:condconsunb} is equivalent to the conservation condition \eqref{eq:condconsuna}. Finally, the node-based conservation condition \eqref{eq:condconsuna} boils down to the system
\begin{equation}
\label{eq:nodalsolv}
\sum_{f \in \mathcal{SF}(p)} l_{pf} (\lambda_{l,pf}+\lambda_{r,pf})( \mathbf{n}_{pf}\otimes  \mathbf{n}_{pf}) \mathbf{v}_{p}=\sum_{f \in \mathcal{SF}(p)} l_{pf} (\lambda_{l,pf}+\lambda_{l,pf}) \overline{v}_{\mathbf{n}_{pf}} \mathbf{n}_{pf}.
\end{equation}
This system always admits a unique solution that provides the expression of the nodal velocity $\mathbf{v}_p$ which allows to determine the subface fluxes. It is thus called the nodal solver, refer to \cite{Maire2009}.

\subsubsection{Flux characterization}
\label{sssec:fluxcharac}
We observe that the unconventional finite volume discretization introduced above relies on non-unique subface-based fluxes, which leads to the loss of conservation in the classical face-based sense. To overcome this difficulty, we introduced the node-based condition \eqref{eq:nbc} and showed its equivalence with the Residual Distribution consistency condition. It is important to note that the node-based condition can be interpreted as a sufficient conservation condition, derived from enforcing a global balance over the entire computational domain \cite{MaireHDR2011}. Building on the methodology proposed in \cite{Abgrall2018} and described in section \ref{sec:rd:flux}, we demonstrate here the existence of ``half-face'' fluxes on each subface incident to a node, provided that the node-based condition \eqref{eq:nbc} is satisfied. This construction enables us to obtain an explicit expression for each half-face flux, thereby establishing that our unconventional finite volume formulation possesses a local conservation property. In other words, this shows that the node-based condition is not only sufficient, but also necessary.
\begin{figure}
\centering
\includegraphics[width=0.75\textwidth]{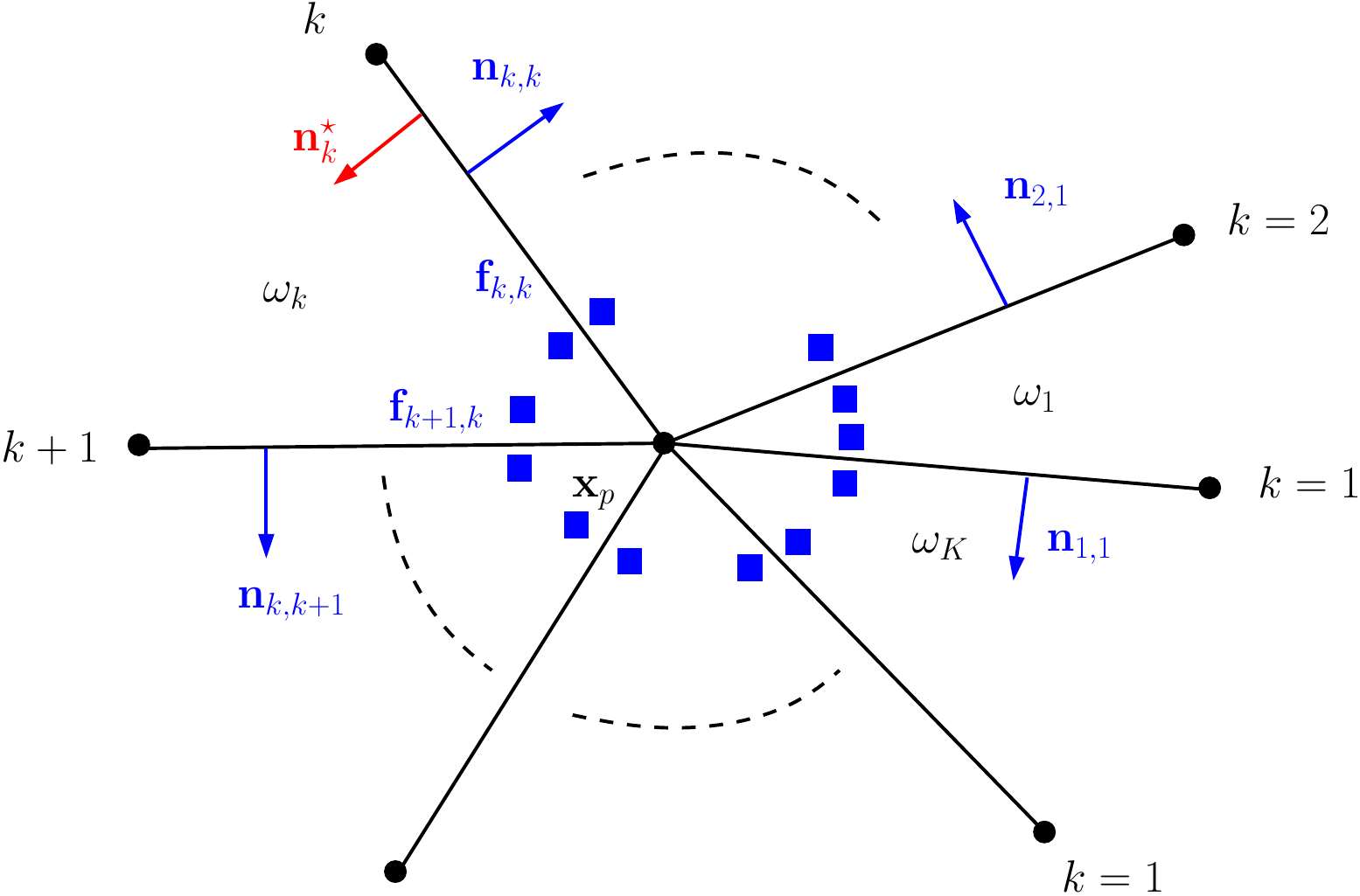}
\caption{Grid fragment in the vicinity of a node: Representation of the cells surrouding the target node; The blue squares corresponds to the subface fluxes; Cells and faces are numbered in the counterclockwise order using cyclic notation.}
\label{fig:cellaroundnode}
\end{figure}

Let us consider the node $p$ surrounded by $K$ cells, that is $\vert\mathcal{C}(p)\vert=K$, where $\mathcal{C}(p)$ is the set of cells surrounding $p$. Before going any further, we introduce some local notations which are displayed in Fig.~\ref{fig:cellaroundnode} to facilitate the presentation. First of all, the generic cell surrounding $p$ is denoted by means of label $k$, where $k=1,\cdots,K$. The faces impinging at $p$ are also numbered using label $k$. More precisely, the set of subfaces belonging to corner $pk$, $\mathcal{SF}(pk)$, is composed of the faces $k$ and $k+1$. Here, $l_{k}$ for $k=1,K$, is the measure of the ``half-face'' $k$. We adopt the convention of a cyclic numbering and thus $l_{K+1} \equiv l_{1}$. The unit normals to faces $k$ and $k+1$ pointing outward cell $k$ are respectively $\mathbf{n}_{k,k}$ and $\mathbf{n}_{k+1,k}$, thus the subface fluxes in the directions of these unit normals are respectively $\mathbf{f}_{k,k}$ and $\mathbf{f}_{k+1,k}$. Here, the first subscript denotes the face, while the second refers to the cell. Using these notations, the node-based condition can be rewritten under the form
\begin{equation}
  \label{eq:nbcnew}
  \sum_{k=1}^{K} l_{k} \mathbf{f}_{k,k}+l_{k+1} \mathbf{f}_{k+1,k}=\mathbf{0}.
\end{equation}
As previously noted, each ``half-face'' $k$ is associated with two subface fluxes, $\mathbf{f}_{k,k-1}$ and $\mathbf{f}_{k,k}$.

Let us investigate the question of the existence of a unique local subface flux associated with each subface. For each ``half-face'' $k$ impinging at $p$, we introduce a unique subface flux $\mathbf{f}_k^{\star}$ defined according to the unit direction $\mathbf{n}_{k}^{\star}$ which points from cell $k-1$ to cell $k$, see Fig.~\ref{fig:cellaroundnode}. If this unique flux exists then it should satisfy the following conservation condition
\begin{equation}
  \label{eq:consconvloc}
  l_{k}\mathbf{f}_{k,k}+l_{k+1}\mathbf{f}_{k+1,k} =-\mathbf{f}_{k}^{\star} +\mathbf{f}_{k+1}^{\star},\;\forall\,k=1\cdots K.
\end{equation}
Here, we point out that the length is included in the definition of the flux $\mathbf{f}_{k}^{\star}$. Finally, the linear system satisfies by the fluxes $\mathbf{f}_{k}^{\star}$ writes
\begin{align*}
  -\mathbf{f}_{1}^{\star} +\mathbf{f}_{2}^{\star}=&l_{1}\mathbf{f}_{1,1}+l_{2}\mathbf{f}_{2,1}, \\
  -\mathbf{f}_{2}^{\star} +\mathbf{f}_{3}^{\star}=&l_{2}\mathbf{f}_{2,2}+l_{3}\mathbf{f}_{3,2}, \\
  \vdots \\%& \vdots \\
  -\mathbf{f}_{k}^{\star} +\mathbf{f}_{k+1}^{\star}=&l_{k}\mathbf{f}_{k,k}+l_{k+1}\mathbf{f}_{k+1,k}, \\
  \vdots \\%& \vdots \\
  -\mathbf{f}_{K-1}^{\star} +\mathbf{f}_{K}^{\star}=&l_{K-1}\mathbf{f}_{K-1,K-1}+l_{K}\mathbf{f}_{K,K-1}, \\
  \vdots \\%& \vdots \\
  -\mathbf{f}_{K}^{\star} +\mathbf{f}_{1}^{\star}=&l_{K}\mathbf{f}_{K,K}+l_{1}\mathbf{f}_{1,K}.
\end{align*}
Summing these equations, we recover the node-based conservation condition \eqref{eq:nbcnew}. We rewrite this linear system under the compact form
\begin{equation}
  \label{eq:lisys}
  \mathbf{A}  \mathbf{f}^{\star}=\mathbf{S},
\end{equation}
where the vector of unknown fluxes and the right-hand side are respectively given by
$$\mathbf{f}^{\star}=(\mathbf{f}_{1}^{\star},\cdots,\mathbf{f}_{K}^{\star})^{\mathtt{T}},\;\text{and}\;\mathbf{S}=(l_{1}\mathbf{f}_{1,1}+l_{2}\mathbf{f}_{2,1},\cdots,l_{K}\mathbf{f}_{K,K}+l_{1}\mathbf{f}_{1,K})^{\mathtt{T}}.$$
The above linear system is characterized by the $K \times K$ matrix
$$\mathbf{A}=
\begin{pmatrix}
  -1 &  1 & 0 & \cdots & \cdots & 0 \\
   0 & -1 & 1 & 0 & \cdots & 0 \\
   \vdots & \ddots & \ddots & \ddots & \ddots & \vdots \\
   \vdots & \ddots & \ddots & \ddots & \ddots & 0 \\
   0 & \cdots & \cdots & 0 & -1 & 1 \\
   1 & 0 & \cdots & \cdots & 0 & -1
\end{pmatrix}
.$$
Computing $\mathbf{L}=\mathbf{A}\mathbf{A}^{\mathtt{T}}$, we get
%\todo[inline]{PEut \^etre harmoniser la notation avec la section 4 ou L et A ne sont pas en boldface ? Remi/PH check (M)}
$$\mathbf{L}=
\begin{pmatrix}
  2 &  -1 & 0 & \cdots & \cdots & -1 \\
   -1 & 2 & -1 & 0 & \cdots & 0 \\
   \vdots & \ddots & \ddots & \ddots & \ddots & \vdots \\
   \vdots & \ddots & \ddots & \ddots & \ddots & 0 \\
   0 & \cdots & \cdots & -1 & 2 & -1 \\
   -1 & 0 & \cdots & \cdots & -1 & 2
\end{pmatrix}
,$$
which is nothing but the matrix of discrete Laplacian operator written over a periodic computational domain. First, it is clear that $\text{Ker} (\mathbf{L})=\text{Span}\{\mathbf{1}_{K}\}$ where $\mathbf{1}_{K}=(1,\cdots,1)^{t}$ and $\text{Im} (\mathbf{L})=\left (\text{Ker} (\mathbf{L})\right)^{\perp}$. Noting that the node-based condition \eqref{eq:nbcnew} can be written as $\mathbf{S}\cdot \mathbf{1}_{K}=0$ directly implies that $\mathbf{S} \in \text{Im} (\mathbf{L})$. Therefor, there exists a unique $\tilde{\mathbf{f}} \in \text{Im}(L)$ such that $\mathbf{L}  \tilde{\mathbf{f}}=\mathbf{S}$. This shows that the solution of \eqref{eq:lisys} might be written under the form $\mathbf{f}^{\star}=\mathbf{A}^{\mathtt{T}} \tilde{\mathbf{f}}$ and thus
\begin{equation}
  \label{eq:solut}
  \mathbf{f}^{\star}=\mathbf{A}^{\mathtt{T}}\mathbf{L}^{-1}  \mathbf{S}.
\end{equation}
It is worth noticing that the solvability of \eqref{eq:lisys} is equivalent to the satisfaction of the node-based condition \eqref{eq:nbcnew}.

We finish this paragraph providing some results useful to obtain a practical expression of the inverse of $\mathbf{L}$. First of all, since $\mathbf{L}$ is a circulant matrix\footnote{A circulant matrix is such that each row is a circular shift of the first row.} its eigenvectors $\mathbf{e}_l$, for $l=0\cdots K-1$, are expressed in terms of the Kth roots of unity
$$\mathbf{e}_l=\left (\alpha_{K}^{0},\alpha_{K}^{l},\alpha_{K}^{2l},\cdots,\alpha_{K}^{l(K-1)}\right)^{\mathtt{T}},$$
where $\alpha_{K}^{l}=\exp(\frac{2\pi li}{K})$ for $l \in [0,K-1]$ and $i^{2}=-1$. This comes from the fact that $\alpha_{K}^{l+K}=\alpha_{K}^{l}$ and one can also easily check that $\{\mathbf{e}_{l} \}_{l=0\cdots K-1}$ is an orthogonal basis for the Hermitian scalar product. The corresponding eigenvalues are given by
$$\lambda_l=2-\alpha_{K}^{l}-\alpha_{K}^{l(K-1)}=2 -2\cos\left (\frac{2\pi l}{K} \right),\;\text{for}\;l=0\cdots K-1.$$
The first eigenvalue is $\lambda_{0}=0$ and the others are non negative. Finally, the inverse of $\mathbf{L}$ defined over $(\text{span} (\mathbf{1}_{K}))^{\perp}$ writes explicitly as
\begin{equation}
  \label{eq:invL}
  \mathbf{L}^{-1}=\sum_{l=1}^{K-1} \frac{1}{\lambda_l} \left (\mathbf{e}_l \otimes \mathbf{e}_l \right).
\end{equation}
This demonstrates that $\mathbf{f}^{\star}$ can be computed explicitly in terms of all the subface fluxes associated with the subfaces incident at node $p$. More precisely, it implies that the unique flux $\mathbf{f}_{k}^{\star}$ attached to the ``half-face'' $k$, depends on all the states surrounding the node-hence the designation of multipoint flux approximation \cite{Gallice2022}.
\subsection{Further readings}
The node-based flux approximation described above leads to a cell-centered finite volume method that is only first-order accurate in space and time for the discretization of the Lagrangian gas dynamics equations. A second-order space--time extension was developed in \cite{Maire2009} using the Generalized Riemann Problem (GRP) methodology introduced by Ben-Artzi and Falcovitz in their monograph \cite{Ben-Artzi2003}. This yields an efficient one-step algorithm in which the equation of state is invoked only once per time step. Building upon this, a third-order space--time extension was proposed in \cite{Vilar2012,Vilar2014} through an original Discontinuous Galerkin method formulated over the initial configuration, i.e. within the total Lagrangian representation of the gas dynamics equations. In this framework, particular attention must be given to the discretization of the deformation gradient tensor, which maps the initial configuration to the current one and must satisfy an involution constraint rigorously equivalent to the Piola compatibility condition. This guarantees consistency between the geometry at t=0t=0 and the geometry at later times.

The unconventional cell-centered finite volume discretization has also been successfully extended to more complex physical models, including nonlinear solid mechanics, both for hypoelastic constitutive laws \cite{Maire2013} and hyperelastic ones \cite{Kluth2010,Boscheri2022}. More recently, its application to Lagrangian ideal magnetohydrodynamics (MHD) has led to a cell-centered finite volume method \cite{Boscheri2023} that rigorously preserves the divergence-free property of the magnetic field at the discrete level.

Finally, we mention the development of a staggered finite volume discretization of Lagrangian hydrodynamics \cite{Maire2011}, in which the momentum equation is discretized on dual cells and the momentum flux is approximated using Riemann solvers associated with the "half-faces" of the dual cells adjacent to cell centers. This results in cell-centered momentum flux approximations that are not uniquely defined, leading to a loss of conservation. Conservation is once again recovered by enforcing a cell-center-based conservation condition, which coincides with the consistency condition of Residual Distribution schemes. This procedure yields both the cell-centered velocity and the cell-centered flux approximation, and it also naturally provides an expression for a multidimensional artificial viscosity.

The first attempt to extend the unconventional Lagrangian finite volume formulation to gas dynamics expressed in an Eulerian framework was presented in \cite{Shen2014_I}. In this work, the authors introduced what they termed a 2D HLLC nodal solver, a multidimensional extension of the classical HLLC Riemann solver. The extension is achieved by prescribing a contact velocity, defined as the projection of the nodal velocity onto the normal of each face incident to the node. This formulation leads to non-unique fluxes and consequently the loss of face-based conservation. As in the Lagrangian case, conservation is restored through a node-based conservation condition, which ultimately reduces to the Lagrangian counterpart. However, within this approach, the ordering of wave speeds in the Riemann solver is not guaranteed, and neither entropy stability nor positivity preservation can be ensured. Despite these limitations, the reported numerical experiments show that this method is remarkably robust: it is largely insensitive to well-known numerical pathologies such as odd--even decoupling and the carbuncle phenomenon \cite{Quirk1994}, while preserving contact discontinuities.

These promising features motivated us to develop a general formalism for systematically extending the unconventional Lagrangian FV formulation to hyperbolic systems of conservation laws written in Eulerian form, provided the system includes a continuity equation. In \cite{Gallice2022}, we introduced a subface flux-based finite volume method for discretizing multidimensional hyperbolic systems on general unstructured grids. The numerical flux approximation is constructed from the notion of a simple Eulerian Riemann solver, first introduced by Gallice in \cite{Gallice2003}. This solver is obtained systematically from its Lagrangian counterpart via the Lagrange-to-Euler mapping, a procedure that transfers desirable properties such as positivity preservation and entropy stability. In this framework, conservation and entropy stability are no longer expressed in the traditional face-based sense but instead arise from a node-based vectorial conservation equation and a scalar entropy inequality, respectively. The resulting multidimensional FV scheme is governed by an explicit time step restriction that guarantees both positivity and entropy stability.

Applied to gas dynamics, this formulation yields an original multidimensional FV scheme that is simultaneously conservative and entropy-stable, with numerical fluxes computed through a nodal solver analogous to that used in Lagrangian hydrodynamics. Its robustness and accuracy have been validated on a range of test problems, where the scheme exhibits strong resistance to the numerical instabilities that often affect classical face-based contact-preserving FV formulations. A three-dimensional extension of the method to general hybrid unstructured grids has recently been presented in \cite{Delmas2025}.

As in the Lagrangian case, these unconventional FV schemes derive conservation from a node-based condition, which ensures local conservation. In addition, the Eulerian node-based condition is formally identical to its Lagrangian counterpart. This follows directly from the fact that Eulerian fluxes are constructed from Lagrangian fluxes through the Lagrange-to-Euler mapping, establishing a rigorous equivalence between the two formulations. This also shows that the Eulerian node-based conservation coincides with the consistency condition attached to the Residuals Distribution scheme. 
%%%%%%%%%%%

%\section{Extension for entropy}

\section{Other applications of the RD formalism}
\subsection{Construction of schemes on staggered meshes}\label{sec:stagerred}
In this section, we make a brief summary of the papers \cite{AbgrallTokareva1,AbgrallTokareva2,AbgrallStaggered}. Since the method is essentially similar, and maybe more general in \cite{AbgrallStaggered}, we summarize it.

The system is that of the Euler equations, where the variables are the density $\rho$, the velocity $\bbv$ and the internal energy $e$, i.e.
\begin{equation}
\label{euler:nc}
\begin{split}
\dpar{\rho}{t}&+\text{div }(\rho \bbv)=0\\
\dpar{\bbv}{t}&+\big ( \bbv \cdot\nabla)\bbv+\frac{\nabla p}{\rho}=0\\
\dpar{\rho\varepsilon}{t}&+\big ( \bbv \cdot \nabla)(\rho \varepsilon)+\rho h \text{ div }\bbv.
\end{split}
\end{equation}
The equation of state is of the form $p=p(\rho,e)$ and the numerical examples will be obtained with the perfect equation of state.
This system is not suitable for the solution of problems with discontinuity, but it has the advantage to be written with the variable that are useful to the engineer. There has been a  vast literature on the discretization of the problem \eqref{euler:nc}, and here we show how the techniques described in this paper may help to have systematic manner to approximate it in such a way that the correct weak solutions are recovered at mesh convergence.

The procedure has two steps. First we write a stable scheme for \eqref{euler:nc} that can be written in the form \eqref{forme schemas}. This scheme will produce wrong solutions at mesh convergence. Then we show how to correct it  in order to recover the correct weak solutions. One of the possible problems of the method is that we do not know a priori if the new scheme will still be stable, but all the examples we have run have been run in a stable manner.

We proceed by giving one example. The computational domain, i.e. $\R^d$ to simplify,  is covered by simplex that are denoted generically by $K$. The approximation space for the thermodynamics, i.e. the density and the internal energy is $${\TT}_j=\bigoplus_K \P^r(K)$$ and the approximation for the velocity is $${\KK}_h=\bigg (\bigoplus_K\P^{r+1}(K)\bigg )^d\cap \bigg (C^0(\R^d)\bigg )^d.$$ Hence the thermodynamical parameters can be discontinuous across the faces of the elements $K$. The thermodynamical degrees of freedom are denoted by $\bsigma_{{\TT}}$ and the kinematic degrees of freedom by $\bsigma_{{\KK}}$. They are a priori distinct. To fix ideas, we  will assume that the thermodynamical degrees of freedom are the Lagrange points of $\P^r(K)$, and we denote them by $\sigma_{{\TT}}$. For the kinematic degrees of freedom, we chose here to expand, in each element $K$, the velocity as a sum of B\'ezier polynomial. In that we follow \cite{AbgrallStaggered} where a non linear RD scheme is used to update in time the velocity, the reason for choosing B\'ezier polynomial is that for all $K$, the integral of the basis functions  attached to $K$ is strictly positive. This allows to use a deferred correction technique to update the velocity in time, see \cite{Mario,AbgrallDec,AbgrallStaggered} for more details. Other spatial approximation could be used, such as in \cite{Sixtine1,Sixtine2}, also leading to a mass matrix free approximation.

The semi-discrete scheme for the density is the standard dG scheme,
\begin{subequations}
    \label{eq:staggered}
    \begin{equation}\label{eq:staggered:rho}
        \int_K\varphi_{\sigma_{\TT}}\dfrac{d\rho^h}{dt}\; d\bbx{-\int_K\nabla\varphi_{\sigma_{\TT}} \cdot\big (\rho^h\bbv^h\big )\; d\bbx+\int_{\partial K} \varphi_{\sigma_{\TT}}\hbbf_\bbn^\rho (\bbu_K, \bbu^-)\; d\gamma}=0
        %_{\Phi_{\sigma_{\TT}}^{\rho,K}(\bbu^h)}=0
    \end{equation}
    The velocity equation is approximated by
    \begin{equation}
        \label{eq:staggered:v}
        \int_{\R^d}\varphi{\sigma_{\KK}}\dfrac{d\bbv^h}{dt}\; d\bbx+\sum_{K}\bigg ({\int_K\varphi_{{\KK}} (\bbv^h\cdot\nabla)\bbv^h\; d\bbx+\int_{K}\varphi_{{\KK}}\dfrac{\nabla p^h}{\rho^h}\; d\bbx}\bigg )=0
        %_{\Phi_{\sigma_{\KK}}^{\bbv,K}(\bbu^h)}\bigg )=0,
    \end{equation}
    and 
    \begin{equation}
        \label{eq:staggered:e}
        \int_K\varphi_{\sigma_{\TT}}\dfrac{d e^h}{dt}\; d\bbx
        +{\int_K\varphi_{{\TT}}\bigg ( \big ( \bbv^h\cdot \nabla\big )\bbv^h+(e^h+p^h)\text{div } \bbv^h\bigg )\; d\bbx}=0
        %_{\Phi_{\sigma_{\TT}}^{e,K}(\bbu^h)}=0
    \end{equation}
\end{subequations}

The relations \eqref{eq:staggered:rho} and \eqref{eq:staggered:e} lead to small and invertible mass matrix like for dG. The numerical flux in \eqref{eq:staggered:rho} is a standard numerical flux. The integrals in \eqref{eq:staggered:v} and \eqref{eq:staggered:e} are evaluated using quadrature formula, as well as the surface integral of \eqref{eq:staggered:rho}. The quadrature formula are chosen so that the formal accuracy is respected. We skip the important details of getting an oscillation free scheme, 
for which we refer to \cite{AbgrallTokareva1,AbgrallTokareva2,AbgrallStaggered}.
%and refer to \cite{AbgrallTokareva1,AbgrallTokareva2,AbgrallStaggered} for the details.

From \eqref{eq:staggered}, we see that each of the relations are put in the formalism of relation \eqref{forme schemas}. The conservation relation \eqref{rds_re:conservation} holds true only for the mass, and not for the velocity and the internal energy. 

In this paper, we only consider the case of a scheme that is first order in time, for the sake of simplicity. The extension to higher order is done in the above mentioned references, via a defect correction algorithm because of the velocity equation.

The only important fact is about the structure of the \textit{discrete} temporal evolution equations:  the discrete evolution equations of the density and the internal energy will have the following structure: for all $\sigma_{{\TT}}\in K$,
\begin{subequations}\label{eq:discrete}
\begin{equation}\label{discrete rho}
\vert C_{\sigma_{{\TT}}}\vert\;\big ( \rho_{\sigma_{\TT}}^{n+1}-\rho_{\sigma_{\TT}}^{n}\big )+\Phi_{\sigma_{{\TT}}}^{\rho,K}=0
\end{equation}
and 
\begin{equation}\label{discrete e}
\vert C_{\sigma_{{\TT}}}\vert\;\big ( e_{\sigma_{\TT}}^{n+1}-e_{\sigma_{\TT}}^{n}\big )+\Phi_{\sigma_{{\TT}}}^{e,K}=0
\end{equation}
while the velocity update is done for each $\sigma_{{\KK}}$ by
\begin{equation}\label{discrete v}
\vert C_{\sigma_{{\KK}}}\vert\;
\big ( \bbv_{\sigma_{\KK}}^{n+1}-\bbv_{\sigma_{\KK}}^{n}\big )
+\sum_{K, \sigma_{{\KK}}\in K} \Phi_{\sigma_{{\TT}}}^{\bbv,K}=0.
\end{equation}
\end{subequations}
The residuals $\Phi$  are an approximation of the time-space  integrals of \eqref{eq:staggered}. 

For example,
$$\Phi_{\sigma_{{\TT}}}^{\rho,K}\approx\int_{t_n}^{t_{n+1}}\bigg [\int_K\varphi_{\sigma_{\TT}}\dfrac{d\rho^h}{dt}\; d\bbx-\int_K\nabla\varphi_{\sigma_{\TT}} \cdot \big (\rho^h\bbv^h\big )\; d\bbx+\int_{\partial K} \varphi_{\sigma_{\TT}}\hbbf_\bbn^\rho (\bbu_K, \bbu^-)\; d\gamma
\bigg ] d\xi.
$$

To get a locally conservative scheme, the idea is to modify the residuals in order to force the relations \eqref{rds_re:conservation}. For the density, there is nothing to do. For the velocity, we proceed as follows. 

If we had a Lax-Wendroff theorem to prove, we would have to study, for any $K$, integrals of the form
$$\int_{K} \psi^h(t_n, \bbx)\big (\rho^{n+1}\bbv^{n+1}-\rho^{n}\bbv^{n}\big ),$$
where $\psi^h$ is an approximation of  a smooth test function $\psi$ with compact support. We do not have to worry about accuracy, we must choose $\psi^h$ so that in the limit of mesh refinement, if the solution is bounded and converges in $L^2$ (for example) to some momentum $\rho\bbv$, the sum of these integrals converges to
$$\int_{\R^d}\dpar{\psi}{t}\rho\bbv\; d\bbx.$$
Hence a good choice is to take $\psi^h(s,\bbx)$ in $K\times [t_n,t_{n+1}]$ as its average in $K$ at time $t_n$. We are left with integrals of the type
$$\int_K \big (\rho^{n+1}\bbv^{n+1}-\rho^{n}\bbv^{n}\big )\; d\bbx.$$
Because of the structure of the algorithm \eqref{eq:discrete}, we have to study terms of the form 
$$\int_K \big (\rho^{n+1}\bbv^{n+1}-\rho^{n}\bbv^{n}\big )\; d\bbx$$ and we notice that
$$\rho^{n+1}\bbv^{n+1}-\rho^{n}\bbv^{n}=
\rho^{n+1} \big ( \bbv^{n+1}-\bbv^{n}\big ) +
\bbv^{n}\big ( \rho^{n+1}-\rho^{n}\big ).$$

Introducing $\Delta \bbv_{\sigma_{\KK}}=:\bbv_{\sigma_{\KK}}^{{n+1}}-\bbv_{\sigma_{\KK}}^{{n}}$ and
$\Delta \rho_{\sigma_{{\TT}}}:=\rho_{\sigma_{{\TT}}}^{{n+1}}-\rho_{\sigma_{{\TT}}}^{{n}}$, we can write
\begin{equation*}
\begin{split}\int_K\big (\rho^{n+1}\bbv^{n+1}-\rho^{n}\bbv^{n}\big )\; d\bbx&= \sum_{\sigma_{{\KK}}\in K} \Delta \bbv_{\sigma_{\KK}}\int_K \rho^{{n+1}}\varphi_{\sigma_{\KK}}\; d\bbx\\
&\qquad + \sum_{\sigma_{{\TT}}\in K}\Delta \rho_{\sigma_{{\TT}}}\int_K \bbv^{{n}}\varphi_{\sigma_{\TT}}\; d\bbx.
\end{split}
\end{equation*}

Then we get
\begin{equation}
\label{momentum:2}
\begin{split}
\int_{K}& \big ( \rho^{{n+1}}\bbv^{{n+1}}-\rho^{{n}}\bbv^{{n}}\big ) \; d\bbx\\
& =\sum_{\sigma_{{\KK}}\in K} \Delta \bbv_{\sigma_{\KK}}\int_K \rho^{{n+1}}\varphi_{\sigma_{\KK}}\; d\bbx+ \sum_{\sigma_{{\TT}}\in K}\Delta \rho_{\sigma_{{\TT}}}\int_K \bbv^{{n}}\varphi_{\sigma_{\TT}}\; d\bbx
\\
&\quad=\sum_{\sigma_{{\KK}}\in K} \omega^{\rho,n+1,K}_{{\sigma_{{\KK}}}} 
|C_{\sigma_{{\KK}}}|\Delta \bbv_{\sigma_{\KK}}
 + \sum_{\sigma_{{\TT}}\in K}\omega^{\bbv,n,K}_{\sigma_{\TT}}
|C_{\sigma_{\TT}}|\Delta \rho_{\sigma_{{\TT}}},
\end{split}
\end{equation}
where we have set for simplicity
$$\omega^{\rho,n+1,K}_{{\sigma_{{\KK}}}}:=\dfrac{\int_K \rho^{{n+1}}\varphi_{\sigma_{\KK}}\; d\bbx}{|C_{\sigma_{{\KK}}}|}\; \text{ , }\;
\omega^{\bbv,n,K}_{\sigma_{\TT}}:=\dfrac{\int_K \bbv^{{n}}\varphi_{\sigma_{\TT}}\; d\bbx}{|C_{\sigma_{\TT}}|}.$$

For the velocity, we have:
$$|C_{\sigma_{{\KK}}}|\big ( \bbv_{\sigma_{\KK}}^{{n+1}}-\bbv^{{n}}_{\sigma_{\KK}}\big ) +\sum_{K, \sigma_{\KK}\in K}\Phi^\bbv_{\sigma_{\KK},K}=0.$$
%************************************************** 

%************************************************** 
For the density, we have
$$\vert C_{\sigma_{\TT}}\vert \big ( \rho_{\sigma_{{\TT}}}^{{n+1}}-\rho_{\sigma_{{\TT}}}^{{n}}\big ) +
\sum_{K, \sigma_{{\TT}}\in K} \Phi_{\sigma_{{\TT}},K}^{\rho}=0$$
and we note that the sum reduces to one term, hence
$$\vert C_{\sigma_{\TT}}\vert \big ( \rho_{\sigma_{{\TT}}}^{{n+1}}-\rho_{\sigma_{{\TT}}}^{{n}}\big ) +
\Phi^\rho_{\sigma_{{\TT}},K}=0.$$
%************************************************** 
Using these relations in \eqref{momentum:2}, we get
\begin{equation}
\label{momentum:3}
\begin{split}
\int_{\R^d}&\psi(\bbx,t) \; \big ( \rho^{{n+1}}\bbv^{{n+1}}-\rho^{{n}}\bbv^{{n}}\big ) \; d\bbx+\Delta t_n\underbrace{
\sum_{\sigma_{{\KK}}} \psi_{\sigma_{{\KK}}}^n \omega_{\sigma_{{\KK}}}^{\rho, n+1}
\bigg [ \sum_{K, \sigma_{{\KK}}\in K}\Phi^\bbv_{\sigma_{{\KK}},K}\bigg ]}_{I}\\&
+
\Delta t_n\sum_K\bigg \{\sum_{\sigma_{{\KK}}\in K} \big (\psi_K^n-\psi_{\sigma_{{\KK}}}\big )\omega_{\sigma_{{\KK}}}^{\rho, n+1,K}\Big [ \sum_{K', \sigma_{{\KK}}\in K'\cap K} \Phi^\bbv_{\sigma_b,K}\Big ]\bigg \}
\\
&\qquad+
\Delta t_n\sum_K \psi^n_K \Big [\sum_{\sigma_{{\TT}}\in K}\omega^{\bbv,n,K}_{\sigma_{\TT}}\Phi^\rho_{\sigma_{{\TT}},K}\Big ]
\\
&\qquad\qquad=0
\end{split}
\end{equation}
The term $I$ can be rewritten as 
\begin{equation*}
\begin{split}
\sum_{\sigma_{{\KK}}} &
\psi_{\sigma_{{\KK}}}^n \omega_{\sigma_{{\KK}}}^{\rho, n+1}
\bigg [ \sum_{K, \sigma_{{\KK}}\in K}\Phi^\bbv_{\sigma_{{\KK}},K}\bigg ]=
\sum_{K} \psi^n_K \sum_{\sigma_{{\KK}}\in K} \omega_{\sigma_{{\KK}}}^{\rho, n+1}\Phi^\bbv_{\sigma_{{\KK}},K}\\
&\qquad 
+
\sum_K \bigg [ \sum_{\sigma_{{\KK}}\in K} \big ( \psi^n_K-\psi^n_{\sigma_{{\KK}}}\big )\omega_{\sigma_{{\KK}}}^{\rho, n+1}\Phi^\bbv_{\sigma_{{\KK}},K}
\bigg ]
\end{split}
\end{equation*}
Hence, gathering all together, we get
\begin{equation*}
\begin{split}
\int_{\R^d}&\psi(\bbx,t) \; \big ( \rho^{{n+1}}\bbv^{{n+1}}-\rho^{{n}}\bbv^{{n}}\big ) \; d\bbx\\
 &+%\Delta t_n
\sum_{K} \psi^n_K\Bigg [  \sum_{\sigma_{{\KK}}\in K} \omega_{\sigma_{{\KK}}}^{\rho, n+1}\Phi^\bbv_{\sigma_{{\KK}},K}
\sum_{\sigma_{{\TT}}\in K}\omega^{\bbv,n,K}_{\sigma_{\TT}}\Phi^\rho_{\sigma_{{\TT},K}}\Bigg ]\\
&\qquad \qquad =0
\end{split}
\end{equation*}

Thus, we obtain the master equation:
\begin{subequations}
\begin{equation}
\label{master:1}
\begin{split}
 \int_{\R^d}\psi(\bbx,t) \; &\big ( \rho^{{n+1}}\bbv^{{n+1}}-\rho^{{n}}\bbv^{{n}}\big ) \; d\bbx 
 \\&+\sum_K \psi_K^n \Bigg [ 
\sum_{\sigma_{\KK}\in K} \omega^{\rho,n+1}_{\sigma_{{\KK}}} \Phi_{\sigma_{\KK},K}^\bbv 
+\sum_{\sigma_{{\TT}}\in K}\omega^{\bbv,n,K}_{\sigma_{{\KK}}}
\Phi^\rho_{\sigma_{{\TT}},K}
 \Big ]\\
 \\
&\qquad+ 
\sum_{K}\Big ( F_K^{\mathbf{m}}+ D_K^{\mathbf{m}}
\sum_{\sigma_{\KK}\in K}D^K_{\sigma_{\KK}}
\Big )=0
\end{split}
\end{equation}
with
 \begin{equation}
\label{master:2}
\begin{split}
F_K^{\mathbf{m}}&=\sum_{\sigma_{{\KK}}\in K} \big ( \psi^n_K-\psi^n_{\sigma_{{\KK}}}\big )\omega_{\sigma_{{\KK}}}^{\rho, n+1}\Phi^\bbv_{\sigma_{{\KK}},K}\\
D_{K}^{\mathbf{m}}&=\sum_{\sigma_{{\KK}}\in K} \big (\psi_K^n-\psi_{\sigma_{{\KK}}}\big )\omega_{\sigma_{{\KK}}}^{\rho, n+1,K}\Big [ \sum_{K', \sigma_{{\KK}}\in K'\cap K} \Phi^\bbv_{\sigma_b,K}\Big ]\\
%\sum_{\sigma_{\KK}\in K}\big ( \psi^n_K-\psi^n_{\sigma_{{\KK}}}\big )\omega_{\sigma_{{\KK}}}^{\rho, (p+1)}\Phi^\bbv_{\sigma_{{\KK}},K} \\
 \omega_{\sigma_{{\KK}}}^{\rho,n+1,K}&=\dfrac{ \int_K \rho^{{n+1}}\varphi_{\sigma_{\KK}}\; d\bbx}{\vert C_{\sigma_{\KK}}\vert}, \qquad
\omega_{\sigma_{\KK}}^{\rho,n+1}=
\sum\limits_{K, \sigma_{{\KK}}\in K}\omega_{\sigma_{\KK}}^{\rho,n+1,K}
\\
\omega^{\bbv,n,K}_{\sigma_{\TT}}&=\dfrac{\int_K \bbv^{{n}}\varphi_{\sigma_{\TT}}\; d\bbx}{\vert C_{\sigma_{\TT}}\vert}. 
\end{split}
\end{equation}  
\end{subequations}

The same kind of relations can be obtained for the internal energy, and it is possible to show the following result
\begin{proposition}\label{LxW}
Assume that the mesh $\mathcal{T}_h$ is shape regular, we denote by $h$ the maximum diameter of the element of the mesh.
For any $K$, the residuals $\Phi_{\sigma_\mathcal{E},K}^\rho$, $\Phi_{\sigma_\mathcal{E},K}^e$, $\Phi_{\sigma_\mathcal{V},K}^\bbv$ are Lipschitz continuous functions of their arguments, with Lipschitz constant of the form $C\cdot h$, where $C$ only depends on $\alpha$ and the maximum norm of the solution.

Assume that we have a family of meshes $\mathcal{F}=\{\mathcal{T}_{h_k}\}$ with $\lim\limits_{k\rightarrow +\infty} h_k=0$. We denote by $(\bbu_{h_k})_{k\geq 0}$ the sequence of functions fulfilling:
$$\text{ if }t\in [t_n, t_{n+1}[,\; \bbu(\bbx, t)=(\rho(\bbx, t_n), \bbv(\bbx,t_n), e(\bbx,t_n))^{\mathtt{T}}$$
with, if $K$ is the element that exists almost everywhere such that $\bbx\in K$,
$$\rho(\bbx,t_n)=\sum_{\sigma_\mathcal{E}\in K} \rho_{\sigma_\mathcal{T}}^n\varphi_{\sigma_\mathcal{T}}(\bbx), \quad e(\bbx,t_n)= \sum_{\sigma_\mathcal{T}\in K} e_{\sigma_\mathcal{T}}^n\varphi_{\sigma_\mathcal{T}}(\bbx),$$
and $$\bbv(\bbx,t_n)=\sum_{\sigma_\mathcal{K}}\bbv_{\sigma_\mathcal{K}}^n\varphi_{\sigma_\mathcal{K}}(\bbx).$$ Here $\{(\rho^n_{\sigma_\mathcal{T}}), (\bbv^n_{\sigma_\mathcal{K}}),(e^n_{\sigma_\mathcal{T}})\}_{n\geq 0, \sigma_\mathcal{T}, \sigma_\mathcal{K}}$ are defined by the scheme \eqref{eq:discrete}. 

We assume that the density, velocity and internal energy are uniformly bounded and that a subsequence converges in $L^2$ towards $(\rho, \bbv, e)$, where $\rho, e\in L^2(\R^d\times [0,T])$ and $\bbv\in \big (L^2(\R^d\times [0,T]))^d$ . 

 We also assume  that the residuals satisfy
 \begin{equation}\label{momentum}
\sum_{\sigma_\mathcal{K}\in K} \omega^{\rho,n+1}_{\sigma_\mathcal{K}} \Phi_{\sigma_\mathcal{K},K}^\bbv 
+\sum_{\sigma_{\mathcal{T}}\in K}\omega^{\bbv,n,K}_{\sigma_\mathcal{T}}
\Phi^\rho_{\sigma_{\mathcal{T}},K}=\int_{\partial K} \bbf^{\mathbf{m}}(U^{{n}})\cdot \bbn\; d\gamma
\end{equation}
%************************************************** 
%************************************************** 
and 
\begin{equation}\label{energy}
\sum_{\sigma_\mathcal{T}\in K}\Phi_{\sigma_\mathcal{T},K}^e+\sum_{\sigma_\mathcal{K}\in K} \theta_{\sigma_\mathcal{K}}^{{\mathbf{m}}}\cdot \Phi_{\sigma_\mathcal{K},K}^\bbv+\frac{1}{2}\sum_{\sigma_{\mathcal{T}}}\theta_{\sigma_\mathcal{T}}^{q^2, K}\Phi_{\sigma_{\mathcal{T}}}=\int_{\partial K}\bbf^E(\bbu^{{n}})\cdot \bbn\; d\gamma,
\end{equation}
%************************************************** 
where we have set
\begin{equation}\label{coeff:corr}
\begin{split}
\omega_{\sigma_\mathcal{K}}^{\rho,n+1}=\dfrac{\sum\limits_{K, \sigma_\mathcal{K}\in K} \int_K \rho^{{n+1}}\varphi_{\sigma_{\mathcal{K}}}\; d\bbx}{\vert C_{\sigma_\mathcal{K}}\vert}, &\qquad \omega^{\bbv,n,K}_{\sigma_\mathcal{K}}=\dfrac{\int_K \bbv^{{n}}\varphi_{\sigma_\mathcal{T}}\; d\bbx}{\vert C_{\sigma_\mathcal{T}}\vert},\\
\widetilde{{\mathbf{m}}}=\frac{\rho^{{n+1}}\bbv^{{n+1}}+\rho^{{n}}\bbv^{{n}}}{2}, &\qquad \widetilde{q^2}=\bbv^{{n+1}}\cdot \bbv^{{n}},
\\
\theta_{\sigma_\mathcal{K}}^{{\mathbf{m}}}=\dfrac{\sum\limits_{K, \sigma_\mathcal{K}\in K}\int_K\widetilde{{\mathbf{m}}}\varphi_{\sigma_\mathcal{V}}\; dx }{\vert  C_{\sigma_\mathcal{V}}\vert},&\qquad
 \theta_{\sigma_\mathcal{T}}^{q^2, K}=\dfrac{\int_K\widetilde{q^2}\varphi_{\sigma_\mathcal{T}}\; dx}{\vert  C_{\sigma_\mathcal{T}}\vert}
\end{split}
\end{equation}
with the assumption that there exists $C$ independent of $n$, such that $\Delta t\leq C h$. { In \eqref{momentum} (resp. \eqref{energy}), $\bbf^{\mathbf{m}}(\bbu^{{n}})\cdot \bbn$ (resp. $\bbf^E(\bbu^{{n}})\cdot \bbn$) is the momentum component of the normal flux (resp. its energy component).}

Then $V=(\rho, \rho \bbv, e+\tfrac{1}{2}\rho\bbv^2)$ is a weak solution of the problem.
\end{proposition}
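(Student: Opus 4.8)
The plan is to reduce the statement to three separate weak-form limits, one for each conserved quantity of the Euler system, namely mass $\rho$, momentum $\rho\bbv$, and total energy $E=e+\tfrac{1}{2}\rho\bbv^2$, and to treat each by the same Lax--Wendroff machinery developed in Section~\ref{rds_re:sec:th:LW}. The mass balance is the easiest: equation \eqref{eq:staggered:rho} is a genuine conservative discontinuous Galerkin discretization, so the residuals $\Phi^\rho_{\sigma_{\mathcal{T}},K}$ already satisfy a conservation relation of the form \eqref{rds_re:conservation} with the flux $\hbbf^\rho_\bbn$. I would therefore apply Lemmas~\ref{sec:rd:lemme1}--\ref{sec:rd:lemme3} essentially verbatim to conclude that the $L^2$-limit $\rho$, paired with the limit of the momentum, satisfies the weak form of $\partial_t\rho+\nabla\cdot(\rho\bbv)=0$, including the initial term as in Lemma~\ref{th:LW}.

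For momentum, the starting point is the master equation \eqref{master:1}. First I would multiply the discrete evolution by a smooth compactly supported test function $\psi$, sampled at the degrees of freedom, and sum over $\sigma_{\mathcal{K}}$ and over $n$, reproducing the algebraic identity
$$\rho^{n+1}\bbv^{n+1}-\rho^{n}\bbv^{n}=\rho^{n+1}\big(\bbv^{n+1}-\bbv^{n}\big)+\bbv^{n}\big(\rho^{n+1}-\rho^{n}\big)$$
that underlies the derivation of \eqref{master:1}--\eqref{master:2}. The hypothesis \eqref{momentum} is tailored so that the weighted residual combination appearing in the cell sum is precisely the boundary integral $\int_{\partial K}\bbf^{\mathbf{m}}(\bbu^{n})\cdot\bbn\,d\gamma$; after permuting cell and edge sums this turns the discrete spatial term into a quantity converging to $\int\nabla\psi:\bbf^{\mathbf{m}}$, while the discrete time term converges to $\int\partial_t\psi\cdot(\rho\bbv)$ by the first lemma of Section~\ref{rds_re:sec:th:LW}. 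The remaining pieces $F_K^{\mathbf{m}}$ and $D_K^{\mathbf{m}}$ in \eqref{master:2} each carry a factor $\psi_K^n-\psi_{\sigma_{\mathcal{K}}}^n=O(h)$; combined with the Lipschitz/scaling bound $|\Phi|=O(h^{d-1})$ and the bounded connectivity $\#\mathcal{S}_K\le C$, these sum to $O(h)$ and drop out in the limit, exactly as the commutator terms do in Lemma~\ref{sec:rd:lemme3}.

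The total-energy balance follows the same template but is the technical heart of the argument. Here I would first derive the energy analogue of \eqref{master:1}, splitting the discrete increment of $E=e+\tfrac{1}{2}\rho\bbv^2$ through a discrete product rule; the kinetic part requires the symmetric factorization of $\tfrac{1}{2}(\rho^{n+1}|\bbv^{n+1}|^2-\rho^{n}|\bbv^{n}|^2)$ that motivates the auxiliary quantities $\widetilde{\mathbf{m}}$ and $\widetilde{q^2}$ and the weights $\theta^{\mathbf{m}}_{\sigma_{\mathcal{K}}}$, $\theta^{q^2,K}_{\sigma_{\mathcal{T}}}$ of \eqref{coeff:corr}. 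Assumption \eqref{energy} is exactly the conservation relation making the weighted sum of $\Phi^e$, $\Phi^\bbv$ and $\Phi^\rho$ equal to the total-energy flux $\int_{\partial K}\bbf^E(\bbu^{n})\cdot\bbn\,d\gamma$, so the limiting weak form of $\partial_t E+\nabla\cdot\bbf^E=0$ emerges after the same permutation-of-sums and $O(h)$-commutator cleanup used for the momentum.

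The main obstacle is the control of the nonlinear products on staggered spaces. Because $\rho,e$ and $\bbv$ live on different approximation spaces, the momentum and kinetic energy are genuinely bilinear in quantities stored at distinct degrees of freedom, and the discrete product rule produces the mixed weights $\omega^{\rho,n+1}_{\sigma_{\mathcal{K}}}$, $\omega^{\bbv,n,K}_{\sigma_{\mathcal{T}}}$ and the time-staggered product $\bbv^{n+1}\cdot\bbv^{n}$. I expect the delicate step to be showing that these weights and products converge to the correct pointwise nonlinearities, $\rho\bbv$ and $|\bbv|^2$, in the limit: this needs the $L^2$ convergence of the whole triple together with the uniform $L^\infty$ bound and the space--time regularity estimate \eqref{sec:rd:BV:2}, which must force $\bbv^{n+1}-\bbv^{n}$ and the interpolation defects to vanish strongly enough that $\widetilde{q^2}\to|\bbv|^2$ and $\widetilde{\mathbf{m}}\to\rho\bbv$ almost everywhere along the chosen subsequence. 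Once this consistency of the discrete nonlinearities is secured, combining the three weak limits shows that $V=(\rho,\rho\bbv,E)$ is a weak solution.
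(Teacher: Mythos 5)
Your proposal is correct and follows essentially the same route as the paper: the paper's own argument is precisely the master-equation derivation \eqref{master:1}--\eqref{master:2} (product-rule splitting of $\rho\bbv$, and its symmetric analogue for $\tfrac12\rho\bbv^2$ via $\widetilde{\mathbf{m}}$ and $\widetilde{q^2}$), with the hypotheses \eqref{momentum} and \eqref{energy} converting the weighted residual sums into boundary flux integrals, after which the Lax--Wendroff machinery of Section \ref{rds_re:sec:th:LW} handles the limit and the $O(h)$ commutator terms $F_K^{\mathbf{m}}$, $D_K^{\mathbf{m}}$. You also correctly single out the convergence of the staggered nonlinear products ($\widetilde{q^2}\to|\bbv|^2$, $\widetilde{\mathbf{m}}\to\rho\bbv$) as the step requiring the uniform bounds and strong $L^2$ convergence assumed in the statement, which is exactly where the detailed analysis in the cited references is needed.
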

%%%%%%%%%%%%
%%%%%%%%%%%%%

\paragraph{How to achieve   discrete conservation.}%\label{howto}
%%%%%%%%%%%%%%%%%%%%%%%%%%%%%%%%%%%%
%%%%%%%%%%%%%%%%%%%%%%%%%%%%%%%%%%%%
Since there is no ambiguity, we drop the dependency of the residuals with respect to the element.

  We show how to slightly modify the original scheme so that the new one will satisfy \eqref{rds_re:conservation},  \eqref{momentum} and \eqref{energy}, and hence if the scheme converges, we have convergence towards a weak solution.
 To achieve this, following \cite{AbgrallTokareva1,paola,Abgrall2018}, we introduce the correction terms in the residuals. This needs to be done only for the velocity and the internal energy.
 
 Knowing at time $t_n$ the solution $(\rho^n, \bbv^n, e^n)$ we obtain with the forward Euler step $(\rho^{n+1}, \bbv^{n+1}, e^{n+1})$. For this, we first compute $\rho^{{n+1}}$ and then perform the update for the velocity and the energy:

\begin{itemize}
 \item \textbf{Momentum.}\par
We introduce a  correction $r^\bbv_{\sigma,K}$ so that 
\begin{equation}
\label{correction:u}
\Psi_{\sigma_{V}}^\bbv=\Phi^{\bbv}_{\sigma_{V}}(\bbu^{{n}})+r^\bbv_{\sigma_\mathcal{V}}
\end{equation}
%************************************************** 
%\begin{equation}
%\label{correction:u}
%\Psi_{\sigma_{V}}^\bbv=\dfrac{\Phi^{\bbv}_{\sigma_{V}}(U^{(p)})+\Phi^{ \bbv}_{\sigma_{V}}(U^n)}{2}+r^\bbv_{\sigma_\mathcal{V}}
%\end{equation}
%************************************************** 
is such that
\eqref{momentum} holds true for the new set of residuals, 
i.e.
\begin{equation*}
%\label{1}
\sum_{\sigma_\mathcal{V}\in K} \omega^{\rho,n+1}_{\sigma_\mathcal{V}} r_{\sigma_\mathcal{V}}^\bbv=\int_{\partial K}\bbf^{\mathbf{m}}(\bbu^{{n}}) \bbn\; d\gamma-\bigg \{\sum_{\sigma_\mathcal{V}\in K} \omega^{\rho,n+1}_{\sigma_\mathcal{V}} \Phi_{\sigma_\mathcal{V},K}^\bbv 
+\sum_{\sigma_{\mathcal{E}}\in K}\omega^{\bbv,n,K}_{\sigma_\mathcal{E}}
\Phi^\rho_{\sigma_{\mathcal{E}},K}\bigg\}.
\end{equation*}
%**************************************************
%\begin{equation*}
%%\label{1}
%\sum_{\sigma_\mathcal{V}\in K} \omega^{\rho,p+1}_{\sigma_\mathcal{V}} r_{\sigma_\mathcal{V}}^\bbv=\int_{\partial K} \frac{\bbf^{\mathbf{m}}(U^{(p)})+\bbf^{\mathbf{m}}(U^{{n}})}{2}\cdot \bbn\; d\gamma-\bigg \{\sum_{\sigma_\mathcal{V}\in K} \omega^{\rho,p+1}_{\sigma_\mathcal{V}} \Phi_{\sigma_\mathcal{V},K}^\bbv 
%+\sum_{\sigma_{\mathcal{E}}\in K}\omega^{\bbv,p,K}_{\sigma_\mathcal{E}}
%\Phi^\rho_{\sigma_{\mathcal{E}},K}\bigg\}
%\end{equation*}
%**************************************************
There is no reason to have a different value of $r_{\sigma_\mathcal{V}}^\bbv$ unless for possible special needs, so we set $r_{\sigma_\mathcal{V}}^\bbv=r^\bbv$, and since a priori\footnote{Here we use the fact that the B\'ezier polynomials are $>0$ on K, and assume that the density at $t_n$ is also positive.}
$$\sum_{\sigma_\mathcal{V}\in K} \omega^{\rho,n+1}_{\sigma_\mathcal{V}}>0$$
 we get a unique value of $r^\bbv$
 defined by
 \begin{equation}
\label{1}
\begin{split}
\bigg (\sum_{\sigma_\mathcal{V}\in K} \omega^{\rho,n+1}_{\sigma_\mathcal{V}}\bigg ) r^\bbv=\int_{\partial K} \bbf^{\mathbf{m}}(\bbu^{{n}})& \bbn\; d\gamma-\bigg \{\sum_{\sigma_\mathcal{V}\in K} \omega^{\rho,n+1}_{\sigma_\mathcal{V}} \Phi_{\sigma_\mathcal{V},K}^\bbv \\
&\quad +\sum_{\sigma_{\mathcal{E}}\in K}\omega^{\bbv,n,K}_{\sigma_\mathcal{E}}
\Phi^\rho_{\sigma_{\mathcal{E}},K}\bigg\}.
\end{split}
\end{equation}

Once this is known, we can update the velocity and compute $\bbv_{\sigma_{\mathcal{V}}}^{{n+1}}$.

 \item \textbf{Energy.}\par 
Now we know $\rho^{{n}}$, $\rho^{{n+1}}$, $\bbv^{{n}}$, $\bbv^{{n+1}}$ and $e^{{n}}$, and have the \emph{updated} residuals for the velocity (there is no change for the density). Again we introduce a correction on the energy, $r_{\sigma_\mathcal{E}}^e$, and for the residual
$$\Psi_{\sigma_\mathcal{E}}^e=\Phi_{\sigma_\mathcal{E}}^e(\bbu^{{n}})+r_{\sigma_\mathcal{E}}^e$$ to satisfy \eqref{energy}, we simply need:
\begin{equation}\label{2}\begin{split}
\sum_{\sigma_\mathcal{E}\in K}r_{\sigma_\mathcal{E}}^e=\int_{\partial K} \bbf^E(\bbu^{{n}})& \bbn\; d\gamma-\bigg \{\sum_{\sigma_\mathcal{E}\in K}\Phi_{\sigma_\mathcal{E},K}^e+\sum_{\sigma_\mathcal{V}\in K} \theta_{\sigma_\mathcal{V}}^{{\mathbf{m}}}\cdot \Phi_{\sigma_\mathcal{V},K}^\bbv\\
&\qquad \qquad \qquad \qquad +\frac{1}{2}\sum_{\sigma_{\mathcal{E}}}\theta_{\sigma_\mathcal{V}}^{q^2, K}\Phi_{\sigma_{\mathcal{E}}}\bigg \}.
\end{split}
\end{equation}
%**************************************************
%**************************************************
 Since there is no reason to favour one degree of freedom with respect to the other ones, we take $r^e_{\sigma_{\mathcal{E}}}=r^e$, and again we can explicitly solve the equation and obtain the energy at the new time instance.

{%\color{red}
\item Modifications for other schemes}
We have presented this conservation recovery method using a class of schemes that might seem a bit narrow. In this section, we want to explain that it is not the case. This can apply to more general schemes, as soon as the update of any variable $w$ (density, velocity, energy), described by degrees of freedom $\sigma$ (point values, averages, moments), can be written as:
$$
\sum_{K, \sigma\in K} \Phi_\sigma^K.$$

\end{itemize}

\subsection{Application of the flux formulation to mesh refinement}
In section \ref{sec:rd:flux}, we have seen that any dG scheme can be reinterpreted as a a finite volume scheme, more precisely that the residuals defined by the dG scheme can be rewritten as a boundary contribution and a sum of internal flux. This idea has been exploited, first in one dimension in \cite{Vilar1}, and then in several dimensions in \cite{VilarAbgrall} for scalar and systems. In in \cite{Vilar2}, this decomposition has been used to enforce domain preservation as  well as better control of the entropy.

Considering a polygonal cell $K$, the idea is first to define a  family of sub-cells $\omega_p$ that cover $K$ and are non overlapping. If the polynomial approximation in $K$ is $k$, we need $N_k=\frac{(k+1)(k+2)}{2}$ such sub-cells to have a chance that the mapping
$$\begin{array}{ccl}
\P^r(K)&\rightarrow &\R^{N_r}\\
u&\mapsto &(\II_p(u))_{1\leq p\leq N_k},
\end{array}$$ where
$$\II_p(u)=\dfrac{\int_{\omega_p} u(\bbx)\; d\bbx}{\vert \omega_p\vert}$$is one to one.
It will be one-to one under a Vandermonde like condition which can be easily achieved. The Figure \ref{fig:vilar:1} shows some examples.
\begin{figure}[!ht]
\begin{center}
\begin{subfigure}{0.45\textwidth}
\centering
\includegraphics[width=\textwidth]{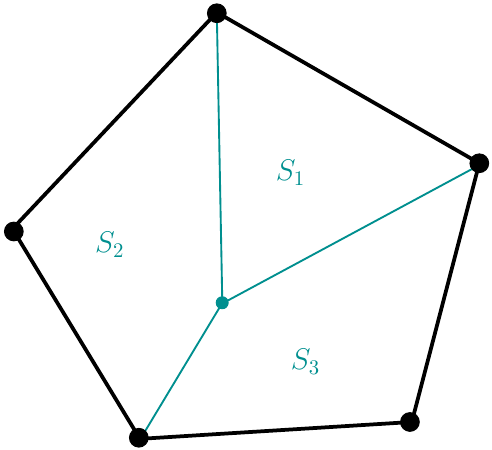}
\end{subfigure}
\begin{subfigure}{0.45\textwidth}
\includegraphics[width=\textwidth]{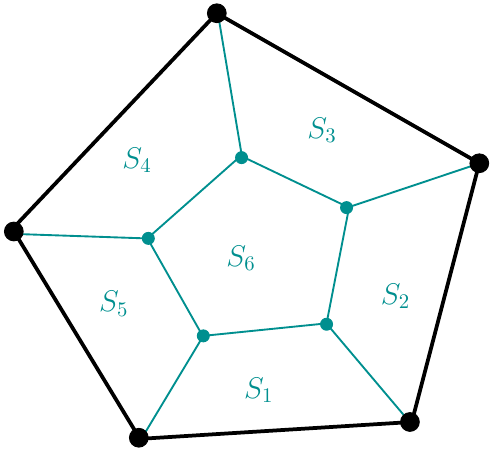}
\end{subfigure}
\begin{subfigure}{0.45\textwidth}
\centering
\includegraphics[width=\textwidth]{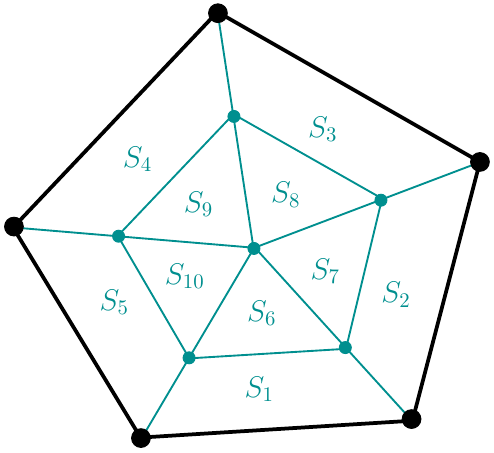}
\end{subfigure}
    \caption{Examples of subdivision for a polygonal cell and DG schemes from $\P^1$ to $\P^3$. This figure is taken from \protect\cite{VilarAbgrall}.}
  \label{fig:vilar:1}
  \end{center}
\end{figure}
The polynomials of $\P^r(K)$ are described in a basis (Lagrange, modal, or other), 
$$u=\sum_{p=1}^{N_k} u_p\varphi_p, \qquad \varphi_p\in \P^r(K)$$
 and we have 
 $$\II_p(u)=\sum_{p'=1}^{N_r} u_p\II_{p}(\varphi_{p'}).$$
 Let us denote by $\bbP$ the matrix
 $$P=\begin{pmatrix}
 \II_{p}(\varphi_{p'})\end{pmatrix}_{1\leq p,p'\leq N_r}$$
 In the original basis, the semi-discrete dG is written as 
 $$\dfrac{d\bbU}{dt}=\bbM^{-1}\Phi$$ where $\bbU$ is the vector of the $u_p$, $\bbM$ is the mass matrix and $\Phi$ the vector of the dG residuals,
 $$\Phi_p=-\int_K \bbf(\bbu)\nabla\varphi_p\; d\bbx+\int_{\partial K}\varphi_p\hbbf_\bbn \; d\gamma.$$
Once this is done,  we obtain an equation on the average of $u$ on each of the $\omega_p$:
\begin{equation}
    \label{eq:vilar:1}
\bbD \dfrac{d\bar \bbu}{dt}=\bbD\bbP\bbM^{-1} \Phi:=\begin{pmatrix}\tilde{\Phi}_1\\
\vdots \\
\tilde{\Phi}_l\\ \vdots \\
\tilde{\Phi}_{N_r}
\end{pmatrix},
\end{equation}
where $\bar \bbu$ is the  vector in $\R^{N_r}$ which components are the $\II_p(u)$ and $\bbD$ is the diagonal matrix which terms are the area $\vert \omega_p\vert$.
We note that
$$\sum_{p=1}^{N_k}\vert \omega_p\vert \II_p(u)=\int_K u\, d\bbx,$$
denoting by $\mathbf{1}$ the diagonal matrix which elements are egal to $1$, we have
$$\mathbf{1}^{\mathtt{T}}\bbD\bar \bbu=\int_K u\; d\bbx.$$
Then,
\begin{lemma}
Whatever the basis $\{\varphi_p\}_{1\leq p\leq N_r}$ of $\P^r$, we have
\begin{equation}
    \label{eq:vilar:2}
\mathbf{1}^{\mathtt{T}}\bbD\bbP\bbM^{-1}\Phi=\int_{\partial K}\hbbf_\bbn\; d\gamma
\end{equation}
\end{lemma}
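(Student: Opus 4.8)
The plan is to evaluate the row vector $\mathbf{1}^{\mathtt{T}}\bbD\bbP$ explicitly and then observe that right-multiplying by $\bbM^{-1}$ collapses it to the coordinate vector of the constant function, after which the structure of the residual does the rest. First I would compute, component by component in the basis index $p'$,
$$
\big(\mathbf{1}^{\mathtt{T}}\bbD\bbP\big)_{p'} = \sum_{p=1}^{N_r} |\omega_p|\,\II_p(\varphi_{p'}) = \sum_{p=1}^{N_r}\int_{\omega_p}\varphi_{p'}\;d\bbx = \int_K \varphi_{p'}\;d\bbx,
$$
where the last equality uses only that the subcells $\omega_p$ form a partition of $K$. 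Denoting by $\bbc$ the vector with entries $\bbc_{p'}=\int_K\varphi_{p'}\,d\bbx$, this shows $\mathbf{1}^{\mathtt{T}}\bbD\bbP=\bbc^{\mathtt{T}}$.

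The key step is to relate $\bbc$ to the mass matrix. Since $1\in\P^r(K)$, there is a (basis-dependent) coordinate vector $\bbe$ such that $\sum_{p}\bbe_p\varphi_p\equiv 1$ on $K$. Using $\bbM_{pp'}=\int_K\varphi_p\varphi_{p'}\,d\bbx$ together with the symmetry of $\bbM$,
$$
(\bbM\bbe)_{p'} = \sum_{p}\Big(\int_K\varphi_{p'}\varphi_p\,d\bbx\Big)\bbe_p = \int_K\varphi_{p'}\Big(\sum_p\bbe_p\varphi_p\Big)d\bbx = \int_K\varphi_{p'}\,d\bbx = \bbc_{p'},
$$
so $\bbc=\bbM\bbe$, hence $\bbc^{\mathtt{T}}\bbM^{-1}=\bbe^{\mathtt{T}}$. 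Consequently $\mathbf{1}^{\mathtt{T}}\bbD\bbP\bbM^{-1}\Phi=\bbe^{\mathtt{T}}\Phi=\sum_p\bbe_p\Phi_p$. I would emphasize here that the inverse mass matrix never needs to be formed and that nothing depends on the particular choice of basis beyond the fact that the constant $1$ lies in $\P^r$; this is exactly the "whatever the basis" part of the claim.

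Finally I would substitute the definition $\Phi_p=-\int_K\bbf(\bbu)\nabla\varphi_p\,d\bbx+\int_{\partial K}\varphi_p\hbbf_\bbn\,d\gamma$ and use linearity in $p$,
$$
\sum_p\bbe_p\Phi_p = -\int_K\bbf(\bbu)\,\nabla\Big(\sum_p\bbe_p\varphi_p\Big)d\bbx + \int_{\partial K}\Big(\sum_p\bbe_p\varphi_p\Big)\hbbf_\bbn\,d\gamma .
$$
Because $\sum_p\bbe_p\varphi_p\equiv 1$, its gradient vanishes and the volume term drops, while its boundary trace is $1$, leaving precisely $\int_{\partial K}\hbbf_\bbn\,d\gamma$, which is the asserted identity. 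There is no genuine analytic difficulty here: the only care needed is in the row/column bookkeeping of $\bbP$ and the use of the symmetry of $\bbM$. The single conceptual point, and the step I would highlight as the crux, is the observation $\bbc=\bbM\bbe$ with $\bbe$ representing the constant $1$, since it is this that makes $\bbM^{-1}$ disappear and renders the result manifestly independent of the polynomial basis.
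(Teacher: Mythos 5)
Your proof is correct and follows essentially the same route as the paper's: identify $\mathbf{1}^{\mathtt{T}}\bbD\bbP$ as the vector of integrals $\int_K\varphi_p\,d\bbx$, recognize that applying $\bbM^{-1}$ to it yields the coordinates of the constant function $1$ in the basis $\{\varphi_p\}$, and conclude by testing the Galerkin residuals against that constant so the volume term drops and only $\int_{\partial K}\hbbf_\bbn\,d\gamma$ survives. The only (cosmetic) difference is directional: you exhibit the coordinate vector $\bbe$ of the constant and verify $\bbM\bbe=\bbc$ forward, whereas the paper defines $\bbz=\bbM^{-1}\bby$ and then deduces $\sum_k z_k\varphi_k=1$ by a uniqueness argument; your ordering sidesteps that implicit nondegeneracy step but is otherwise the same proof.
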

\begin{proof}
The proof is done in several steps. We first evaluate $\bby^{\mathtt{T}}=\mathbf{1}^{\mathtt{T}}\bbD\bbP$, and then we compute 
$\bby^{\mathtt{T}}\Phi$.
\begin{itemize}
    \item Evaluation of $\bby^{\mathtt{T}}=\mathbf{1}^{\mathtt{T}}\bbD\bbP$, i.e. $\bby=\bbP^{\mathtt{T}}\bbD^{\mathtt{T}}\mathbf{1}$. Since $\bbD$ is diagonal, 
    $\bbD^{\mathtt{T}}=\bbD$ and we see that
    $$\bbD\mathbf{1}=\begin{pmatrix}\vert\omega_1\vert \\\vdots \\ \vert \omega_{N_r}\vert \end{pmatrix}
$$
so that the $l$-th term of the vector $\bbP^{\mathtt{T}}\bbD^{\mathtt{T}}\mathbf{1}$ writes:
$$\sum_{p=1}^{N_r}\vert \omega_p\vert \II_p(\varphi_l)=\sum_{p=1}^{N_r}\int_{\omega_p}\varphi_l\; d\bbx\int_K\varphi_l \; d\bbx$$
This shows that $\bby=\big (\int_K\varphi_1\; d\bbx, \ldots, \int_K\varphi_{N_r}\big )$.
\item Then we compute $\bby^{\mathtt{T}}\bbM^{-1}$, i.e. $\bbM^{-\mathtt{T}}\bby$, and since $\bbM$ is symmetric, we evaluate $\bbM^{-1}\bby=\bbz=(z_1, \ldots, z_{N_r})$.
We will have for all $1\leq p\leq N_r$,
$$y_p=\sum_{k=1}^{N_r} \bbM_{kp}z_k=\int_K\varphi_p\big ( \sum_{k=1}^{N_r} z_k\varphi_k\big ) \; d\bbx$$ and since $$\sum_{k=1}^{N_r} z_k\varphi_k\in \P^r,$$ and $\{\varphi_p\}$ a basis of $\P^r$, we must have
\begin{equation}\label{eq:vilar:3}1=\sum_{k=1}^{N_r} z_k \varphi_k.\end{equation}
\end{itemize}
We can now evaluate $ \mathbf{1}^{\mathtt{T}}\bbD\bbP\bbM^{-1}\Phi=\bby^{\mathtt{T}}\bbM^{-1}\Phi=\bbz^{\mathtt{T}}\Phi^{\mathtt{T}}$ since
\begin{equation*}
    \begin{split}
\bbz^{\mathtt{T}}\Phi^{\mathtt{T}}&=\sum_{k=1}^{N_k} z_k\bigg ( -\int_K  \bbf(\bbu)\nabla\varphi_k\; d\bbx+\int_{\partial K}\varphi_k\hbbf_{\bbn}\; d\gamma\bigg )\\&\quad =
-\int_K\nabla\bigg (\sum_{k=1}^{N_k} z_k \varphi_k\bigg ) \bbf(\bbu)\; d\bbx+
\int_{\partial K} \bigg (\sum_{k=1}^{N_k} z_k \varphi_k\bigg )\hbbf_{\bbn}\; d\gamma\\&\qquad=
\int_{\partial K}\hbbf_{\bbn}\; d\gamma
    \end{split}
\end{equation*}
thanks to \eqref{eq:vilar:3}.
\end{proof}

\bigskip

Hence, we get into the usual framework. Defining the boundary flux by
$$\hbbf^b_{p}=\int_{\partial\omega_p\cap\partial K}\hbbf_\bbn\; d\gamma,$$ and noticing that the boundary flux is $0$ if $\omega_p$ does not intersect $\partial K$, we define
$$\tilde{\Psi}_p=\tilde{\Phi}_p-\hbbf^b_p.$$ The second step is to construct from the sub-cells a graph, i.e. a list of connections between sub-cells. Two cells will be connected if they share a common interface. This defines a graph that is simply connected. Then we can compute flux, denoted by $\FF_{pp'}$, across the face that is the intersection of the sub-cells $\omega_p$ and $\omega_{p'}$ (is the intersection of these two sub-cells is non empty, i.e. if they corresponds to an edge of the graph).

In \cite{VilarAbgrall}, this was used as follows: we had a mechanism to detect "bad" cells, i.e. cells for which the average $\II_p(u)$ is outside of the invariant domain. Then using  a classical bound preserving numerical flux, for example the local Lax-Friedrichs one, we simply replace each of  flux $\FF_{pp'}$ by this Local Lax-Friedrichs flux, and reconstruct a modified dG residual by adding the flux. 

The criteria to flag "bad" cells was a bit heuristic and have been improved in \cite{Vilar2} by blending the flux $\FF_{pp'}$ and the local Lax-Friedrichs flux accross the face between the cells $\omega_p$ and $\omega_{p'}$ in such a way that $\II_p(u)$ will stay in the invariant domain. See \cite{VilarAbgrall} and \cite{Vilar2} for more details.

\subsection{Application to entropy and some differential constraints.}
%\subsection{Application to entropy and some differential constraints}
\subsubsection{The one-dimensional case}
Here, we reused the notations introduced in Section~\ref{sec:motivating:1D}. We are interested in studying the discretization of the system \eqref{eq:RP} equipped with an entropy, {\it i.e.} a concave function $\eta$ of $\mathbf{u}$, and with an entropy flux $g$ such that the weak solution satisfies in addition the following entropy inequality (in the distribution sense)
\begin{equation}
\label{eq:RP:entropie}
\frac{\partial \eta}{\partial t}+\frac{\partial g (\mathbf{u})}{\partial x} \geq 0.
\end{equation}
We follow the same lines as in Section~\ref{sec:motivating:1D}, that is starting from the notion of entropic Godunov-type Finite Volume method we put in evidence the fluctuations associated with the entropy flux and the consistency condition satisfied by these fluctuations to ensure entropy stability. The numerical solution provided by the Godunov-type scheme \eqref{eq:God} by means of the Riemann solver $\mathbf{r}(\mathbf{u}_l,\mathbf{u}_r,\tfrac{x}{t})$
$$\mathbf{u}_i^{n+1}=\frac{1}{2} \left (\mathbf{u}_{i-\frac{1}{2}}^{+}+\mathbf{u}_{i+\frac{1}{2}}^{-} \right),$$
where $\mathbf{u}_{i-\frac{1}{2}}^{+}$ and $\mathbf{u}_{i-\frac{1}{2}}^{+}$ are given by \eqref{eq:FVS}, satisfies
\begin{equation}
\label{eq:Gode:entro}
\eta \left (\mathbf{u}_i^{n+1}\right):=\eta_i^{n+1}\geq \frac{1}{2}\left [ \eta(\mathbf{u}_{i-1/2}^+)+\eta(\mathbf{u}_{i+\frac{1}{2}}^-)\right ].
\end{equation}
By virtue of entropy concavity and the Jensen inequality \footnote{For a concave function $\eta$ there holds $$\eta \left [\frac{1}{b-a} \int_{a}^{b} f(x)\,\mathrm{d}x \right ] \geq \frac{1}{b-a} \int_{a}^{b} \eta \left [f(x) \right ]\,\mathrm{d}x.$$} we arrive at
\begin{align*}
\eta \left (\mathbf{u}_{i-\frac{1}{2}}^{+}\right ) \geq & \eta_{i-\frac{1}{2}}^{+}:=\frac{2}{\Delta x_i} \int_{x_{i-\frac{1}{2}}}^{x_{i}} \eta \left [\mathbf{r}(\mathbf{u}_{i-1}^{n}, \mathbf{u}_{i}^{n},\frac{x-x_{i-\frac{1}{2}}}{\Delta t}) \right ]\,\mathrm{d}x,\\
\eta \left (\mathbf{u}_{i+\frac{1}{2}}^{-}\right ) \geq &\eta_{i+\frac{1}{2}}^{-}:=\frac{2}{\Delta x_i} \int_{x_{i}}^{x_{i+\frac{1}{2}}} \left [\mathbf{r}(\mathbf{u}_{i}^{n}, \mathbf{u}_{i+1}^{n},\frac{x-x_{i+\frac{1}{2}}}{\Delta t})\right ]\,\mathrm{d}x.
\end{align*}
Then, \eqref{eq:Gode:entro} becomes
$$\Delta x_i \eta_i^{n+1}\geq \frac{\Delta x_i}{2} \eta_{i-\frac{1}{2}}^{+} + \frac{\Delta x_i}{2} \eta_{i+\frac{1}{2}}^{-}.$$
Introducing $\eta_{i}^{n}$ in the foregoing inequality turns it into
\begin{equation}
\label{eq:Godm:entro}
\Delta x_i\left (\eta_i^{n+1}-\eta_i^{n}\right ) +\frac{\Delta x_i}{2}\left [ \left ( \eta_i^{n}-\eta_{i+1/2}^{-})\right)-\left (\eta_{i-\frac{1}{2}}^{+}-\eta_i^{n} )\right )\right ] \geq 0.
\end{equation}
This allows to identify the entropy fluxes attached to the cell interfaces $x_{i-\frac{1}{2}}$ and $x_{i+\frac{1}{2}}$
\begin{align*}
g_{i-\frac{1}{2}}^{+}=&g(\mathbf{u}_i^n) -\frac{\Delta x_i}{2 \Delta t} \left (\eta_{i}^{n}-\eta_{i-\frac{1}{2}}^{+} \right) \\
g_{i+\frac{1}{2}}^{-}=&g(\mathbf{u}_i^n) -\frac{\Delta x_i}{2 \Delta t} \left (\eta_{i+\frac{1}{2}}^{-}-\eta_{i}^{n} \right).
\end{align*}
The entropy inequality \eqref{eq:Godm:entro} rewrites under the flux form
\begin{equation}
\label{eq:Godm:entroflux}
\Delta x_i\left (\eta_i^{n+1}-\eta_i^{n}\right ) +\Delta t \left (g_{i+\frac{1}{2}}^{-}-g_{i-\frac{1}{2}}^{+} \right ) \geq 0.
\end{equation}
Let us recall that approximate Riemann solver $\mathbf{r}(\mathbf{u}_l,\mathbf{u}_r,\frac{x}{t})$ is consistent with the integral form of the entropy inequality if and only if
\begin{equation}
\label{eq:HLLentrop}
g_{i+\frac{1}{2}}^{+} -g_{i+\frac{1}{2}}^{-} \geq 0.
\end{equation}
This is the famous HLL consistency condition, refer to \cite{Harten1983}.

Introducing the entropy flux evaluated at the cell averaged value into \eqref{eq:Godm:entroflux} leads us to rewrite it under the form
$$\Delta x_i\left (\eta_i^{n+1}-\eta_i^{n}\right ) +\Delta t \left (g_{i+\frac{1}{2}}^{-}-g(\mathbf{u}_i^n)+g(\mathbf{u}_i^n) -g_{i-\frac{1}{2}}^{+} \right ) \geq 0,$$
which allows us to identify the fluctuations of entropy flux attached to the cell interfaces $x_{i-\frac{1}{2}}$ and $x_{i+\frac{1}{2}}$
\begin{equation}
\label{eq:fluc:entro}
\Phi_{g,i}^{i-\frac{1}{2}}=g(\mathbf{u}_i^n) -g_{i-\frac{1}{2}}^{+}\;\text{and}\; \Phi_{g,i}^{i+\frac{1}{2}}=g_{i+\frac{1}{2}}^{-}-g(\mathbf{u}_i^n).
\end{equation}
Observing that
$$g_{i+\frac{1}{2}}^{+}-g_{i+\frac{1}{2}}^{-}=g(\mathbf{u}_{i+1}^n)-\Phi_{g,i+1}^{i+\frac{1}{2}}-\Phi_{g,i}^{i+\frac{1}{2}}-g(\mathbf{u}_i^n),$$
we claim that the HLL consistency condition with the integral form of the entropy inequality is equivalent to the following consistency condition imposed on the entropy fluctuations at node $x_{i+\frac{1}{2}}$.
\begin{equation}
\label{eq:conscondrd:entropy}
\Phi_{g,i}^{i+\frac{1}{2}}+\Phi_{g,i+1}^{i+\frac{1}{2}} \leq g_{i+1}^n-g_i^n,
\end{equation}
where $g_i^n:=g(\mathbf{u}_{i}^n)$ and $g_{i+1}^n:=g(\mathbf{u}_{i+1}^n)$. Let us point out the right-hand side of the above inequality is nothing but the integral of $\frac{\partial g}{\partial x}$ over the dual cell $[x_i,x_{i+1}]$, where $x_i$ is the midpoint of the cell $[x_{i-\frac{1}{2}},x_{i+\frac{1}{2}}]$. 

It is worth observing that condition \eqref{eq:conscondrd:entropy} is rigorously equivalent to the classical entropy stability condition introduced by Tadmor in \cite{TadmorVieux,TadmorActa}. To show this, we start  from the Finite  Volume scheme semi-discretized in time
\begin{equation}
\label{eq:FVgene}
\Delta x_i \frac{\mathrm{d} \mathbf{u}}{\mathrm{d} t} + \hat{\mathbf{f}}_{i-\frac{1}{2}}-\hat{\mathbf{f}}_{i+\frac{1}{2}}=\mathbf{0},
\end{equation}
where $\hat{\mathbf{f}}=\hat{\mathbf{f}} (\mathbf{u}_l,\mathbf{u}_r)$ is a consistent numerical flux defined at the cell interface $x_{i+\frac{1}{2}}$, {\it i.e.}, $\hat{\mathbf{f}} (\mathbf{u},\mathbf{u})=\mathbf{f}(\mathbf{u})$. Dot-multiplying the above equation by the entropy variable $\mathbf{w}_i:=\frac{\partial \eta }{\partial \mathbf{u}}(\mathbf{u}_i)$ yields
\begin{equation}
\label{eq:entropfirst}
\Delta x_i  \mathbf{w}_i \cdot \frac{\mathrm{d} \mathbf{u}}{\mathrm{d} t}+ \mathbf{w}_i\cdot \left (\hat{\mathbf{f}}_{i+\frac{1}{2}}-\hat{\mathbf{f}}_{i-\frac{1}{2}}\right) =0.
\end{equation}
We define the arithmetic average and the jump of $(\mathbf{w}_i,\mathbf{w}_{i+1})$ at the interface $x_{i+\frac{1}{2}}$
$$\langle \mathbf{w} \rangle_{i+\frac{1}{2}}=\frac{1}{2} \left (\mathbf{w}_i+\mathbf{w}_{i+1} \right),\;\text{and}\;\llbracket \mathbf{w} \rrbracket_{i+\frac{1}{2}}=\mathbf{w}_{i+1}-\mathbf{w}_{i}.$$
Noticing that
$$\mathbf{w}_i=\langle \mathbf{w} \rangle_{i-\frac{1}{2}}+\frac{1}{2}\llbracket \mathbf{w} \rrbracket_{i-\frac{1}{2}}=\langle \mathbf{w} \rangle_{i+\frac{1}{2}}-\frac{1}{2}\llbracket \mathbf{w} \rrbracket_{i+\frac{1}{2}},$$
we obtain
\begin{align*}
\mathbf{w}_i\cdot \hat{\mathbf{f}}_{i-\frac{1}{2}}=& \langle \mathbf{w} \rangle_{i-\frac{1}{2}} \cdot \hat{\mathbf{f}}_{i-\frac{1}{2}}+\frac{1}{2}\llbracket \mathbf{w} \rrbracket_{i-\frac{1}{2}}\cdot \hat{\mathbf{f}}_{i-\frac{1}{2}} \\
\mathbf{w}_i\cdot \hat{\mathbf{f}}_{i+\frac{1}{2}}=& \langle \mathbf{w} \rangle_{i+\frac{1}{2}} \cdot \hat{\mathbf{f}}_{i+\frac{1}{2}}-\frac{1}{2}\llbracket \mathbf{w} \rrbracket_{i+\frac{1}{2}}\cdot \hat{\mathbf{f}}_{i+\frac{1}{2}}.
\end{align*}
Following Tadmor, we introduce the  potential $\psi= \mathbf{w}\cdot \mathbf{f}-g$, which statisfies $\frac{\partial \psi}{\partial \mathbf{w}}=\mathbf{f}$. Then, it is quite natural to define the numerical entropy flux in terms of the potential $\psi$ as follows
$$\hat{g}_{i+\frac{1}{2}}:=\langle \mathbf{w} \rangle_{i+\frac{1}{2}} \cdot  \mathbf{f}_{i+\frac{1}{2}}-\langle \psi \rangle_{i+\frac{1}{2}},$$
and we can easily check that the proposed numerical entropy flux is consistent with the physical entropy flux. Gathering the previous results we get
\begin{align*}
\mathbf{w}_i\cdot \mathbf{f}_{i-\frac{1}{2}}=&\hat{g}_{i-\frac{1}{2}}+\langle \psi \rangle_{i-\frac{1}{2}}+\frac{1}{2} \llbracket \mathbf{w} \rrbracket_{i-\frac{1}{2}}\cdot \hat{\mathbf{f}}_{i-\frac{1}{2}},\\
\mathbf{w}_i\cdot \mathbf{f}_{i+\frac{1}{2}}=&\hat{g}_{i+\frac{1}{2}}+\langle \psi \rangle_{i+\frac{1}{2}}-\frac{1}{2} \llbracket \mathbf{w} \rrbracket_{i+\frac{1}{2}}\cdot \hat{\mathbf{f}}_{i+\frac{1}{2}}.
\end{align*}
Finally, substituting the foregoing expressions into \eqref{eq:entropfirst} leads to
\begin{equation}
\label{eq:entroprod}
\Delta x_i \frac{\mathrm{d} \eta_i}{\mathrm{d} t}+\hat{g}_{i+\frac{1}{2}}-\hat{g}_{i-\frac{1}{2}}=\frac{1}{2} \left (\llbracket \mathbf{w} \rrbracket_{i-\frac{1}{2}}\cdot \hat{\mathbf{f}}_{i-\frac{1}{2}}-\llbracket \psi \rrbracket_{i-\frac{1}{2}}+ \llbracket \mathbf{w} \rrbracket_{i+\frac{1}{2}}\cdot \hat{\mathbf{f}}_{i+\frac{1}{2}}-\llbracket \psi \rrbracket_{i+\frac{1}{2}}\right ).
\end{equation}
Observing the right-hand side of the above entropy equation and following Tadmor \cite{TadmorVieux,TadmorActa} we define the notion of entropy stable scheme as follows
\begin{definition}[Entropy-stable scheme]
  \label{def:ES}
The Finite Volume scheme \eqref{eq:FVgene} is entropy-stable, for a concave entropy, if the numerical flux at each cell interface satisfies
\begin{equation}
\label{eq:Tadmor}
\hat{\mathbf{f}}_{i+\frac{1}{2}} \cdot \llbracket \mathbf{w} \rrbracket_{i+\frac{1}{2}}-\llbracket \psi \rrbracket_{i+\frac{1}{2}} \geq 0.
\end{equation}
It is entropy-conservative when we have an equality.
\end{definition}
The right-hand side of the entropy equation \eqref{eq:entroprod} represents the entropy production within cell $[x_{i-\frac{1}{2}},x_{i+\frac{1}{2}}]$.
To establish the link between the Tadmor's condition \eqref{eq:Tadmor} and the consistency condition expressed in terms of the entropy flux fluctuations \eqref{eq:conscondrd:entropy} let us rewrite the Finite Volume scheme in terms of the flux fluctuations
We also have
$$\Delta x_i \frac{\mathrm{d} \mathbf{u}_i}{\mathrm{d}t} +\mathbf{\Phi}_{i}^{i+\frac{1}{2}}+\mathbf{\Phi}_{i}^{i-\frac{1}{2}}=\mathbf{0},$$
where the flux fluctuations at node $x_{i+\frac{1}{2}}$ steming from cells $i$ and $i+1$ are
$$\mathbf{\Phi}_{i}^{i+\frac{1}{2}}=\hat{\mathbf{f}}_{i+\frac{1}{2}}-\mathbf{f}(\mathbf{u}_i),\;\text{and}\; \mathbf{\Phi}_{i+1}^{i+\frac{1}{2}}=\mathbf{f}(\mathbf{u}_{i+1})-\hat{\mathbf{f}}_{i+\frac{1}{2}}.$$
To determine the flux fluctuations contributions to the entropy production we compute
\begin{align*}
  \mathbf{w}_i\cdot \mathbf{\Phi}_{i}^{i+\frac{1}{2}}+ \mathbf{w}_{i+1}\cdot  \mathbf{\Phi}_{i+1}^{i+\frac{1}{2}}=&\mathbf{w}_i\cdot \left [\hat{\mathbf{f}}_{i+\frac{1}{2}}-\mathbf{f}(\mathbf{u}_i) \right]+\mathbf{w}_{i+1} \cdot \left [ \mathbf{f}(\mathbf{u}_{i+1})-\hat{\mathbf{f}}_{i+\frac{1}{2}} \right ],\\
  =& -\hat{\mathbf{f}}_{i+\frac{1}{2}}\cdot \llbracket \mathbf{w} \rrbracket_{i+\frac{1}{2}}+\llbracket \mathbf{f} \cdot \mathbf{w} \rrbracket_{i+\frac{1}{2}},\\
  =& -\hat{\mathbf{f}}_{i+\frac{1}{2}}\cdot \llbracket \mathbf{w} \rrbracket_{i+\frac{1}{2}}+\llbracket \psi \rrbracket_{i+\frac{1}{2}} +g_{i+1}-g_{i}.
  \end{align*}
Here, we have made use of the thermodynamic potential definition $\psi=\mathbf{w}\cdot \mathbf{f}-g$. Finally, we claim that 
\begin{equation}
\label{eq:entropfluctuat}
\mathbf{w}_i\cdot  \mathbf{\Phi}_{i}^{i+\frac{1}{2}}+ \mathbf{w}_{i+1}\cdot  \mathbf{\Phi}_{i+1}^{i+\frac{1}{2}}\leq g_{i+1}-g_i,
\end{equation}
is equivalent to Tadmor's condition 
$$\hat{\mathbf{f}}_{i+\frac{1}{2}}\cdot \llbracket \mathbf{w} \rrbracket_{i+\frac{1}{2}}-\llbracket \psi \rrbracket_{i+\frac{1}{2}}\geq 0.$$
\subsubsection{The multidimensional extension}
\label{eq:entropmultiD}
Here, we describe extension of \eqref{eq:entropfluctuat} to the multidimensional case and we reuse the notations of Section~\ref{sec:motivating:FV:II}. Before proceeding any further, let us recall the Finite Volume scheme resulting from the discretization over the polygonal cell $\omega_c$ of the system of conservation laws \eqref{hyper} equipped with the entropy inequality \eqref{entropyinequality}
\begin{equation}
  \label{eq:FVsemidis}
  |\omega_c| \frac{\mathrm{d} \mathbf{u}_c}{\mathrm{d} t} +\sum_{p \in \mathcal{P}(c)} \sum_{f \in \mathcal{SF}(pc)} l_{pcf} \mathbf{f}_{pcf}=\mathbf{0},
\end{equation}
where $\mathbf{f}_{pcf}$ is some consistent numerical flux defined in normal direction $\mathbf{n}_{pcf}$ of the subface $f$ belonging to the corner $pc$. By virtue of the geometrical identity
$$\sum_{p \in \mathcal{P}(c)} \sum_{f \in \mathcal{SF}(pc)} l_{pcf} \mathbf{n}_{pcf}=\mathbf{0},$$
we rewrite the Finite Volume discretization under the fluctuation form
$$|\omega_c| \frac{\mathrm{d} \mathbf{u}_c}{\mathrm{d} t} +\sum_{p \in \mathcal{P}(c)} \mathbf{\Phi}_{c}^{p}=\mathbf{0},$$
where the fluctation attached to corner $pc$ reads
$$\mathbf{\Phi}_{c}^{p}=\sum_{f \in \mathcal{SF}(pc)} l_{pcf} \left [\mathbf{f}_{pcf}-\mathbf{f}(\mathbf{u}_c)  \mathbf{n}_{pcf} \right].$$
The natural multidimensional extension of the entropy-stable condition \eqref{eq:entropfluctuat} writes
\begin{equation}
\label{eq:entropyCond}
\sum_{c\in \mathcal{C}(p)} \mathbf{w}_c\cdot \mathbf{\Phi}_{c}^{p} \leq \int_{\partial\omega_p}\mathbf{g} (\mathbf{u})\cdot \mathbf{n}\,\mathrm{d}\gamma.
\end{equation}
Here, $(\eta,\mathbf{g})$ is entropy-entropy flux pair which satisfies the entropy inequality
$$\frac{\mathrm{d} \eta}{\mathrm{d} t}+\nabla \cdot (\mathbf{g}(\mathbf{u})) \geq 0.$$
Indeed, in the context of Finite Volume methods characterized by ``multidimensional'' numerical fluxes, {\it cf.} Section~\ref{sec:motivating:FV:II} and Section~\ref{sec:rd:flux},  one  shows immediately that condition \eqref{eq:entropyCond} is equivalent to (see Figure.~\ref{fig:polygrid-mr} and Section~\ref{sec:motivating:FV:II} for the notation)
\begin{equation}
\label{eq:entropyCond-flux}
\sum_{c\in\mathcal{C}(p)} \sum_{f \mathcal{SF}(pc)} l_{pcf} \left [\mathbf{f}_{pcf} -\mathbf{f}(\mathbf{u}_c)  \mathbf{n}_{pcf} \right ]\cdot \mathbf{w}_c \leq -\sum_{c\in\mathcal{C}(p)} \sum_{f \mathcal{SF}(pc)} l_{pcf} \mathbf{g}(\mathbf{u}_c) \cdot \mathbf{n}_{pcf},
\end{equation}
which is the natural multidimensional generalization of Tadmor's shuffle condition, see Definition~\ref{def:ES}. It is also worth pointing out that a similar node-based entropy condition has been derived in \cite{Gallice2022} for a multidimensional Finite Volume discretization of the Euler equations.
As shown in \cite{Abgrall2018}, it is always possible to modify of the locally conservative scheme \eqref{eq:FVsemidis} so that it remains locally conservative while enforcing its entropy stability, that is it satisfies \eqref{eq:entropyCond}. To this end, we consider the fluctuation form of \eqref{eq:FVsemidis} assuming that the residuals $\left \{\mathbf{\Phi}_{c}^{p}\right \}_{c \in \mathcal{C}(p)}$ satisfy the conservation condition
$$\sum_{c \in \mathcal{C}(p)} \mathbf{\Phi}_{c}^{p} =\int_{\partial \omega_p} \mathbf{f}(\mathbf{u}) \mathbf{n}\,\mathrm{d}\gamma.$$
Following \cite{Abgrall2018} where this construction was provided in the finite element context described in section \ref{sec:motivating}, let us introduce the following modification of the residuals $\{\mathbf{\Phi}_c^p\}_{c\in \mathcal{C}(p)}$
$$\widetilde{\mathbf{\Phi}}_c^p=\mathbf{\Phi}_c^p+\alpha_p \mathbf{A}_0 (\overline{\mathbf{u}}_p ) \left (\mathbf{w}_c-\overline{\mathbf{w}}_p\right ).$$
Here, $\mathbf{A}_{0}$ is the inverse of the entropy Hessian with respect to the conservative variable, {\it i.e.},
$$\mathbf{A}_0=\left (\frac{\partial \mathbf{u}}{\partial \mathbf{w}} \right)^{-1}.$$
By virtue of the strict concavity of $\eta$, this matrix is negative definite. In addition, $\overline{\mathbf{u}}_p$ and $\overline{\mathbf{w}}_p$ are some average values of $\mathbf{u}_c$ and $\mathbf{w}_c$ at the node $p$, for instance we can define them as their arithmetic averages
$$\overline{\mathbf{u}}_p=\frac{1}{|\mathcal{C}(p)|}\sum_{c\in \mathcal{C}(p)}\mathbf{u}_c,\;\text{and}\; \overline{\mathbf{w}}_p=\frac{1}{|\mathcal{C}(p)|} \sum_{c\in \mathcal{C}(p)} \mathbf{w}_c.$$
By construction
$$\sum_{c \in \mathcal{C}(p)} \mathbf{w}_c -\overline{\mathbf{w}}_p=\mathbf{0},$$
thus the $\{\widetilde{\mathbf{\Phi}_c^p}\}_{c \in \mathcal{C}(p)}$ still satisfy the conservation condition since
$$\sum_{c \in \mathcal{C}(p)} \widetilde{\mathbf{\Phi}}_{c}^{p}=\sum_{c \in \mathcal{C}(p)} \mathbf{\Phi}_{c}^{p}.$$
We are now in position to determine $\alpha_p$ such that and we can enforce the entropy stability condition \eqref{eq:entropyCond}. First, let us compute
\begin{align*}
  \sum_{c \in \mathcal{C}(p)} \widetilde{\mathbf{\Phi}}_{c}^{p}\cdot \mathbf{w}_c &-\int_{\partial \omega_p} \mathbf{g}(\mathbf{u})\cdot \mathbf{n}\,\mathrm{d}\gamma= \sum_{c \in \mathcal{C}(p)} \mathbf{\Phi}_{c}^{p}\cdot \mathbf{w}_c +\alpha_p \mathbf{A}_{0}(\overline{\mathbf{u}}_{p}) (\mathbf{w}_{c}-\overline{\mathbf{w}}_{p}) \cdot \mathbf{w}_c \\
 & \qquad\qquad\qquad -\int_{\partial \omega_p} \mathbf{g}(\mathbf{u})\cdot \mathbf{n}\,\mathrm{d}\gamma\\
  =&\sum_{c \in \mathcal{C}(p)} \alpha_p \mathbf{A}_{0}(\overline{\mathbf{u}}_{p}) (\mathbf{w}_{c}-\overline{\mathbf{w}}_{p}) \cdot (\mathbf{w}_{c}-\overline{\mathbf{w}}_{p}) +\mathbf{\Phi}_{c}^{p}\cdot \mathbf{w}_c
  \\
  &\qquad\qquad\qquad -\int_{\partial \omega_p} \mathbf{g}(\mathbf{u})\cdot \mathbf{n}\,\mathrm{d}\gamma.
\end{align*}
Setting
$$\mathcal{D}_p=\sum_{c \in \mathcal{C}(p)} \mathbf{A}_{0}(\overline{\mathbf{u}}_{p}) (\mathbf{w}_{c}-\overline{\mathbf{w}}_{p}) \cdot (\mathbf{w}_{c}-\overline{\mathbf{w}}_{p}) <0,$$
and
$$\mathcal{E}_p=\int_{\partial \omega_p} \mathbf{g}(\mathbf{u})\cdot \mathbf{n}\,\mathrm{d}\gamma-\sum_{c \in \mathcal{C}(p)}\mathbf{\Phi}_{c}^{p}\cdot \mathbf{w}_c,$$
leads us to claim that the entropy stability condition holds true if and only if $\alpha_p$ satisfies $\alpha_p \mathcal{D}_p -\mathcal{E}_p \leq 0$, that is
\begin{equation}
  \label{eq:condalpha}
  \alpha_p \geq \frac{\mathcal{E}_p}{\mathcal{D}_p},
\end{equation}
since by virtue of the \emph{strict} concavity of entropy, $\mathbf{A}_0(\overline{\mathbf{u}}_p )$ is a positive negative matrix, and $\mathcal{D}_p < 0$ if not all the $\mathbf{w}_c$, {\it i.e.}, the $\mathbf{u}_c$ are equal.

It is important to see under which condition(s), the ratio $\frac{\mathcal{E}_p}{\mathcal{D}_p}$ is bounded, to get a bounded value of $\alpha_p$ knowing that the value of $\alpha_p$ might have an influence on the CFL number. We show this is correct, provided we are away from vaccum, and that the flux $\bbf_{pcf}$ are Lipschitz continuous.

\begin{proof}
In the following, we shall write that the flux $\mathbf{f}$ and the entropy flux $\mathbf{g}$ depend on $\mathbf{w}$ and not $\mathbf{u}$ to simplify the writing. This is not an issue since the mapping $\mathbf{w} \mapsto \mathbf{u}$ is one-to-one by virtue of the strict concavity of the entropy Hessian. 

Using the notations of figure \ref{fig:polygrid-mr}, and \eqref{eq:nbc} as well as \eqref{eq:geomiden}, we see that 
\begin{align*}
\mathcal{E}_p=&\sum_{c \in \mathcal{C}(p)} \sum_{f \in \mathcal{SF}(pc)} l_{pcf} \left \{ -\mathbf{g}(\mathbf{w}_c)\cdot \mathbf{n}_{pcf} -\left [\mathbf{f}_{pcf} -\mathbf{f}(\mathbf{w}_c)  \mathbf{n}_{pcf} \right ]\cdot \mathbf{w}_c \right \}, \\
=&\sum_{c \in \mathcal{C}(p)} \sum_{f \in \mathcal{SF}(pc)} l_{pcf} \left \{ \left [\mathbf{f}(\mathbf{w}_c)^{\mathtt{T}}  \mathbf{w}_c-\mathbf{g}(\mathbf{w}_c) \right ]\cdot \mathbf{n}_{pcf} -\mathbf{f}_{pcf}\cdot \mathbf{w}_c \right \} \\
&=\sum_{c \in \mathcal{C}(p)} \sum_{f \in \mathcal{SF}(pc)} l_{pcf} \left [ \mathbf{\psi}(\mathbf{w}_c)\cdot \mathbf{n}_{pcf} -\mathbf{f}_{pcf}\cdot \mathbf{w}_c \right ].
\end{align*}
Here, $\mathbf{\psi}$ is the potential function given by
$$\mathbf{\psi}(\mathbf{w}) \cdot \mathbf{n}=\mathbf{f}(\mathbf{w})  \mathbf{n} \cdot \mathbf{w}-\mathbf{g}(\mathbf{w})^\mathtt{T} \mathbf{n},\;\forall \mathbf{n} \in \mathbb{R}^{d}.$$
Finally, $\mathcal{E}_p$ might be rewritten
$$\mathcal{E}_p=\sum_{c \in \mathcal{C}(p)} \sum_{f \in \mathcal{SF}(pc)} l_{pcf} \left \{ \left [\mathbf{\psi}(\mathbf{w}_c)-\mathbf{\psi}(\overline{\mathbf{w}}_p)\right] \cdot \mathbf{n}_{pcf} -\mathbf{f}_{pcf}\cdot (\mathbf{w}_c-\overline{\mathbf{w}}_p) \right \},$$
since by virtue of the conservation condition
$$\sum_{c \in \mathcal{C}(p)} \sum_{f \in \mathcal{SF}(pc)} \mathbf{f}_{pcf}=\mathbf{0}.$$
Knowing that the potential satisfies $\nabla_{\mathbf{w}}\psi=\mathbf{f}$, we see that, using Taylor formula with integral rest, 
\begin{equation*}
\begin{split}\big (\psi(\mathbf{w}_c)&-\psi(\overline{\mathbf{w}}_p)\big )\cdot \mathbf{n}_{pcf}-\big ( \mathbf{w}_c-\overline{\mathbf{w}}_p\big )\cdot \mathbf{f}_{pcf}\\
&=
\big ( \mathbf{f}(\mathbf{w}_c)\cdot \mathbf{n}_{pcf}\big ) \cdot ( \mathbf{w}_c-\overline{\mathbf{w}}_p )+\bigg (\int_0^1 \dfrac{\partial^2\psi}{\partial \bbw^2}(s\bbw_c+(1-s)\bar \bbw_p)(1-s)\; ds\bigg )\big (\bbw_c-\bar\bbw_p)^2\\
&\qquad - \big ( \mathbf{f}(\mathbf{w}_c)\cdot \mathbf{n}_{pcf}\big ) \cdot ( \mathbf{w}_c-\overline{\mathbf{w}}_p )
-
\big ( \mathbf{w}_c-\overline{\mathbf{w}}_p\big )\cdot \big ( \mathbf{f}_{pcf}-\mathbf{f}(\mathbf{w}_c)\cdot \mathbf{n}_{pcf}\big )\\
&=\bigg (\int_0^1\nabla_{\mathbf{w}}\mathbf{f} (s\bbw_c+(1-s)\bar\bbw_p) \; (1-s) ds\bigg ) (\bbw_c-\bar\bbw_p)^2\\&
\qquad \qquad\qquad \qquad -\big ( \mathbf{w}_c-\overline{\mathbf{w}}_p\big )\cdot \big ( \mathbf{f}_{pcf}-\mathbf{f}(\mathbf{w}_c)\cdot \mathbf{n}_{pcf}\big )
\end{split}
\end{equation*}
Since $\mathbf{f}_{pcf}$ is assumed to be Lipschitz continuous, we see that
$$\Vert \EE_p\Vert \leq C \; \bigg (\max\limits_{c\in \CC(p), f\in \SS\PP(pc)} l_{pcf}\bigg ) \; \sum\limits_{c\in \CC(p)} \Vert \mathbf{w}_c-\overline{\mathbf{w}}_p \Vert^2,$$
where $C$ is a bound on $\nabla_\bbw \bbf$ and assuming the existence of a constant $C'$ such that $$\Vert \DD_p\Vert \geq C' \sum\limits_{c\in \CC(p)} \Vert \mathbf{w}_c-\overline{\mathbf{w}}_p \Vert^2,$$
we get the boundedness condition.

The constant $C$ and $C'$ depends on the $L^\infty$ norm of $\mathbf{w}$: hence we need to assume that $\mathbf{u}$ stay bounded.
For the Euler equations, the last condition holds true if the solution is away from vaccum.
\end{proof}
\begin{rem}[Adaptation with the other methods sketched in section \ref{sec:motivating}]
$ $
\begin{enumerate}
  \item  In \cite{Abgrall2018}, a similar estimate was given for the dG scheme, where the entropy flux is not defined exactly as in Tadmor, mostly for ease of computation via the Taylor formula, and for some continuous finite element like methods.

\item In \cite{AbgrallOffnerRanocha}, this technique has been extended to satisfy several differential constraints. One of the application was to get an entropy satisfying scheme that is also kinetic preserving, in the dG framework. 
\end{enumerate}

\end{rem}
In \cite{AbgrallDumbserBusto}, this technique has been used in a completely different purpose. 
The model of 
\cite{PeshRom2014,HPRmodel} can be used for fluid and solid if one adapt correctly  the parameters $\chi_1$ and $\chi_2$ below. It writes:
\begin{subequations}\label{eq:dum}
\begin{equation}\label{eq:dum:1}
\dpar{\rho}{t}+\text{ div }(\rho \bbv)-\text{ div }(\varepsilon \nabla\rho)=0
\end{equation}
\begin{equation}
\label{eq:dum:2}
\begin{split}
\dpar{(\rho \bbv)}{t}&+\text{ div }\big (\rho\bbv\otimes\bbv+p\Id\big )+\text{div }\big (\mathbf{\sigma}+\mathbf{\omega}\big )-\text{ div }\big (\varepsilon\nabla(\rho \bbv)\big )=0
\end{split}
\end{equation}
\begin{equation}
\label{eq:dum:3}
\begin{split}
\dpar{(\rho \eta)}{t}&+\text{div }\big (\rho\eta\bbv+\mathbf{\beta}\big )-\text{ div }\big (\varepsilon \nabla\big(\rho\eta\big )=\Pi+\dfrac{\mathbf \alpha:\mathbf \alpha}{\chi_1T}+\dfrac{\bbeta^2}{\chi_2}\geq 0
\end{split}
\end{equation}
\begin{equation}
\label{eq:dum:4}
\begin{split}
    \dpar{\bbA}{t}&+\text{ div }\big ( \bbA \bbv\big )+\bbv\text{curl }\bbA-\text{ div }\big ( \varepsilon\nabla\bbA\big )=-\dfrac{\mathbf\alpha}{\chi_1}
    \end{split}
\end{equation}
\begin{equation}
\label{eq:dum:5}
\begin{split}
\dpar{\bbJ}{t}&+\text{ div }\big ( \bbJ\bbv+\bbT\big )-\bbv\text{ curl }\bbJ-\text{div }\big (\varepsilon\nabla\bbJ\big )=-\dfrac{\mathbf\beta}{\chi_2}
\end{split}
\end{equation}
\begin{equation}
\label{eq:dum:6}
\begin{split}
\dpar{E}{t}&+\text{ div }\big ( (\EE_1+\EE_2+\EE_3+\EE4)\bbv\big )+\text{ div }\bigg (\big (p\Id+\mathbf\omega+\mathbf\sigma\big )\bbv\big )-\text{div }\big ( \varepsilon\nabla E\big )=0
\end{split}
\end{equation}
\end{subequations}
with  $\bbA\in M_d(\mathbb{R})$ and $\bbJ\in \mathbb{R}^d$ in addition to the usual notations. The total energy is $  E =\mathcal{E}_1  +\mathcal{E}_2 {\, + \, \mathcal{E}_3 + \mathcal{E}_4} $ with $\mathcal{E}_1$ being the internal energy, $\EE_2$ the kinetic energy, and $\EE_3$, $\EE_4$ energy densities linked to $\bbA$ and $\bbJ$ respectively. Last, we have $\mathbf\alpha=\dpar{\EE}{\bbA}\in M_d(\mathbb{R})$ and $\mathbf\beta=\dpar{\EE}{\bbJ}\in \mathbb{R}^d$.  More details can be found in \cite{PeshRom2014}.  The system above belongs to the class of overdetermined hyperbolic systems because it can be shown that \eqref{eq:dum:5} is a consequence of the other relations in \eqref{eq:dum}. More precisely, one can write a relation of the form
\begin{equation}
    \label{eq:dum:7}
\eqref{eq:dum:6}=r\times \eqref{eq:dum:1}+\bbv \eqref{eq:dum:2}+T\times \eqref{eq:dum:3}+\mathbf\alpha:\eqref{eq:dum:4}+\mathbf\beta\cdot , \eqref{eq:dum:5}
\end{equation}
where $\theta$ is the temperature (hence one needs a complete equation of state) and $r=\dpar{\EE}{\rho}$. One can see \cite{AbgrallDumbserBusto} for more details.

Using the RD formalism, it was possible to design a class of schemes that discretise \eqref{eq:dum:1}-\eqref{eq:dum:5} which guaranty local conservation, including for the total energy $E$. This means that one, by tuning appropriately a numerical dissipation  from the same principles as what we have described above for the entropy, it becomes possible to get a relation that discretely mimic \eqref{eq:dum:7} so that we have local conservation of the total entropy. This was done using finite volume and  discontinuous Galerkin approaches. One may consult \cite{AbgrallDumbserBusto} for more details.

%\input{multiD-entropy-mario}

%%%%%%%%%
\section{Conclusions, perspective, and one outlier.}
%\textcolor{red}{A ecrire.., premier JET du dimanche soir (avec des enfants qui crient et qui bralillent  \`a  cot\'e, be kind ...) }

This paper has presented a  truly multidimensional setting allowing to embed conservation constraints
when discretizing hyperbolic systems. We have focused on homogeneous systems,
for which conservation  is  equivalent to the preservation in
 time of appropriate integrals on the mesh of the densities of several physical 
quantities  (mass, momentum, energy, entropy  etc).

Several aspects that fit naturally in the framework discussed here  been left out, or remain under investigation. For example, we have not discussed how the presence of source terms  modifies  the notion
of conservation. It is well known that even in one dimension the notion of trivial (constant) solution
is not suited. One must thus account for the existence of non-trivial
states which satisfy the steady limit of the PDE, and which may require an enhanced approximation.
This notion is often referred to as well-balanced.   In 
\cite{abgrall2022hyperbolic} we have provided an in depth discussion of how the present 
framework can be easily extended to this case, using the notion of so-called global fluxes
which allows to recast the non-homogenous system in pseudo-conservative form.
An interesting particular case are  steady equilibria involving the existence of a (however complex) set of stationary invariants $V$ depending 
on the physical unknowns, and on some external data. For these steady solutions one has $v=v_0$ constant. 
For these problems, the  source terms may be simply included in the definition of the residual $\Phi$, and the design of well-balanced schemes boils down to  two elements: the choice of the appropriate quadrature; the design of a numerical dissipation compatible with the 
discrete equilibrium.  Some examples are discussed in \cite{abgrall2022hyperbolic} and references therein.
 Examples in the framework of multidimensional finite volume solvers with corner fluxes can be found in \cite{PHRaph2}.

A more ambitious objective is the preservation of general steady states, which do not admit any set of stationary  invariants. There are many interesting such states (as e.g. isolated vortices, as well as many other
potential flow solutions). In this case the problem is much more complex as it requires embedding the preservation of a (possibly non-homogenous) differential constraint. Depending on the PDE system, this constraint
can be of the type div = d$_0$ or curl=c$_0$.  The preservation of these constraints
is indeed possible in the framework discussed here. In the homogeneous case, promising  work on the curl involution for  linear  homogeneous problems using finite volumes with corner fluxes 
has been presented in \cite{Barsukow_nodeconservative2025}.  On Cartesian meshes,
a general methodology to embed  the preservation of  discrete analogs of the  div = d$_0$ constraints 
are discussed in  \cite{brt25,bcrt25}. In both cases, a full residual $\Phi$ which is a cell integral
of the flux divergence plus the source is shown to play a crucial role. 
The ideas discussed here are also linked to earlier work e.g. by Mishra and Tadmor
 or Torrilhon \cite{mishra2011constraint,jeltsch2006curl}.
We believe the framework discussed here is very well suited to further pursued these objectives on arbitrary meshes.  

Another very interesting aspect is the use of the ideas of this paper to generalize to arbitrary high
order  the finite volume setting using  corner fluxes. The main issue is somewhat related to quadrature.
The question is how to  increase the accuracy without including edge Riemann problems based on
one-dimensional fluxes which may  destroy some of the nice properties obtained with point numerical fluxes.
This is perhaps related to another aspect which is how to include in the present framework
methods which use approximations based on some moments of the solutions or other non-Lagrange 
approach (modal expansions, Taylor expansions etc.).

To end this paper, let us make the following comments. 
It seems that all the schemes we are aware of can be cast into the framework we have described here, except maybe one class. This exception is the so-called "active flux" method. This methodology uses meshes that can be triangular, Cartesian, or made of polygons, see \cite{PampaPoly,Calhoun2023-ms,Barsukow}. Using a globally continuous approximation, these methods combine moments (which are logically stored in the elements) and point values (which are localized at the boundary of elements). It was first described by P.L. Roe and his students, see e.g.  \cite{AF1,AF2,AF3,EymannRoe2013,Maeng,He} for an early contribution. This was later formalized in \cite{abgrall_AF}, where in particular a Lax-Wendroff like theorem is shown. High order extensions were first considered in \cite{BarsukowAbgrall}. One particular feature of these methods is that the conservation constraint are only applied to the first moment, i.e. the average. The point values can be updated without conservation constraint. This has been used for example in \cite{BPPAMPA_1D} to stabilize non linearly the method. In \cite{PampaPoly}, this is combined with a Virtual finite element framework, in order to handle general polygons.  In \cite{arXiv:2508.17147} has been identified a link between the discontinous Galerkin approach and this methodology.
These methods seem to have surprising properties, see e.g. \cite{Barsukow} for low Mach flows, or \cite{arXiv:2508.17147} for built in bound preserving properties.

There is still a lot to do!

\section*{Acknowledgements.}
RA has been partially funded by SNSF grant 200020\_204917 "Structure preserving and fast methods for hyperbolic systems of conservation laws".

This work would not have been possible without the help of many colleagues,  former students and postdocs:  L. Arpaia,   W. Barsukow, W. Boscheri, S. Busto, P. Bacigaluppi, M. Ciallella,   H. Deconinck, V. Delmas, M. Dumbser, A.G. Filippini, E. Gaburro, G. Gallice, A. del Grosso,  D. Kuzmin, A. Larat, R. Loub\`ere,  Y. Liu, M. Mezine, L. Micalizzi, S. Michel, P. \"Offner,  B. Rebourcet, D. de Santis, M. Shaskhov, S. Tokareva,  D. Torlo, F. Vilar,  K. Wu.

%====================================================================
%==========  B I B L I O G R A P H Y
\bibliographystyle{plain}

\bibliography{./biblio_am}
%====================================================================

\end{document}